\newtheorem{thm}{Theorem}[section]
\newtheorem{prop}[thm]{Proposition}
\newtheorem{cor}[thm]{Corollary}
\newtheorem{definition}[thm]{Definition}
\newtheorem{example}[thm]{Example}
\newtheorem{remark}[thm]{Remark}
\newtheorem{lemma}[thm]{Lemma}
\newtheorem{theorem}[thm]{Theorem}
\numberwithin{equation}{section}
\newcommand{\D}{\mathbb{D}}
\newcommand{\T}{\mathbb{T}}
\newcommand{\C}{\mathbb{C}}
\newcommand{\Z}{\mathbb{Z}}
\newcommand{\clb}{\mathcal{B}}
\newcommand{\cld}{\mathcal{D}}
\newcommand{\clh}{\mathcal{H}}
\newcommand{\clk}{\mathcal{K}}
\newcommand{\cls}{\mathcal{S}}
\newcommand{\clw}{\mathcal{W}}
\newcommand\restr[2]{\ensuremath{\left.#1\right|_{#2}}}
\subjclass[2020]{46L08, 47A13, 47A20, 47A56, 47L05, 47L30, 47L55}
\keywords{$C^*$-correspondence; completely contractive representation; product system; unitary extension}
\begin{document}
\title[Wold decompositions and unitary extensions of product systems]{Twisted representations of product systems of $C^*$-correspondences: Wold decomposition and unitary extensions}
\author{Baruch Solel}
\address{Department of Mathematics, Technion—Israel Institute of Technology, Haifa 32000, Israel.}
\email{mabaruch@technion.ac.il}
\author{Mansi Suryawanshi}
\address{Department of Mathematics, Technion—Israel Institute of Technology, Haifa 32000, Israel.}
\email{suryawanshi@campus.technion.ac.il; mansisuryawanshi1@gmail.com}

\date{\today}

\begin{abstract}
We investigate Wold-type decompositions and unitary extension problems for
multivariable isometric covariant representations associated with product systems
of $C^*$-correspondences. First, we establish an operator-theoretic characterization
for the existence of a Wold decomposition for the tuple $(\sigma, T_1, T_2, \ldots, T_n)$, where each
$(\sigma,T_i)$ is an isometric covariant representation of a
$C^*$\nobreakdash-correspondence. We then introduce twisted and doubly twisted covariant representations of product
systems. For doubly twisted isometric representations, we prove the
existence of a Wold decomposition, recovering earlier results for doubly commuting
representations as special cases. We further obtain explicit descriptions of the
resulting Wold summands and develop concrete Fock-type models realizing each
component. We present non-trivial examples of these families. Finally, we construct unitary extensions via a direct-limit
procedure. As applications, we obtain unitary extensions for several previously
studied classes of operator tuples, including doubly twisted, doubly non-commuting,
and doubly commuting isometries, and for a special class of doubly twisted representations of product system.
\end{abstract}

\maketitle	
\tableofcontents

\section{Introduction}

The classical Wold decomposition theorem is a cornerstone of the structural theory of isometries on Hilbert spaces. Let $\mathcal H$ be an infinite dimensional separable Hilbert space over the complex plane $\C$, and
let $\mathcal B(\mathcal H)$ denote the algebra of all bounded linear operators
on $\mathcal H$.
An operator $V \in \mathcal B(\mathcal H)$ is called an \emph{isometry} if
\(
V^*V = I_{\mathcal H}.
\)
An isometry $V$ is called a \emph{shift} if
\(\|V^{*m}h\|\longrightarrow 0 \quad \text{as } m\to\infty \text{ for all } h\in\mathcal H,
\)
and it is called \emph{unitary} if $VV^* = I_{\mathcal H}$.
The Wold decomposition asserts that every isometry $V \in \mathcal B(\mathcal H)$
admits a unique orthogonal decomposition
\(
\mathcal H = \mathcal H_u \oplus \mathcal H_s,
\)
where $\restr{V}{\mathcal H_u}$ is unitary and $\restr{V}{\mathcal H_s}$ is  shift \cite{NagyFoias}. This canonical decomposition has far-reaching consequences across operator theory, stochastic processes~\cite{Miamee}, prediction theory~\cite{Helson}, time series analysis~\cite{Masani}, operator models~\cite{NagyFoias}, $C^*$-algebras~\cite{Coburn, Omland, Weber}.

The analytical elegance of this decomposition is highly desirable, yet its translation to multi-variable operator systems specifically, tuples of isometries $\{V_1, \dots, V_k\}$ for $k \ge 2$ acting on a common space $\mathcal{H}$ proves profoundly challenging. Early multivariable analogues typically required additional
structural hypotheses. 
In particular, Słociński established such a decomposition for a doubly commuting pair of isometries in~\cite{Slocinski80}. This result, along with its subsequent generalizations, was later used in~\cite{BCL} to construct models for
tuples of commuting isometries and to analyze the structure of the $C^*$-algebras they generate. Popovici studied the Wold decompositon for a pair of commuting isometries~\cite{Popovici10}. Popescu generalized the Wold decomposition to the noncommutative setting of row isometries $V=(V_1,\ldots,V_n)$ with mutually orthogonal ranges, obtaining a unique orthogonal decomposition into a \emph{row unitary} part and a \emph{pure} (shift-type) part modeled on the full Fock space over $\mathbb C^n$
\cite{Popescu90}. For a systematic and detailed study of doubly commuting and twisted analogues in higher-dimensional settings and related general frameworks, see~\cite{Gaspar, JP, Ko, RSS, S}.

To study operator tuples within a unified framework extending beyond the classical
Hilbert space setting, Hilbert $C^*$-modules and $C^*$-correspondences provide a
natural and effective framework~\cite{Lance,Muhly}. These ideas were subsequently
developed in depth in a series of papers by P.~Muhly and B.~Solel, who established
the theory of tensor algebras over $C^*$-correspondences.
In this context, Muhly and Solel
\cite{MS} established a Wold-type decomposition for isometric covariant
representations of a $C^*$-correspondence over a $C^*$-algebra $\mathcal A$.
They proved that every isometric covariant representation admits a unique decomposition
as a direct sum of two types of representations: one that is \emph{induced} by a representation
of $\mathcal A$, in the sense of Rieffel~\cite{Rieffel}, and another that gives rise to a
representation of the Cuntz–Pimsner algebra.
The induced isometric covariant representations share many features with shifts. To a large extent, shifts reflect the representation theory of $\mathbb C$ and in fact shifts are essentially
induced by representations of $\mathbb C$.
In the general setting, however, the situation is considerably more intricate than in the
classical case of shifts. Popescu’s Wold decomposition theorem~\cite{Popescu90},
which can be recovered as a special case by of this decomposition by considering the $C^*$-correspondence $E$ as a Hilbert space over $\mathbb C$, and the left action defined by $\varphi(\lambda)=\lambda I_E$ for
$\lambda \in\mathbb C$.
Skalski and Zacharias
\cite{Skalski} subsequently established a higher-rank version of the main result of \cite{MS}. They proved Wold decomposition for
\emph{doubly commuting} isometric representations of product systems of
$C^*$-correspondences over $\mathbb Z_+^k$, yielding the expected $2^k$-fold
orthogonal splitting. The corresponding crucial concept of a product system of a $C^*$-correspondence
over $\mathbb{N}_0^k$ was introduced in~\cite{Fow}. This notion has been further exploited in recent work by B.~Solel on dilations of
commuting completely positive maps (see~\cite{Solel,Solel2}).

While these Wold-type theorems provide canonical structural decompositions,
they do not give a criterion for \emph{when} a given multivariable isometric
representation admits such a decomposition.  In the classical setting of
tuples of isometries on a Hilbert space, this problem was resolved in
\cite[Theorem~2.1]{RSS2} via a purely operator-theoretic characterization: a tuple
$V = (V_1,\ldots,V_n)$ of isometries admits a von Neumann--Wold decomposition if and only if the
individual Wold components $\mathcal{H}_{V_i,u}$ (or equivalently $\mathcal{H}_{V_i,s}$) reduce every
$V_j$ for all $1 \leq i,j \leq k$. For a commuting triple of isometries $\{V_1,V_2,V_3\}$, the existence result was established by
Gaspar and Suciu (see~\cite[Theorem~2]{Gaspar}).

Section~\ref{Characterization for Existence of Wold Decomposition} aims to extend this
characterization from the setting of $n$-tuples of isometries on a Hilbert space to
families of the form $(\sigma, T_1, T_2, \ldots, T_n)$, where for each $i\in I_n$,
$(\sigma,T_i)$ is an isometric covariant representation of a
$C^*$\nobreakdash-correspondence $E_i$ over a $C^*$\nobreakdash-algebra
$\mathcal A$ on $\mathcal H$, and
$\sigma:\mathcal A\to\mathcal B(\mathcal H)$ is a fixed nondegenerate
$*$\nobreakdash-representation (cf. Theorem~\ref{thm: class of decom}).
In this higher-rank
framework, the analogues of the ``shift'' and ``unitary'' parts are the induced and fully coisometric
components of each covariant pair. We establish that the existence of a Wold decomposition
for a tuple $(\sigma, T_1,\ldots,T_k)$ is precisely equivalent to the mutual reducibility of these
components, thereby extending the von Neumann--Wold criterion to the categorical setting of product
systems.

In Section \ref{Doubly Twisted Representations}, we introduce the notions of \emph{twisted} and \emph{doubly twisted} representations of
a product system, which form the central objects of our study. A twisted representation extends the
usual framework of covariant representations by allowing the coordinate maps $T_{i}$ to satisfy
commutation relations that are modified by a prescribed family of unitaries
$\{U_{ij}\}_{i\neq j}$ acting on the underlying Hilbert space. This additional twisting captures
interactions between the coordinate directions that are not necessarily symmetric or commuting. As an application of the characterization theorem established in
Section~\ref{Characterization for Existence of Wold Decomposition}, we prove the
existence of a Wold decomposition for doubly twisted isometric representations of product systems (cf. Theorem~\ref{thm: decompo for doubly twisted rep}). As a special case, we recover the Wold decomposition
obtained by Skalski and Zacharias~\cite{Skalski} for doubly commuting isometric
representations.

In Section~\ref{Examples}, we present a series of illustrative examples, confirming that the setting is rich and nontrivial. A central role in this paper is played by a concrete Fock-type model
for doubly twisted covariant representations.
This model provides a unified framework in which induced and fully
coisometric components coexist in a controlled manner.
More precisely, given a subset $A\subseteq I_k$, the coordinates indexed
by $A$ give rise to an induced (shift-type) part, while the complementary
coordinates $A^{\mathrm c}$ contribute a fully coisometric part governed
by a twisted representation on an auxiliary Hilbert space.
The construction yields a large and flexible class of doubly twisted
representations, extending the classical Fock representation and its
variants to the setting of twisted product systems.
Moreover, direct sums of such models corresponding to different choices
of $A$ again produce doubly twisted representations, illustrating the
robustness of the framework.
We present this model explicitly in Example~\ref{ex:fock-model} (see~\eqref{eq:model-operators}), which
serves as a guiding example for the general theory developed in the
subsequent sections.
Then we construct explicit non-trivial examples of operator families
that give rise to twisted and doubly twisted representations, illustrating how
the corresponding relations arise naturally in different settings. In the scalar case $\mathcal A=\mathbb C$
and the automorphic case $E_i=\alpha_i\mathcal A$, the twisted relations
translate into explicit operator identities, and extend Solel's characterization
\cite{Solel} of doubly commuting representations. In addition, we provide two concrete examples of operators in the latter case.
Taken together, these results demonstrate that (doubly) twisted representations
can be constructed from concrete operator families satisfying explicit algebraic
relations.

In Section \ref{Wold Decomposition}, we obtain concrete representations of the summands arising in the Wold decomposition
of doubly twisted isometric representations of the product system.
Starting with an isometric covariant $k$-tuple
$(\sigma,T^{(1)},\ldots,T^{(k)})$, we introduce the wandering subspaces $ W_A=\displaystyle\bigcap_{i\in A}W_i,$ where $W_i := \operatorname{Ran}(I-T_{i}T_{i}^*).$
We then give explicit representations for each Wold summand $\mathcal{H}_A$,
\[
\mathcal{H}_A
=
\sum_{n\in\mathbb{Z}_+^{|A|}}
T_A^{(n)}
\left(I_{E_A^n}\otimes
\sum_{m\in\mathbb{Z}_+^{k-|A|}}
T_{I_k\setminus A}^{(m)}
\left(I_{E_{I_k\setminus A}^m}\otimes W_A\right)\right).
\]
On each summand $\mathcal H_A$ the coordinates indexed by $A$ represent induced part (shift--like) while those indexed by $A^{\mathrm c}$ are fully
coisometric. 
As a corollary, the case $A=I_k$ recovers the induced Fock part, while $A=\varnothing$ yields the
fully coisometric part. 

Motivated by the classical one variable theory, in Section \ref{Models}, we pass to a Fock space model in the multivariable, doubly twisted setting.  In the one variable case, the Wold decomposition identifies the shift part of an isometry with multiplication by $z$ on a vector--valued Hardy space via the canonical unitary, providing a concrete functional model.  This philosophy has been a guiding theme in operator theory.  
In this section, our goal is to describe an explicit model for each summand $\mathcal{H}_A$ obtained
from the Wold decomposition of a doubly twisted isometric covariant representation
$(\sigma,T_1,\ldots,T_k)$. Using the canonical unitary
$\Pi_A\colon \mathcal{H}_A\to\mathcal{F}(E_A)\otimes\mathcal{D}_A$, we transport the restricted
representation $\restr{(\sigma,\{T_i\}_{i\in I_k})}{\mathcal{H}_A}$ to the Fock space model. This yields a
new tuple of operators $(\sigma_A,\{M_{A,i}\}_{i\in I_k})$ acting on
$\mathcal{F}(E_A)\otimes\mathcal{D}_A$, unitarily equivalent to the original one on $\mathcal{H}_A$ (cf. Prposition~\ref{prop:construction of MAi}).
In this model, the components $M_{A,i}$ with $i\in A$ appear as creation operators, whereas those
for $i\in A^c$ act as fully coisometric parts.

One of the fundamental consequences of the Wold decomposition is that every isometry
admits a unitary extension. It is also well known that every contraction admits a
coisometric extension. In~\cite[Section 5]{Muhly}, P.~Muhly and B.~Solel  obtained
far-reaching generalizations of the latter result by constructing an explicit
inductive extension procedure for completely contractive covariant representations
of a $C^*$-correspondence (see also~\cite{Muhly2}, where the special case of a
$C^*$-correspondence induced by a unital injective endomorphism of $\mathcal A$
is treated). Their approach, inspired by methods from classical operator theory,
produces, under suitable technical assumptions, a fully coisometric extension of a
completely contractive representation. However, except in certain special cases, it
remains unknown whether a fully coisometric extension obtained in this manner from an
isometric representation is itself isometric.
In a different direction, Skalski and Zacharias (cf. \cite[Section~5]{Skalski}) showed 
that one can obtain extensions which are simultaneously isometric and fully
coisometric, which they termed \emph{unitary extensions}. In the one-variable case,
they established the existence of such unitary extensions and highlighted the
difficulties involved in constructing analogous extensions in higher-rank settings.

In light of these developments, it is natural to seek a systematic construction of
unitary extensions for higher-rank representations, particularly in the presence of
twisted commutation relations. This is precisely the goal of Section~\ref{Unitary Extension}.

We develop a new approach for doubly
twisted isometric covariant representations of product systems via a direct-limit construction. Starting from a compatible direct
system $\{(\mathcal{H}_m,\phi_{n,m})\}_{m\le n}$ of Hilbert spaces carrying doubly twisted isometric
covariant representations $(\sigma_m,T_{m,1},\ldots,T_{m,k})$ with twists
$\{t_{ij}\otimes U_{m,ij}\}_{i\neq j}$, we show in Theorem~\ref{thm:inductive-limit} that there exists a limiting twisted
isometric covariant representation $(\sigma_\infty,T_{\infty,1},\ldots,T_{\infty,k})$ on the direct
limit Hilbert space $\mathcal{H}_\infty$, together with unitaries $\{U_{\infty,ij}\}_{i\neq j}$,
extending all the given data. We identify a natural and essentially minimal hypothesis on the
connecting maps which ensures that the limiting representation is fully coisometric, and hence
doubly twisted.

As consequences, we obtain unitary extensions for several well-known classes of operators,
including doubly twisted isometries studied by Sarkar, Rakshit, and Suryawanshi~\cite{RSS},
doubly non-commuting tuples of  isometries introduced by Jeu and Pinto~\cite{JP}, and doubly commuting tuples of 
isometries due to Sarkar~\cite{S} and Słociński~\cite{Slocinski80}. As an application, we prove that multiplication operators $(M_{z_1},\cdots, M_{z_n})$ on $H^2(\mathbb D^n)$ admit a unitary extension.
When $\mathcal{A}$ is abelian and each $E_i=\mathcal{A}$, we further obtain an explicit model on
$\ell^2(\mathbb{Z}_+^{|A|})\otimes\mathcal{D}_A$ and show that every doubly twisted isometric covariant representation admits a doubly twisted unitary extension in this setting (cf. Example~\ref{ex: unitary-ext-commutative}).

In the next section, we collect the basic definitions and known results that will be used throughout the paper.

\section{Preliminaries} \label{Preliminaries}
 Throughout this paper, all Hilbert spaces are assumed to be separable and defined over $\mathbb{C}$ and we let  $\mathcal A$ denote the $C^*$-algebra.
We set $\mathbb Z_+:=\{0,1,2,\ldots\}$ and, for $k\in\mathbb Z_+$, we write
$I_k:=\{1,2,\ldots,k\}$.
Inner products on Hilbert spaces are conjugate-linear
in the first variable and linear in the second. Also $I_{\clh}$ denotes the identity operator on a Hilbert space $\clh.$ Given $T \in \mathcal B(\mathcal H)$, its range and kernel are defined by
$\text{Ran}T = T(\mathcal H)$ and $\ker T = \{h \in \mathcal H : Th=0\}$, respectively.

\begin{definition}
A (right) \emph{Hilbert $\mathcal A$-module} is a complex vector space $E$
equipped with a right $\mathcal A$-module structure and an
$\mathcal A$-valued inner product
$\langle\cdot,\cdot\rangle_{\mathcal A}:E\times E\to\mathcal A$
which is linear in the second variable, conjugate-linear in the first,
satisfies
\[
\langle\xi,\eta a\rangle_{\mathcal A}
=\langle\xi,\eta\rangle_{\mathcal A}a,\qquad
\langle\xi,\eta\rangle_{\mathcal A}
=\langle\eta,\xi\rangle_{\mathcal A}^*,
\]
and $\langle\xi,\xi\rangle_{\mathcal A}\ge0$ with equality if and only if
$\xi=0$. When $E$ is complete with respect to
the norm $\|\xi\|:=\|\langle\xi,\xi\rangle_{\mathcal A}\|^{1/2}$, it is called a 
\emph{Hilbert $C^*$\nobreakdash-module over $\mathcal{A}$}, 
or simply a \emph{Hilbert $\mathcal{A}$-module}. 
\end{definition}

For a Hilbert $\mathcal A$-module $E$, an operator $T:E\to E$ is called \emph{adjointable} if there exists an operator
$T^*:E\to E$ such that $\langle T\xi,\eta\rangle_{\mathcal A}
=
\langle\xi,T^*\eta\rangle_{\mathcal A},
\, \xi,\eta\in E.$
The set of all adjointable operators on  is denoted by $\mathcal L(E)$ (see \cite{Lance}).
The presence of adjointable operators allows one to equip a Hilbert
$\mathcal A$-module with a left action of $\mathcal A$, leading to the notion
of a $C^*$-correspondence.

\begin{definition}
A \emph{$C^*$-correspondence} over a $C^*$-algebra $\mathcal A$ is a right
Hilbert $\mathcal A$-module $E$ together with a nondegenerate
$*$-homomorphism $\varphi:\mathcal A\longrightarrow\mathcal L(E),$ 
called the \emph{left action}.  We write $a\cdot\xi:=\varphi(a)\xi$ for
$a\in\mathcal A$ and $\xi\in E$.
\end{definition}
 
Throughout the paper, all $C^*$-correspondences are assumed to be essential as
left $\mathcal A$-modules, i.e. the closed linear
span of $\varphi(\mathcal{A})E$ is equal to $E$, and all representations of $\mathcal A$ are assumed
to be nondegenerate. 
A good, general, up-to-date reference for Hilbert C*-modules is the monograph by Lance~\cite{Lance}.

Let $E$ and $F$ be $C^*$-correspondences over a $C^*$-algebra $\mathcal A$, with
left actions $\varphi_E$ and
$\varphi_F$ respectively. Then the balanced tensor product $E\otimes_{\mathcal A}F$ is a $C^*$-correspondence
over $\mathcal A$. It is defined as the Hausdorff completion of the algebraic
balanced tensor product, equipped with the $\mathcal A$-valued inner product
given by
\[
\langle \xi_1\otimes \eta_1,\;\xi_2\otimes \eta_2\rangle
=
\langle \eta_1,\,
\varphi_F\!\bigl(\langle \xi_1,\xi_2\rangle_E\bigr)\eta_2\rangle_F.
\]
The right and left actions of $\mathcal A$ are given by $(\xi\otimes\eta)a:=\xi\otimes(\eta a),\, a\cdot(\xi\otimes\eta):=(\varphi_E(a)\xi)\otimes\eta.$ Whenever the underlying correspondence is understood, we suppress the subscript and
write the left action as $\varphi$.
Let $E^0:=\mathcal A$. For $n\ge1$ define inductively $E^n:=E\otimes_{\mathcal A}E^{n-1}.$
Each $E^n$ is again a $C^*$-correspondence over $\mathcal A$ with left action
$\varphi^{(n)}:\mathcal A\to\mathcal L(E^n)$ given by
\[
\varphi^{(n)}(a)(\xi_1\otimes\cdots\otimes\xi_n)
=
\varphi(a)\xi_1\otimes\xi_2\otimes\cdots\otimes\xi_n,
\qquad a\in\mathcal A,
\]
where $\varphi^{(0)}=\mathrm{id}_{\mathcal A}$.
The associated \emph{Fock module} is defined by $\mathcal F(E):=\bigoplus_{n\in\mathbb Z_+}E^n,$
which is a $C^*$-correspondence over $\mathcal A$ with left action
$\varphi_\infty:\mathcal A\to\mathcal L(\mathcal F(E))$ given by
\[
\varphi_\infty(a)\left(\bigoplus_{n\in\mathbb Z_+}\xi_n\right)
:=
\bigoplus_{n\in\mathbb Z_+}\varphi^{(n)}(a)\xi_n,
\qquad a\in\mathcal A.
\]
For $\xi \in E$ and $\eta \in E^n$, the \emph{creation operator} $T_\xi$ on $\mathcal{F}(E)$ is defined by $T_\xi(\eta) := \xi \otimes \eta.$
If $E$ is a $C^*$-correspondence over $\mathcal A$ and
$(\sigma,\mathcal H)$ is a nondegenerate representation of $\mathcal A$, we
denote by $E\otimes_\sigma\mathcal H$ the Hilbert space completion of the
algebraic tensor product $E\odot\mathcal H$ with respect to the inner product
\(
\langle \xi\otimes h,\eta\otimes k\rangle
:=\langle h,\sigma(\langle\xi,\eta\rangle_{\mathcal A})k\rangle,
\,
\xi,\eta\in E,\ h,k\in\mathcal H.\)

\begin{definition}\cite{Rieffel}
Let $E$ be a $C^*$-correspondence over  $\mathcal A$ with left
action $\varphi$, and let
$\pi:\mathcal A\to\mathcal B(\mathcal H)$ be a nondegenerate representation on a
Hilbert space $\mathcal H$.  The associated \emph{induced (isometric covariant)
representation} $(\rho,S)$ of $E$ on the Hilbert space
$\mathcal F(E)\otimes_\pi\mathcal H$ is defined by
\[
\rho(a):=\varphi_\infty(a)\otimes I_{\mathcal H},
\quad a\in\mathcal A, \quad S(\xi):=T_\xi\otimes I_{\mathcal H},
\quad \xi\in E.
\]
\end{definition}
Now we define the representations considered by Muhly and Solel:
\begin{definition}\cite{MS}
A \emph{completely contractive covariant representation} of $E$ on $\mathcal H$
is a pair $(\sigma,T)$, where $\sigma:\mathcal A\to\mathcal B(\mathcal H)$ is a
nondegenerate $*$-representation and $T:E\to\mathcal B(\mathcal H)$ is a linear,
completely contractive map satisfying
\[
T(a\,\xi\,b)=\sigma(a)T(\xi)\sigma(b),
\qquad a,b\in\mathcal A,\ \xi\in E.
\]
The representation $(\sigma,T)$ is called \emph{isometric} if for every $\xi,\eta\in E$, we have $T(\xi)^*T(\eta)=\sigma(\langle\xi,\eta\rangle_{\mathcal A}).$
Associated to $(\sigma,T)$ is a contraction
$\widetilde T:E\otimes_\sigma\mathcal H\to\mathcal H$ defined by $\widetilde T(\xi\otimes h):=T(\xi)h,\, \xi\in E,\ h\in\mathcal H$,
which satisfies the covariance relation
\begin{equation}\label{eq:left_action}
\widetilde T(\varphi(a)\xi\otimes h)
=
\sigma(a)\widetilde T(\xi\otimes h),
\qquad a\in\mathcal A.
\end{equation}
For a fixed $\sigma$, the correspondence $T\leftrightarrow\widetilde T$ gives a
one-to-one correspondence between covariant representations $(\sigma,T)$ of $E$
and contractions $\widetilde T$ satisfying~\eqref{eq:left_action}
(see~\cite{Muhly}).

Moreover, $(\sigma,T)$ is isometric if and only if $\widetilde T$ is an
isometry.  The representation $(\sigma,T)$ is called \emph{fully coisometric} if
$\widetilde T\,\widetilde T^*=I_{\mathcal H}$.
\end{definition}

In addition to $\widetilde T$, we also require the generalized higher powers of it. For 
$i \in I_k$ and $l \ge 1$, we define recursively
\[
\widetilde T_i^{(1)} := \widetilde T_i, \qquad
\widetilde T_i^{(l)} := 
\widetilde T_i\bigl(I_{E_i} \otimes \widetilde T_i^{(l-1)}\bigr).
\]
Equivalently,
\(
\widetilde T_i^{(l)}
=
\widetilde T_i\,
(I_{E_i} \otimes \widetilde T_i)\cdots
\bigl(I_{E_i^{\,l-1}} \otimes \widetilde T_i\bigr):E_i^{\,l} \otimes_\sigma \mathcal H
\longrightarrow
\mathcal H.
\)

The canonical single variable analogue of the classical Wold decomposition,
corresponding to the semigroup $\mathbb Z_+$, is the
\emph{Muhly--Solel Wold decomposition}.  It provides a structural splitting for isometric covariant representations of a single $C^*$-correspondence
\cite{MS}. First, recall that
a subspace $\mathcal{K} \subset \mathcal{H}$ is called \emph{reducing} for the 
representation $(\sigma, T)$ if it reduces 
$\sigma(\mathcal{A})$, and both 
$\mathcal{K}$ and $\mathcal{K}^{\perp}$ are left invariant under all operators 
$T_i(\xi_i)$ for $i \in I_k$ and $\xi_i \in E_i$.

\begin{theorem}\cite[Theorem 2.9]{MS}\label{Thm:Wold-decomposition-sigma-T}
Let $(\sigma,T)$ be an isometric covariant representation of a
$C^*$-correspondence $E$ over a $C^*$-algebra $\mathcal A$ on a Hilbert space $\mathcal H$.  Then there exists a unique orthogonal decomposition
\[
(\sigma,T)\;\cong\;(\sigma_1,T_1)\oplus(\sigma_2,T_2),
\qquad
\mathcal H=\mathcal H_1\oplus\mathcal H_2,
\]
such that
\begin{enumerate}
\item $(\sigma_1,T_1):=\restr{(\sigma,T)}{\mathcal H_1}$ is an \emph{induced covariant
representation};
\item $(\sigma_2,T_2):=\restr{(\sigma,T)}{\mathcal H_2}$ is \emph{fully coisometric}.
\end{enumerate}

The decomposition is unique in the sense that if a closed subspace
$\mathcal K\subseteq\mathcal H$ reduces $(\sigma,T)$ and
$\restr{(\sigma,T)}{\mathcal K}$ is induced (respectively, fully coisometric), then
$\mathcal K\subseteq\mathcal H_1$ (respectively, $\mathcal K\subseteq\mathcal
H_2$).
Moreover, the summands are given explicitly by
\[
\mathcal H_1
=
\overline{\operatorname{span}}
\big\{
\widetilde T^{(n)}(\xi\otimes w)
:\ \xi\in E^n,\ w\in W,\ n\in\mathbb Z_+
\big\}, \qquad \mathcal H_2
=
\bigcap_{n\in\mathbb Z_+}
\widetilde T^{(n)}\left(E^n\otimes_\sigma\mathcal H\right),
\]
where
\(
W:=\operatorname{Ran}(I-\widetilde T\widetilde T^*)
\)
and $\widetilde T^{(0)}:=\sigma$.

\end{theorem}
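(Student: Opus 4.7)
The plan is to run a Wold-type argument with the single isometry $\widetilde T:E\otimes_\sigma\mathcal H\to\mathcal H$ (isometric by hypothesis on $(\sigma,T)$) playing the role of the classical shift. Its defect subspace is $W=\operatorname{Ran}(I-\widetilde T\widetilde T^*)=\ker\widetilde T^*$, so $\mathcal H=W\oplus\operatorname{Ran}\widetilde T$. Taking adjoints in the covariance relation~\eqref{eq:left_action} gives $\widetilde T^*\sigma(a)=(\varphi(a)\otimes I)\widetilde T^*$, from which both $W$ and $\operatorname{Ran}\widetilde T$ are $\sigma$-invariant---a crucial point, since it is precisely what makes the orthogonal decomposition of $\mathcal H$ lift to $E^m\otimes_\sigma\mathcal H=(E^m\otimes_\sigma W)\oplus(E^m\otimes_\sigma\operatorname{Ran}\widetilde T)$. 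Each iterate $\widetilde T^{(m)}$ is again an isometry, so $\mathcal H_m:=\widetilde T^{(m)}(E^m\otimes_\sigma\mathcal H)$ is closed and the recursion $\widetilde T^{(m+1)}=\widetilde T^{(m)}(I_{E^m}\otimes\widetilde T)$ yields $\mathcal H_{m+1}\subseteq\mathcal H_m$ together with $\mathcal H_m\ominus\mathcal H_{m+1}=\widetilde T^{(m)}(E^m\otimes_\sigma W)$. Telescoping produces the orthogonal decomposition $\mathcal H=\mathcal H_1\oplus\mathcal H_2$ with $\mathcal H_1,\mathcal H_2$ as claimed.

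Next I verify that both summands reduce $(\sigma,T)$. The covariance $\sigma(a)\widetilde T^{(m)}=\widetilde T^{(m)}(\varphi^{(m)}(a)\otimes I)$ keeps each layer $\widetilde T^{(m)}(E^m\otimes_\sigma W)$ inside $\mathcal H_1$ and commutes with the intersection defining $\mathcal H_2$. The factorisation $T(\eta)=\widetilde T\circ L_\eta$, with $L_\eta h:=\eta\otimes h$, sends layer $m$ into layer $m{+}1$; for $T(\eta)^*=L_\eta^*\widetilde T^*$, on $W$ it vanishes, and on a typical element $\widetilde T^{(m)}(\eta_1\otimes\xi'\otimes w)=\widetilde T(\eta_1\otimes\widetilde T^{(m-1)}(\xi'\otimes w))$ with $m\ge 1$, applying $\widetilde T^*$ then $L_\eta^*$ yields $\widetilde T^{(m-1)}(\varphi^{(m-1)}(\langle\eta,\eta_1\rangle)\xi'\otimes w)\in\mathcal H_1$. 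Hence $\mathcal H_1$, and therefore $\mathcal H_2$, is reducing. The map
\[
\mathcal F(E)\otimes_\sigma W\longrightarrow\mathcal H_1,\qquad \xi\otimes w\mapsto\widetilde T^{(m)}(\xi\otimes w)\ \text{for}\ \xi\in E^m,\ w\in W,
\]
is unitary (isometric on each layer by isometry of $\widetilde T^{(m)}$; layers mutually orthogonal) and intertwines $(\sigma_1,T_1)$ with the induced representation on $\mathcal F(E)\otimes_\sigma W$, so $(\sigma_1,T_1)$ is induced.

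For the fully coisometric part, given $x\in\mathcal H_2$, the inclusion $x\in\mathcal H_1\subseteq\operatorname{Ran}\widetilde T$ produces a unique $z\in E\otimes_\sigma\mathcal H$ with $\widetilde T(z)=x$, and the further inclusions $x\in\mathcal H_{m+1}=\widetilde T(E\otimes_\sigma\mathcal H_m)$ force $z\in\bigcap_m E\otimes_\sigma\mathcal H_m=E\otimes_\sigma\mathcal H_2$ (using the standard identity that Hilbert tensor products commute with decreasing intersections of closed subspaces). This gives $\widetilde T_2(E\otimes_{\sigma_2}\mathcal H_2)=\mathcal H_2$, i.e.\ $(\sigma_2,T_2)$ is fully coisometric. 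Uniqueness is routine: any reducing subspace $\mathcal K$ on which $(\sigma,T)|_{\mathcal K}$ is induced is generated by its own wandering subspace $\mathcal K\cap W\subseteq W$, hence $\mathcal K\subseteq\mathcal H_1$, and dually for the fully coisometric case. The main technical hurdle is the $\sigma$-invariance of $W$ and $\operatorname{Ran}\widetilde T$: without it, the orthogonal splitting of $\mathcal H$ does not lift to a splitting of $E^m\otimes_\sigma\mathcal H$ and the telescoping argument breaks down. A secondary subtlety is that in the coisometric step one must pull $x$ back not merely to $E\otimes_\sigma\mathcal H$ but to $E\otimes_{\sigma_2}\mathcal H_2$, which is where the intersection-commutation identity is essential.
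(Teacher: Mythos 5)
The paper does not prove this statement at all---it is quoted from Muhly--Solel \cite[Theorem 2.9]{MS}---and your argument is a correct reconstruction of the standard proof from that source: the wandering-subspace telescoping for the isometry $\widetilde T$, with the $\sigma$-reducibility of $W$ and $\operatorname{Ran}\widetilde T$ doing exactly the work you highlight of splitting $E^m\otimes_\sigma\mathcal H$, and the pull-back of $x\in\mathcal H_2$ into $E\otimes_\sigma\mathcal H_2$ giving full coisometry. The only blemishes are cosmetic: in your last paragraph the symbol $\mathcal H_1$ is reused for $\widetilde T^{(1)}(E\otimes_\sigma\mathcal H)=\operatorname{Ran}\widetilde T$, clashing with the induced Wold summand, and the ``standard identity'' $\bigcap_m\bigl(E\otimes_\sigma\mathcal H_m\bigr)=E\otimes_\sigma\bigcap_m\mathcal H_m$ deserves its one-line justification, namely that the projections $I_E\otimes P_{\mathcal H_m}$ (well defined because $P_{\mathcal H_m}\in\sigma(\mathcal A)'$) decrease strongly to $I_E\otimes P_{\bigcap_m\mathcal H_m}$.
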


It serves as the foundational
one variable case for higher rank generalizations.
A major subsequent advance was obtained by Skalski and Zacharias
\cite{Skalski}, who established a Wold decomposition for doubly commuting
isometric representations of product systems of $C^*$-correspondences over the
semigroup $\mathbb Z_+^k$. 
The basic object of this work is a product system $\mathbb E$ of $C^*$-correspondences
over $\mathbb Z_+^k$~\cite{Fow}. As explained in~\cite{Solel,Solel2}, $\mathbb E$ may be viewed
as a family of $C^*$-correspondences $\{E_1,\ldots,E_k\}$ together with unitary
isomorphisms $t_{ij} : E_i \otimes E_j \longrightarrow E_j \otimes E_i, \, i>j,$
yielding canonical identifications,  $E^n\simeq
E_1^{n_1}\otimes_{\mathcal A}\cdots\otimes_{\mathcal A}E_k^{n_k},$ for $n=(n_1,\ldots,n_k)\in\mathbb Z_+^k.$
We further assume $t_{ii}=\mathrm{id}_{E_i\otimes E_i}$,
$t_{ij}=t_{ji}^{-1}$ for $i<j$, and 
the braid (hexagon) relations
\[
(I_{E_\ell}\otimes t_{ij})(t_{i\ell}\otimes I_{E_j})(I_{E_i}\otimes t_{j\ell})
=
(t_{j\ell}\otimes I_{E_i})(I_{E_j}\otimes t_{i\ell})(t_{ij}\otimes I_{E_\ell})
\]
for all distinct $i,j,\ell$.

We now pass from the algebraic structure of a product system to its analytic
realization via compatible covariant representations on Hilbert spaces.

\begin{definition}\cite[Definitons 1.2, 2.1]{Skalski}
\label{def: rep of E}
Let $\mathbb E$ be a product system over $\mathbb Z_+^k$.
By a (covariant completely contractive) representation of $\mathbb E$
on a Hilbert space $\mathcal H$ we mean a tuple $(\sigma, T_1,\ldots,T_k),$
where $(\sigma,\mathcal H)$ is a representation of $\mathcal A$, and
$T_i:E_i\to\mathcal B(\mathcal H)$ are linear completely contractive maps
satisfying
\[
T_i(a\,\xi_i\,b)=\sigma(a)T_i(\xi_i)\sigma(b),
\qquad a,b\in\mathcal A,\ \xi_i\in E_i,
\]
and for all $1 \leq i \neq j \leq k,$
\begin{equation}\label{eq:twisted-relation}
\widetilde T_i(I_{E_i}\otimes \widetilde T_j)
=
\widetilde T_j(I_{E_j}\otimes \widetilde T_i)(t_{ij}\otimes I_{\mathcal H}).
\end{equation}
Such a representation is called \emph{isometric} if each $(\sigma,T_i)$
is isometric as a representation of $E_i$, and \emph{fully coisometric} if each
$(\sigma,T_j)$ is fully coisometric.

Further, we say that the representation $(\sigma, T_1, \ldots, T_k)$ is \emph{doubly commuting} if 
for every distinct $i,j \in I_k$,
\begin{equation}\label{eq: doubly commuting rel}
\widetilde{T_j}^{*}\,\widetilde{T_i}
= 
(I_{E_j} \otimes \widetilde{T_i})\,
(t_{ij} \otimes I_{\mathcal{H}})\,
(I_{E_i} \otimes \widetilde{T_j}^{*}).
\end{equation}
\end{definition}

\begin{remark}
$T_\xi$ denotes the creation operator on $\mathcal{F}(E)$, whereas for $i \in I_k$, $T_i:E_i\to \mathcal{B}(\clh)$ are the maps in a covariant representation; these are unrelated symbols.    
\end{remark} 
We now extend the notion of the Fock module associated with a $C^*$-correspondence
$E$ to the setting of product systems.
For a product system of
$C^*$-correspondences $\mathbb E=\{E^n\}_{n\in\mathbb Z_+^k}$ over $\mathcal A$ and
$A\subseteq I_k$, we denote by
$\mathbb E_A=\{E_A^n\}_{n\in\mathbb Z_+^{|A|}}$
the product subsystem generated by $\{E_i:i\in A\}$.
The associated \emph{Fock module} is
\[
\mathcal F(E_A):=\bigoplus_{n\in\mathbb Z_+^{|A|}}E_A^n,
\qquad E_A^0:=\mathcal A.
\]
For each $n\in\mathbb Z_+^{|A|}$, the left action
$\varphi^{(n)}:\mathcal A\to\mathcal L(E_A^n)$
is defined recursively by $\varphi^{(0)}(a)b:=ab$ on $\mathcal A$; and for $n\neq 0$, choose $i\in A$ with $n_i>0$ and identify
$E_A^n\cong E_i\otimes_{\mathcal A}E_A^{n-e_i}$, then set $\varphi^{(n)}(a)(\eta_i\otimes\zeta)
   := (\varphi_i(a)\eta_i)\otimes\zeta,
   \, a\in\mathcal A.$
This definition is independent of the choice of $i$ since each $t_{ij}$ is a bimodule isomorphism.
The induced \emph{Fock left action} of $\mathcal A$ on $\mathcal F(E_A)$ is
\[
\varphi_\infty(a):=\bigoplus_{n\in\mathbb Z_+^{|A|}}\varphi^{(n)}(a),
\qquad a\in\mathcal A.
\]
Now we extend the notion of reducing subspace to the 
representation $(\sigma, T_1, \ldots, T_k)$. First recall that given a Hilbert space $\mathcal H$ and a subset $\mathcal S \subseteq \mathcal B(\mathcal H)$,
we define the \emph{commutant} of $\mathcal S$ to be
\[
\mathcal S' = \{\, X \in \mathcal B(\mathcal H) : XS = SX \text{ for every } S \in \mathcal S \,\}.
\]
\begin{definition}\label{def:reducing subspace}
A subspace $\mathcal{K} \subset \mathcal{H}$ is called \emph{reducing} for the 
representation $(\sigma, T_1, \ldots, T_k)$ if it reduces 
$\sigma(\mathcal{A})$ (so that the projection onto $\mathcal{K}$, denoted by 
$P_{\mathcal{K}}$, lies in $\sigma(\mathcal{A})'$), and both 
$\mathcal{K}$ and $\mathcal{K}^{\perp}$ are left invariant under all operators 
$T_i(\xi_i)$ for $i \in I_k$ and $\xi_i \in E_i$. 
\end{definition}
It is easy to see that the obvious restriction procedure yields a 
representation of $E$ on $\mathcal{K}$, which is called a 
\emph{summand} of $(\sigma, T_1, \ldots, T_k)$ and will be denoted by
\(
\restr{(\sigma, T_1, \ldots, T_k)}{\mathcal{K}}.
\)
The following decomposition generalizes the single variable Wold decomposition~\cite{MS} and 
provides the canonical model for multivariable isometries satisfying the 
doubly commuting relations.

\begin{theorem}\cite[Theorem 2.4] {Skalski}\label{thm:doubly_commuting_wold}
Every doubly commuting isometric representation 
$(\sigma, T_1, \ldots, T_k)$ 
of a product system $\mathbb{E}$ on a Hilbert space $\mathcal{H}$ 
admits a unique orthogonal decomposition
\(
\mathcal{H} = \bigoplus_{\alpha \subseteq I_k} \mathcal{H}_\alpha
\)
such that, for each subset $\alpha = \{\alpha_1, \ldots, \alpha_r\} \subseteq I_k$:
\begin{enumerate}
    \item $\mathcal{H}_\alpha$ reduces $(\sigma, T_1, \ldots, T_k)$;
    \item the restricted tuple 
          $\restr{(\sigma, T_{\alpha_1}, \ldots, T_{\alpha_r})}{\mathcal{H}_\alpha}$ 
          is isomorphic to an induced representation of the 
          product subsystem 
          $\mathbb{E}_\alpha$ over $\mathbb{Z}_+^r$, 
          generated by the correspondences 
          $E_{\alpha_1}, \ldots, E_{\alpha_r}$ 
          and the associated isomorphisms 
          $t_{\alpha_i,\alpha_j}$;
    \item for each $i \in I_k \setminus \alpha$, 
          the representation 
          $\restr{(\sigma, T_i)}{\mathcal{H}_\alpha}$ 
          of $E_i$ is fully coisometric.
\end{enumerate}
\end{theorem}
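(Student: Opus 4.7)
The plan is to reduce the $k$-variable statement to $k$ successive applications of the single-variable Muhly--Solel decomposition, using the doubly commuting relations \eqref{eq: doubly commuting rel} to ensure compatibility across coordinates. I would first apply, for each $i\in I_k$, Theorem \ref{Thm:Wold-decomposition-sigma-T} to the pair $(\sigma,T_i)$ on $\mathcal H$, obtaining the single-variable decomposition $\mathcal H=\mathcal H_{i,s}\oplus \mathcal H_{i,u}$ into an induced summand and a fully coisometric summand for direction $i$, with
$$\mathcal H_{i,s}=\overline{\operatorname{span}}\bigl\{\widetilde T_i^{(n)}(\xi\otimes w):\xi\in E_i^{n},\,w\in W_i,\,n\in\mathbb Z_+\bigr\},\qquad \mathcal H_{i,u}=\bigcap_{n\in\mathbb Z_+}\widetilde T_i^{(n)}(E_i^{n}\otimes_\sigma\mathcal H),$$
where $W_i=\operatorname{Ran}(I-\widetilde T_i\widetilde T_i^*)$.

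The crucial step is to show that, for $i\neq j$, both $\mathcal H_{i,s}$ and $\mathcal H_{i,u}$ reduce the covariant pair $(\sigma,T_j)$. Since these subspaces are already $\sigma(\mathcal A)$-invariant by the single-variable decomposition, it suffices to check that each one and its orthogonal complement are invariant under all $T_j(\xi_j)$. For $\mathcal H_{i,u}$ I would unfold \eqref{eq: doubly commuting rel} iteratively: the doubly commuting identity allows one to rewrite $\widetilde T_j(I_{E_j}\otimes \widetilde T_i^{(n)})$ as $\widetilde T_i^{(n)}$ composed with a map involving $\widetilde T_j$ and the twists $t_{ij}$, so the descending intersection is preserved. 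For $\mathcal H_{i,s}$ I would use the dual viewpoint: $W_i=\ker\widetilde T_i^{\,*}$ is carried into itself by $\widetilde T_j$ (a direct consequence of \eqref{eq: doubly commuting rel}), and induction in $n$ then propagates invariance to each layer $\widetilde T_i^{(n)}(E_i^{n}\otimes W_i)$. Applying the uniqueness in Theorem \ref{Thm:Wold-decomposition-sigma-T} to the restrictions to $\mathcal H_{i,\ast}$ forces the projections $P_{j,s},P_{j,u}$ to commute with $P_{i,s},P_{i,u}$, so the joint subspaces
$$\mathcal H_\alpha=\bigcap_{i\in\alpha}\mathcal H_{i,s}\,\cap\,\bigcap_{i\in I_k\setminus\alpha}\mathcal H_{i,u},\qquad \alpha\subseteq I_k,$$
are pairwise orthogonal reducing subspaces with $\bigoplus_\alpha\mathcal H_\alpha=\mathcal H$.

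It then remains to identify the restricted representation on each $\mathcal H_\alpha$. Clause (iii) is immediate: for $i\notin\alpha$ one has $\mathcal H_\alpha\subseteq\mathcal H_{i,u}$, so $(\sigma,T_i)|_{\mathcal H_\alpha}$ is fully coisometric. For clause (ii), I would take $\mathcal D_\alpha=\mathcal H_\alpha\cap\bigcap_{i\in\alpha}\ker\widetilde T_i^{\,*}$ as the joint wandering space and define a map $\mathcal F(E_\alpha)\otimes_\sigma\mathcal D_\alpha\to\mathcal H_\alpha$ by sending $\xi\otimes d$ with $\xi\in E_\alpha^{n}$ to $\widetilde T_\alpha^{(n)}(\xi\otimes d)$, where $\widetilde T_\alpha^{(n)}$ is the iterated map assembled from the $\widetilde T_i$ for $i\in\alpha$ via the twists $t_{ij}$. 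The braid relations for $\{t_{ij}\}_{i,j\in\alpha}$ ensure this assignment is well defined and intertwines the induced representation of the subsystem $\mathbb E_\alpha$ with $(\sigma,\{T_i\}_{i\in\alpha})|_{\mathcal H_\alpha}$; isometry follows from iterated application of the single-variable induced formula together with full coisometricity in the complementary directions, and surjectivity from the wandering-subspace generation of each $\mathcal H_{i,s}$. Uniqueness is then a direct consequence of the uniqueness clause of Theorem \ref{Thm:Wold-decomposition-sigma-T}: any reducing $\mathcal K$ on which the restriction has profile $\alpha$ must satisfy $\mathcal K\subseteq\mathcal H_{i,s}$ for $i\in\alpha$ and $\mathcal K\subseteq\mathcal H_{i,u}$ for $i\notin\alpha$, hence $\mathcal K\subseteq\mathcal H_\alpha$.

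I expect the main obstacle to be the mutual reducibility step. Carefully unfolding the doubly commuting identity \eqref{eq: doubly commuting rel} to interchange $\widetilde T_j$ with the higher iterates $\widetilde T_i^{(n)}$ requires bookkeeping with the twist unitaries $t_{ij}$, and this is where the full strength of the doubly commuting hypothesis is used rather than mere commutation. The closely related gluing issue in the final step \textemdash{} fitting the separately induced components on $\mathcal H_\alpha$ into a single induced representation of $\mathbb E_\alpha$ \textemdash{} is then a direct consequence of the compatibility established there.
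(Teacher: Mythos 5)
Your proposal is correct and follows essentially the same route the paper takes: it recovers this (cited Skalski--Zacharias) theorem exactly as the paper does, namely by forming the single-variable Muhly--Solel components for each coordinate, proving their mutual reducibility from the doubly commuting identity (the paper's Theorem~\ref{thm: class of decom} together with the induction in Theorem~\ref{thm: decompo for doubly twisted rep}, specialized to $U_{ij}=I$), intersecting to obtain $\mathcal H_\alpha$, and identifying each summand through the joint wandering subspace and the Fock-space unitary (the paper's $\mathcal W_A$, $\mathcal D_A$, $\Pi_A$ in Sections~\ref{Wold Decomposition}--\ref{Models}, plus the factorization $\mathcal F(\mathbb E_\alpha)\cong\mathcal F(E_{\alpha_1})\otimes\cdots\otimes\mathcal F(E_{\alpha_r})$ discussed after Corollary~\ref{thm:skalski-zacharias-decomposition}). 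The only difference is organizational: the paper proves the general doubly twisted version and specializes, while you argue directly in the commuting case.
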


A.~Talker~\cite{Talker} introduced the notion of $\lambda$--doubly commuting row isometries and established the existence of their unitary extensions using a
direct--limit construction.  Although our setting is substantially different, the underlying method—namely, the use of inductive limits of Hilbert spaces—is particularly effective.  We adapt this technique to the framework of
product systems and use it in Section~\ref{Unitary Extension} to construct
unitary extensions for various classes of representations.

First we recall a categorical construction of the direct limit of Hilbert spaces \cite{Talker}.  
Let \( \clh = \{\clh_i\}_{i \in I} \) be a family of Hilbert spaces indexed by a non-empty directed set \( I \).  
Assume that for all \( i, j \in I \) with \( i \le j \) there is an isometry 
\(\varphi_{ji} : \clh_i \to \clh_j\) such that whenever \(i \le j \le k\),
\[
\varphi_{kj} \circ \varphi_{ji} = \varphi_{ki}, \quad \text{and} \quad \varphi_{ii} = \mathrm{id}_{\clh_i}.
\]  
The direct limit \((\clh_\infty, \psi_i)\) is a Hilbert space with isometries \(\psi_i : \clh_i \to \clh_\infty\) which satisfy
\[
\psi_j \circ \varphi_{ji} = \psi_i,
\]
and has the following universal property: for every family of contractions \(\{\pi_i\}_{i \in I}\) from \(\{\clh_i\}_{i \in I}\) into a Hilbert space \(\clk\) such that $\pi_j \circ \varphi_{ji} = \pi_i,$
there exists a unique contraction \(\pi_\infty : \clh_\infty \to \clk\) such that
\[
\pi_i = \pi_\infty \circ \psi_i \quad \forall i \in I.
\]
\begin{prop}\cite[Proposition 3.3.2]{Talker}\label{prop: density in H infty}
Let \(\clh_\infty\) be the direct limit of the direct system \((\clh_i, \varphi_{ji})\).  
Then the set $\displaystyle\bigcup_{i \in I} \psi_i(\clh_i)$
is dense in \(\clh_\infty\).
\end{prop}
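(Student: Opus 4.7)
The plan is to show that the closure $\mathcal{K}:=\overline{\bigcup_{i\in I}\psi_i(\mathcal{H}_i)}$ coincides with $\mathcal{H}_\infty$ by exploiting the universal property of the direct limit, rather than appealing to an explicit construction. The strategy is to build a contraction $\pi_\infty\colon \mathcal{H}_\infty\to\mathcal{K}$ and then use uniqueness to show that its composition with the inclusion $\mathcal{K}\hookrightarrow\mathcal{H}_\infty$ is forced to be the identity, which immediately gives surjectivity of the inclusion.

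First, I would view $\mathcal{K}$ as a Hilbert space in its own right, being a closed subspace of $\mathcal{H}_\infty$. Since $\psi_i(\mathcal{H}_i)\subseteq \mathcal{K}$ by definition, each $\psi_i$ corestricts to an isometry $\widehat{\psi}_i\colon \mathcal{H}_i\to \mathcal{K}$, and the compatibility relation $\widehat{\psi}_j\circ\varphi_{ji}=\widehat{\psi}_i$ is inherited directly from the corresponding identity for the $\psi_i$'s. Thus $\{\widehat{\psi}_i\}_{i\in I}$ is a compatible family of contractions from $\{\mathcal{H}_i\}_{i\in I}$ into the Hilbert space $\mathcal{K}$.

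Next, I would apply the universal property of $(\mathcal{H}_\infty,\{\psi_i\})$ to this family, obtaining a unique contraction $\pi_\infty\colon \mathcal{H}_\infty\to\mathcal{K}$ satisfying $\widehat{\psi}_i=\pi_\infty\circ\psi_i$ for every $i\in I$. Letting $\iota\colon \mathcal{K}\hookrightarrow\mathcal{H}_\infty$ denote the inclusion (which is itself an isometry, hence a contraction), the composition $\iota\circ\pi_\infty\colon \mathcal{H}_\infty\to\mathcal{H}_\infty$ is a contraction obeying $(\iota\circ\pi_\infty)\circ\psi_i=\iota\circ\widehat{\psi}_i=\psi_i$ for every $i$. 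Since the identity $\mathrm{id}_{\mathcal{H}_\infty}$ trivially satisfies the same factorization, the uniqueness clause in the universal property (now applied with target Hilbert space $\mathcal{H}_\infty$ itself) forces $\iota\circ\pi_\infty=\mathrm{id}_{\mathcal{H}_\infty}$. Hence $\iota$ is surjective, so $\mathcal{K}=\mathcal{H}_\infty$ and density follows.

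The only mildly delicate point is to make sure the universal property is legitimately applied twice, first with target $\mathcal{K}$ (to manufacture $\pi_\infty$) and then, through the uniqueness statement, with target $\mathcal{H}_\infty$ (to identify $\iota\circ\pi_\infty$ with the identity). This is immediate from the formulation given in the excerpt, which permits an arbitrary Hilbert space as target, so no additional input is needed. An alternative (and essentially equivalent) route would be to argue by contradiction: if $\mathcal{K}\neq\mathcal{H}_\infty$, the orthogonal projection onto $\mathcal{K}$ and the identity would be two distinct contractions from $\mathcal{H}_\infty$ to $\mathcal{H}_\infty$ whose restrictions to each $\psi_i(\mathcal{H}_i)$ agree, violating the uniqueness clause; I would note this as a secondary route but pursue the factorization argument above as it is shorter and more conceptual.
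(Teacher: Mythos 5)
Your argument is correct. Note, however, that the paper does not prove this proposition at all: it is quoted from Talker's thesis, where the direct limit is obtained by an explicit construction (essentially a completion of the algebraic direct limit), so that density of $\bigcup_i \psi_i(\mathcal H_i)$ is immediate from the construction itself. Your route is genuinely different: you deduce density purely from the axiomatic description given in the paper (isometries $\psi_i$ with $\psi_j\circ\varphi_{ji}=\psi_i$ plus the universal factorization property with its uniqueness clause), by corestricting the $\psi_i$ to the closed subspace $\mathcal K=\overline{\bigcup_i\psi_i(\mathcal H_i)}$, producing $\pi_\infty:\mathcal H_\infty\to\mathcal K$, and then using uniqueness with target $\mathcal H_\infty$ to force $\iota\circ\pi_\infty=\mathrm{id}_{\mathcal H_\infty}$. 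Each step is legitimate: the corestricted maps are isometries (hence contractions) and inherit compatibility, and both $\iota\circ\pi_\infty$ and $\mathrm{id}$ factor the compatible family $\{\psi_i\}$, so the uniqueness clause applies exactly as stated. What your approach buys is model-independence — it shows density holds for \emph{any} Hilbert space satisfying the stated universal property, not just the concrete one built in the reference — at the cost of being slightly less elementary; the constructive proof buys immediacy but is tied to one particular realization of $\mathcal H_\infty$. Your sketched alternative via the orthogonal projection $P_{\mathcal K}$ versus the identity is also valid and arguably even shorter, since $P_{\mathcal K}\psi_i=\psi_i$ holds automatically once $\psi_i(\mathcal H_i)\subseteq\mathcal K$.
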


\begin{lemma}\cite[Lemma 3.3.3--3.3.4]{Talker}
\label{lemma: existence of Vinfnty}
Let \(\clh_\infty\) be the direct limit of the direct system \((\clh_i, \varphi_{ji})\).  
Assume that for all \(i \in I\) there exists a contraction 
\(V_i : \clh_i \to \clh_i\) such that
$V_j \circ \varphi_{ji} = \varphi_{ji} \circ V_i, \quad \text{for all } i \le j.$
Then there exists a contraction 
\(V_\infty : \clh_\infty \to \clh_\infty\) such that
$V_\infty \circ \psi_i = \psi_i \circ V_i, \quad \text{for all } i \in I.$
\medskip
Moreover, if each \(V_i\) is an isometry, then \(V_\infty\) is also an isometry.
\end{lemma}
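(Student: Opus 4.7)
The plan is to apply the universal property of the direct limit, as recorded in the excerpt, to the family $\{\psi_i\circ V_i\}_{i\in I}$ of maps into the target Hilbert space $\mathcal K:=\mathcal H_\infty$ itself. First I would observe that each $\psi_i\circ V_i:\mathcal H_i\to\mathcal H_\infty$ is a contraction, since $\psi_i$ is an isometry and $V_i$ is a contraction. The compatibility condition required by the universal property reduces to the short computation
\[
(\psi_j\circ V_j)\circ\varphi_{ji}
=\psi_j\circ(V_j\circ\varphi_{ji})
=\psi_j\circ(\varphi_{ji}\circ V_i)
=(\psi_j\circ\varphi_{ji})\circ V_i
=\psi_i\circ V_i,
\]
which uses the hypothesis $V_j\circ\varphi_{ji}=\varphi_{ji}\circ V_i$ together with the defining identity $\psi_j\circ\varphi_{ji}=\psi_i$ of the direct limit. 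Applying the universal property to this compatible family then produces a unique contraction $V_\infty:\mathcal H_\infty\to\mathcal H_\infty$ with $V_\infty\circ\psi_i=\psi_i\circ V_i$ for every $i\in I$, which establishes the first assertion.

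For the ``moreover'' clause, suppose each $V_i$ is an isometry. Then for every $i\in I$ and $h\in\mathcal H_i$,
\[
\|V_\infty\psi_i h\|=\|\psi_i V_i h\|=\|V_i h\|=\|h\|=\|\psi_i h\|,
\]
using that $\psi_i$ and $V_i$ are both isometries. Thus $V_\infty$ preserves norms on $\bigcup_{i\in I}\psi_i(\mathcal H_i)$, which is dense in $\mathcal H_\infty$ by Proposition~\ref{prop: density in H infty}. Because $V_\infty$ is bounded, a routine continuity argument extends the isometry identity $\|V_\infty v\|=\|v\|$ from this dense subset to all of $\mathcal H_\infty$, giving the second assertion.

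The statement is essentially a direct translation of the universal property together with a density argument, so there is no substantial obstacle; the only subtlety worth flagging is that one must view the codomain of the $\psi_i\circ V_i$ as $\mathcal H_\infty$ (not as the various $\mathcal H_i$) in order for the universal property to apply, and the intertwining relation one gets out of the universal property is precisely the one desired.
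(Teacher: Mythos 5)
Your proof is correct, and it is the standard argument: the paper itself only cites this lemma from Talker's thesis without proof, and the cited proof proceeds in exactly this way, namely by applying the stated universal property to the compatible contractive family $\{\psi_i\circ V_i\}_{i\in I}$ with target $\mathcal H_\infty$, and then upgrading to an isometry via norm preservation on the dense set $\bigcup_{i\in I}\psi_i(\mathcal H_i)$ and continuity. No gaps; the one subtlety you flag (taking the codomain to be $\mathcal H_\infty$) is handled correctly.
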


\section{Characterization of the Wold Decomposition} \label{Characterization for Existence of Wold Decomposition}

The Wold-type theorems of Muhly--Solel~\cite{MS} 
(for single $C^*$\nobreakdash-correspondences) 
and of Skalski--Zacharias~\cite{Skalski} 
(for product systems) establish that 
every isometric representation decomposes uniquely into 
induced and fully coisometric parts.  
What remains open is to determine an intrinsic 
operator-theoretic condition guaranteeing the existence of such a decomposition.

In this section, we provide a necessary and sufficient criterion 
for the existence of a Wold decomposition for an isometric representation 
$(\sigma,T_1,\ldots,T_n)$ of a family of $C^*$\nobreakdash-correspondences 
$E_1,\ldots,E_n$, without imposing any a priori relations 
(such as double commutativity) among them.  
Our approach extends the classical result of~\cite{RSS2}, 
where it was shown that for a tuple $V=(V_1,\ldots,V_n)$ of isometries 
on a Hilbert space~$\mathcal H$, 
the existence of a von Neumann–Wold decomposition is equivalent to 
the mutual reducibility of the unitary (or shift) parts of the individual 
isometries.

Replacing $V_i$ by the covariant pairs $(\sigma,T_i)$, 
we demonstrate that the same principle governs the $C^*$\nobreakdash-correspondence 
and product-system settings:  
a Wold decomposition exists exactly when 
the induced and fully coisometric components of each 
$(\sigma,T_i)$ reduce every other $(\sigma,T_j)$.

\smallskip
To motivate this characterization, recall that in the one variable case, every 
isometric covariant representation $(\sigma,T)$ of a $C^*$-correspondence 
admits a unique orthogonal decomposition into an induced part and a fully coisometric part 
(Theorem~\ref{Thm:Wold-decomposition-sigma-T}). 
In several variables, the existence of such a decomposition depends on how the individual representations $(\sigma,T_i)$ interact. We now introduce a notion of Wold decomposition that isolates the structural property required for a representation to admit a Wold-type decomposition. This definition generalizes Definition~1.2 of~\cite{RSS2} to the present setting
(see also~\cite{Popovici10, Slocinski80, Wold}).

\begin{definition}\label{Wold-Decomposition}
Let $E_1, E_2, \cdots, E_k$ be $C^*$\nobreakdash-correspondences over a $C^*$\nobreakdash-algebra~$\mathcal{A}$. 
For each $i \in I_k$, let $(\sigma, T_i)$ be a (nondegenerate) covariant representation 
of $E_i$ on a Hilbert space~$\mathcal H$, where $\sigma : \mathcal{A} \to \mathcal B(\mathcal H)$ 
is a fixed $*$\nobreakdash-representation.  
We say that the family $(\sigma, T_1, \ldots, T_k)$ \emph{admits a Wold decomposition} 
if for each subset $A \subseteq I_k$, there exists a closed subspace $\mathcal H_A$ of $\mathcal H$ (some of these subspaces may be trivial) such that
\begin{enumerate}
    \item $\displaystyle \mathcal H = \bigoplus_{A \subseteq I_k} \mathcal H_A$;
    \item each $\mathcal H_A$ reduces $(\sigma, T_i)$ for all $i \in I_k$;
    \item $\restr{(\sigma, T_i)}{\mathcal H_A}$ is an induced representation for $i \in A$, and $\restr{(\sigma, T_j)}{\mathcal H_A}$ is fully coisometric for $j \in A^c$.
\end{enumerate}
\end{definition}

For $i \in I_k$, let $(\sigma, T_i)$ be  an isometric covariant representation of a single correspondence $E_i$ on~$\mathcal{H}$.
The Muhly--Solel Wold decomposition yields
\[
\mathcal{H} = \mathcal{H}_i^1 \oplus \mathcal{H}_i^2,
\]
where $\restr{(\sigma, T_i)}{\mathcal{H}_i^1}$ is an induced isometric covariant representation 
and $\restr{(\sigma, T_i)}{\mathcal{H}_i^2}$ is fully coisometric.
We begin by identifying a simple but powerful criterion that 
characterizes all representations admitting a Wold decomposition.

\begin{theorem}\label{thm: class of decom}
Let $E_1, E_2, \ldots, E_n$ be $C^*$\nobreakdash-correspondences over a 
$C^*$\nobreakdash-algebra~$\mathcal{A}$. 
For each $i \in I_n$, let $(\sigma, T_i)$ be an isometric covariant representation 
of $E_i$ on a Hilbert space~$\mathcal{H}$, where 
$\sigma : \mathcal{A} \to \mathcal{B}(\mathcal{H})$ is a fixed nondegenerate 
$*$\nobreakdash-representation. 
The following statements are equivalent:
\begin{enumerate}
    \item The family $(\sigma, T_1, T_2, \ldots, T_n)$ admits a Wold decomposition.
    \item For all $i, j \in I_n$, the subspace $\mathcal{H}_i^1$ reduces the covariant pair $(\sigma, T_j)$.
    \item For all $i, j \in I_n$, the subspace $\mathcal{H}_i^2$ reduces the covariant pair $(\sigma, T_j)$.
\end{enumerate}
\end{theorem}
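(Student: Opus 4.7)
The plan is to establish the three equivalences separately, concentrating the real work in the implication $(2)\Rightarrow(1)$. The equivalence $(2)\Leftrightarrow(3)$ is essentially formal: since $\mathcal H_i^2=(\mathcal H_i^1)^\perp$, a self-adjoint projection $P$ commutes with $\sigma(\mathcal A)$ and with each $T_j(\xi)$ if and only if $I-P$ does, so $\mathcal H_i^1$ reduces $(\sigma,T_j)$ precisely when $\mathcal H_i^2$ does.

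For $(1)\Rightarrow(2)$, I would exploit the uniqueness clause in Theorem~\ref{Thm:Wold-decomposition-sigma-T}. Fix $i\in I_n$ and group the hypothetical Wold summands as $\mathcal K:=\bigoplus_{A\ni i}\mathcal H_A$ and $\mathcal K^\perp=\bigoplus_{A\not\ni i}\mathcal H_A$. Each $\mathcal H_A$ reduces $(\sigma,T_i)$, and by Definition~\ref{Wold-Decomposition} the restriction $\restr{(\sigma,T_i)}{\mathcal K}$ is a direct sum of induced representations (hence induced), while $\restr{(\sigma,T_i)}{\mathcal K^\perp}$ is fully coisometric. Uniqueness of the Muhly--Solel decomposition then forces $\mathcal K=\mathcal H_i^1$. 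Because $\mathcal K$ is an orthogonal sum of subspaces that individually reduce every $(\sigma,T_j)$, the same holds for $\mathcal H_i^1$.

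The substantive implication is $(2)\Rightarrow(1)$. Let $P_i$ denote the projection onto $\mathcal H_i^1$ and set $Q_i:=I-P_i$. Hypothesis $(2)$ says that each $P_i$ commutes with $\sigma(\mathcal A)$ and with $T_j(\xi)$ for all $j\in I_n$, $\xi\in E_j$. The critical step is to show that the projections $\{P_1,\ldots,P_n\}$ pairwise commute. By iterating the covariance identity, one gets $P_i\,\widetilde T_j^{(m)}=\widetilde T_j^{(m)}(I_{E_j^{m}}\otimes P_i)$ for every $m\ge 1$, so $P_i$ preserves $\widetilde T_j^{(m)}(E_j^m\otimes_\sigma\mathcal H)$. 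Applying the explicit formula
\[
\mathcal H_j^2=\bigcap_{m\in\mathbb Z_+}\widetilde T_j^{(m)}(E_j^m\otimes_\sigma\mathcal H)
\]
from Theorem~\ref{Thm:Wold-decomposition-sigma-T}, I conclude that $P_i\mathcal H_j^2\subseteq\mathcal H_j^2$. Self-adjointness of $P_i$ then gives $P_i\mathcal H_j^1\subseteq\mathcal H_j^1$, i.e.\ $P_iP_j=P_jP_i$. Having a commuting family of projections, I define $P_A:=\prod_{i\in A}P_i\prod_{i\notin A}Q_i$ and $\mathcal H_A:=P_A\mathcal H$, obtaining mutually orthogonal projections summing to $I_\mathcal H$. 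Each $P_A$ is a polynomial in operators that commute with $\sigma(\mathcal A)$ and every $T_j(\xi)$, so $\mathcal H_A$ reduces the full family. For $i\in A$, $\mathcal H_A\subseteq\mathcal H_i^1$ is a reducing subspace of an induced representation, and induction yields $\bigcap_m \widetilde T_i^{(m)}(E_i^m\otimes_\sigma\mathcal H_A)=\{0\}$, so the restriction is induced; symmetrically for $i\notin A$ the restriction is fully coisometric because $\widetilde T_i\widetilde T_i^*=I$ persists under restriction to a reducing subspace.

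The main obstacle I foresee is establishing $P_iP_j=P_jP_i$: hypothesis $(2)$ only asserts that $P_i$ reduces the individual operators $T_j(\xi)$ and $\sigma(a)$, not that it commutes with $P_j$ itself, so one must pass through the intrinsic description of $\mathcal H_j^2$ given by the Muhly--Solel theorem. A secondary but routine verification is that the induced and fully coisometric properties pass to reducing subspaces; I plan to handle this directly through the kernel/range criteria in Theorem~\ref{Thm:Wold-decomposition-sigma-T} rather than through unitary equivalence with a Fock model, since the former argument requires only the ambient representation data.
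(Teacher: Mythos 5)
Your proof is correct, and the easy parts coincide with the paper's: the equivalence of (2) and (3) by orthocomplementation, and (1)$\Rightarrow$(2) by grouping the summands $\mathcal H_A$ according to whether $i\in A$ and invoking the uniqueness clause of Theorem~\ref{Thm:Wold-decomposition-sigma-T}. Your treatment of (2)$\Rightarrow$(1), however, takes a genuinely different route. The paper works with the same subspaces $\mathcal H_A=\bigl(\bigcap_{i\in A}\mathcal H_i^1\bigr)\cap\bigl(\bigcap_{j\in A^c}\mathcal H_j^2\bigr)$, but proves completeness $\mathcal H=\bigoplus_{A}\mathcal H_A$ by a recursive inclusion argument: applying the one-variable Wold decomposition to $(\sigma,T_j)\restriction_{\mathcal H_A}$ and the uniqueness clause, it shows $\mathcal H_A\subseteq\mathcal H_{\widetilde A\cup\{j\}}\oplus\mathcal H_{\widetilde A}$ and iterates over the coordinates starting from $\mathcal H=\mathcal H_1^1\oplus\mathcal H_1^2$, thereby avoiding any commutativity claim for the projections. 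You instead prove directly that the projections $P_i=P_{\mathcal H_i^1}$ pairwise commute, using $P_i\widetilde T_j^{(m)}=\widetilde T_j^{(m)}(I_{E_j^{m}}\otimes P_i)$ (legitimate since $P_i\in\sigma(\mathcal A)'$), the closedness of the ranges of the isometries $\widetilde T_j^{(m)}$, the formula $\mathcal H_j^2=\bigcap_{m}\widetilde T_j^{(m)}(E_j^{m}\otimes_\sigma\mathcal H)$, and self-adjointness of $P_i$; once commutativity is in hand, with $Q_i=I-P_i$ the identity $\sum_{A}P_A=\prod_i(P_i+Q_i)=I$ makes the orthogonal decomposition automatic, and your $P_A\mathcal H$ agree with the paper's $\mathcal H_A$. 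This is arguably cleaner, and it isolates a commutation fact that the paper only establishes much later (Lemma~\ref{lemma:commutativity-PHi1-PHj2}), and only in the doubly twisted setting where it relies on the already-proved decomposition; the price is that you must justify the commutativity step carefully, which you do. Your final verification that inducedness and full coisometry pass to the reducing subspaces $\mathcal H_A$ (via $\mathcal H_A\cap\mathcal H_i^2=\{0\}$ together with the Muhly--Solel decomposition of the restriction, and the persistence of $\widetilde T_i\widetilde T_i^{\,*}=I$ on a reducing subspace) is sound and is at the same level of detail as the corresponding ``it is clear'' step in the paper, though the word ``induction'' there is better replaced by the containment argument you actually intend.
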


\begin{proof}
It suffices to establish the equivalence of \textup{(1)} and \textup{(2)}. 
Suppose that $(\sigma, T_1, \ldots, T_n)$ admits a Wold decomposition. 
Then, for a fixed $i \in I_n$, we have
\[
\mathcal H
= \bigoplus_{A\subseteq I_n} \mathcal H_A
= \left(\bigoplus_{\substack{B\subseteq I_n\\ i\in B}} \mathcal H_B\right)
  \oplus
  \left(\bigoplus_{\substack{B\subseteq I_n\\ i\notin B}} \mathcal H_B\right)
= \mathcal H_i^1 \oplus \mathcal H_i^2.
\]
By Definition~\ref{Wold-Decomposition}, each $\mathcal H_B$ reduces $(\sigma, T_1, \ldots, T_n)$, and hence so does their direct sum.
Furthermore, for $i\in B$, the restriction $\restr{(\sigma, T_i)}{\mathcal H_B}$ is induced. 
Therefore, $\restr{(\sigma, T_i)}{\bigoplus_{\substack{B\subseteq I_n\\ i\in B}} \mathcal H_B}$ is an (possibly infinite) orthogonal direct sum of induced representations,
and hence itself induced. 
Similarly, for $i\notin B$, $\restr{(\sigma, T_i)}{\bigoplus_{\substack{B\subseteq I_n\\ i\notin B}} \mathcal H_B}$ is fully coisometric. 
Consequently, the uniqueness of the Wold decomposition for $(\sigma, T_i)$ implies that
\[
\mathcal H_i^1 
= \bigoplus_{\substack{B\subseteq I_n\\ i\in B}} \mathcal H_B,
\qquad
\mathcal H_i^2
= \bigoplus_{\substack{B\subseteq I_n\\ i\notin B}} \mathcal H_B.
\]
Since each $\mathcal{H}_A$ reduces $(\sigma, T_j)$ for every $j \in I_n$, 
it follows that $\mathcal{H}_i^1$ (and hence $\mathcal{H}_i^2$) reduces $(\sigma, T_j)$ for all $i, j \in I_n$.

\smallskip
Conversely, assume that $\mathcal{H}_i^2$ (and hence $\mathcal{H}_i^1$) reduces $(\sigma, T_j)$ for all $i, j \in I_n$. 
We claim that the family $(\sigma, T_1, T_2, \ldots, T_n)$ admits a Wold decomposition.
For each $A \subseteq I_n$, define
\begin{equation}\label{eqn: rep of Wold dec H A}
    \mathcal{H}_A 
    = \left(\bigcap_{i \in A} \mathcal{H}_i^1\right) 
      \cap 
      \left(\bigcap_{j \in A^c} \mathcal{H}_j^2\right).
\end{equation}
Then each $\mathcal{H}_A$ reduces $(\sigma, T_i)$ for all $i \in I_n$. 
Moreover, by the structure of $\mathcal{H}_A$, it is clear that for $i \in A$, the restriction $\restr{(\sigma, T_i)}{\mathcal{H}_A}$ is induced, 
whereas for $j \in A^c$, the restriction $\restr{(\sigma, T_j)}{\mathcal{H}_A}$ is fully coisometric.  
It remains to show that 
\[
\mathcal{H} = \bigoplus_{A \subseteq I_n} \mathcal{H}_A.
\]
Since $\displaystyle\bigoplus_{A \subseteq I_n} \mathcal{H}_A \subseteq \mathcal{H}$, 
we only need to prove the reverse inclusion.

\smallskip
We claim that for any $A \subseteq I_m \subsetneq I_n$ and $j \notin I_m$, 
if $\widetilde{A}$ denotes $A$ viewed as a subset of $I_m \cup \{j\}$, then
\[
\mathcal{H}_A 
    \subseteq 
    \mathcal{H}_{\widetilde{A} \cup \{j\}} \oplus \mathcal{H}_{\widetilde{A}}.
\]
Since $\mathcal{H}_A$ reduces $(\sigma, T_j)$, 
we may decompose the restriction $\restr{(\sigma, T_j)}{\mathcal{H}_A}$ as
\[
\mathcal{H}_A = \mathcal{H}_{A,j}^1 \oplus \mathcal{H}_{A,j}^2,
\]
where $\restr{(\sigma, T_j)}{\mathcal{H}_{A,j}^1}$ is induced 
and $\restr{(\sigma, T_j)}{\mathcal{H}_{A,j}^2}$ is fully coisometric. 
Then
\[
\mathcal{H}_{A,j}^1 
    \subseteq \mathcal{H}_A \cap \mathcal{H}_j^1 
    \subseteq \mathcal{H}_{\widetilde{A} \cup \{j\}},
\qquad
\mathcal{H}_{A,j}^2 
    \subseteq \mathcal{H}_A \cap \mathcal{H}_j^2 
    \subseteq \mathcal{H}_{\widetilde{A}}.
\]
Hence, the claim follows.
Applying this observation repeatedly to 
$A \subseteq I_m \subsetneq I_n$ and indices $j, k \notin I_m$, we obtain
\[
\mathcal{H}_A 
    \subseteq 
    \mathcal{H}_{\widetilde{A} \cup \{j,k\}}
        \oplus \mathcal{H}_{\widetilde{A} \cup \{j\}}
        \oplus \mathcal{H}_{\widetilde{A} \cup \{k\}}
        \oplus \mathcal{H}_{\widetilde{A}},
\]
where $\widetilde{A}=A$, regarded as a subset of $I_m \cup \{j,k\}$.

\smallskip
Now consider the decomposition corresponding to $(\sigma, T_1)$, and set 
$\mathcal{H}_{\{1\}} := \mathcal{H}_1^1$ and $\mathcal{H}_{\emptyset} := \mathcal{H}_1^2$. 
Then 
\[
\mathcal{H} = \mathcal{H}_{\{1\}} \oplus \mathcal{H}_{\emptyset},
\]
where $\{1\}$ and $\emptyset$ are subsets of $I_1=\{1\}$. 
Applying the above argument recursively to $\mathcal{H}_{\{1\}}$, we obtain
\[
\mathcal{H}_{\{1\}} 
    \subseteq 
    \bigoplus_{A \subseteq J} \mathcal{H}_{\widetilde{\{1\}} \cup A},
\]
where $J=\{2,\ldots,n\}$ and $\widetilde{\{1\}}=\{1\}$ viewed as a subset of $I_n$. 
Similarly,
\[
\mathcal{H}_{\emptyset} 
    \subseteq 
    \bigoplus_{A \subseteq J} \mathcal{H}_A.
\]
Therefore,
\[
\begin{split}
\mathcal{H} 
    &= \mathcal{H}_{\{1\}} \oplus \mathcal{H}_{\emptyset} \\[4pt]
    &\subseteq 
        \left(\bigoplus_{A \subseteq J} 
            \mathcal{H}_{\widetilde{\{1\}} \cup A}\right) 
        \oplus 
        \left(\bigoplus_{A \subseteq J} \mathcal{H}_A\right) \\[4pt]
    &= \bigoplus_{A \subseteq I_n} \mathcal{H}_A,
\end{split}
\]
and this completes the proof.
\end{proof}

\section{Doubly Twisted Representations} \label{Doubly Twisted Representations}
We now introduce our main object of study the \emph{doubly twisted representations} of a product system. As an application of the characterization theorem established in the preceding section, we prove the existence of a Wold-type decomposition for this family of representations. In particular, we recover the classical decomposition for doubly commuting representations. 

\begin{remark} \label{rem: tensor identities}
    We begin by recording a collection of elementary identities that will be used 
frequently throughout the paper. Their proofs follow directly from the 
functoriality of the spatial tensor product and from the definitions of the maps 
$\widetilde{T}_i$ associated with a covariant representation 
$(\sigma, T_1, \ldots, T_k)$ of a product system:
\begin{enumerate}
\item
Let $\mathcal H_1, \mathcal H_2, \mathcal H_3$, and $\mathcal K$ be Hilbert spaces. 
If $B \in \mathcal B(\mathcal H_2, \mathcal H_3)$ and 
$C \in \mathcal B(\mathcal H_1, \mathcal H_2)$, then
\[
I_{\mathcal K} \otimes (BC)
=
(I_{\mathcal K} \otimes B)(I_{\mathcal K} \otimes C).
\]

\item
Let $\mathbb E$ be a product system over $\mathbb Z_+^k$, and let 
$(\sigma, T_1, \ldots, T_k)$ be a (completely contractive) covariant representation 
of $\mathbb E$ on a Hilbert space~$\mathcal H$. 
For any unitary isomorphism
\(
t_{ji} : E_j \otimes_{\mathcal A} E_i 
\longrightarrow 
E_i \otimes_{\mathcal A} E_j,
\, i \neq j,
\)
and any operator $X \in \mathcal B(\mathcal H)$, one has
\[
(t_{ji} \otimes I_{\mathcal H})
(I_{E_j \otimes E_i} \otimes X)
=
(I_{E_i \otimes E_j} \otimes X)
(t_{ji} \otimes I_{\mathcal H}).
\]
In particular, this identity holds for 
$X = \widetilde T_i^{(n)} \widetilde T_i^{(n)\,*}$, $n \in \mathbb Z_+$. 

\end{enumerate}
\end{remark}

\begin{definition}\label{def:twisted}
Let $\mathbb{E} = \{E_i\}_{i=1}^k$ be a product system of 
$C^*$\nobreakdash-correspondences over $\mathcal A$, 
with canonical product--system isomorphisms $t_{ij} : E_i \otimes_{\mathcal A} E_j 
\longrightarrow 
E_j \otimes_{\mathcal A} E_i,
\, i \neq j.$
Let $(\sigma, \mathcal H)$ be a nondegenerate $*$\nobreakdash-representation of 
$\mathcal A$ on a Hilbert space~$\mathcal H$.
A \emph{(covariant completely contractive) twisted representation} of $\mathbb E$ 
on~$\mathcal H$ is a tuple $(\sigma, T_1, \ldots, T_k)$, where each
$T_i : E_i \to \mathcal B(\mathcal H)$ is a linear, completely contractive map 
such that for $ a,b \in \mathcal A,\ \xi_i \in E_i,$
    \[
    T_i(a\,\xi_i\,b) 
    = \sigma(a)\,T_i(\xi_i)\,\sigma(b).
    \]
Each $T_i$ induces a contraction
$\widetilde T_i:E_i\otimes_\sigma\mathcal H\to\mathcal H$,
$\widetilde T_i(\xi\otimes h)=T_i(\xi)h$, which satisfies the intertwining relation
\[
\widetilde{T_i}(\varphi_i(a)\xi \otimes h) = \sigma(a) \widetilde{T_i}(\xi \otimes h),
\, a \in \mathcal{A}.
\]
And for all $i,j \in I_k$ with $i \neq j$,
    \begin{equation}\label{eq:twisted}
    \widetilde T_i (I_{E_i} \otimes \widetilde T_j)
    =
    \widetilde T_j (I_{E_j} \otimes \widetilde T_i)
    \,(t_{ij} \otimes U_{ij}),
    \end{equation} 
where $\{U_{ij}\}_{i \neq j}$ is a family of unitaries on~$\mathcal H$ such that, 
for all $i \neq j$, all $\ell \in I_k$, and all $a \in \mathcal A$, we have
\[
U_{ji} = U_{ij}^*, \qquad U_{ij}\,\widetilde T_{\ell}
    = \widetilde T_{\ell}\,(I_{E_{\ell}} \otimes U_{ij}), \qquad
U_{ij}\,\sigma(a) = \sigma(a)\,U_{ij}.
\]

Such a representation is called \emph{isometric} if each pair $(\sigma, T_i)$ is an 
isometric covariant representation of $E_i$, and \emph{fully coisometric} if
$\widetilde T_i\,\widetilde T_i^{\,*} = I_{\mathcal H}$ for all $ i \in I_k.$
\end{definition}

\begin{definition}\label{def:doubly-twisted}
A twisted representation $(\sigma, T_1, \ldots, T_k)$ of $\mathbb E$ is said to be 
\emph{doubly twisted} if for all $i,j\in I_k$ with $i\neq j$,
\begin{equation}\label{eq:doubly}
\widetilde T_j^{\,*}\,\widetilde T_i
= (I_{E_j}\otimes \widetilde T_i)\,(t_{ij}\otimes U_{ij})\,
(I_{E_i}\otimes \widetilde T_j^{\,*}).
\end{equation}
\end{definition}

\begin{remark}\label{rem: range projections}
For a doubly twisted (completely contractive) representation, we observe the following.
\begin{enumerate}
    \item
    When each twisting unitary $U_{ij}$ is equal to the identity operator, 
    the above framework reduces to the classical setting of doubly commuting 
    representations (cf. \cite{Skalski}).

    \item
    For each $m \in I_k$ and $n \in \mathbb Z_+$, define
    \[
    P_m^{(n)} := \widetilde T_m^{(n)}\,\widetilde T_m^{(n)\,*}.
    \]
    Each operator $P_m^{(n)}$ is a positive contraction on $\mathcal H$ 
    (and in fact a projection whenever $(\sigma, T_m)$ is isometric), 
    representing the range projection of $\widetilde T_m^{(n)}$. 
    The family $\{P_m^{(n)}\}_{n \ge 0}$ forms a decreasing sequence of contractions; 
    in particular,
    \[
    P_m^{(0)} = I_{\mathcal H}
    \quad\text{and}\quad
    P_m^{(n+1)} \le P_m^{(n)} 
    \quad \text{for all } n \in \mathbb Z_+.
    \]
\end{enumerate}
\end{remark}

We now turn to the proof of the existence of a Wold decomposition for doubly twisted representations. 
As a first step, we verify certain commutation relations between the twisting unitaries and the range 
projections $P_m^{(n)}$. 
The following lemmas record these identities and form the technical backbone of the proof that follows.

\begin{lemma}\label{lemma:intertwining of U_{ij}}
Let $(\sigma, T_1, \ldots, T_k)$ be a doubly twisted representation of a product system $\mathbb{E}$ on a Hilbert space $\mathcal{H}$. 
Then for all $i,j,m \in I_k$ with $i\ne j$ and for every $n\in\mathbb Z_+$, we have
\begin{enumerate}
    \item $U_{ij}\, \widetilde{T}_m^{(n)}
= \widetilde{T}_m^{(n)}\, (I_{E_m^{\,n}} \otimes U_{ij}).$
    \item $U_{ij}\, P_m^{(n)} = P_m^{(n)}\, U_{ij}$, where $P_m^{(n)} := \widetilde{T}_m^{(n)}\widetilde{T}_m^{(n)*}$;
    
    \item $(t_{ij} \otimes U_{ij})\left(I_{E_i \otimes E_j} \otimes P_m^{(n)}\right)
    = \left(I_{E_j \otimes E_i} \otimes P_m^{(n)}\right)(t_{ij} \otimes U_{ij})$.
\end{enumerate}
\end{lemma}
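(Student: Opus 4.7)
The plan is to prove the three assertions in sequence, using part (1) as the technical engine from which parts (2) and (3) follow by short formal manipulations. Only the intertwining hypotheses built into Definition~\ref{def:twisted} (namely $U_{ij}\widetilde{T}_{\ell}=\widetilde{T}_{\ell}(I_{E_{\ell}}\otimes U_{ij})$ and $U_{ji}=U_{ij}^{\,*}$) together with the elementary tensor identities of Remark~\ref{rem: tensor identities} will be needed; the full doubly twisted relation \eqref{eq:doubly} is not required here.

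For part (1), I would argue by induction on $n$. The case $n=1$ is literally the intertwining axiom of Definition~\ref{def:twisted} applied with $\ell=m$. For the inductive step, expand $\widetilde{T}_m^{(n)}=\widetilde{T}_m(I_{E_m}\otimes\widetilde{T}_m^{(n-1)})$, pull $U_{ij}$ across the outer $\widetilde{T}_m$ using the base case, lift the inductive identity $U_{ij}\widetilde{T}_m^{(n-1)}=\widetilde{T}_m^{(n-1)}(I_{E_m^{n-1}}\otimes U_{ij})$ through the $I_{E_m}\otimes(\cdot)$ slot using the functoriality identity in Remark~\ref{rem: tensor identities}(1), and recombine to obtain $\widetilde{T}_m^{(n)}(I_{E_m^{n}}\otimes U_{ij})$.

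For part (2), the trick is that $U_{ji}=U_{ij}^{\,*}$ also belongs to the twisting family, so part (1) applied to the pair $(j,i)$ and then adjointed yields the companion identity $\widetilde{T}_m^{(n)\,*}U_{ij}=(I_{E_m^{n}}\otimes U_{ij})\widetilde{T}_m^{(n)\,*}$. Then
\[
U_{ij}P_m^{(n)}
=U_{ij}\widetilde{T}_m^{(n)}\widetilde{T}_m^{(n)\,*}
=\widetilde{T}_m^{(n)}(I_{E_m^{n}}\otimes U_{ij})\widetilde{T}_m^{(n)\,*}
=\widetilde{T}_m^{(n)}\widetilde{T}_m^{(n)\,*}U_{ij}
=P_m^{(n)}U_{ij},
\]
using part (1) in the second equality and the dual intertwiner in the third.

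For part (3), I would split the twist as $t_{ij}\otimes U_{ij}=(t_{ij}\otimes I_{\mathcal H})(I_{E_i\otimes E_j}\otimes U_{ij})$, so that the left hand side becomes $(t_{ij}\otimes I_{\mathcal H})(I_{E_i\otimes E_j}\otimes U_{ij}P_m^{(n)})$; apply part (2) to reorder $U_{ij}P_m^{(n)}=P_m^{(n)}U_{ij}$ in the last tensor slot, then slide $I_{E_i\otimes E_j}\otimes P_m^{(n)}$ past $t_{ij}\otimes I_{\mathcal H}$ by the general identity in Remark~\ref{rem: tensor identities}(2) (with $X=P_m^{(n)}$), and finally reassemble the factor $I_{E_j\otimes E_i}\otimes U_{ij}$ with $t_{ij}\otimes I_{\mathcal H}$ into $t_{ij}\otimes U_{ij}$ on the right. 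I do not foresee a genuine obstacle: the entire lemma is a careful bookkeeping exercise, and the only point requiring attention is keeping the correct identity factor ($I_{E_i\otimes E_j}$, $I_{E_j\otimes E_i}$, or $I_{E_m^{n}}$) at each step so that the tensor slots match.
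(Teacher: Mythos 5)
Your proposal is correct and follows essentially the same route as the paper: induction on $n$ for part (1), the adjointed companion identity together with (1) for part (2), and the tensor-slot identity of Remark~\ref{rem: tensor identities}(2) for part (3). Your explicit use of $U_{ji}=U_{ij}^{*}$ to justify the adjoint intertwiner, and your observation that only the intertwining axioms of Definition~\ref{def:twisted} (not the doubly twisted relation) are needed, are accurate but do not change the substance of the argument.
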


\begin{proof}
\begin{enumerate}
\item We prove the identity by induction on $n$. 
For $n=1$, we already have $U_{ij}\, \widetilde{T}_m
= \widetilde{T}_m (I_{E_m} \otimes U_{ij}).$
Assume the identity for $n-1$, that is,
\[
U_{ij}\, \widetilde{T}_m^{(n-1)}
= \widetilde{T}_m^{(n-1)}\left(I_{E_m^{\,n-1}} \otimes U_{ij}\right).
\]
We now prove the result for $n$. By Remark~\ref{rem: tensor identities} together with the case $n=1$,
\[
U_{ij}\, \widetilde{T}_m^{(n)} 
= U_{ij}\, \widetilde{T}_m \left(I_{E_m} \otimes \widetilde{T}_m^{(n-1)}\right)
=\widetilde{T}_m \left(I_{E_m} \otimes U_{ij}\,\widetilde{T}_m^{(n-1)}\right).
\]
Using the induction hypothesis, the right hand side simplifies to $\widetilde T_m\left(I_{E_m}\otimes \widetilde T_m^{(n-1)}
\left(I_{E_m^{\,n-1}}\otimes U_{ij}\right)\right),$
and hence Remark \ref{rem: tensor identities} implies
\[
U_{ij}\,\widetilde T_m^{(n)}
=
\widetilde T_m^{(n)}\bigl(I_{E_m^{\,n}}\otimes U_{ij}\bigr).
\]

\item
From the relation $(1)$ above, we obtain
\[
(I_{E_m^{\,n}} \otimes U_{ij})\, \widetilde{T}_m^{(n)*}
= \widetilde{T}_m^{(n)*}\, U_{ij}.
\]
Using these two relations, we get
\[
U_{ij}\, \widetilde{T}_m^{(n)}\, \widetilde{T}_m^{(n)*}
= \widetilde{T}_m^{(n)} (I_{E_m^{\,n}} \otimes U_{ij})\, \widetilde{T}_m^{(n)*}
= \widetilde{T}_m^{(n)}\, \widetilde{T}_m^{(n)*}\, U_{ij}.
\]

\item The assertion follows directly from (2) above and Remark \ref{rem: tensor identities}(2).
\end{enumerate}
\end{proof}

With this lemma in hand, we now apply our characterization of the existence of the Wold decomposition (Theorem~\ref{thm: class of decom}) to the setting of doubly twisted representations of a product system.  
Consequently, this leads to a natural extension of the classical Wold decomposition for a single isometric covariant representation $(\sigma, T)$ to the present multi-variable twisted framework, as stated below.

\begin{theorem}\label{thm: decompo for doubly twisted rep}
Every doubly twisted isometric covariant representation 
$(\sigma, T_1, \ldots, T_k)$
of the product system~$\mathbb{E}$ on a Hilbert space~$\mathcal{H}$ admits a unique Wold decomposition.
\end{theorem}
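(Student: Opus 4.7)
The plan is to deduce the theorem from the characterization of Theorem~\ref{thm: class of decom}: existence of a Wold decomposition for $(\sigma,T_1,\ldots,T_k)$ is equivalent to the statement that, for each $i\in I_k$, the subspace $\mathcal{H}_i^2$ appearing in the one-variable Muhly--Solel decomposition of $(\sigma,T_i)$ (Theorem~\ref{Thm:Wold-decomposition-sigma-T}) reduces every pair $(\sigma,T_j)$. Uniqueness will then follow from the explicit formula $\mathcal{H}_A=\bigl(\bigcap_{i\in A}\mathcal{H}_i^1\bigr)\cap\bigl(\bigcap_{j\in A^c}\mathcal{H}_j^2\bigr)$ obtained in the proof of Theorem~\ref{thm: class of decom}, since the one-variable summands $\mathcal{H}_i^1$ and $\mathcal{H}_i^2$ are themselves unique by Theorem~\ref{Thm:Wold-decomposition-sigma-T}.

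The technical core is the commutation identity
\[
P_i^{(n)}\,\widetilde{T}_j \;=\; \widetilde{T}_j\,(I_{E_j}\otimes P_i^{(n)}), \qquad i\neq j,\ n\in\mathbb{Z}_+,
\]
which I will establish by induction on $n$. The base case $n=0$ is trivial, and for $n=1$ the identity follows by expanding $P_i^{(1)}\widetilde{T}_j=\widetilde{T}_i\widetilde{T}_i^*\widetilde{T}_j$, substituting the doubly twisted relation~\eqref{eq:doubly} (with $i,j$ swapped) for $\widetilde{T}_i^*\widetilde{T}_j$, and then applying the twisted relation~\eqref{eq:twisted}; the twists collapse via $(t_{ij}\otimes U_{ij})(t_{ji}\otimes U_{ji})=I$, yielding $\widetilde{T}_j(I_{E_j}\otimes P_i^{(1)})$. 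For the inductive step I write $P_i^{(n)}=\widetilde{T}_i(I_{E_i}\otimes P_i^{(n-1)})\widetilde{T}_i^*$, substitute the doubly twisted expression for $\widetilde{T}_i^*\widetilde{T}_j$, apply the induction hypothesis inside the tensor factor $I_{E_i}\otimes(\cdot)$, and then use Lemma~\ref{lemma:intertwining of U_{ij}}(3) to commute $I_{E_i\otimes E_j}\otimes P_i^{(n-1)}$ past $t_{ji}\otimes U_{ji}$. Crucially, $(I_{E_i}\otimes P_i^{(n-1)})\widetilde{T}_i^*=\widetilde{T}_i^*P_i^{(n)}$ (obtained by taking adjoints of $\widetilde{T}_i(I_{E_i}\otimes P_i^{(n-1)})=P_i^{(n)}\widetilde{T}_i$, which itself uses $\widetilde{T}_i^*\widetilde{T}_i=I$), so the expression collapses to $\widetilde{T}_j(I_{E_j}\otimes P_i^{(1)})(I_{E_j}\otimes P_i^{(n)})=\widetilde{T}_j(I_{E_j}\otimes P_i^{(n)})$, using the monotonicity $P_i^{(n)}\leq P_i^{(1)}$ from Remark~\ref{rem: range projections}(2).

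With this identity in hand, I pass to the SOT limit: since $\{P_i^{(n)}\}$ is a decreasing sequence of projections, $P_i^\infty:=\mathrm{SOT}\text{-}\lim_n P_i^{(n)}$ is the orthogonal projection onto $\mathcal{H}_i^2$, and the commutation relation evaluated on $\xi\otimes h$ for $\xi\in E_j$ yields $P_i^\infty\,T_j(\xi)=T_j(\xi)\,P_i^\infty$. Combined with $P_i^\infty\in\sigma(\mathcal{A})'$ (immediate from Theorem~\ref{Thm:Wold-decomposition-sigma-T}), this shows $\mathcal{H}_i^2$ reduces $(\sigma,T_j)$ for all $i,j\in I_k$, so Theorem~\ref{thm: class of decom} produces the desired Wold decomposition. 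The main obstacle is the bookkeeping in the inductive step, where the doubly twisted relation, the intertwining properties of the $U_{ij}$, Lemma~\ref{lemma:intertwining of U_{ij}}, and the range projections of the iterates $\widetilde{T}_i^{(n)}$ must all be coordinated simultaneously; apart from that, the argument is essentially a diagram chase. In particular, setting $U_{ij}=I$ recovers the Skalski--Zacharias decomposition (Theorem~\ref{thm:doubly_commuting_wold}) as a special case.
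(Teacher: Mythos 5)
Your proposal is correct and follows essentially the same route as the paper: both reduce the theorem to Theorem~\ref{thm: class of decom} by proving, via induction using the twisted relation, the doubly twisted relation, and Lemma~\ref{lemma:intertwining of U_{ij}}, the commutation identity $\widetilde{T}_i^{(n)}\widetilde{T}_i^{(n)*}\widetilde{T}_j=\widetilde{T}_j\bigl(I_{E_j}\otimes \widetilde{T}_i^{(n)}\widetilde{T}_i^{(n)*}\bigr)$, which shows that each $\mathcal{H}_i^{2}$ reduces every $(\sigma,T_j)$. The only differences are cosmetic (you close the inductive step with the base case plus $P_i^{(1)}P_i^{(n)}=P_i^{(n)}$, where the paper re-invokes the doubly twisted relation), and your uniqueness remark via $\mathcal{H}_A=\bigl(\bigcap_{i\in A}\mathcal{H}_i^1\bigr)\cap\bigl(\bigcap_{j\in A^c}\mathcal{H}_j^2\bigr)$ matches what the paper's argument implicitly uses.
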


\begin{proof}
It suffices to show that for all \(i, j \in I_k\), the subspace \(\mathcal{H}_2^i\) reduces \(T_j\); that is, $T_j(e)\mathcal{H}_2^i \subseteq \mathcal{H}_2^i$ and $ 
T_j(e)^*\mathcal{H}_2^i \subseteq \mathcal{H}_2^i$
for all $e \in E_j.$ As 
$\mathcal H_2^i :=\displaystyle \bigcap_{l\ge0} \operatorname{Ran}(\widetilde T_i^{(l)}\widetilde T_i^{(l)*}),$
it is enough to prove that for any $l \in \Z_+$,
\[
T_j(e)\widetilde{T}_i^{(l)} \widetilde{T}_i^{(l)*}(h)
= \widetilde{T}_i^{(l)} \widetilde{T}_i^{(l)*} T_j(e)(h),
\qquad \forall\, h \in \mathcal{H}.
\]
Thus it suffices to establish the operator identity
\begin{equation}\label{eq:TiTj commutation}
\widetilde{T}_j(I_{E_j} \otimes \widetilde{T}_i^{(l)} \widetilde{T}_i^{(l)*})
= \widetilde{T}_i^{(l)} \widetilde{T}_i^{(l)*} \widetilde{T}_j.
\end{equation}

\noindent
We proceed by induction on \(l\). For $l=1,$ the twisted relations ~\eqref{eq:twisted} and ~\eqref{eq:doubly} imply
\[
\widetilde{T}_j (I_{E_j} \otimes \widetilde{T}_i \widetilde{T}_i^*) 
= \widetilde{T}_i (I_{E_i} \otimes \widetilde{T}_j)(t_{ji} \otimes U_{ji})(I_{E_j} \otimes \widetilde{T}_i^*) 
= \widetilde{T}_i \widetilde{T}_i^* \widetilde{T}_j.
\]
Assume that \eqref{eq:TiTj commutation} holds for some \(l = n\), that is,
\[
\widetilde{T}_j(I_{E_j} \otimes \widetilde{T}_i^{(n)} \widetilde{T}_i^{(n)*})
= \widetilde{T}_i^{(n)} \widetilde{T}_i^{(n)*} \widetilde{T}_j.
\]
We show it holds for \(l = n+1\).  Using the recursive identity
\(\widetilde{T}_i^{(n+1)} = \widetilde{T}_i (I_{E_i} \otimes \widetilde{T}_i^{(n)})\),
we compute:
\begin{align*}
\widetilde{T}_j(I_{E_j} \otimes \widetilde{T}_i^{(n+1)} \widetilde{T}_i^{(n+1)*})
&= \widetilde{T}_j(I_{E_j} \otimes \widetilde{T}_i (I_{E_i} \otimes \widetilde{T}_i^{(n)} \widetilde{T}_i^{(n)*}) \widetilde{T}_i^*) \\
&= \widetilde{T}_j(I_{E_j} \otimes \widetilde{T}_i)
   (I_{E_j \otimes E_i} \otimes \widetilde{T}_i^{(n)} \widetilde{T}_i^{(n)*})
   (I_{E_j} \otimes \widetilde{T}_i^*) \\
&=
   \widetilde{T}_i (I_{E_i} \otimes \widetilde{T}_j)
   (t_{ji} \otimes U_{ji})
   (I_{E_j \otimes E_i} \otimes \widetilde{T}_i^{(n)} \widetilde{T}_i^{(n)*})
   (I_{E_j} \otimes \widetilde{T}_i^*) \quad\text{(by \eqref{eq:twisted})}
 \\
&=
   \widetilde{T}_i (I_{E_i} \otimes \widetilde{T}_j)
   (I_{E_i \otimes E_j} \otimes \widetilde{T}_i^{(n)} \widetilde{T}_i^{(n)*})
   (t_{ji} \otimes U_{ji})
   (I_{E_j} \otimes \widetilde{T}_i^*) \quad\text{(by Lemma \ref{lemma:intertwining of U_{ij}})}\\
&= \widetilde{T}_i (I_{E_i} \otimes \widetilde{T}_j (I_{E_j} \otimes \widetilde{T}_i^{(n)} \widetilde{T}_i^{(n)*}))
   (t_{ji} \otimes U_{ji})(I_{E_j} \otimes \widetilde{T}_i^*) \\
&=
   \widetilde{T}_i (I_{E_i} \otimes \widetilde{T}_i^{(n)} \widetilde{T}_i^{(n)*} \widetilde{T}_j)
   (t_{ji} \otimes U_{ji})(I_{E_j} \otimes \widetilde{T}_i^*) \quad \text{(induction hypothesis)} \\
&= \widetilde{T}_i (I_{E_i} \otimes \widetilde{T}_i^{(n)})(I_{E_i} \otimes \widetilde{T}_i^{(n)*})
   (I_{E_i} \otimes \widetilde{T}_j)(t_{ji} \otimes U_{ji})(I_{E_j} \otimes \widetilde{T}_i^*) \\
&= \widetilde{T}_i^{(n+1)} \widetilde{T}_i^{(n+1)*} \widetilde{T}_j  \quad \text{ (by \eqref{eq:doubly})}.
\end{align*}
This completes the induction. Therefore, by Theorem~\ref{thm: class of decom}, we conclude that every doubly twisted isometric representation admits a unique decomposition.
\end{proof}

When each twisting unitary $U_{ij}$ is the identity operator, 
Theorem~\ref{thm: decompo for doubly twisted rep} immediately reduces to the case of 
doubly commuting representations. 
Hence, we recover the following classical result by Skalski and Zacharias~\cite{Skalski} 
for doubly commuting isometric representations as a corollary of the above theorem. 
\begin{cor}\label{thm:skalski-zacharias-decomposition}
  Every doubly commuting isometric covariant representation $(\sigma, T_1, \ldots, T_k)$
of a product system~$\mathbb{E}$ on a Hilbert space~$\mathcal{H}$ 
admits a Wold decomposition.  
\end{cor}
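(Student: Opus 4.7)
The plan is to deduce this corollary as a direct specialization of Theorem~\ref{thm: decompo for doubly twisted rep}. The key observation is that the class of doubly commuting representations, as formalized in equation~\eqref{eq: doubly commuting rel}, coincides precisely with the class of doubly twisted representations in which every twisting unitary is taken to be the identity operator on $\mathcal H$.

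First I would set $U_{ij} := I_{\mathcal H}$ for every pair $i \neq j$ in $I_k$ and verify that this family satisfies the axioms required in Definition~\ref{def:twisted}: the relations $U_{ji} = U_{ij}^{*}$, the commutation $U_{ij}\sigma(a) = \sigma(a)U_{ij}$, and the intertwining $U_{ij}\widetilde T_\ell = \widetilde T_\ell (I_{E_\ell}\otimes U_{ij})$ all become trivial identities. Next, with this choice of unitaries, the twisted commutation relation \eqref{eq:twisted} reduces to $\widetilde T_i(I_{E_i}\otimes \widetilde T_j) = \widetilde T_j(I_{E_j}\otimes \widetilde T_i)(t_{ij}\otimes I_{\mathcal H})$, which is exactly the product--system covariance relation~\eqref{eq:twisted-relation} from Definition~\ref{def: rep of E}, and is automatically satisfied by any representation of $\mathbb E$ in the sense of Skalski--Zacharias. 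Similarly, the doubly twisted condition~\eqref{eq:doubly} becomes $\widetilde T_j^{*}\widetilde T_i = (I_{E_j}\otimes \widetilde T_i)(t_{ij}\otimes I_{\mathcal H})(I_{E_i}\otimes \widetilde T_j^{*})$, which is precisely the doubly commuting relation~\eqref{eq: doubly commuting rel}.

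Having identified the doubly commuting tuple $(\sigma, T_1, \ldots, T_k)$ as a doubly twisted isometric covariant representation with trivial twists, I would then invoke Theorem~\ref{thm: decompo for doubly twisted rep} directly to conclude the existence and uniqueness of the Wold decomposition. The explicit form of the Wold summands $\mathcal H_A$ from equation~\eqref{eqn: rep of Wold dec H A}, together with the description of the induced and fully coisometric parts, then yields the decomposition $\mathcal H = \bigoplus_{A\subseteq I_k} \mathcal H_A$ with the stated properties.

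There is essentially no obstacle here: the corollary is a pure specialization argument. The only point that deserves a brief remark is that verifying the compatibility axioms for $U_{ij} = I_{\mathcal H}$ is routine, so the entire proof fits in a few lines, and the substantive technical content (the induction establishing identity~\eqref{eq:TiTj commutation}) is already encapsulated in the proof of Theorem~\ref{thm: decompo for doubly twisted rep}.
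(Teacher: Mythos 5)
Your proposal is correct and follows essentially the same route as the paper: the authors also obtain this corollary by specializing Theorem~\ref{thm: decompo for doubly twisted rep} to the trivial twists $U_{ij}=I_{\mathcal H}$, under which the twisted and doubly twisted relations reduce to \eqref{eq:twisted-relation} and \eqref{eq: doubly commuting rel}. The verification of the compatibility axioms is indeed trivial, so nothing further is needed.
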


One can also obtain the above result as a direct consequence of the characterization theorem~\ref{thm: class of decom}.
It is important to note that the notion of a Wold decomposition used in our framework
is formulated coordinatewise: for each $i \in A \subseteq I_k$,
the pair $\restr{(\sigma, T_i)}{\mathcal{H}_A}$ acts as an induced covariant representation of $E_i$.
In contrast, Skalski and Zacharias~\cite{Skalski} describe their decomposition in terms 
of the subsystems $\mathbb{E}_\alpha$ generated by the correspondences 
$E_{\alpha_1}, \dots, E_{\alpha_r}$, and state that the restriction $\restr{(\sigma, T^{(\alpha_1)}, \dots, T^{(\alpha_r)})}{\mathcal{H}_\alpha}$
is isomorphic to an induced representation of the product subsystem $\mathbb{E}_\alpha$
over $\mathbb{Z}_+^r$.
When the twisting unitaries $U_{ij}$ are all identities, these two notions coincide: 
an induced representation of the subsystem $\mathbb{E}_\alpha$ 
is precisely the tensor product of the single variable induced representations 
$(\sigma, T_i)$ for $i \in \alpha$. That is, the Fock module factorizes canonically as
\[
\mathcal F(\mathbb E_\alpha)
\;\cong\;
\mathcal F(E_{\alpha_1})\otimes\cdots\otimes\mathcal F(E_{\alpha_r}),
\]
where $\otimes$ denotes the internal tensor product of Hilbert 
$\mathcal A$–modules.
Under this identification, the $i$-th creation operator on 
$\mathcal F(\mathbb E_\alpha)$ corresponds to the creation operator on the 
$i$-th factor. 
Hence the induced representation of the subsystem $\mathbb E_\alpha$ from $\sigma$ 
is unitarily equivalent to the tensor product of the single variable induced 
representations:
\[
\mathbb E_\alpha(\sigma)
\;\cong\;
E_{\alpha_1}(\sigma)\otimes\cdots\otimes E_{\alpha_r}(\sigma).
\]
Thus, our formulation recovers the Skalski--Zacharias decomposition as a special case of the general twisted setting.

\section{Examples} \label{Examples}
In this section, we construct explicit non-trivial examples of twisted and
doubly twisted representations arising from operator families. Recall that, for $i \neq j \in I_k$, $t_{ij} : E_i \otimes E_j \;\longrightarrow\; E_j \otimes E_i,$
flips the two tensor factors.
For any $n \in \Z_+$, to interchange the tensor blocks $E_i \otimes E_j^{\,n}$ and 
$E_j^{\,n} \otimes E_i$, 
we define an isomorphism
\(
t_{ij}^{(n)} : 
E_i \otimes E_j^{\,n}
\;\longrightarrow\;
E_j^{\,n} \otimes E_i
\) recursively by
\begin{equation}\label{eq:tijn_interchanged}
t_{ij}^{(n)}
\;:=\;
\prod_{k=1}^{n}
\left(
I_{E_j^{\,n-k}} \otimes t_{ij} \otimes I_{E_j^{\,k-1}}
\right).
\end{equation}

\begin{lemma}\label{lemma:recursive-def-tij-n}
Let $1 \le i \ne j \le k$. Then the maps
$t_{ij}^{(n)} \colon E_i \otimes E_j^{\,n} \longrightarrow E_j^{\,n} \otimes E_i$
satisfy the recursive relation
\[
t_{ij}^{(n+1)}
=
\left(I_{E_j^{\,n}} \otimes t_{ij}\right)
\left(t_{ij}^{(n)} \otimes I_{E_j}\right),
\qquad n \ge 1.
\]
\end{lemma}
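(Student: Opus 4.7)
My plan is to prove the recursion by a direct manipulation of the defining product
\[
t_{ij}^{(n+1)} \;=\; \prod_{k=1}^{n+1}\bigl(I_{E_j^{n+1-k}}\otimes t_{ij}\otimes I_{E_j^{k-1}}\bigr),
\]
read as the operator composition $A_1 A_2 \cdots A_{n+1}$ with $A_k := I_{E_j^{n+1-k}}\otimes t_{ij}\otimes I_{E_j^{k-1}}$. First I would peel off the $k=1$ factor, namely $A_1 = I_{E_j^n}\otimes t_{ij}$, which is the outermost (last-applied) map in the composition; it takes $E_j^n\otimes E_i\otimes E_j$ to $E_j^{n+1}\otimes E_i$, matching the codomain of the remaining sub-product.

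Next I would reindex the leftover factors $A_2,\ldots,A_{n+1}$ by $k' := k-1$, so $k' = 1,\ldots,n$. A typical such factor becomes
\[
I_{E_j^{n-k'}}\otimes t_{ij}\otimes I_{E_j^{k'}}
\;=\;
\bigl(I_{E_j^{n-k'}}\otimes t_{ij}\otimes I_{E_j^{k'-1}}\bigr)\otimes I_{E_j},
\]
where I have used associativity $I_{E_j^{k'}}=I_{E_j^{k'-1}}\otimes I_{E_j}$ to split off the last copy of $E_j$. Applying the elementary identity $(A\otimes I_{E_j})(B\otimes I_{E_j})=(AB)\otimes I_{E_j}$ factor-by-factor collapses the reindexed product to
\[
\left(\prod_{k'=1}^{n}\bigl(I_{E_j^{n-k'}}\otimes t_{ij}\otimes I_{E_j^{k'-1}}\bigr)\right)\otimes I_{E_j}
\;=\; t_{ij}^{(n)}\otimes I_{E_j}.
\]
Composing with the isolated outer factor $I_{E_j^n}\otimes t_{ij}$ yields exactly $t_{ij}^{(n+1)}=(I_{E_j^n}\otimes t_{ij})(t_{ij}^{(n)}\otimes I_{E_j})$, as claimed.

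There is no substantive obstacle; the argument is pure tensor bookkeeping. The only subtleties are (a) respecting the left-to-right convention for the product so that the $k=1$ factor is the outermost in the composition, and (b) invoking associativity of the spatial tensor product at the right step to regroup the identity operators. A base-case check at $n=1$, where the recursion reduces to $t_{ij}^{(2)}=(I_{E_j}\otimes t_{ij})(t_{ij}\otimes I_{E_j})$, is immediate from the definition and could be included as a sanity test.
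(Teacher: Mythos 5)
Your proof is correct and follows essentially the same route as the paper: peel off the $k=1$ (outermost) factor $I_{E_j^{\,n}}\otimes t_{ij}$ from the defining product, reindex the remaining factors, and absorb the trailing copy of $E_j$ via $(A\otimes I_{E_j})(B\otimes I_{E_j})=(AB)\otimes I_{E_j}$ to recognize $t_{ij}^{(n)}\otimes I_{E_j}$. You merely make explicit the reindexing and regrouping steps that the paper's one-line computation compresses, which is a harmless (indeed clarifying) elaboration.
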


\begin{proof}
By definition,
\begin{align*}
 t_{ij}^{(n+1)}
&=\prod_{k=1}^{n+1}
\left(I_{E_j^{\,n+1-k}}\otimes t_{ij}\otimes I_{E_j^{\,k-1}}\right) =\left(I_{E_j^{\,n}}\otimes t_{ij}\right)
\prod_{k=1}^{n}
\left(I_{E_j^{\,n-k}}\otimes t_{ij}\otimes I_{E_j^{\,k-1}}\right)\\
&=\left(I_{E_j^{\,n}}\otimes t_{ij}\right)
\left(t_{ij}^{(n)}\otimes I_{E_j}\right)  \end{align*}
which is the desired recursion.
\end{proof}

\begin{remark}
The lemma shows that the family $\{t_{ij}^{(n)} \}_{n\geq 1}$ satisfies a natural recursion: 
each higher power $t_{ij}^{\,(n+1)}$ is obtained by adjoining $t_{ij}$ on the left of $t_{ij}^{(n)}$, up to a suitable tensoring with identity operators.  
This identity is often used inductively to manipulate products involving the intertwiners $t_{ij}$.
\end{remark}
In general, for arbitrary $m,n \in \mathbb{Z_+}$, we define \(
t_{ij}^{(m,n)} : 
E_i^{\,m} \otimes E_j^{\,n}
\;\longrightarrow\;
E_j^{\,n} \otimes E_i^{\,m}
\) by
\begin{equation}\label{eq:tijmn}
t_{ij}^{(m,n)}
\;:=\;
\prod_{k=1}^{m}
\left(
I_{E_i^{\,k-1}} \otimes t_{ij}^{(n)} \otimes I_{E_i^{\,m-k}}
\right).
\end{equation}
The maps $t_{ij}^{(m,n)}$ may also be defined recursively in the
second index.  Namely, for $n\ge1$,
\begin{equation} \label{eq: sec def of tijmn}
   t_{ij}^{(m,n)}
\;=\;
\left(I_{E_j^{\,n-1}} \otimes t_{ij}^{(m,1)}\right)
\;\left(t_{ij}^{(m,n-1)} \otimes I_{E_j}\right)
:\;
E_i^{\,m} \otimes E_j^{\,n}
\,\longrightarrow\,
E_j^{\,n} \otimes E_i^{\,m}. 
\end{equation}

For the iterated family $\{t_{ij}^{(n)}\}$, we now establish the corresponding braid (or block hexagon) relations.
\begin{prop}\label{prop: braid at nth level}
For all distinct $i>j>\ell$ and all $n\ge 1$, the iterated flips
$t_{i\ell}^{(n)}:E_i\otimes E_\ell^{\,n}\to E_\ell^{\,n}\otimes E_i$ and
$t_{j\ell}^{(n)}:E_j\otimes E_\ell^{\,n}\to E_\ell^{\,n}\otimes E_j$
satisfy
\[
\left(I_{E_\ell^{\,n}}\otimes t_{ij}\right)\,
\left(t_{i\ell}^{(n)}\otimes I_{E_j}\right)\,
\left(I_{E_i}\otimes t_{j\ell}^{(n)}\right)
\;=\;
\left(t_{j\ell}^{(n)}\otimes I_{E_i}\right)\,
\left(I_{E_j}\otimes t_{i\ell}^{(n)}\right)\,
\left(t_{ij}\otimes I_{E_\ell^{\,n}}\right),
\]
as maps
\(
E_i\otimes E_j\otimes E_\ell^{\,n}\to E_\ell^{\,n}\otimes E_j\otimes E_i.
\)
\end{prop}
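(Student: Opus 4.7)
I would proceed by induction on $n$. The base case $n=1$ is exactly the hexagon (braid) relation assumed in the definition of the product system for the distinct indices $i,j,\ell$, so there is nothing to prove.

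For the inductive step, assume the identity holds at level $n$ and apply Lemma~\ref{lemma:recursive-def-tij-n} to both $t_{i\ell}^{(n+1)}$ and $t_{j\ell}^{(n+1)}$ to peel off the last copy of $E_\ell$:
\[
t_{i\ell}^{(n+1)}=(I_{E_\ell^n}\otimes t_{i\ell})(t_{i\ell}^{(n)}\otimes I_{E_\ell}),\qquad t_{j\ell}^{(n+1)}=(I_{E_\ell^n}\otimes t_{j\ell})(t_{j\ell}^{(n)}\otimes I_{E_\ell}).
\]
Substituting into each side of the level-$(n+1)$ identity produces a composition of five operators acting on $E_i\otimes E_j\otimes E_\ell^{\,n+1}$. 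The key observation is that, among these five factors, some act nontrivially only on the first $n$ copies of $E_\ell$ (together with $E_i,E_j$), while the others act nontrivially only on the last copy of $E_\ell$ (together with $E_i$ or $E_j$).

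Using the elementary fact that operators on disjoint tensor segments commute, so that $(X\otimes I)(I\otimes Y)=(I\otimes Y)(X\otimes I)$, I would repeatedly slide such factors past one another until each side is rewritten as a composition of a \emph{level-$n$ block}---namely $(I_{E_\ell^n}\otimes t_{ij})(t_{i\ell}^{(n)}\otimes I_{E_j})(I_{E_i}\otimes t_{j\ell}^{(n)})\otimes I_{E_\ell}$ on the left, and $(t_{j\ell}^{(n)}\otimes I_{E_i})(I_{E_j}\otimes t_{i\ell}^{(n)})(t_{ij}\otimes I_{E_\ell^n})\otimes I_{E_\ell}$ on the right---and a common \emph{level-$1$ block}, built solely from $t_{ij},t_{i\ell},t_{j\ell}$ acting on the last three tensor slots (tensored with identity on the remaining slots). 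The inductive hypothesis identifies the two level-$n$ blocks, and the base hexagon applied to those last three slots identifies the level-$1$ block on each side; together these yield the desired equality at level $n+1$. A direct check of the first nontrivial case $n=1\to n=2$---where, after expansion, the mutually disjoint factors $t_{i\ell}\otimes I_{E_\ell\otimes E_j}$ and $I_{E_i\otimes E_\ell}\otimes t_{j\ell}$ can be interchanged to expose the three ``outer'' factors as $I_{E_\ell}\otimes$(hexagon word)---already exhibits the mechanism cleanly.

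\textbf{Main obstacle.} Conceptually, the identity is simply the Yang--Baxter move applied to a bundle of $n+1$ parallel $E_\ell$-strands rather than a single strand, so once the right factorization is in place the argument is essentially automatic. The principal technical hurdle is bookkeeping: at each step the intermediate tensor space changes shape, and every application of the disjoint-commutation rule $(X\otimes I)(I\otimes Y)=(I\otimes Y)(X\otimes I)$ shifts the placement of the surrounding identity operators slightly, so careful parenthesization is required to ensure that the ``inner'' level-$n$ block and the ``outer'' level-$1$ block extracted from the left-hand side genuinely coincide with the corresponding blocks extracted from the right-hand side.
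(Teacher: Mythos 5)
Your proposal is correct and follows essentially the same route as the paper: induction on $n$, with the base case being the assumed hexagon relation, and the inductive step using Lemma~\ref{lemma:recursive-def-tij-n} to peel off the last $E_\ell$ factor and then commuting operators on disjoint tensor slots so that the induction hypothesis handles the level-$n$ block while the $n=1$ hexagon handles the remaining block. The paper packages this bookkeeping as a commutative diagram, but the underlying argument is the one you describe.
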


\begin{proof}
We proceed by induction on $n$.
For $n=1$, the identity reduces to the usual hexagon relation for the
product--system isomorphisms.
Assume that the identity holds for some $n \ge 1$.
We prove it for $n+1$.
Write
\[
E_i \otimes E_j \otimes E_\ell^{\,n+1}
=
\bigl(E_i \otimes E_j \otimes E_\ell^{\,n}\bigr)\otimes E_\ell.
\]
Applying the induction hypothesis to the factor
$E_i \otimes E_j \otimes E_\ell^{\,n}$ yields the commuting block on the left-hand
side of the diagram below.
Applying the case $n=1$ to the remaining factor
$E_i \otimes E_j \otimes E_\ell$ produces the commuting block on the right.
Combining these two blocks and applying Lemma \ref{lemma:recursive-def-tij-n} completes the proof.
\[
\begin{tikzcd}[row sep=2em, column sep=2em]
E_i \otimes E_j \otimes E_\ell^{(n+1)}
\ar[r]
\ar[d]
&
E_\ell^{(n)} \otimes E_i \otimes E_j \otimes E_\ell
\ar[r]
\ar[d]
&
E_\ell^{(n)} \otimes E_i \otimes E_\ell \otimes E_j
\ar[r]
&
E_\ell^{(n+1)} \otimes E_i \otimes E_j
\ar[d]
\\
E_j \otimes E_i \otimes E_\ell^{(n+1)}
\ar[r]
&
E_\ell^{(n)} \otimes E_j \otimes E_i \otimes E_\ell
\ar[r]
&
E_\ell^{(n)} \otimes E_j \otimes E_\ell \otimes E_i
\ar[r]
&
E_\ell^{(n+1)} \otimes E_j \otimes E_i
\end{tikzcd}
\]

\end{proof}
Let us introduce a convenient notation. 
For remaining section, let us fix a set \(A=\{i_1<i_2<\cdots<i_p\}\subseteq I_k\) 
and a multi-index \(n=(n_1,\dots,n_p)\in\mathbb Z_+^{|A|}\).
For this fixed \(A\) and \(n\), and for \(1\le j\le m\le p\), 
we define (omitting explicit reference to \(A\) and \(n\) hereafter):
\begin{equation}\label{eq:notational-convenience}
\begin{aligned}
E_{j,m}
&:= E_{i_j}^{\,n_j}\otimes E_{i_{j+1}}^{\,n_{j+1}}\otimes\cdots
   \otimes E_{i_{m-1}}^{\,n_{m-1}}\otimes E_{i_m}^{\,n_m},
   \qquad 
   E_{j,j}:=E_{i_j}^{\,n_j},\\[4pt]
\widetilde{T}_{j,m}
&:= \widetilde{T}_{i_j}^{(n_j)}
   \left(I_{E_{j,j}}\otimes \widetilde{T}_{i_{j+1}}^{(n_{j+1})}\right)
   \cdots
   \left(I_{E_{j,m-1}}\otimes \widetilde{T}_{i_m}^{(n_m)}\right)
   : E_{j,m}\otimes\mathcal H\longrightarrow\mathcal H.
\end{aligned}
\end{equation}
When \(j=1\) and \(m=p\), we use the standard shorthand
\(\widetilde{T}_A^{(n)} (= \widetilde{T}_{1,p}\))
and \(E_A^{n} (= E_{1,p})\).
For any $l \in I_k,$ and $i_j \in A,$
define $\theta_{l,i_j}^{(n_j)}
\;:\;
E_{1,\,j-1} \otimes E_{l} \otimes E_{i_j}^{\,n_j} 
\otimes E_{\,j+1,\,p}
\;\longrightarrow\;
E_{1,\,j} \otimes E_{l} 
\otimes E_{\,j+1,\,p}$ by
\begin{equation}\label{eq:theta-ij-im}
\theta_{l,i_j}^{(n_j)}
\;:=\;
I_{E_{1,\,j-1}} \,\otimes\,
t_{li_j}^{(n_j)} \,\otimes\,
I_{E_{\,j+1,\,p}}.
\end{equation}
We set \(W_{l,i_j} := \theta_{l,i_j}\otimes U_{li_j}\) and for a given $n$, we define the unitary operator $D_j[W_{l,i_j}]
:\;
E_{1,\,j-1}\otimes E_{l}\otimes E_{i_j}^{\,n_j}\otimes E_{\,j+1,\,p} \otimes \clh
\;\longrightarrow\;
E_{1,\,j}\otimes E_{l}\otimes E_{\,j+1,\,p} \otimes \clh$
by
\begin{equation}\label{eq:Dm-theta-U-im}
D_j[W_{l,i_j}]
\;:=\;
\theta_{l,i_j}^{(n_j)}\otimes U_{li_j}^{\,n_j}.
\end{equation}

We now present our first example, which serves as a model for the class of
doubly twisted representations introduced above.
The construction produces a doubly twisted covariant representation
associated with a fixed subset $A \subseteq I_k$, where the coordinates in $A$
give rise to an induced part, while those in $A^{\mathrm c}$ contribute a fully
coisometric part.
Using this model, one can generate a wide class of doubly twisted
representations.
Moreover, taking direct sums of such models over different choices of $A$
again yields a doubly twisted representation.

\begin{example}\label{ex:fock-model}
Let $\mathbb{E}=\{E(\mathbf n)\}_{\mathbf n\in\mathbb Z_+^k}$ be a product system of
$C^*$-correspondences over $\mathcal A$ with the left action given by  $\varphi_i:\mathcal A\to\mathcal L(E_i)$. For $i\neq j$, let
\(
t_{ij}:
E_i\otimes_{\mathcal A}E_j\longrightarrow E_j\otimes_{\mathcal A}E_i
\)
be the canonical unitary flip.
Let $\cld$ be a Hilbert space and
 $\sigma:\mathcal A\to\mathcal B(\mathcal D)$ be a nondegenerate
$*$-representation. Also, let $\{U_{ij}\}_{i\neq j}\subset\mathcal B(\mathcal D)$ be a family of
pairwise commuting unitaries satisfying
\[
U_{ji}=U_{ij}^*,\qquad
U_{ij}\sigma(a)=\sigma(a)U_{ij}\quad (a\in\mathcal A).
\]
Fix $m\in I_k$. Let $A=I_m=\{1,\ldots,m\},$ therefore $A^{\mathrm c}
= I_k\setminus I_m=\{m+1,\ldots,k\}.$
Let 
\(
(\sigma,W_{m+1},\ldots,W_k)
\) be a \emph{fully coisometric twisted covariant representation}
of the subsystem $\mathbb E_{A^{\mathrm c}}$ on $\mathcal D$ with corresponding twists $\{t_{ij} \otimes U_{ij}: i, j \in A^{\mathrm{c}},\, i\neq j\}$.
Consider the Hilbert space $\mathcal H:=\mathcal F(E_A)\otimes_\sigma\mathcal D,
$ and define a representation $\tau:\mathcal A\to\mathcal B(\mathcal H)$ by
\[
\tau(a):=\varphi_{(\infty)}(a)\otimes I_{\mathcal D},
\text{ where }
\varphi_{(\infty)}(a)
=
\bigoplus_{\mathbf n\in\mathbb Z_+^m}\varphi^{(\mathbf n)}(a).
\]
Since $\varphi_{(\infty)}$ and $\sigma$ are nondegenerate, so is $\tau$.
For $x_i\in E_i$, $i\in I_k$, define completely contractive operators $M_i:E_i\to\mathcal B(\mathcal H)$ by
\begin{equation}\label{eq:model-operators}
M_i(x_i)=
\begin{cases}
T_{x_i}\otimes I_{\mathcal D},
& i=1,\\[6pt]
D_{i-1}[W_{i,i-1}]
\cdots
D_1[W_{i1}]
\,(T_{x_i}\otimes I_{\mathcal D}),
& 2\le i\le m,\\[6pt]
(I_{\mathcal F(E_{A})}\otimes\widetilde W_i)\,
D_m[W_{im}]
\cdots
D_1[W_{i1}]
\,(T_{x_i}\otimes I_{\mathcal D}),
& m+1\le i\le k.
\end{cases}
\end{equation}
For $i\neq j$, define $U_{A,ij}:=I_{\mathcal F(E_{A})}\otimes U_{ij}.$
Then, for all $\ell \in I_k$ and $a \in \mathcal A,$
\[
U_{A,ji}=U_{A,ij}^*, \qquad
U_{A,ij}\,\widetilde M_{\ell}
=
\widetilde M_{\ell}(I_{E_{\ell}}\otimes U_{A,ij}),
\qquad
U_{A,ij}\tau(a)=\tau(a)U_{A,ij},
\]
and
$(\tau,M_1,\ldots,M_k)$ is a doubly twisted covariant isometric representation
of the product system $\mathbb E$ on $\mathcal H$ with twists
$\{t_{ij}\otimes U_{A,ij}\}_{i \neq j}$. Moreover, $(\tau,M_i)$ is induced for $i \in A,$ whereas $(\tau,M_i)$ is fully coisometric for each $i \in A^{\mathrm c}$.

In Theorem \ref{thm:verification of ex 1} below we verify the details of the example. 
\end{example}

\begin{remark}
The above construction is presented for the simpler case
$A = I_m = \{1,\ldots,m\}$ only for notational convenience.
For an arbitrary subset $A \subseteq I_k$, the same model is obtained
by reindexing the product system and the twisting unitaries accordingly.
All statements and proofs carry over verbatim.
\end{remark}

\begin{lemma}\label{lem:commutation-with-phi-infty}
Let $a\in\mathcal A$. Fix $\ell\in A$ and $i\in I_k$ with $i\neq \ell$.
Let $n_\ell\ge1$. Then for
\(
D_\ell[W_{i,\ell}]
=
\theta_{i,\ell}^{(n_\ell)}\otimes U_{i\ell}^{\,n_\ell},
\)
we have
\[
(\varphi_{(\infty)}(a)\otimes I_{\mathcal D})\,D_\ell[W_{i,\ell}]
=
D_\ell[W_{i,\ell}]\,(\varphi_{(\infty)}(a)\otimes I_{\mathcal D}).
\]
\end{lemma}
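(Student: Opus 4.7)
\emph{Proof proposal.} The plan is to observe that $D_\ell[W_{i,\ell}] = \theta_{i,\ell}^{(n_\ell)} \otimes U_{i\ell}^{n_\ell}$ is a left $\mathcal A$-bimodule map with respect to the canonical left $\mathcal A$-actions on its source and target, and then to note that $\varphi_{(\infty)}(a)\otimes I_{\mathcal D}$ is precisely the implementation of this left action on the Hilbert-space side. Since $U_{i\ell}^{n_\ell}$ acts only on the $\mathcal D$-factor and commutes trivially with $I_{\mathcal D}$, the claim reduces to the bimodule property of $\theta_{i,\ell}^{(n_\ell)}$.

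I would split the argument according to whether $\ell\ge 2$ or $\ell=1$. If $\ell\ge 2$, then by definition $\theta_{i,\ell}^{(n_\ell)}= I_{E_{1,\ell-1}}\otimes t_{i,\ell}^{(n_\ell)}\otimes I_{E_{\ell+1,m}}$ acts as the identity on the leftmost tensor factor $E_{1,\ell-1}$. The left $\mathcal A$-action on both the source and the target of $\theta_{i,\ell}^{(n_\ell)}$ is carried precisely on this leftmost factor (via $\varphi_1^{(n_1)}$ on $E_1^{n_1}$), so $\varphi_{(\infty)}(a)\otimes I_{\mathcal D}$ and $D_\ell[W_{i,\ell}]$ act on disjoint tensor slots and commute trivially.

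The case $\ell=1$ requires slightly more care: now $E_{1,0}$ is trivial and $\theta_{i,1}^{(n_1)} = t_{i,1}^{(n_1)}\otimes I_{E_{2,m}}$ does act on the leftmost factor, sending $E_i\otimes E_1^{n_1}$ to $E_1^{n_1}\otimes E_i$. Since each flip $t_{ij}$ is a bimodule isomorphism by definition of a product system, the iterate $t_{i,1}^{(n_1)}$ built in \eqref{eq:tijn_interchanged} as a composition of such flips, each tensored with identities on the unchanged factors, is itself a bimodule map. This yields the key intertwining
\[
t_{i,1}^{(n_1)}\bigl(\varphi_i(a)\otimes I_{E_1^{n_1}}\bigr)
= \bigl(\varphi_1^{(n_1)}(a)\otimes I_{E_i}\bigr)\, t_{i,1}^{(n_1)},
\]
which, combined with the trivial commutation of $U_{i1}^{n_1}$ with $I_{\mathcal D}$, delivers the claim.

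I do not foresee any real obstacle here; the whole argument rests on the single fact that the product-system flips are bimodule isomorphisms, and the only mild subtlety is isolating the $\ell=1$ boundary case, where the leftmost factor of the source is $E_i$ rather than a slot coming from $\mathcal F(E_A)$.
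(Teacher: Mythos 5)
Your proof is correct and takes essentially the same route as the paper: both arguments reduce the claim to the $\mathcal A$-bilinearity of the flip $t_{i\ell}$, iterated (via Lemma~\ref{lemma:recursive-def-tij-n}) to the intertwining $(\varphi_\ell^{(n_\ell)}(a)\otimes I_{E_i})\,t_{i\ell}^{(n_\ell)}=t_{i\ell}^{(n_\ell)}\,(\varphi_i(a)\otimes I_{E_\ell^{\,n_\ell}})$, while $U_{i\ell}^{\,n_\ell}$ commutes trivially because it acts only on $\mathcal D$. Your explicit case split $\ell\ge2$ versus $\ell=1$ is simply a more spelled-out version of the paper's ``tensoring with identities on the remaining factors'' step, not a different argument.
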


\begin{proof}
Since $U_{i\ell}^{\,n_\ell}$ acts on $\mathcal D$ only, it suffices to show that
$\theta_{i,\ell}^{(n_\ell)}$ commutes with $\varphi_{(\infty)}(a)$ on
$\mathcal F(E_A)$.
By $\mathcal A$--bilinearity of $t_{i\ell}:E_i\otimes_{\mathcal A}E_\ell\to
E_\ell\otimes_{\mathcal A}E_i$, we have, for all $a\in\mathcal A$,
\[
(\varphi_\ell(a)\otimes I_{E_i})\,t_{i\ell}
=
t_{i\ell}\,(\varphi_i(a)\otimes I_{E_\ell}).
\]
Iterating this identity (equivalently, using Lemma~\ref{lemma:recursive-def-tij-n})
yields, for every $n_\ell\ge1$,
\[
(\varphi_\ell^{(n_\ell)}(a)\otimes I_{E_i})\,t_{i\ell}^{(n_\ell)}
=
t_{i\ell}^{(n_\ell)}\,(\varphi_i(a)\otimes I_{E_\ell^{\,n_\ell}}).
\]
Tensoring with identities on the remaining factors gives
\[
\varphi^{(\mathbf n)}(a)\,\theta_{i,\ell}^{(n_\ell)}
=
\theta_{i,\ell}^{(n_\ell)}\,\varphi^{(\mathbf n)}(a)
\quad\text{on each summand }E(\mathbf n)\subseteq \mathcal F(E_A),
\]
and hence, since $\varphi_{(\infty)}(a)=\bigoplus_{\mathbf n}\varphi^{(\mathbf n)}(a)$,
\[
\varphi_{(\infty)}(a)\,\theta_{i,\ell}^{(n_\ell)}
=
\theta_{i,\ell}^{(n_\ell)}\,\varphi_{(\infty)}(a).
\]
Combining this with the commutation of $U_{i\ell}^{\,n_\ell}$ with
$\varphi_{(\infty)}(a)\otimes I_{\mathcal D}$ proves the claim.
\end{proof}

\begin{thm}\label{thm:verification of ex 1}
The representation $(\tau,M_1,\ldots,M_k)$ 
of the product system $\mathbb E$ on $\mathcal H$, with twists
$\{\,t_{ij}\otimes U_{A,ij}\,\}_{i\neq j}$, satisfies the following property:
\begin{enumerate}
    \item For all $i\in I_k$, $M_i$ satisfies the covariance relation.
    \item For $i,j\in I_k$ with $i\neq j$, the maps $\widetilde M_i,\widetilde M_j$ satisfy the
 twisted relation:
\begin{equation}\label{eq:twisted-ij-Im}
\widetilde M_i\bigl(I_{E_i}\otimes \widetilde M_j\bigr)
=
\widetilde M_j\bigl(I_{E_j}\otimes \widetilde M_i\bigr)\,
\bigl(t_{ij}\otimes U_{A,ij}\bigr)
\qquad\text{on }E_i\otimes E_j\otimes_\tau \mathcal H .
\end{equation}
\item For $i,j\in I_k$ with $i\neq j$, the maps $\widetilde M_i,\widetilde M_j$ satisfy the
doubly twisted relation:
\begin{equation}\label{eq:doublytwisted-ij-Im}
\widetilde M^*_j \widetilde M_i 
= \bigl(I_{E_j}\otimes \widetilde M_i\bigr)
\bigl(t_{ij}\otimes U_{A,ij}\bigr)
\bigl(I_{E_i}\otimes \widetilde M^*_j\bigr)
\qquad\text{on }E_i\otimes_\tau \mathcal H .
\end{equation}
\end{enumerate}
\end{thm}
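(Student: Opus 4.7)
The plan is to verify (1), (2), (3) by case analysis on whether $i$ and $j$ lie in $A=I_m$ or in $A^{\mathrm c}$, using throughout the pairwise commutation of $\{U_{ij}\}$ with each other, with $\sigma$, and with each $\widetilde W_\ell$, together with the braid identity of Proposition \ref{prop: braid at nth level} and the recursion of Lemma \ref{lemma:recursive-def-tij-n}. For part (1), I would factor $M_i(a\xi_i b)$ along the three types of operators appearing in its definition and move $\tau(a)=\varphi_{(\infty)}(a)\otimes I_{\mathcal D}$ past each from the left. The creation operator satisfies $T_{a\xi_i b}=\varphi_{(\infty)}(a)T_{\xi_i}$ on the Fock module since the $b$ is absorbed into the following tensor factor via the balanced tensor product; each $D_\ell[W_{i,\ell}]$ commutes with $\tau(a)$ by Lemma \ref{lem:commutation-with-phi-infty} combined with $U_{i\ell}\sigma(a)=\sigma(a)U_{i\ell}$; and for $i\in A^{\mathrm c}$, the outer factor $I_{\mathcal F(E_A)}\otimes\widetilde W_i$ converts $\tau(a)$ into $\sigma(a)$ on $\mathcal D$ via the covariance of $(\sigma,W_i)$. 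Putting these together yields $M_i(a\xi_i b)=\tau(a)M_i(\xi_i)\tau(b)$, and the same ingredients give $U_{A,ji}=U_{A,ij}^*$, the commutation of $U_{A,ij}$ with $\tau(a)$, and $U_{A,ij}\widetilde M_\ell=\widetilde M_\ell(I_{E_\ell}\otimes U_{A,ij})$.

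For part (2), I would evaluate both sides of \eqref{eq:twisted-ij-Im} on an elementary tensor $\xi_i\otimes\xi_j\otimes\eta\otimes d$ with $\eta\in E_A^{\mathbf n}$ and $d\in\mathcal D$, splitting into the four subcases $\{i,j\}\subseteq A$, $\{i,j\}\subseteq A^{\mathrm c}$, and the two mixed cases. In each subcase both sides expand into a concatenation of creation operators, iterated flips $t_{\cdot\cdot}^{(\cdot)}$, towers of $U$-twists on $\mathcal D$, and, when relevant, applications of $\widetilde W_\cdot$. The braid identity of Proposition \ref{prop: braid at nth level} and the recursion of Lemma \ref{lemma:recursive-def-tij-n} will be used to collapse the intervening flips into the single factor $t_{ij}$ on the right; the commutativity of the $U_{ij}$ among themselves and the identity $U_{ij}\widetilde W_\ell=\widetilde W_\ell(I_{E_\ell}\otimes U_{ij})$ will gather the towers of twists into $U_{A,ij}=I_{\mathcal F(E_A)}\otimes U_{ij}$; and in the subcase $\{i,j\}\subseteq A^{\mathrm c}$ the twisted relation of $(\sigma,W_{m+1},\ldots,W_k)$ will be invoked to exchange $\widetilde W_i$ and $\widetilde W_j$. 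The two mixed cases are lighter: the ``absent'' operator becomes the identity on the Fock or $\mathcal D$ side and only the relevant $D$-operators need to be rearranged.

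Part (3) proceeds by the same case analysis, now with $\widetilde M_j^*$ in place of $\widetilde M_j$. I would compute $\widetilde M_j^*$ summand by summand on $\mathcal F(E_A)\otimes\mathcal D$: for $j\in A$ it combines the adjoint of the Fock creation operator with the inverses of the $D_\ell[W_{j,\ell}]$ factors; for $j\in A^{\mathrm c}$ it reduces, after peeling off the inverse $U$-twists, to $\widetilde W_j^*$. Substituting into $\widetilde M_j^*\widetilde M_i$ and transporting the twists and flips through with the same identities as in (2) yields the right-hand side of \eqref{eq:doublytwisted-ij-Im}.

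The main obstacle throughout is the combinatorial bookkeeping. Each $D_\ell$ contributes a block flip $t_{i\ell}^{(n_\ell)}$ on the Fock side and a power $U_{i\ell}^{\,n_\ell}$ on $\mathcal D$, so for the twisted and doubly twisted relations to produce exactly $t_{ij}\otimes U_{A,ij}$ with no leftover factors one must verify that these collections collapse correctly under all the required permutations. Proposition \ref{prop: braid at nth level} is precisely the tool for the flip-collapse, and the commutation relations among the $U_{\cdot\cdot}$, $\sigma$, and $\widetilde W_\cdot$ handle the twist-collapse. Once these cancellations are made explicit in one representative subcase, the remaining subcases follow by analogous direct substitutions.
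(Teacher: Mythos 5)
Your strategy is essentially the one the paper follows: the same case analysis on whether $i,j$ lie in $A$ or $A^{\mathrm c}$, collapsing the iterated flips with Proposition \ref{prop: braid at nth level} and Lemma \ref{lemma:recursive-def-tij-n}, gathering the $U$-towers via their mutual commutativity and $U_{ij}\widetilde W_\ell=\widetilde W_\ell(I_{E_\ell}\otimes U_{ij})$, and, when $i,j\in A^{\mathrm c}$, reducing to the twisted relation of $(\sigma,W_{m+1},\ldots,W_k)$ on $\mathcal D$. For part (3) your route is more computational than the paper's: the paper disposes of the cases in which at least one index lies in $A^{\mathrm c}$ by deducing the doubly twisted identity from the already-proved twisted relation, and for $i,j\in A$ it compares matrix coefficients $\langle\widetilde M_j^*\widetilde M_i(\cdot),\cdot\rangle=\langle\widetilde M_i(\cdot),\widetilde M_j(\cdot)\rangle$ on elementary tensors, so the adjoint of the Fock creation operator never has to be written down. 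Your explicit computation of $\widetilde M_j^*$ can be made to work, but note that for $j\in A$ the factor $T_{x_j}\otimes I_{\cld_A}$ is only an isometry, so the ``inverses'' you mention must be adjoints, and the resulting bookkeeping is precisely what the inner-product argument sidesteps.

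The one step that would fail as written is in part (1), case $i\in A^{\mathrm c}$: you claim that the outer factor $I_{\mathcal F(E_A)}\otimes\widetilde W_i$ ``converts $\tau(a)$ into $\sigma(a)$ on $\mathcal D$ via the covariance of $(\sigma,W_i)$.'' Covariance of $W_i$ concerns the left action $\varphi_i(a)$ on the $E_i$ leg, not $\tau(a)=\varphi_{(\infty)}(a)\otimes I_{\mathcal D}$, and no such conversion occurs; indeed, if the $a$ were carried down to $\mathcal D$ in this way you would end with $\bigl(I_{\mathcal F(E_A)}\otimes\sigma(a)\bigr)M_i(\xi_i)\tau(b)$ instead of the required $\tau(a)M_i(\xi_i)\tau(b)$. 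The correct mechanism is the paper's: by the bimodule property of the flips (Lemma \ref{lem:commutation-with-phi-infty}), the left action of $a$ coming from $T_{a\xi_i b}$ is transported to the leftmost (Fock) leg, where it commutes with $I_{\mathcal F(E_A)}\otimes\widetilde W_i$ simply because the two operators act on disjoint tensor factors; covariance of $W_i$ is never needed in part (1). Relatedly, your identity $T_{a\xi_i b}=\varphi_{(\infty)}(a)T_{\xi_i}$ should read $T_{a\xi_i b}=\varphi_{(\infty)}(a)\,T_{\xi_i}\,\varphi_{(\infty)}(b)$ (your verbal remark about $b$ being absorbed through the balanced tensor product is the right idea; the displayed formula drops that factor). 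With these corrections, part (1) goes through exactly as in the paper, and parts (2) and (3) follow your plan.
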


\begin{proof}
 \begin{enumerate}
     \item To verify the covariance property, fix $a,b\in\mathcal A$ and $x_i \in E_i$ for some
$i\in I_k$. We first observe that, for any creation operator,
\[
T_{a\cdot x\cdot b}
=
\varphi_{(\infty)}(a)\,T_x\,\varphi_{(\infty)}(b).
\]  
Consequently, $M_1(a\cdot x_1\cdot b)
=
\tau(a)\,M_1(x_1)\,\tau(b).$
Now, for $1<i\le m$, and  $x_i\in E_i$, let
\(
M_i(x_i)=Q_i\,(T_{x_i}\otimes I_{\mathcal D}),
\) where
\(
Q_i=D_{i-1}[W_{i,i-1}]\cdots D_1[W_{i1}].
\) Lemma \ref{lem:commutation-with-phi-infty} implies that
$Q_i$ commutes with $\varphi_{(\infty)}(a)\otimes I_{\mathcal D}$.
Therefore, for $a,b\in\mathcal A$, using covariance of creation operators,
\begin{align*}
M_i(a\cdot x_i\cdot b)
&=Q_i\bigl((\varphi_{(\infty)}(a)\,T_{x_i}\,\varphi_{(\infty)}(b))\otimes I_{\mathcal D}\bigr)\\
&=(\varphi_{(\infty)}(a)\otimes I_{\mathcal D})\,
Q_i\,(T_{x_i}\otimes I_{\mathcal D})\,
(\varphi_{(\infty)}(b)\otimes I_{\mathcal D})\\
&=\tau(a)\,M_i(x_i)\,\tau(b).
\end{align*}
Now we verify covariance for $m+1\le i\le k$.
Set
\(
R_i:=D_m[W_{im}]\cdots D_1[W_{i1}],
\) 
so that $M_i(x_i)=(I\otimes \widetilde W_i)\,R_i\,(T_{x_i}\otimes I_{\mathcal D})$.
$R_i$ commutes with $\varphi_{(\infty)}(a)\otimes I_{\mathcal D}$ by Lemma \ref{lem:commutation-with-phi-infty}. Moreover, $\varphi_{(\infty)}(a)$ acts on the
$\mathcal F(E_A)$ tensor factor, while $\widetilde W_i$ acts on the
$\mathcal D$--factor, and hence they commute. Therefore, for $a,b\in\mathcal A$, using covariance of creation operators, we obtain
\begin{align*}
M_i(a\cdot x_i\cdot b)
&=(I\otimes \widetilde W_i)\,R_i\bigl((\varphi_{(\infty)}(a)\,T_{x_i}\,\varphi_{(\infty)}(b))\otimes I_{\mathcal D}\bigr)\\
&=(\varphi_{(\infty)}(a)\otimes I_{\mathcal D})\,(I\otimes \widetilde W_i)\,R_i\,
(T_{x_i}\otimes I_{\mathcal D})\,(\varphi_{(\infty)}(b)\otimes I_{\mathcal D})\\
&=\tau(a)\,M_i(x_i)\,\tau(b).
\end{align*}
\item 
Fix $i,j\in I_m$ with $i\le j$ (the case $i\ge j$ is symmetric).
First observe that, by Definition~\ref{eq:model-operators} and the fact that
the unitaries $\{U_{pq}\}_{p\neq q}$ commute, the $\mathcal D$--component
of both
\(
\widetilde M_i\bigl(I_{E_i}\otimes \widetilde M_j\bigr) \) and \(
\widetilde M_j\bigl(I_{E_j}\otimes \widetilde M_i\bigr)
\bigl(t_{ij}\otimes U_{A,ij}\bigr)
\)
coincide.
Indeed, $U_{A,ij}=I_{\mathcal F(E_A)}\otimes U_{ij}$ acts only on the
$\mathcal D$--tensor factor, and the resulting products of the unitaries
$\{U_{pq}\}_{p\neq q}$ coincide by commutativity.
Therefore, to verify the twisted relation it suffices to compare the
actions of the two sides on the Fock space component
$\mathcal F(E_{I_m})$.
Let $\mathcal L$ and $\mathcal R$ denote the corresponding Fock-space
components of the left-hand side and the right-hand side, respectively. Explicitly
\[
\mathcal{L}= \theta^{(n_{i-1})}_{i,i-1} \cdots \theta^{(n_1)}_{i,1}
\left( I_{E_i} \otimes
      \theta^{(n_{j-1})}_{j,j-1} \cdots \theta^{(n_1)}_{j,1}\right),
\]
and
\begin{align*}
\mathcal{R}
&= \theta^{(n_{j-1})}_{j,j-1} \cdots \theta^{(n_i+1)}_{j,i}
\cdots \theta^{(n_1)}_{j,1}
\left( I_{E_j} \otimes
      \theta^{(n_{i-1})}_{i,i-1} \cdots \theta^{(n_1)}_{i,1}\right)
( t_{ij}\otimes I_{\mathcal F(E_{A})}).
\end{align*}
The equality $\mathcal L=\mathcal R$ is encoded by the commutative hexagon, which commutes by the block hexagon (braid) relation for the flips $t_{ij}$.
\begin{equation}\label{eq:hexagon-diagram}
\begin{tikzcd}[column sep=large, row sep=large]
E_i \otimes E_j \otimes E_{I_m}^{\boldsymbol n}
\arrow[r]
\arrow[d]
&
E_i \otimes E_{I_m}^{\boldsymbol n+\mathbf e_j}
\arrow[r]
&
E_{I_m}^{\boldsymbol n+\mathbf e_i+\mathbf e_j}
\\
E_j \otimes E_i \otimes E_{I_m}^{\boldsymbol n}
\arrow[r]
&
E_j \otimes E_{I_m}^{\boldsymbol n+\mathbf e_i}
\arrow[r]
&
E_{I_m}^{\boldsymbol n+\mathbf e_i+\mathbf e_j}.
\end{tikzcd}
\end{equation}

For a more explicit argument, note that $\mathcal L$ acts by first inserting
the $E_j$--block into $E_A^{\mathbf n}$ and then inserting the $E_i$--block.
Since for $p\neq q$ the operators $\theta_{i,p}$ and $\theta_{j,q}$ act on
distinct tensor components, this action may be realized by alternately applying
$\theta_{j,\ell}^{(n_\ell)}$ and $\theta_{i,\ell}^{(n_\ell)}$ for
$\ell=1,\dots,i-1$ as follows:
\begin{equation*}\label{eq:hexagon-diagram}
\begin{tikzcd}[column sep=small, row sep=large]
E_i \otimes E_j \otimes E_{A}^{n}
\arrow[r]
&
E_i \otimes E_{i_1}^{n_1} \otimes E_j \otimes E_{2,m} 
\arrow[r]
&
 E_{i_1}^{n_1} \otimes E_i \otimes E_j \otimes E_{2,m} 
\arrow[r]
&
\cdots
\arrow[r]
&
 E_{A}^{n+e_i+e_j}.
\end{tikzcd}
\end{equation*}

Formally, and with a slight abuse of notation (suppressing identity operators
on tensor components on which the maps do not act), we may write
\begin{align*}
\mathcal L
&=
\theta^{(n_{j-1})}_{j,j-1}\cdots \theta^{(n_i)}_{j,i}
\Bigl[
  \theta^{(n_{i-1})}_{i,i-1}\,
  \theta^{(n_{i-1})}_{j,i-1}
\Bigr]
\cdots
\Bigl[
  \theta^{(n_1)}_{i,1}\,
  \theta^{(n_1)}_{j,1}
\Bigr]\\
&=
\theta^{(n_{j-1})}_{j,j-1}\cdots \theta^{(n_i)}_{j,i}
\Bigl[
 t_{ji}\,
  \theta^{(n_{i-1})}_{j,i-1}\,
  \theta^{(n_{i-1})}_{i,i-1}\,
 t_{ij}
\Bigr]
\cdots
\Bigl[
  t_{ji}\,
  \theta^{(n_1)}_{j,1}\,
  \theta^{(n_1)}_{i,1}\,
  t_{ij}
\Bigr] \quad (\text{Proposition} \ref{prop: braid at nth level})\\
&=
\theta^{(n_{j-1})}_{j,j-1}\cdots \theta^{(n_i+1)}_{j,i}
\Bigl[
  \theta^{(n_{i-1})}_{j,i-1}\,
  \theta^{(n_{i-1})}_{i,i-1}\,
\Bigr]
\cdots
\Bigl[
  \theta^{(n_1)}_{j,1}\,
  \theta^{(n_1)}_{i,1}\,
\Bigr] t_{ij}\\
&=
\theta^{(n_{j-1})}_{j,j-1}\cdots \theta^{(n_i+1)}_{j,i}
\cdots \theta^{(n_1)}_{j,1}
\left(
I_{E_j}\otimes
\theta^{(n_{i-1})}_{i,i-1}\cdots \theta^{(n_1)}_{i,1}
\right)
t_{ij}\\
&=\mathcal{R},
\end{align*}
where the second last equality follows because $\theta^{(n_l)}_{i,l}\theta^{(n_k)}_{j,k}=\theta^{(n_k)}_{j,k}\theta^{(n_l)}_{i,l}, \, \forall k < l,$ as they act on different components. 

Now consider the case $i\in A$ and $j\in A^{\mathrm c}$.
Since $\widetilde W_j$ acts trivially on the
Fock space $\mathcal F(E_A)$, it commutes with the
operators $D_{i-r}[W_{ir}]$ acting on the Fock-space component. Therefore,
\begin{align*}
\widetilde M_i\bigl(I_{E_i}\otimes \widetilde M_j\bigr)
(x_i \otimes x_j \otimes \xi \otimes h)
&=
\prod_{r=1}^{i-1} D_{i-r}[W_{ir}]\,
(I_{\mathcal F(E_A)}\otimes \widetilde W_j)
\prod_{r=1}^{m} D_{m+1-r}[W_{jr}]
(x_i \otimes x_j \otimes \xi \otimes h) \\
&=
(I_{\mathcal F(E_A)}\otimes \widetilde W_j)
\prod_{r=1}^{i-1} D_{i-r}[W_{ir}]
\prod_{r=1}^{m} D_{m+1-r}[W_{jr}]
(x_i \otimes x_j \otimes \xi \otimes h).
\end{align*}

As in the previous case, by Definition~\ref{eq:model-operators} and the
commutativity of the unitaries $\{U_{pq}\}_{p\neq q}$, the $\mathcal D$--components
of
\(
\widetilde M_i\bigl(I_{E_i}\otimes \widetilde M_j\bigr)
\)
and
\(
\widetilde M_j\bigl(I_{E_j}\otimes \widetilde M_i\bigr)
(t_{ij}\otimes U_{A,ij})
\)
coincide.
Therefore, it suffices to compare the induced maps on the Fock-space component
$\mathcal F(E_A)$.
Let $\mathcal L$ and $\mathcal R$ denote the corresponding Fock-space maps.
Then
\[
\mathcal{L}
=
\theta^{(n_{i-1})}_{i,i-1} \cdots \theta^{(n_1)}_{i,1}
\Bigl(
I_{E_i}\otimes
\theta^{(n_m)}_{j,m}\cdots \theta^{(n_1)}_{j,1}
\Bigr),
\]
while
\[
\mathcal{R}
=
\theta^{(n_m)}_{j,m}\cdots \theta^{(n_i+1)}_{j,i}
\cdots \theta^{(n_1)}_{j,1}
\Bigl(
I_{E_j}\otimes
\theta^{(n_{i-1})}_{i,i-1}\cdots \theta^{(n_1)}_{i,1}
\Bigr)
(t_{ij}\otimes I_{\mathcal F(E_A)}).
\]
The equality $\mathcal L=\mathcal R$ follows exactly as in the case
$i,j\in A$, using the braid (hexagon) relation for the flips $t_{ij}$.
This completes the proof for $i\in A$ and $j\in A^{\mathrm c}$.

Finally, let $i,j\in A^{\mathrm c}$.
Since $\widetilde W_j$ acts trivially on the Fock space
$\mathcal F(E_A)$, it commutes with the operators
$D_{i-r}[W_{ir}]$ acting on the Fock-space component.
Consequently, in the expression
$\widetilde M_i\bigl(I_{E_i}\otimes \widetilde M_j\bigr)$,
the factor $(I_{\mathcal F(E_A)}\otimes \widetilde W_j)$ may be moved past
$\displaystyle\prod_{r=1}^{i-1} D_{i-r}[W_{ir}]$ without affecting the Fock-space part.

It follows that both
\(
\widetilde M_i\bigl(I_{E_i}\otimes \widetilde M_j\bigr)
\)
and
\(
\widetilde M_j\bigl(I_{E_j}\otimes \widetilde M_i\bigr)
(t_{ij}\otimes U_{A,ij})
\)
act identically on the Fock-space component $\mathcal F(E_A)$, and hence the
verification of the twisted relation reduces entirely to the corresponding
twisted relation for $(\widetilde W_i,\widetilde W_j)$ on $\mathcal D$ along with hexagon relations.
This completes the proof in the case $i,j\in A^{\mathrm c}$.

\item We verify the doubly twisted relations for
$(\sigma,\widetilde M_1,\ldots,\widetilde M_m)$.
If at least one of the indices $i$ or $j$ belongs to $A^{\mathrm c}$,
the desired relations follow directly from the corresponding twisted relations.
In the remaining case $i,j\in A$, note that
$U_{A,ij}=I_{\mathcal F(E_A)}\otimes U_{ij}$ and that the family
$\{U_{pq}\}_{p\neq q}$ consists of commuting unitaries.
Consequently, the $\mathcal D$--components of the two sides coincide, and it
suffices to compare the induced maps on the Fock space $\mathcal F(E_A)$. Notice that,
\begin{align*}
\langle \widetilde M^*_j(\xi \otimes h), x_j \otimes \eta \otimes h_1 \rangle
&=\langle \xi \otimes h,\widetilde M_j( x_j \otimes \eta \otimes h_1) \rangle\\
&=\langle \xi \otimes h,  D_{j-1}[W_{j j-1}] \cdots D_{1}[W_{j 1}]( x_j \otimes \eta \otimes h_1) \rangle\\
&=\langle  D_{1}[W_{1j}]  \cdots D_{j-1}[W_{j-1j}](\xi \otimes h),   x_j \otimes \eta \otimes h_1 \rangle\\
\end{align*}
Fix $i\neq j$. Both sides define bounded maps $E_i\otimes H\to E_j\otimes H$,
so it suffices to compare their matrix coefficients against elementary tensors.
Let $x_i\in E_i$, $x_j\in E_j$, $\xi,\eta\in\mathcal F(E_A)$, and $h,h_1\in\mathcal D$.
By the defining property of adjoints,
\[
\bigl\langle \widetilde M_j^{*}\widetilde M_i(x_i\otimes \xi\otimes h),\,
x_j\otimes \eta\otimes h_1\bigr\rangle
=
\bigl\langle \widetilde M_i(x_i\otimes \xi\otimes h),\,
\widetilde M_j(x_j\otimes \eta\otimes h_1)\bigr\rangle.
\]
The resulting Fock-space identities are exactly the two
routes in the block-hexagon diagram, and therefore coincide by the braid
relation for the flips $t_{ij}$ (Proposition~\ref{prop: braid at nth level}).
Thus the two operators agree on elementary tensors, and the desired identity
follows.
\end{enumerate}
\end{proof}

We now address the problem of constructing concrete examples of doubly twisted
representations from prescribed families of operators.
To this end, we develop two frameworks: the automorphic case
$E_i=\alpha_i\mathcal A$ and the scalar case $\mathcal A=\mathbb C$.
In each setting, we provide explicit characterizations of (doubly) twisted
representations in terms of concrete operator relations, thereby extending
Solel’s theory of doubly commuting representations~\cite{Solel} to the
doubly twisted setting.

\begin{example}\label{ex2:A=C}
Let $\mathcal A=\mathbb C$, and let
$\mathbb E=\{E_i\}_{i=1}^k$ be a product system of $C^*$-correspondences over
$\mathbb Z_+^k$.  We identify each fiber $E_i$ with a Hilbert space
$\mathcal H_i$ equipped with a fixed orthonormal basis
$\{e^i_\alpha : \alpha\in J_i\}$.
Let $t_{ij}:\mathcal H_i\otimes\mathcal H_j \longrightarrow
\mathcal H_j\otimes\mathcal H_i$
denote the unitary flip maps determining the product system structure.
Let $\mathcal H$ be a Hilbert space, and let $\{U_{ij}\}_{i\neq j}\subseteq\mathcal B(\mathcal H)$
be a family of unitaries.
Suppose that for each $i\in I_k$ we are given a family of operators
$\{S^i_\alpha : \alpha\in J_i\}\subseteq\mathcal B(\mathcal H)$ such that $\displaystyle \sum_{\alpha\in J_i} S^i_\alpha (S^i_\alpha)^* \le I_{\mathcal H}$, and for all $i,j\in I_k$ and $\alpha\in J_i$, $\beta\in J_j$,
\begin{equation}\label{eq:S_twisted_assumed}
S^i_\alpha S^j_\beta
=
\sum_{\gamma\in J_j,\ \delta\in J_i}
\big\langle e^j_\gamma\otimes e^i_\delta,\,
t_{ij}(e^i_\alpha\otimes e^j_\beta)\big\rangle\,
S^j_\gamma S^i_\delta\, U_{ij};
\end{equation}
\begin{equation}\label{eq:S_doubly_twisted_assumed}
(S^j_\beta)^* S^i_\alpha
=
\sum_{\delta\in J_i,\ \gamma\in J_j}
\big\langle e^j_\beta\otimes e^i_\delta,\,
t_{ij}(e^i_\alpha\otimes e^j_\gamma)\big\rangle\,
S^i_\delta\, U_{ij}\,(S^j_\gamma)^* .
\end{equation}
Then there exists a doubly twisted representation
$(\sigma,T_1,\ldots,T_k)$ of the product system $\mathbb E$ on $\mathcal H$
such that
\[
S^i_\alpha h
=
\widetilde T_i(e^i_\alpha\otimes h),
\qquad h\in\mathcal H.
\]
Here $\sigma:\mathbb C\to B(\mathcal H)$ is given by $\sigma(\lambda)=\lambda I_{\mathcal H}$.
We verify the details of the above example in the Proposition \ref{prop:construct_doubly_twisted}.
\end{example}

\begin{remark}
In applications, we will assume that the family $\{U_{ij}\}_{i\neq j}$
consists of pairwise commuting unitaries satisfying $U_{ji}=U_{ij}^*$.
These additional assumptions are not required for the construction below,
but are part of the standard notion of a (doubly) twisted representation.
\end{remark}

\begin{prop}\label{prop:construct_doubly_twisted}
In the setting of Example~\ref{ex2:A=C}, suppose that the families
$\{S^i_\alpha : \alpha\in J_i\}\subseteq\mathcal B(\mathcal H)$
satisfy \eqref{eq:S_twisted_assumed} and \eqref{eq:S_doubly_twisted_assumed}.
Then there exists a doubly twisted representation
$(\sigma,T_1,\ldots,T_k)$ of the product system $\mathbb E$ on $\mathcal H$
such that
\[
\widetilde T_i(e^i_\alpha\otimes h)=S^i_\alpha h,
\qquad h\in\mathcal H.
\]
\end{prop}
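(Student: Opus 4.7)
The plan is to define $(\sigma,T_1,\ldots,T_k)$ directly from the data and then verify each axiom of a doubly twisted representation by reducing everything to an orthonormal-basis computation. I would set $\sigma(\lambda):=\lambda I_{\mathcal H}$, which is trivially a nondegenerate $*$-representation of $\mathbb C$, and for each $i$ and $\xi=\sum_\alpha c_\alpha e^i_\alpha\in E_i$ set $T_i(\xi):=\sum_\alpha c_\alpha S^i_\alpha$, interpreted in the strong-operator sense when $J_i$ is infinite. Since $\mathcal A=\mathbb C$, the covariance identity $T_i(a\xi b)=\sigma(a)T_i(\xi)\sigma(b)$ collapses to linearity of $T_i$ and holds for free.

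Next I would check that $\widetilde T_i$ is a well-defined contraction on $E_i\otimes_\sigma\mathcal H$. A direct calculation on basis vectors yields
\[
\widetilde T_i\widetilde T_i^{\,*}h=\sum_{\alpha\in J_i}S^i_\alpha(S^i_\alpha)^*h,\qquad \widetilde T_j^{\,*}h=\sum_{\beta\in J_j}e^j_\beta\otimes (S^j_\beta)^*h,
\]
where convergence of the second expression as an element of $E_j\otimes_\sigma\mathcal H$ follows from the row-contractivity hypothesis, since its squared norm equals $\langle h,\sum_\beta S^j_\beta(S^j_\beta)^*h\rangle\le\|h\|^2$. Hence $\|\widetilde T_i\|\le 1$, and by the Muhly--Solel correspondence between contractions satisfying \eqref{eq:left_action} and completely contractive covariant representations, each $(\sigma,T_i)$ is a completely contractive covariant representation of $E_i$.

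The main step is the verification of the twisted relation \eqref{eq:twisted} and the doubly twisted relation \eqref{eq:doubly}, both of which I would check on elementary tensors $e^i_\alpha\otimes e^j_\beta\otimes h$ and $e^i_\alpha\otimes h$ respectively. On the left-hand sides one obtains $S^i_\alpha S^j_\beta h$ and $\sum_\beta e^j_\beta\otimes (S^j_\beta)^*S^i_\alpha h$. On the right-hand sides one expands $t_{ij}(e^i_\alpha\otimes e^j_\beta)$ in the orthonormal basis $\{e^j_\gamma\otimes e^i_\delta\}$ of $\mathcal H_j\otimes\mathcal H_i$ and unfolds the actions of $\widetilde T_i$, $\widetilde T_j^{\,*}$, and $U_{ij}$ factor by factor; the resulting sums are precisely \eqref{eq:S_twisted_assumed} and \eqref{eq:S_doubly_twisted_assumed} respectively. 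The main (essentially bookkeeping) obstacle is matching coefficients, summation indices, and the placement of $U_{ij}$ on the two sides so that the scalar identities translate cleanly into the claimed operator equalities; the remaining axioms on $\{U_{ij}\}$ (symmetry $U_{ji}=U_{ij}^*$ and commutation with $\sigma$ and with each $S^\ell_\alpha$) are the standing assumptions from the preceding remark and carry no content beyond what is assumed.
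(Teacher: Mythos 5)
Your proposal is correct and follows essentially the same route as the paper's proof: define $\widetilde T_i$ by expanding against the fixed orthonormal basis, deduce contractivity from the row-contraction hypothesis, derive the adjoint formula $\widetilde T_j^{\,*}h=\sum_{\beta\in J_j}e^j_\beta\otimes (S^j_\beta)^*h$, and verify the twisted and doubly twisted relations on elementary tensors, where they reduce exactly to \eqref{eq:S_twisted_assumed} and \eqref{eq:S_doubly_twisted_assumed}. The only cosmetic difference is that the paper establishes boundedness first via the factorization $\widetilde T_i=R_iU_i$ (row operator composed with the canonical unitary $U_i(\xi\otimes h)=(\langle e^i_\alpha,\xi\rangle h)_\alpha$), which avoids the slight circularity in your write-up of invoking $\widetilde T_i\widetilde T_i^{\,*}$ before $\widetilde T_i$ is known to be bounded; otherwise the arguments coincide.
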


\begin{proof}
For each $i$, define a linear operator
$\widetilde T_i:\mathcal H_i\otimes\mathcal H\to\mathcal H$ on elementary tensors by
\begin{equation} \label{eq:def_Ttilde_converse}
  \widetilde T_i(\xi\otimes h)
:=\sum_{\alpha\in J_i}\langle e^i_\alpha,\xi\rangle\, S^i_\alpha h,
\qquad \xi\in\mathcal H_i,\ h\in\mathcal H.  
\end{equation}
Let $R_i:\displaystyle\bigoplus_{\alpha\in J_i}\mathcal H\to\mathcal H$ be the row operator
$R_i((h_\alpha)_\alpha)=\sum_\alpha S^i_\alpha h_\alpha$.
Since $\{S^i_\alpha\}_{\alpha\in J_i}$ is a row contraction, we have
$R_iR_i^*\le I_{\mathcal H}$, and hence $\|R_i\|\le 1$.
Let
$U_i:\mathcal H_i\otimes\mathcal H\to\displaystyle\bigoplus_{\alpha\in J_i}\mathcal H$
be the canonical unitary
$U_i(\xi\otimes h)=(\langle e^i_\alpha,\xi\rangle h)_\alpha$.
Then $\widetilde T_i=R_iU_i$, so $\widetilde T_i$ is a contraction.

Fix $i,j\in I_k$. We prove the twisted relation for $\widetilde T_i, \, \widetilde T_j$
on elementary tensors, hence on all of $\mathcal H_i\otimes\mathcal H_j\otimes\mathcal H$ by linearity
and continuity.
Let $\xi\in\mathcal H_i$, $\eta\in\mathcal H_j$, and $h\in\mathcal H$. Using \eqref{eq:def_Ttilde_converse},
\begin{align*}
\widetilde T_i(I_{\mathcal H_i}\otimes \widetilde T_j)(\xi\otimes\eta\otimes h)
&=\widetilde T_i\!\left(\xi\otimes \sum_{\beta\in J_j}\langle e^j_\beta,\eta\rangle\, S^j_\beta h\right) \\
&=\sum_{\alpha\in J_i}\sum_{\beta\in J_j}
   \langle e^i_\alpha,\xi\rangle\,\langle e^j_\beta,\eta\rangle\, S^i_\alpha S^j_\beta h\\
&=\sum_{\alpha,\beta}\langle e^i_\alpha,\xi\rangle\,\langle e^j_\beta,\eta\rangle
   \sum_{\gamma,\delta}
   \big\langle e^j_\gamma\otimes e^i_\delta,\,
      t_{ij}(e^i_\alpha\otimes e^j_\beta)\big\rangle\,
   S^j_\gamma S^i_\delta\,U_{ij}h \quad[\text{by }\eqref{eq:S_twisted_assumed}] \\
   &=\sum_{\gamma,\delta}
   \Big\langle e^j_\gamma\otimes e^i_\delta,\,
      t_{ij}(\xi\otimes\eta)\Big\rangle\,
   S^j_\gamma S^i_\delta\,U_{ij}h.
\end{align*}
Writing $t_{ij}(\xi\otimes\eta)=\sum_{\gamma,\delta} c_{\gamma\delta}\, e^j_\gamma\otimes e^i_\delta$
with $c_{\gamma\delta}=\langle e^j_\gamma\otimes e^i_\delta,\,t_{ij}(\xi\otimes\eta)\rangle$,
the last expression becomes
\begin{align*}
\widetilde T_i(I_{\mathcal H_i}\otimes \widetilde T_j)(\xi\otimes\eta\otimes h)
&=\sum_{\gamma,\delta} c_{\gamma\delta}\, S^j_\gamma S^i_\delta\,U_{ij}h \\
&=\widetilde T_j\!\left(\sum_{\gamma,\delta} c_{\gamma\delta}\,
      e^j_\gamma\otimes \widetilde T_i(e^i_\delta\otimes U_{ij}h)\right) \\
&=\widetilde T_j(I_{\mathcal H_j}\otimes \widetilde T_i)\,(t_{ij}\otimes U_{ij})
   (\xi\otimes\eta\otimes h).
\end{align*}
Hence $(\sigma,T_1,\ldots,T_k)$ is a twisted representation of $\mathbb E$.

Assume now that \eqref{eq:S_doubly_twisted_assumed} holds.
We show that this condition is equivalent to the doubly twisted covariance relation.
It suffices to check the equality on elementary tensors.
For this, first notice that, for each $j\in I_k$ and $h\in\mathcal H$, 
\begin{align*}
\langle \xi\otimes h',\, \widetilde T_j^{*}h\rangle
&=\left\langle \sum_{\gamma\in J_j}\langle e^j_\gamma,\xi\rangle\, S^j_\gamma h',\, h\right\rangle 
=\sum_{\gamma\in J_j}\langle \xi, e^j_\gamma\rangle\, \langle h',\, (S^j_\gamma)^* h\rangle
=\left\langle \xi\otimes h',\,
\sum_{\gamma\in J_j} e^j_\gamma\otimes (S^j_\gamma)^*h \right\rangle,
\end{align*}
with convergence in the strong operator topology.
Since finite linear combinations of elementary tensors are dense in
$\mathcal H_j\otimes\mathcal H$, 
we obtain \(
\widetilde T_j^{*}h
=
\displaystyle\sum_{\gamma\in J_j} e^j_\gamma\otimes (S^j_\gamma)^* h.
\)
Now we compute,
\begin{align*}
(I_{\mathcal H_j}\otimes \widetilde T_i)\,(t_{ij}\otimes U_{ij})\,
(I_{\mathcal H_i}\otimes \widetilde T_j^{*})(e^i_\alpha\otimes h)
&=(I_{\mathcal H_j}\otimes \widetilde T_i)\,(t_{ij}\otimes U_{ij})
  \left(
    \sum_{\gamma\in J_j} e^i_\alpha\otimes e^j_\gamma\otimes (S^j_\gamma)^* h
  \right) \\
&=(I_{\mathcal H_j}\otimes \widetilde T_i)
  \left(
    \sum_{\gamma\in J_j} t_{ij}(e^i_\alpha\otimes e^j_\gamma)
      \otimes U_{ij}(S^j_\gamma)^* h
  \right) \\
&=\sum_{\delta\in J_i,\ \gamma\in J_j}
   \langle e^j_\beta\otimes e^i_\delta,\,
          t_{ij}(e^i_\alpha\otimes e^j_\gamma)\rangle\,
   e^j_\beta\otimes
   \widetilde T_i(e^i_\delta\otimes U_{ij}(S^j_\gamma)^* h).
\end{align*}
Applying $\widetilde T_i(e^i_\delta\otimes k)=S^i_\delta k$ and summing over $\beta$,
this expression becomes
\[
\sum_{\beta\in J_j} e^j_\beta\otimes
\left(
\sum_{\delta\in J_i,\ \gamma\in J_j}
\langle e^j_\beta\otimes e^i_\delta,\,
        t_{ij}(e^i_\alpha\otimes e^j_\gamma)\rangle\,
S^i_\delta U_{ij}(S^j_\gamma)^* h
\right).
\]
By \eqref{eq:S_doubly_twisted_assumed}, the inner sum equals $(S^j_\beta)^* S^i_\alpha h$,
and hence the above expression reduces to
\[
\sum_{\beta\in J_j} e^j_\beta\otimes (S^j_\beta)^* S^i_\alpha h
=\widetilde T_j^{*} S^i_\alpha h
=\widetilde T_j^{*}\widetilde T_i(e^i_\alpha\otimes h).
\]
Therefore, the representation $(\sigma,T_1,\ldots,T_k)$ is doubly twisted.
\end{proof}
The preceding proposition shows how a family of operators
$\{S^i_\alpha\}$ satisfying the coordinate relations
\eqref{eq:S_twisted_assumed}--\eqref{eq:S_doubly_twisted_assumed}
gives rise to a doubly twisted representation of the product system $\mathbb E$.
We now establish the converse construction.
We first compute the adjoint of the operator $\widetilde T_j$.

\begin{lemma}\label{lem:adjoint-Ttilde}
For each $j\in I_k$, the adjoint $\widetilde T_j^*:\mathcal H\to \mathcal H_j\otimes\mathcal H$
is given by
\[
\widetilde T_j^{*} h
=
\sum_{\gamma\in J_j} e^j_\gamma \otimes (S^j_\gamma)^* h,
\qquad h\in\mathcal H,
\]
where the series converges in the strong operator topology.
\end{lemma}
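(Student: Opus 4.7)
The plan is to verify the formula by evaluating both sides on a total subset of $\mathcal H_j\otimes\mathcal H$, essentially isolating the computation already performed in the proof of Proposition~\ref{prop:construct_doubly_twisted}. Fix $h\in\mathcal H$. I would first argue that the series
\[
\sum_{\gamma\in J_j} e^j_\gamma\otimes (S^j_\gamma)^* h
\]
converges in $\mathcal H_j\otimes\mathcal H$. Since $\{e^j_\gamma\}_{\gamma\in J_j}$ is an orthonormal basis of $\mathcal H_j$, the summands are pairwise orthogonal in $\mathcal H_j\otimes\mathcal H$, so norm convergence reduces to summability of the squared norms $\|(S^j_\gamma)^* h\|^2$. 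The row-contraction hypothesis $\sum_{\gamma} S^j_\gamma(S^j_\gamma)^*\le I_{\mathcal H}$ (imposed in Example~\ref{ex2:A=C}) yields
\[
\sum_{\gamma\in J_j}\|(S^j_\gamma)^* h\|^2
= \Big\langle \sum_{\gamma\in J_j} S^j_\gamma(S^j_\gamma)^* h,\,h\Big\rangle
\le \|h\|^2,
\]
so the series converges in norm, and defines a bounded linear map $R:\mathcal H\to \mathcal H_j\otimes\mathcal H$ with $\|R\|\le 1$. In particular, the partial sums of $R$ converge strongly in $h$.

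Next I would check that $R=\widetilde T_j^{*}$ by comparing matrix coefficients on elementary tensors. For $\xi\in\mathcal H_j$ and $h'\in\mathcal H$, the definition~\eqref{eq:def_Ttilde_converse} of $\widetilde T_j$ gives
\[
\langle \xi\otimes h',\,\widetilde T_j^{*}h\rangle
= \langle \widetilde T_j(\xi\otimes h'),\,h\rangle
= \sum_{\gamma\in J_j}\overline{\langle e^j_\gamma,\xi\rangle}\,\langle S^j_\gamma h',\,h\rangle
= \sum_{\gamma\in J_j}\langle \xi,e^j_\gamma\rangle\,\langle h',\,(S^j_\gamma)^* h\rangle,
\]
which is precisely $\langle \xi\otimes h',\,R h\rangle$ by bilinearity of the inner product on $\mathcal H_j\otimes\mathcal H$ and the convergence established above.

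Finally, since elementary tensors $\xi\otimes h'$ have dense linear span in $\mathcal H_j\otimes\mathcal H$, the equality of inner products for all such $\xi\otimes h'$ forces $\widetilde T_j^{*}h = R h$. As $h\in\mathcal H$ was arbitrary, this gives the desired formula, with convergence in norm in $h$ and therefore in the strong operator topology as an identity of operators. The only delicate point is the convergence of the series, which is handled cleanly by the row-contraction assumption; there is no deeper obstacle.
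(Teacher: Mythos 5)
Your proof is correct and follows essentially the same route as the paper: compute $\langle \xi\otimes h',\widetilde T_j^{*}h\rangle$ from the definition of $\widetilde T_j$, match it with the proposed series term by term, and conclude by density of elementary tensors in $\mathcal H_j\otimes\mathcal H$. Your explicit verification that the row-contraction bound $\sum_{\gamma}S^j_\gamma (S^j_\gamma)^*\le I_{\mathcal H}$ gives norm convergence of the series is a welcome extra detail that the paper leaves implicit, but it does not change the argument.
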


\begin{proof}
Let $x\in\mathcal H_j$ and $h,h'\in\mathcal H$.
Using the expansion of $x$ with respect to the fixed orthonormal basis
$\{e^j_\gamma\}_{\gamma\in J_j}$, we have
\(
\widetilde T_j(x\otimes h)
=
\displaystyle\sum_{\gamma\in J_j}\langle e^j_\gamma,x\rangle\, S^j_\gamma h.
\)
Consequently, for any $h' \in \clh,$
\begin{align*}
\langle x\otimes h,\, \widetilde T_j^* h' \rangle
=\left\langle \sum_{\gamma\in J_j}\langle e^j_\gamma,x\rangle\, S^j_\gamma h,\,
h'\right\rangle 
=\left\langle x\otimes h,\,
\sum_{\gamma\in J_j} e^j_\gamma \otimes (S^j_\gamma)^* h'
\right\rangle.
\end{align*}
Therefore, for all $h\in\mathcal H,\, \widetilde T_j^{*} h
=
\displaystyle\sum_{\gamma\in J_j} e^j_\gamma \otimes (S^j_\gamma)^* h.$
The series
on the right-hand side converges in the strong operator topology.
\end{proof}

\begin{theorem}\label{thm:matrix-characterization}
In the setting of Example~\ref{ex2:A=C},
$(\sigma,T_1,\ldots,T_k)$ is a twisted representation of the product system
$\mathbb E$ with respect to the twists $\{t_{ij}\otimes U_{ij}\}_{i\neq j}$
if and only if there exists a family of operators $\{S^{i}_\alpha : i\in I_k,\ \alpha\in J_i\}
\subseteq\mathcal B(\mathcal H)$
such that for all $i,j\in I_k$ and $\alpha\in J_i$, $\beta\in J_j$,
\[
\sum_{\alpha\in J_i} S^{i}_\alpha (S^{i}_\alpha)^* \le I_{\mathcal H},
\qquad
S^{i}_\alpha S^{j}_\beta
=
\sum_{\gamma\in J_j,\ \delta\in J_i}
\big\langle e^j_\gamma \otimes e^i_\delta,\,
t_{ij}(e^i_\alpha \otimes e^j_\beta)\big\rangle\,
S^{j}_\gamma S^{i}_\delta\, U_{ij}.
\]
Moreover, $(\sigma,T_1,\ldots,T_k)$ is a \emph{doubly twisted representation}
if and only if, in addition,
\[
(S^{j}_\beta)^* S^{i}_\alpha
=
\sum_{\delta\in J_i,\ \gamma\in J_j}
\big\langle e^j_\beta \otimes e^i_\delta,\,
t_{ij}(e^i_\alpha \otimes e^j_\gamma)\big\rangle\,
S^{i}_\delta\, U_{ij}\,(S^{j}_\gamma)^*.
\]
\end{theorem}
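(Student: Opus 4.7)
The plan is to establish the two implications separately. The ``if'' direction is already essentially contained in Proposition~\ref{prop:construct_doubly_twisted}, which shows how a family $\{S^i_\alpha\}$ satisfying the three displayed relations yields a (doubly) twisted representation by setting $\widetilde{T}_i(\xi \otimes h) := \sum_\alpha \langle e^i_\alpha, \xi\rangle S^i_\alpha h$. So the real work is the ``only if'' direction: starting from a twisted representation $(\sigma, T_1, \ldots, T_k)$, define $S^i_\alpha := T_i(e^i_\alpha) \in \mathcal{B}(\mathcal{H})$, so that $S^i_\alpha h = \widetilde{T}_i(e^i_\alpha \otimes h)$, and verify the stated matrix relations.

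For the row contractivity, I would factor $\widetilde{T}_i = R_i U_i$, where $U_i : \mathcal{H}_i \otimes \mathcal{H} \to \bigoplus_{\alpha \in J_i} \mathcal{H}$ is the canonical unitary $\xi \otimes h \mapsto (\langle e^i_\alpha, \xi\rangle h)_\alpha$, and $R_i = [\,S^i_\alpha\,]_\alpha$ is the row operator. Since $(\sigma, T_i)$ is completely contractive, $\widetilde{T}_i$ is a contraction, so $R_i$ is a contraction as well, giving $\sum_{\alpha} S^i_\alpha (S^i_\alpha)^* = R_i R_i^* \le I_{\mathcal{H}}$. For the twisted matrix identity, I would apply the covariance relation $\widetilde{T}_i(I_{\mathcal{H}_i} \otimes \widetilde{T}_j) = \widetilde{T}_j(I_{\mathcal{H}_j} \otimes \widetilde{T}_i)(t_{ij} \otimes U_{ij})$ to elementary tensors $e^i_\alpha \otimes e^j_\beta \otimes h$. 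The left-hand side collapses immediately to $S^i_\alpha S^j_\beta h$. For the right-hand side, expand $t_{ij}(e^i_\alpha \otimes e^j_\beta) = \sum_{\gamma, \delta} c_{\gamma,\delta}\, e^j_\gamma \otimes e^i_\delta$ with $c_{\gamma,\delta} := \langle e^j_\gamma \otimes e^i_\delta,\, t_{ij}(e^i_\alpha \otimes e^j_\beta)\rangle$, apply $(t_{ij} \otimes U_{ij})$, then $(I_{\mathcal{H}_j} \otimes \widetilde{T}_i)$, and finally $\widetilde{T}_j$ to produce $\sum_{\gamma, \delta} c_{\gamma,\delta}\, S^j_\gamma S^i_\delta U_{ij} h$. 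Since $h$ is arbitrary, the required matrix identity follows.

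For the doubly twisted identity, I would use Lemma~\ref{lem:adjoint-Ttilde} to express $\widetilde{T}_j^* h = \sum_{\gamma \in J_j} e^j_\gamma \otimes (S^j_\gamma)^* h$, and apply the doubly twisted relation~\eqref{eq:doubly} to $e^i_\alpha \otimes h$. The left-hand side evaluates to $\widetilde{T}_j^* S^i_\alpha h = \sum_{\beta \in J_j} e^j_\beta \otimes (S^j_\beta)^* S^i_\alpha h$. Expanding the right-hand side through $(I_{\mathcal{H}_i} \otimes \widetilde{T}_j^*)$, then $(t_{ij} \otimes U_{ij})$, and finally $(I_{\mathcal{H}_j} \otimes \widetilde{T}_i)$, yields $\sum_{\beta \in J_j} e^j_\beta \otimes \Bigl(\sum_{\delta, \gamma} \langle e^j_\beta \otimes e^i_\delta,\, t_{ij}(e^i_\alpha \otimes e^j_\gamma)\rangle\, S^i_\delta U_{ij} (S^j_\gamma)^*\Bigr) h$. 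Matching the $e^j_\beta$-components in $\mathcal{H}_j \otimes \mathcal{H}$ gives the stated identity, while the opposite direction is handled by Proposition~\ref{prop:construct_doubly_twisted}.

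The main obstacle is not conceptual but notational: one must carefully keep track of the multiple basis indices $\alpha, \beta, \gamma, \delta$ and the direction of the flip $t_{ij}$ versus $t_{ji} = t_{ij}^{-1}$ when matching matrix coefficients. Once the expansion $t_{ij}(e^i_\alpha \otimes e^j_\beta) = \sum c_{\gamma, \delta} e^j_\gamma \otimes e^i_\delta$ is fixed, the two calculations above are just the reverse of the computation performed in the proof of Proposition~\ref{prop:construct_doubly_twisted}, so no new identities are needed.
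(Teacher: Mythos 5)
Your proposal is correct and follows essentially the same route as the paper: the ``if'' direction is delegated to Proposition~\ref{prop:construct_doubly_twisted}, and the ``only if'' direction defines $S^i_\alpha h=\widetilde T_i(e^i_\alpha\otimes h)$, gets row contractivity from contractivity of $\widetilde T_i$, and reads off both matrix identities by applying the twisted and doubly twisted relations to elementary tensors and expanding $t_{ij}(e^i_\alpha\otimes e^j_\gamma)$ in the fixed bases (your ``matching of $e^j_\beta$-components'' is exactly the paper's compression by $(L^j_\beta)^*$, and Lemma~\ref{lem:adjoint-Ttilde} is used identically).
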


\begin{proof}
The ``if'' direction follows from Proposition~\ref{prop:construct_doubly_twisted}.
Conversely, suppose that $(\sigma,T_1,\ldots,T_k)$ is a twisted representation
of $\mathbb E$ on $\mathcal H$ with respect to the twists
$\{t_{ij}\otimes U_{ij}\}_{i\neq j}$.
For each $i\in I_k$ and $\alpha\in J_i$, define
\[
S^i_\alpha h := \widetilde T_i(e^i_\alpha\otimes h),
\qquad h\in\mathcal H.
\]
Since each $\widetilde T_i$ is a contraction, the family
$\{S^i_\alpha\}_{\alpha\in J_i}$ is a row contraction, that is, $\sum_{\alpha\in J_i} S^i_\alpha (S^i_\alpha)^* \le I_{\mathcal H}.$
Fix $i,j\in I_k$ and $\alpha\in J_i$, $\beta\in J_j$.
Using the twisted covariance relation for $(\sigma,T_1,\ldots,T_k)$ and 
expanding 
$t_{ij}(e^i_{\alpha} \otimes e^j_{\beta})= \sum_{\gamma \in J_j, \, \delta \in J_i} \langle e^j_\gamma \otimes e^i_{\delta}, \,
t_{ij}(e^i_\alpha \otimes e^j_\beta) \rangle e^j_\gamma \otimes e^i_{\delta},$ with respect to the fixed orthonormal bases, 
we obtain
\[
S^i_\alpha S^j_\beta
=
\sum_{\gamma\in J_j,\ \delta\in J_i}
\big\langle e^j_\gamma\otimes e^i_\delta,\,
t_{ij}(e^i_\alpha\otimes e^j_\beta)\big\rangle\,
S^j_\gamma S^i_\delta\, U_{ij},
\]
with convergence in the strong operator topology.
Thus, the relations \eqref{eq:S_twisted_assumed} hold.

Now assume that $(\sigma,T_1,\ldots,T_k)$ is a doubly twisted representation.
For $\beta\in J_j$, define an operator
$L^j_{\beta} : \mathcal H \to \mathcal H_j \otimes \mathcal H$ by $L^j_{\beta}(h) := e^j_{\beta} \otimes h,
\, h\in\mathcal H.$
Then $S^j_{\beta} = \widetilde T_j L^j_{\beta}$, and hence $(S^j_{\beta})^{*} = (L^j_{\beta})^{*}\,\widetilde T_j^{*}.$
For $x\in\mathcal H_j$ and $h\in\mathcal H$, we have $(L^j_{\beta})^{*}(x\otimes h)
=
\langle e^j_{\beta}, x\rangle\, h.$
Using the doubly twisted covariance relation, together with the above
expressions for the adjoints of $L^j_{\beta}$ and $\widetilde T_j^{*}$ (cf. Lemma \ref{lem:adjoint-Ttilde}) and the
expansion of $t_{ij}(e^i_\alpha\otimes e^j_\beta)$ with respect to the fixed
orthonormal bases, we obtain
\[
(S^j_\beta)^* S^i_\alpha
=
\sum_{\delta\in J_i,\ \gamma\in J_j}
\big\langle e^j_\beta\otimes e^i_\delta,\,
t_{ij}(e^i_\alpha\otimes e^j_\gamma)\big\rangle\,
S^i_\delta\, U_{ij}\,(S^j_\gamma)^*.
\]
Thus, the relation \eqref{eq:S_doubly_twisted_assumed} holds.
\end{proof}

Consequently, the above example shows that, after fixing orthonormal bases of the
fibers $\mathcal H_i$, there is a bijective correspondence between
the twisted (respectively, doubly twisted) representations of $\mathbb E$, and
the families of operators $\{S^i_\alpha\}$ satisfying
\eqref{eq:S_twisted_assumed} (respectively, together with
\eqref{eq:S_doubly_twisted_assumed}). Thus the abstract notion of a
(doubly) twisted representation becomes equivalent to a concrete operator model.

We now specialize to the automorphic case, where each fiber of the product
system is induced by a $*$-automorphism of the coefficient algebra.
In this setting, twisted and doubly twisted representations admit a concrete
description in terms of operator tuples satisfying natural covariance and
(commutation) relations. 

\begin{example}\label{ex2}
Let $\mathcal A$ be a $C^*$-algebra and let
$\alpha_1,\ldots,\alpha_k\in\text{Aut}(\mathcal A)$ be commuting $*$-automorphisms.
Let $\mathbb E$ denote the associated automorphic product system over
$\mathbb Z_+^k$, whose fibers are $E_i={}_{\alpha_i}\mathcal A$.
Thus each $E_i=\mathcal A$ as a vector space, with right action by multiplication,
left action $\varphi_i(a)c=\alpha_i(a)c$, and inner product
$\langle c_1,c_2\rangle=c_1^*c_2$.
Let $\sigma:\mathcal A\to\mathcal B(\mathcal H)$ be a nondegenerate
$*$-representation, and let $\{U_{ij}\}_{i\neq j}\subseteq\mathcal B(\mathcal H)$
be a family of pairwise commuting unitaries such that $U_{ji}=U_{ij}^*$ and
$U_{ij}\sigma(a)=\sigma(a)U_{ij}$ for all $a\in\mathcal A$.
Suppose $S_1,\ldots,S_k\in\mathcal B(\mathcal H)$ are contractions such that
for all $a\in\mathcal A$ and $i\neq j$,
\[
\sigma(a)S_i=S_i\sigma(\alpha_i(a)),
\qquad
S_iS_j=U_{ij}S_jS_i,
\qquad
S_i^*S_j=U_{ij}^*S_jS_i^*.
\]
Define operators $T_i:E_i\to\mathcal B(\mathcal H)$ by
\[
T_i(a):=S_i\sigma(a),\qquad a\in E_i={}_{\alpha_i}\mathcal A.
\]
Then $(\sigma,T_1,\ldots,T_k)$ is a doubly twisted representation of the product
system $\mathbb E$ on $\mathcal H$ with respect to the twists
$\{t_{ij}\otimes U_{ij}\}_{i\neq j}$, where (under the natural identifications
$E_i\otimes_{\mathcal A}E_j\simeq {}_{\alpha_j\circ\alpha_i}\mathcal A$ and using the fact that automorphisms commute) the
correspondence isomorphism $t_{ij}:E_i\otimes_{\mathcal A}E_j\longrightarrow E_j\otimes_{\mathcal A}E_i$
is given by
\[
t_{ij}(a\otimes b)=\alpha_i^{-1}\alpha_j(a)\otimes b,
\qquad a,b\in\mathcal A.
\]
\end{example}

We verify the details of this construction in Theorem~\ref{thm:char-for-Ei-alphaA},
where we establish a necessary and sufficient criterion characterizing
(doubly) twisted representations of automorphic product systems in terms of
concrete operator relations.

\begin{thm}\label{thm:char-for-Ei-alphaA}
In the setting of Example~\ref{ex2},
$(\sigma,T_1,\ldots,T_k)$ is  a \emph{doubly twisted representation} 
with respect to the twists $\{t_{ij}\otimes U_{ij}\}_{i\neq j}$
if and only if there exist contractions
$S_1,\ldots,S_k\in\mathcal B(\mathcal H)$ such that for all
$a\in\mathcal A$ and $i\neq j$,
\[
\sigma(a)S_i=S_i\sigma(\alpha_i(a)),
\quad 
S_iS_j=U_{ij}S_jS_i,
\quad
S_i^*S_j=U_{ij}^*S_jS_i^*,
\]
with $T_i(a)=S_i\sigma(a).$ 
\end{thm}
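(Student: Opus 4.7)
The plan is to reduce the abstract covariance, twisted, and doubly twisted identities to concrete operator equations on $\mathcal H$ by exploiting the canonical unitary $\Phi_i : E_i \otimes_\sigma \mathcal H \to \mathcal H$, $a\otimes h\mapsto \sigma(a)h$, which is a unitary precisely because $E_i={}_{\alpha_i}\mathcal A$ is the whole algebra and $\sigma$ is nondegenerate. A short preliminary computation shows that, under $\Phi_i$ and the analogous identifications of the iterated tensor products, the flip $t_{ij}(a\otimes b)=\alpha_i^{-1}\alpha_j(a)\otimes b$ pushes forward to the \emph{identity} on $\mathcal H$, using only the elementary identity $\alpha_i\,\alpha_i^{-1}\alpha_j=\alpha_j$, while the unitary $U_{ij}$ on the outer $\mathcal H$-factor pushes forward to $U_{ij}$ itself. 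Consequently each $\widetilde T_i$ corresponds to $S_i:=\widetilde T_i\Phi_i^{-1}$ and each $\widetilde T_j^{\,*}$ corresponds to $S_j^{\,*}$, so that the entire set of relations defining a doubly twisted representation becomes a family of operator equations among the $S_\ell$ and $U_{ij}$ on $\mathcal H$.

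For the $(\Leftarrow)$ direction, given contractions $S_i$ satisfying the three hypotheses, I would set $T_i(a):=S_i\sigma(a)$. Linearity is immediate; the bimodule covariance $T_i(a\xi b)=\sigma(a)T_i(\xi)\sigma(b)$ follows, through the left-action formula $a\cdot\xi=\alpha_i(a)\xi$ on ${}_{\alpha_i}\mathcal A$, from the assumed intertwining $\sigma(a)S_i=S_i\sigma(\alpha_i(a))$; complete contractivity is clear since $\widetilde T_i=S_i\Phi_i$ is a contraction composed with a unitary. Next I would expand both sides of the twisted identity on an elementary tensor $a\otimes b\otimes h$: using covariance to push $\sigma(a)$ past $S_j$ together with the commutation $U_{ij}\sigma(\cdot)=\sigma(\cdot)U_{ij}$, the identity collapses to $S_iS_j=S_jS_iU_{ij}=U_{ij}S_jS_i$ (using the commutation $U_{ij}S_\ell=S_\ell U_{ij}$ of Definition~\ref{def:twisted}), which is exactly the hypothesis. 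The doubly twisted identity is handled by the same scheme using $\widetilde T_j^{\,*}=\Phi_j^{-1}S_j^{\,*}$; it reduces to $S_j^{\,*}S_i=U_{ij}S_iS_j^{\,*}$, which is the adjoint of the assumed relation $S_i^{\,*}S_j=U_{ij}^{\,*}S_jS_i^{\,*}$.

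For the $(\Rightarrow)$ direction I would define $S_i:=\widetilde T_i\Phi_i^{-1}$, which is manifestly a contraction. From $\Phi_i^{-1}\sigma(a)h=a\otimes h$ one reads off $T_i(a)=S_i\sigma(a)$. Unravelling the three defining identities of Definitions~\ref{def:twisted}--\ref{def:doubly-twisted} at the level of the pushed-forward operators on $\mathcal H$ then yields, respectively, $\sigma(a)S_i=S_i\sigma(\alpha_i(a))$, $S_iS_j=U_{ij}S_jS_i$, and $S_i^{\,*}S_j=U_{ij}^{\,*}S_jS_i^{\,*}$. Throughout this step, the auxiliary intertwining condition $U_{ij}\widetilde T_\ell=\widetilde T_\ell(I_{E_\ell}\otimes U_{ij})$ of Definition~\ref{def:twisted} translates to $U_{ij}S_\ell=S_\ell U_{ij}$, which is exactly what allows the reshuffling of $U_{ij}$ that appears in the last two relations.

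The main technical obstacle will be the ``absorption'' of $\sigma$-factors in the possibly non-unital case: each unravelling arrives at an identity of the form $A\,\sigma(c)=B\,\sigma(c)$ valid for all $c\in\mathcal A$, from which one must deduce the bare operator equality $A=B$ on $\mathcal H$. This is handled by the nondegeneracy of $\sigma$ i.e.\ the density of $\{\sigma(c)h : c\in\mathcal A,\ h\in\mathcal H\}$ in $\mathcal H$, or equivalently by passing to an approximate identity in $\mathcal A$. Beyond this bookkeeping, the key structural point driving the theorem is that the commutativity of the automorphisms $\alpha_1,\ldots,\alpha_k$ is exactly what makes the pushforward of $t_{ij}$ trivial; without this assumption, $t_{ij}$ would correspond to a nontrivial conjugation by $\sigma\circ(\alpha_j\alpha_i^{-1})$, and the resulting relations among the $S_i$ would be noticeably more complicated.
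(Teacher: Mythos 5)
Your proposal is correct in substance and arrives at the same reduction as the paper, but by a cleaner device. The paper's forward direction extracts $S_i$ as the SOT--limit of $T_i(u_\lambda)$ along a contractive approximate unit, proves $T_i(b)=S_i\sigma(b)$, and then evaluates \eqref{eq:twisted} and \eqref{eq:doubly} on elementary tensors, using the SOT adjoint formula $\widetilde T_j^{*}h=\lim_\lambda \mu_\lambda\otimes S_j^{*}h$ and nondegeneracy of $\sigma$ to strip the $\sigma(\cdot)$ factors; the converse is dispatched as a direct computation. You instead exploit that nondegeneracy makes $\Phi_i:E_i\otimes_\sigma\mathcal H\to\mathcal H$, $a\otimes h\mapsto\sigma(a)h$, a unitary, set $S_i:=\widetilde T_i\Phi_i^{-1}$ (so $T_i(a)=S_i\sigma(a)$ and $\widetilde T_j^{*}=\Phi_j^{-1}S_j^{*}$ come for free), and transport all relations: under the identifications $E_i\otimes E_j\otimes_\sigma\mathcal H\cong\mathcal H$, $a\otimes b\otimes h\mapsto\sigma(\alpha_j(a)b)h$, the flip $t_{ij}$ pushes to the identity and $t_{ij}\otimes U_{ij}$ to $U_{ij}$, so \eqref{eq:twisted} and \eqref{eq:doubly} become $S_iS_j=S_jS_iU_{ij}$ and $S_j^{*}S_i=S_iU_{ij}S_j^{*}$. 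This avoids approximate-unit limits entirely and makes the converse (in particular boundedness of $\widetilde T_i$) automatic, which is a genuine gain in transparency. One small correction: the triviality of the pushforward of $t_{ij}$ uses only the formula $t_{ij}(a\otimes b)=\alpha_i^{-1}\alpha_j(a)\otimes b$ together with $\alpha_i\circ\alpha_i^{-1}\alpha_j=\alpha_j$; commutativity of the automorphisms is what makes $t_{ij}$ a bimodule map (i.e.\ makes the product system exist), not what trivializes the pushforward.

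One caveat you should make explicit, although it is equally present in the paper's own converse: in the $(\Leftarrow)$ direction you invoke $U_{ij}S_\ell=S_\ell U_{ij}$ ``from Definition~\ref{def:twisted}'', but there this commutation is part of what must be \emph{verified}, since being a twisted representation requires $U_{ij}\widetilde T_\ell=\widetilde T_\ell(I_{E_\ell}\otimes U_{ij})$, which under your identification is exactly $U_{ij}S_\ell=S_\ell U_{ij}$, and it does not follow from the three listed relations (for instance $\mathcal A=\mathbb C$, $S_2=0$ and $S_1$ any contraction not commuting with $U_{12}$ satisfy all three). Your transported computation shows that \eqref{eq:twisted} and \eqref{eq:doubly} are equivalent to $S_iS_j=S_jS_iU_{ij}$ and $S_j^{*}S_i=S_iU_{ij}S_j^{*}$, which match the stated hypotheses $S_iS_j=U_{ij}S_jS_i$ and $S_i^{*}S_j=U_{ij}^{*}S_jS_i^{*}$ only after commuting $U_{ij}$ past the $S$'s. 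In the forward direction this is harmless, since there the commutation is derived from the twisted-representation axioms (as the paper does); in the converse it should either be added to the list of operator hypotheses or checked separately, and as written both your argument and the paper's are silent on this point.
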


\begin{proof}
Let $(\sigma,T_1,\ldots,T_k)$ be a (respectively, doubly) twisted representation
of $\mathbb E$.
For $i\neq j$, the interior tensor product
$E_i\otimes_{\mathcal A}E_j$ is canonically identified with
${}_{\alpha_j\circ\alpha_i}\mathcal A$.
The correspondence isomorphisms
\(
t_{ij}:E_i\otimes_{\mathcal A}E_j \longrightarrow E_j\otimes_{\mathcal A}E_i
\)
satisfy the coherence relations required of a product system and therefore
determine the product system structure on $\mathbb E$
(see, Section~4 (4.3) of~\cite{Solel}).

Fix $i\in I_k$ and let $(u_\lambda)$ be a contractive approximate unit in
$\mathcal A$. Since $(\sigma,T_1,\ldots,T_k)$ is a completely contractive
representation of $\mathbb E$, each operator $T_i(u_\lambda)$ is a contraction.
Hence the net $\{T_i(u_\lambda)\}$ is uniformly bounded, and we define
\[
S_i := \text{\emph{SOT-}}\!\lim_{\lambda} T_i(u_\lambda)\ \in \mathcal B(\mathcal H).
\]
The limit exists and satisfies $\|S_i\|\le 1$.
Moreover, for every $b\in\mathcal A$ we have
\[
T_i(u_\lambda)\sigma(b)
= T_i(u_\lambda b)
\longrightarrow T_i(b)
\quad \text{in norm}.
\]
Passing to the limit yields $T_i(b)=S_i\sigma(b),\, b\in\mathcal A.$
Moreover, for all $a,b\in\mathcal A$, we compute
\[
S_i\sigma(\alpha_i(b))\sigma(a)
= T_i(\alpha_i(b)a)
= T_i(\varphi_i(b)a)
= \sigma(b)T_i(a)
= \sigma(b)S_i\sigma(a).
\]
Since $\sigma$ is nondegenerate, we obtain the covariance relation
\begin{equation}\label{eq:comm-sigma-Si}
\sigma(a)\,S_i = S_i\,\sigma(\alpha_i(a)),
\qquad a\in\mathcal A,\ i=1,\ldots,k.
\end{equation}
Let $a\otimes b\otimes h\in E_i\otimes E_j\otimes\mathcal H$.
Using the definition of $\widetilde T_i$ and $\widetilde T_j$, we compute
\begin{align*}
\widetilde T_i (I_{E_i}\otimes \widetilde T_j)
(a\otimes b\otimes h)
= T_i(a)T_j(b)h 
= S_i\sigma(a)\,S_j\sigma(b)h 
= S_iS_j\,\sigma(\alpha_i(a)b)h.
\end{align*}
On the other hand,
\begin{align*}
\widetilde T_j (I_{E_j}\otimes \widetilde T_i)
(t_{ij}\otimes U_{ij})(a\otimes b\otimes h)
&=
\widetilde T_j (I_{E_j}\otimes \widetilde T_i)
\bigl(\alpha_i^{-1}\alpha_j(a)\otimes b\otimes U_{ij}h\bigr) \\
&=
S_j\sigma(\alpha_i^{-1}\alpha_j(a))\,S_i\sigma(b)\,U_{ij}h \\
&=
S_jS_i\,\sigma(\alpha_j(a)b)\,U_{ij}h .
\end{align*}
The twisted relation implies
\begin{equation}\label{eq:Si-Sj-twisted}
S_i S_j\,\sigma(\alpha_i(a)b)h
=
S_j S_i\,\sigma(\alpha_j(a)b)\,U_{ij}h,
\qquad a,b\in\mathcal A,\ h\in\mathcal H.
\end{equation}
As $U_{ij}\,\widetilde T_\ell
=
\widetilde T_\ell\,(I_{E_\ell}\otimes U_{ij}),$ for $i,j,\ell \in I_k,\ i\neq j,$ therefore, for $a\in E_\ell$ and $h\in\mathcal H$,
\begin{align*}
U_{ij}T_\ell(a)h
= U_{ij}\,\widetilde T_\ell(a\otimes h) = \widetilde T_\ell(a\otimes U_{ij}h) = T_\ell(a)\,U_{ij}h .
\end{align*}
Consequently, for all $a\in\mathcal A$ and $h\in\mathcal H,\ U_{ij}S_\ell\sigma(a)h
= S_\ell\sigma(a)U_{ij}h.$
Since $\sigma$ is nondegenerate, this implies $U_{ij}S_\ell = S_\ell U_{ij},$ for $ 1\le i\neq j,\ \ell\le k.$
Combining this with \eqref{eq:Si-Sj-twisted} and using again the nondegeneracy
of $\sigma$, we conclude that
\[
S_i S_j = S_j S_i U_{ij},
\qquad 1\le i\neq j\le k.
\]
Since $\sigma$ is nondegenerate, for any contractive approximate unit
$(u_\lambda)$ in $\mathcal A$ we have $\sigma(u_\lambda)\to I_{\mathcal H}$
in the strong operator topology. Consequently,
$S_j\sigma(u_\lambda)\to S_j$ strongly.
Taking $\ell=j$ in the preceding relation yields $U_{ij}S_j\sigma(u_\lambda)=S_j\sigma(u_\lambda)U_{ij},$ for all $\lambda.$
Passing to strong limits on both sides, we obtain $U_{ij}S_j=S_jU_{ij}.$
Combining this with the relation \eqref{eq:Si-Sj-twisted}, it follows that
\begin{equation*}\label{eq:SiSj-twisted-rel}
S_iS_j = U_{ij}S_jS_i,
\qquad 1\le i\neq j\le k.
\end{equation*}
Recall that for $E_j = {}_{\alpha_j}\mathcal A$, one has the adjoint formula
\[
T_{e_j}^{*} h
= \mathrm{SOT}\text{-}\!\lim_{\lambda}\,
\mu_{\lambda} \otimes S_j^{*} h,
\qquad h \in \mathcal H,
\]
where $(\mu_{\lambda})$ is a contractive approximate unit of $\mathcal A$
(cf.~\cite{Solel}).
Let $a\in\mathcal A$ and $h\in\mathcal H$. Using the adjoint formula for
$\widetilde T_j^*$ and the fact that $T_i(a)=S_i\sigma(a)$, we compute
\begin{align*}
\widetilde T_j^{\,*}\widetilde T_i(a\otimes h)
&= \widetilde T_j^{\,*}\bigl(S_i\sigma(a)h\bigr) \\
&= \text{\emph{SOT-}}\lim_{\lambda}
\mu_\lambda \otimes S_j^{*}S_i\sigma(a)h \\
&= \text{\emph{SOT-}}\lim_{\lambda}
\mu_\lambda \otimes \sigma(\alpha_j\alpha_i^{-1}(a))S_j^{*}S_i h \\
&= \alpha_j\alpha_i^{-1}(a)\otimes S_j^{*}S_i h .
\end{align*}

Next, we compute
\begin{align*}
&(I_{E_j}\otimes \widetilde T_i)
(t_{ij}\otimes U_{ij})
(I_{E_i}\otimes \widetilde T_j^{\,*})(a\otimes h) \\
&\qquad=
(I_{E_j}\otimes \widetilde T_i)
(t_{ij}\otimes U_{ij})
\Bigl(a\otimes \text{\emph{SOT-}}\lim_\lambda \mu_\lambda\otimes S_j^{*}h\Bigr) \\
&\qquad=
(I_{E_j}\otimes \widetilde T_i)
\Bigl(\alpha_i^{-1}\alpha_j(a)\otimes
\text{\emph{SOT-}}\lim_\lambda \mu_\lambda\otimes U_{ij}S_j^{*}h\Bigr) \\
&\qquad=
\alpha_i^{-1}\alpha_j(a)\otimes
\text{\emph{SOT-}}\lim_\lambda \widetilde T_i(\mu_\lambda\otimes U_{ij}S_j^{*}h) \\
&\qquad=
\alpha_i^{-1}\alpha_j(a)\otimes
\text{\emph{SOT-}}\lim_\lambda T_i(\mu_\lambda)U_{ij}S_j^{*}h \\
&\qquad=
\alpha_i^{-1}\alpha_j(a)\otimes S_iU_{ij}S_j^{*}h .
\end{align*}
The doubly twisted relation implies that
\begin{equation*}\label{eq:SjSi-doubly}
S_j^{*}S_i
=
S_iU_{ij}S_j^{*}
=
U_{ij}^{*} S_j S_i^{*},
\qquad 1\le i\neq j\le k.
\end{equation*}

Conversely, suppose that $\sigma$ is a nondegenerate representation of
$\mathcal A$ on $\mathcal H$, and that $S_1,\ldots,S_k\in \mathcal{B}(\mathcal H)$ satisfy the given conditions.
For $i \in I_k$, define operators $T_i:E_i\to \mathcal{B}(\mathcal H)$ by
\[
T_i(a)=S_i\sigma(a),\qquad a\in E_i={}_{\alpha_i}\mathcal A.
\]
Then each $T_i$ is completely contractive, and the covariance relation
$\sigma(a)T_i(b)=T_i(\varphi_i(a)b)$ follows immediately from the defining
relation between $\sigma$ and $S_i$.
Moreover, using the relations among the $S_i$ and the definition of the maps
$t_{ij}$, a direct computation shows that the operators
$(\sigma,T_1,\ldots,T_k)$ satisfy the twisted (respectively, doubly twisted)
commutation relations.
Hence $(\sigma,T_1,\ldots,T_k)$ is a (doubly) twisted representation of the
product system $\mathbb{E}$.
\end{proof}

We now present an explicit example of operators $S_i$ satisfying the hypotheses
of Theorem~\ref{thm:char-for-Ei-alphaA}. By the theorem, this yields a
(doubly) twisted representation of the
associated product system.

\begin{example}
Let $\mathcal A=\mathbb C^3$ and $\mathcal H=\mathbb C^3$.
For $i=1,2$, define automorphisms $\alpha_i\in \text{Aut}(\mathcal A)$ by
\[
\alpha_1(a_1,a_2,a_3)=(a_2,a_3,a_1),
\qquad
\alpha_2(a_1,a_2,a_3)=(a_3,a_1,a_2).
\]
Note that $\alpha_1$ and $\alpha_2$ commute.
Consider the representation $\sigma:\mathcal A\to\mathcal B(\mathcal H)$ given by
\[
\sigma(a_1,a_2,a_3)=
\begin{bmatrix}
a_1 & 0 & 0\\
0 & a_2 & 0\\
0 & 0 & a_3
\end{bmatrix}.
\]
Define operators $S_1,S_2\in\mathcal B(\mathcal H)$ by
\[
S_1=
\begin{bmatrix}
0 & 0 & 1\\
1 & 0 & 0\\
0 & 1 & 0
\end{bmatrix},
\qquad
S_2=
\begin{bmatrix}
0 & 1 & 0\\
0 & 0 & 1\\
1 & 0 & 0
\end{bmatrix}.
\]
Then $S_1$ and $S_2$ are permutation matrices, hence unitaries.
A direct computation shows that
\[
\sigma(a)S_i=S_i\sigma(\alpha_i(a)),
\qquad a\in\mathcal A,\ i=1,2.
\]
Moreover, $S_1S_2=S_2S_1=I$. Since $S_1$ and $S_2$ commute and are unitary, it follows that
$S_2^*S_1=S_1S_2^*$.
Therefore, the pair $(S_1,S_2)$ together with $\sigma$ satisfies all the hypotheses of
Theorem~\ref{thm:char-for-Ei-alphaA} (with $U_{12}=I$), and hence determines a (doubly) twisted representation of the associated product system $\mathbb E$.
\end{example}
Recall that the unit polydisc in $\mathbb C^k$ is $\mathbb D^k := \{ z=(z_1,\ldots,z_k)\in\mathbb C^k : |z_i|<1,\ 1\leq i \leq k \}.$
For a Hilbert space $\mathcal D$, the $\mathcal D$-valued Hardy space on the polydisc $\mathbb D^k$, $k \geq 1$ is
\[
H^2_{\mathcal D}(\mathbb D^k)
=
H^2(\mathbb D^k)\otimes \mathcal D
=
\left\{
f(z)=\sum_{n\in\mathbb Z_+^k} z^n \eta_n :
\eta_n\in\mathcal D,\ 
\sum_{n\in\mathbb Z_+^k} \|\eta_n\|^2 < \infty
\right\}.
\]
The monomials $\{z^n\eta : n\in\mathbb Z_+^k,\ \eta\in\mathcal D\}$ form an
orthonormal basis for $H^2_{\mathcal D}(\mathbb D^k)$.
\begin{example}
Fix $\lambda \in \T$, and define $D[\lambda] \in \mathcal{B}(H^2(\D))$ by
\[
D[\lambda] z^m = \lambda^m z^m \qquad (m \in \Z_+).
\]
Consider the Hilbert space $\clh = H^2(\D^2) \oplus H^2(\D^2)$, and isometries $V_1= \mbox{diag}(M_z \otimes I_{H^2(\D)}, D[\lambda] \otimes M_z)$ and $V_2= \mbox{diag}(D[\lambda] \otimes M_z, M_z \otimes I_{H^2(\D)})$ on $\clh$. Then  $(V_1, V_2)$ is a doubly twisted isometry on $\clh$ with $U = \mbox{diag}\left(\bar{\lambda} I_{H^2(\D^2)}, {\lambda}I_{H^2(\D^2)}\right)$ (see \cite[Example~2.1]{RSS}).

 Let \( \mathcal{A}= M_2(\mathbb{C}) \). Define automorphisms \( \alpha_1, \alpha_2 \in \mathrm{Aut}(\mathcal{A}) \) by
\[
\alpha_1(a) = U_1 a U_1^*, \quad \alpha_2(a) = U_2 a U_2^*,
\]
where
\[
U_1 = \begin{bmatrix} 1 & 0 \\ 0 & -1 \end{bmatrix}, \quad U_2 = \begin{bmatrix} 0 & 1 \\ 1 & 0 \end{bmatrix}.
\]
Let \( \mathcal{K} = \mathbb{C}^2 \otimes \mathcal{H} \). Define a representation \( \sigma: \mathcal{A} \to \mathcal{B}(\mathcal{K}) \) by \( \sigma(a) = a \otimes I \). Also, define 
\[
S_1 := U_1 \otimes V_1, \text{~and~} S_2 := U_2 \otimes V_2 \in \mathcal{B}(\mathcal{K}).
\]
Clearly, each \( S_i \) is an isometry. Let $W = \begin{bmatrix} -1 & 0 \\ 0 & -1 \end{bmatrix}$, then
\[
S_1 S_2 = (U_1 \otimes V_1)(U_2 \otimes V_2)=U_1U_2 \otimes V_1V_2= W U_2 U_1 \otimes U V_2 V_1= (W  \otimes U )S_2 S_1.
\]
And 
\[
S^*_1 S_2 = (U_1 \otimes V_1)^*(U_2 \otimes V_2)=U^*_1U_2 \otimes V^*_1V_2= W U_2 U^*_1 \otimes U ^*V_2 V^*_1= (W  \otimes U )^*S_2 S^*_1.
\]
Therefore, the pair $(S_1, S_2)$ is a doubly twisted isometry on $\clh$ with the corresponding twist $W \otimes U.$
Also, $\sigma(a) S_i = a U_i \otimes V_i,$
and
\(
S_i\sigma(\alpha_i(a))= U_i \alpha_i(a) \otimes V_i=U_i U_i a U^*_i \otimes V_i=  a U_i \otimes V_i,
\)
where the last equality follows because $U^2_i= I$ and $U^*_i=U_i$. Thus, the covariance relations \( \sigma(a) S_i = S_i \sigma(\alpha_i(a)) \) are satisfied. This example therefore produces a non-trivial doubly twisted representation
of the product system $\mathbb E$.
\end{example}

In the setup of Theorem~\ref{thm:char-for-Ei-alphaA}, the existence of a
\emph{doubly twisted representation} $(\sigma,T_1,\ldots,T_k)$ with respect to the
twists $\{t_{ij}\otimes U_{ij}\}_{i\neq j}$ is equivalent to the existence of
contractions $S_1,\ldots,S_k\in\mathcal B(\mathcal H)$ as described there
with $T_i(a)=S_i\sigma(a).$ Now we show that if the doubly twisted  representation is isometric, then it
admits explicit Hardy–space models and recover the decomposition of
\cite{RSS}.

\begin{thm}\label{thm:model-automorphic-iff}
Assume the setup of Theorem~\ref{thm:char-for-Ei-alphaA}.
Then $(\sigma,T_1,\ldots,T_k)$ is a \emph{doubly twisted isometric} representation of
the associated automorphic product system $\mathbb E$ on a Hilbert space
$\mathcal H$ with respect to the twists $\{t_{ij}\otimes U_{ij}\}_{i\neq j}$
if and only if there exist an orthogonal decomposition $\mathcal H=\displaystyle\bigoplus_{B\subseteq I_k}\mathcal H_B$
and a unitary $\pi:\mathcal H\longrightarrow
\displaystyle\bigoplus_{B\subseteq I_k} H^2_{\mathcal D_B}(\mathbb D^{|B|})$
such that the tuple
\[
(M_1,\ldots,M_k):=(\pi S_1\pi^*,\ldots,\pi S_k\pi^*)
\]
is the doubly twisted model tuple described in
{\rm\cite[Proposition~4.3]{RSS}} and, for
$\sigma'(a):=\pi\sigma(a)\pi^*$, one has
\[
\sigma'(a)M_i = M_i\sigma'(\alpha_i(a)),
\qquad a\in\mathcal A,\ i\in I_k.
\]
\end{thm}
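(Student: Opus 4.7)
The plan is to reduce the statement, via Theorem~\ref{thm:char-for-Ei-alphaA}, to a statement about a tuple of isometries $(S_1,\ldots,S_k)$ satisfying the doubly twisted commutation relations, and then invoke the concrete Hardy--space model from~\cite[Proposition~4.3]{RSS}. First I would observe that in the present setting $(\sigma,T_1,\ldots,T_k)$ is isometric if and only if each $\widetilde T_i$ is an isometry, which upon applying $T_i(a)=S_i\sigma(a)$ and testing against a contractive approximate unit of $\mathcal A$ yields $S_i^{*}S_i=I_{\mathcal H}$. Thus, by Theorem~\ref{thm:char-for-Ei-alphaA}, the data of a doubly twisted isometric representation $(\sigma,T_1,\ldots,T_k)$ of $\mathbb E$ with twists $\{t_{ij}\otimes U_{ij}\}_{i\neq j}$ is equivalent to a tuple of isometries $(S_1,\ldots,S_k)$ on $\mathcal H$ satisfying
\[
S_iS_j=U_{ij}S_jS_i,\qquad S_i^{*}S_j=U_{ij}^{*}S_jS_i^{*},\qquad \sigma(a)S_i=S_i\sigma(\alpha_i(a)),
\]
together with the commuting unitary system $\{U_{ij}\}$ commuting with $\sigma(\mathcal A)$ and with each $S_\ell$. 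This is exactly the framework of a doubly twisted isometric tuple in the sense of~\cite{RSS}.

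Next, I would use the covariance relation $\sigma(a)S_i=S_i\sigma(\alpha_i(a))$ to show that $\sigma$ respects the coordinatewise Wold decompositions of the individual $S_i$. Taking adjoints, $S_i^{*}\sigma(a)=\sigma(\alpha_i(a))S_i^{*}$, so if $w\in W_i:=\ker S_i^{*}$ then $S_i^{*}\sigma(a)w=\sigma(\alpha_i(a))S_i^{*}w=0$, i.e.\ $\sigma(\mathcal A)W_i\subseteq W_i$; iterating $\sigma(a)S_i^n=S_i^n\sigma(\alpha_i^n(a))$ gives that the shift summand $\mathcal H_i^{1}=\bigoplus_{n\geq 0}S_i^n W_i$ is $\sigma$-invariant, and hence (by taking orthogonal complements) reducing for $\sigma$. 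By Theorem~\ref{thm: class of decom} applied to the tuple $(\sigma,S_1,\ldots,S_k)$ (which admits a joint Wold decomposition because the $S_i$ form a doubly twisted tuple, by Theorem~\ref{thm: decompo for doubly twisted rep}), we obtain the canonical orthogonal decomposition $\mathcal H=\bigoplus_{B\subseteq I_k}\mathcal H_B$ with each $\mathcal H_B$ reducing every $S_i$ and also every $\sigma(a)$.

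With the joint decomposition at hand, I would invoke \cite[Proposition~4.3]{RSS} directly on each restriction $\restr{(S_1,\ldots,S_k)}{\mathcal H_B}$ to produce wandering subspaces $\mathcal D_B$ and a unitary
\[
\pi\colon \mathcal H \longrightarrow \bigoplus_{B\subseteq I_k} H^2_{\mathcal D_B}(\mathbb D^{|B|})
\]
identifying $\pi S_i\pi^{*}=M_i$ with the doubly twisted model tuple of \cite{RSS}. Setting $\sigma'(a):=\pi\sigma(a)\pi^{*}$, the desired intertwining $\sigma'(a)M_i=M_i\sigma'(\alpha_i(a))$ is then immediate from the intertwining for $(\sigma,S_i)$ conjugated by $\pi$. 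For the converse direction I would simply reverse the transport: assuming the model-side data, the operators $S_i:=\pi^{*}M_i\pi$ are isometries satisfying the doubly twisted commutation relations (which are preserved under unitary conjugation), and $\sigma(a):=\pi^{*}\sigma'(a)\pi$ inherits $\sigma(a)S_i=S_i\sigma(\alpha_i(a))$, so Theorem~\ref{thm:char-for-Ei-alphaA} produces the required doubly twisted isometric representation via $T_i(a):=S_i\sigma(a)$.

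The principal subtlety is the bookkeeping needed to check that the wandering subspaces $\mathcal D_B$ produced on each $\mathcal H_B$ carry a well-defined representation of $\mathcal A$ compatible with all the $\alpha_i$ for $i\notin B$ (where $\restr{S_i}{\mathcal H_B}$ is unitary) and that the model action of $M_i$ for $i\in B$ as a twisted multiplication by $z_i$ on $H^2_{\mathcal D_B}(\mathbb D^{|B|})$ intertwines the transported $\sigma'$ through $\alpha_i$; once this is unpacked, the verification is a direct computation on monomials $z^n\eta$, $\eta\in\mathcal D_B$, using the relations $\sigma(a)S_i=S_i\sigma(\alpha_i(a))$ and the pairwise commutation of $\sigma(\mathcal A)$ with the twist unitaries $U_{ij}$.
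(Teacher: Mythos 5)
Your proposal is correct and follows essentially the same route as the paper: reduce via Theorem~\ref{thm:char-for-Ei-alphaA} to a tuple of isometries $(S_1,\ldots,S_k)$ satisfying the doubly twisted relations and the covariance $\sigma(a)S_i=S_i\sigma(\alpha_i(a))$, invoke the Hardy--space model of \cite[Proposition~4.3]{RSS} blockwise, define $\pi=\bigoplus_B\pi_B$, and get the intertwining $\sigma'(a)M_i=M_i\sigma'(\alpha_i(a))$ by conjugation; the converse is the same transport back through Theorem~\ref{thm:char-for-Ei-alphaA}. The one point where you diverge is the source of the decomposition $\mathcal H=\bigoplus_{B}\mathcal H_B$: the paper takes it directly from \cite[Theorem~3.6]{RSS}, a purely Hilbert-space statement about the isometry tuple (with explicit formulas for $\mathcal H_B$ and $\mathcal D_B$), whereas you re-derive it from the abstract Wold machinery (Theorems~\ref{thm: class of decom} and~\ref{thm: decompo for doubly twisted rep}) and add an argument that $\sigma$ reduces the summands; that works, but you should note that your abstract summands coincide with the RSS ones (by uniqueness of the Wold decomposition) before applying \cite[Proposition~4.3]{RSS}, and the $\sigma$-reducibility step, while true, is not needed. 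Likewise, the ``principal subtlety'' you flag---that each $\mathcal D_B$ carries a representation of $\mathcal A$ compatible with the $\alpha_i$---is not required by the statement: the theorem only asserts the relation $\sigma'(a)M_i=M_i\sigma'(\alpha_i(a))$ for the transported $\sigma'(a)=\pi\sigma(a)\pi^*$, which, as you yourself observe, is immediate from $\pi S_i=M_i\pi$ and the covariance of $(\sigma,S_i)$; the paper makes no claim that $\sigma'$ respects the block structure.
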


\begin{proof}
Assume that the representation is isometric. By Theorem~\ref{thm:char-for-Ei-alphaA}, there exist contractions
$S_1,\ldots,S_k\in\mathcal B(\mathcal H)$ such that for all
$a\in\mathcal A$ and $i\neq j$,
\[
\sigma(a)S_i=S_i\sigma(\alpha_i(a)),
\quad
S_iS_j=U_{ij}S_jS_i,
\quad
S_i^*S_j=U_{ij}^*S_jS_i^*.
\]
Since $(\sigma,T_1,\ldots,T_k)$ is isometric, each $S_i$ is an isometry, hence
$(S_1,\ldots,S_k)$ is a $k$-tuple of doubly twisted isometries in the sense of
{\rm\cite{RSS}}.
By {\rm\cite[Theorem~3.6]{RSS}}, $\mathcal H$ admits an orthogonal decomposition
$\mathcal H=\bigoplus_{B\subseteq I_k}\mathcal H_B$ with
\[
\mathcal H_B=\bigoplus_{n\in\mathbb Z_+^{|B|}} S_B^n(\mathcal D_B),
\qquad
\mathcal D_B
=
\bigcap_{l\in\mathbb Z_+^{k-|B|}} S_{I_k\setminus B}^l
\Bigl(\bigcap_{i\in B}\ker S_i^*\Bigr),
\]
and for each $B\subseteq I_k$ there is a unitary
$\pi_B:\mathcal H_B\to H^2_{\mathcal D_B}(\mathbb D^{|B|})$
such that $\pi_B(S_B^n\eta)=z^n\eta$ for all $\eta\in\mathcal D_B$.
Let $B=\{p_1,\ldots,p_m\}$ and $B^c=\{q_1,\ldots,q_{k-m}\}$.
Define $M_{B,t}$ on $H^2_{\mathcal D_B}(\mathbb D^{|B|})$ by
\begin{equation}\label{eq: tilde V_t}
M_{B,t}=
\begin{cases}
M_{z_1}, & t=p_1,\\[3pt]
M_{z_i}\,D_1[U_{p_ip_1}]\cdots D_{i-1}[U_{p_ip_{i-1}}],
& t=p_i,\ 1<i\le m,\\[3pt]
D_1[U_{q_jp_1}]\cdots D_m[U_{q_jp_m}]
\bigl(I_{H^2(\D^m)}\otimes (S_{q_j}|_{\mathcal D_B})\bigr),
& t=q_j,\ 1\le j\le k-m.
\end{cases}
\end{equation}
By {\rm\cite[Proposition~4.3]{RSS}}, $\restr{S}{\mathcal H_B}$ is unitarily equivalent
to $(M_{B,1},\ldots,M_{B,k})$.
Set $M_i:=\displaystyle\bigoplus_{B\subseteq I_k}M_{B,i}$ and
$\pi:=\displaystyle\bigoplus_{B\subseteq I_k}\pi_B$. Then $\pi S_i = M_i\pi$ for all $i$.
Define $\sigma'(a):=\pi\sigma(a)\pi^*$.
Using $\sigma(a)S_i=S_i\sigma(\alpha_i(a))$, we obtain
\[
\sigma'(a)M_i
= \pi\sigma(a)\pi^* M_i
= \pi\sigma(a)S_i\pi^*
= \pi S_i\sigma(\alpha_i(a))\pi^*
= M_i\,\pi\sigma(\alpha_i(a))\pi^*
= M_i\sigma'(\alpha_i(a)),
\]
proving our claim.
The converse follows from Theorem~\ref{thm:char-for-Ei-alphaA}.
\end{proof}

\section{The Wold Decomposition} \label{Wold Decomposition}
Given an \emph{isometric} covariant representation \((\sigma, T)\) of a
\(C^*\)-correspondence \(E\) on a Hilbert space \(\mathcal{H}\),
there exists a decomposition $\mathcal{H} = \mathcal{H}_1 \oplus \mathcal{H}_2,$
where \(\restr{(\sigma, \widetilde{T})}{\mathcal{H}_1}\) is an induced covariant representation
and \(\restr{(\sigma, \widetilde{T})}{\mathcal{H}_2}\) is fully coisometric.
Let $\mathcal{W} := \operatorname{Ran}\!\left(I - \widetilde{T}\widetilde{T}^{*}\right).$
Then, by Theorem~\ref{Thm:Wold-decomposition-sigma-T}, we have
\begin{equation}\label{eq:Projection on H1 and H2}
P_{\mathcal{H}_1}
= \mathrm{SOT}\text{-}\!\sum_{n \in \mathbb{Z}_+}
  \widetilde{T}^{(n)} (I_{E^{(n)}} \otimes P_{\mathcal{W}}) \widetilde{T}^{(n)*},
\qquad
P_{\mathcal{H}_2}
= \mathrm{SOT}\text{-}\!\lim_{n \to \infty}
  \widetilde{T}^{(n)} \widetilde{T}^{(n)*}.
\end{equation}

Now consider a doubly twisted isometric covariant representation $(\sigma, T^{(1)}, \dots, T^{(k)})$ of $\mathbb{E}$ on $\mathcal{H}$.  
Then, by Theorem~\ref{thm: decompo for doubly twisted rep}, this representation admits a Wold-type decomposition, that is, $\mathcal{H} =\displaystyle \bigoplus_{A \subseteq I_k} \mathcal{H}_A.$
We now focus on describing the representations corresponding to each subspace $\mathcal{H}_A$.
For each $i \in I_k$, define 
\[
\mathcal{W}_i := \operatorname{Ran}\left(I - \widetilde{T}_i \widetilde{T}_i^{*}\right).
\]
Let $P_{\mathcal W_i}:=I-\widetilde T_i\widetilde T_i^*$ denote the orthogonal projection onto $\mathcal W_i$.

\begin{lemma}\label{lemma:  commutativity of Pwi}
The family $\{P_{\mathcal W_i}\}_{i \in I_k}$ consists of commuting orthogonal projections.
\end{lemma}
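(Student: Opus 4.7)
To prove the lemma I would first observe that each $P_{\mathcal W_i}$ is an orthogonal projection: since $(\sigma, T_i)$ is isometric, $\widetilde T_i$ is an isometry on $E_i\otimes_\sigma\mathcal H$, hence $\widetilde T_i\widetilde T_i^*$ is the orthogonal projection onto $\operatorname{Ran}\widetilde T_i$ and $P_{\mathcal W_i}=I-\widetilde T_i\widetilde T_i^*$ is the complementary projection.

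The crux is commutativity. A clean reduction is the observation that for two orthogonal projections $P,Q$ one has $PQ=QP$ if and only if $PQ$ is self-adjoint. Writing
\[
P_{\mathcal W_i}P_{\mathcal W_j}
= I-\widetilde T_i\widetilde T_i^*-\widetilde T_j\widetilde T_j^*
+\widetilde T_i\widetilde T_i^*\widetilde T_j\widetilde T_j^*,
\]
it therefore suffices to show that $\widetilde T_i\widetilde T_i^*\widetilde T_j\widetilde T_j^*$ is self-adjoint. The plan is to massage this product into a manifestly self-adjoint form $\widetilde T_j(I_{E_j}\otimes \widetilde T_i\widetilde T_i^*)\widetilde T_j^{\,*}$ by successively applying the doubly twisted relation \eqref{eq:doubly} and the twisted relation \eqref{eq:twisted}, together with the cancellation $(t_{ij}\otimes U_{ij})(t_{ji}\otimes U_{ji})=I$ coming from $t_{ji}=t_{ij}^{-1}$ and $U_{ji}=U_{ij}^*$.

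Concretely, the steps are: apply \eqref{eq:doubly} (with the roles of $i,j$ swapped) to rewrite $\widetilde T_i^{\,*}\widetilde T_j$ as $(I_{E_i}\otimes \widetilde T_j)(t_{ji}\otimes U_{ji})(I_{E_j}\otimes \widetilde T_i^{\,*})$, so that
\[
\widetilde T_i\widetilde T_i^*\widetilde T_j\widetilde T_j^*
=\widetilde T_i(I_{E_i}\otimes \widetilde T_j)(t_{ji}\otimes U_{ji})(I_{E_j}\otimes \widetilde T_i^{\,*})\widetilde T_j^{\,*};
\]
then invoke \eqref{eq:twisted} to replace $\widetilde T_i(I_{E_i}\otimes \widetilde T_j)$ by $\widetilde T_j(I_{E_j}\otimes \widetilde T_i)(t_{ij}\otimes U_{ij})$, and collapse the consecutive twists via $(t_{ij}\otimes U_{ij})(t_{ji}\otimes U_{ji})=I_{E_j\otimes E_i\otimes\mathcal H}$. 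This yields
\[
\widetilde T_i\widetilde T_i^*\widetilde T_j\widetilde T_j^*
=\widetilde T_j(I_{E_j}\otimes \widetilde T_i)(I_{E_j}\otimes \widetilde T_i^{\,*})\widetilde T_j^{\,*}
=\widetilde T_j\bigl(I_{E_j}\otimes \widetilde T_i\widetilde T_i^{\,*}\bigr)\widetilde T_j^{\,*},
\]
which is visibly of the form $AXA^*$ with $X=I_{E_j}\otimes \widetilde T_i\widetilde T_i^{\,*}$ self-adjoint, hence self-adjoint.

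Taking adjoints gives the matching identity $\widetilde T_j\widetilde T_j^*\widetilde T_i\widetilde T_i^*=\widetilde T_j(I_{E_j}\otimes \widetilde T_i\widetilde T_i^{\,*})\widetilde T_j^{\,*}$, so the two products coincide and $P_{\mathcal W_i}P_{\mathcal W_j}=P_{\mathcal W_j}P_{\mathcal W_i}$. I do not expect any genuine obstacle; the only care needed is to apply the twisted relations in the correct order and to keep track of where $t_{ij}$ and $U_{ij}$ act, so that the pair $(t_{ij}\otimes U_{ij})(t_{ji}\otimes U_{ji})$ actually composes to the identity on $E_j\otimes E_i\otimes\mathcal H$ (this is why the manipulation hinges on first invoking the doubly twisted relation and then the twisted one, rather than the reverse).
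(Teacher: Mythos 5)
Your proof is correct and follows essentially the same route as the paper: apply the doubly twisted relation to $\widetilde T_i^{\,*}\widetilde T_j$, then the twisted relation together with the cancellation $(t_{ij}\otimes U_{ij})(t_{ji}\otimes U_{ji})=I$ to reach the manifestly self-adjoint form $\widetilde T_j\bigl(I_{E_j}\otimes \widetilde T_i\widetilde T_i^{\,*}\bigr)\widetilde T_j^{\,*}$. The only cosmetic difference is that you conclude via self-adjointness of the product of projections, whereas the paper simply repeats the same manipulation in reverse with $i$ and $j$ swapped.
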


\begin{proof}
For $i,j \in I_k$, by the twisted and doubly twisted relations we have
\begin{align*}
 (\widetilde T_i \widetilde T_i^{*})(\widetilde T_j \widetilde T_j^{*})
&= \widetilde T_i (I_{E_i} \otimes \widetilde T_j) (t_{ji} \otimes U_{ji})
  (I_{E_j} \otimes \widetilde T_i^{*}) \widetilde T_j^{*}\\
&=\widetilde T_j (I_{E_j} \otimes \widetilde T_i)
  (I_{E_j} \otimes \widetilde T_i^{*}) \widetilde T_j^{*}\\
  &=\widetilde T_j (I_{E_j} \otimes \widetilde T_i) (t_{ij} \otimes U_{ij})
  (I_{E_i} \otimes \widetilde T_j^{*}) \widetilde T_i^{*}\\
&= (\widetilde T_j \widetilde T_j^{*})(\widetilde T_i \widetilde T_i^{*}).   
\end{align*}

Hence the projections $P_{\mathcal W_i} = I - \widetilde T_i \widetilde T_i^{*}$ commute pairwise:
\[
(I - \widetilde T_i \widetilde T_i^{*})(I - \widetilde T_j \widetilde T_j^{*})
= (I - \widetilde T_j \widetilde T_j^{*})(I - \widetilde T_i \widetilde T_i^{*}).
\qedhere
\]
\end{proof}

Let $\mathcal W_{\varnothing} = \mathcal H$. For each non-empty subset \(A \subseteq I_k\), the above lemma allows us to define
\begin{equation}\label{eq:WA-and-PwA-as-product}
\mathcal{W}_A
:= \operatorname{Ran}\!\Bigg(\prod_{i \in A} (I - \widetilde T_i \widetilde T_i^{*})\Bigg)
= \bigcap_{i \in A} \operatorname{Ran}(I - \widetilde T_i \widetilde T_i^{*})
= \bigcap_{i \in A} \mathcal{W}_i,
\qquad
P_{\mathcal{W}_A} = \prod_{i \in A} P_{\mathcal{W}_i}.
\end{equation}
To analyze the structure of the subspaces $\mathcal H_A$, we first observe that
if $\mathcal K$ is a closed subspace of $\mathcal H$ that \emph{reduces}
(see Definition~\ref{def:reducing subspace})
a doubly twisted covariant representation
$(\sigma,T_1,\dots,T_k)$ of $\mathbb E$ on $\mathcal H$,
then the restricted tuple
\(
\restr{(\sigma,T_1,\dots,T_k)}{\mathcal K}
\)
is again a doubly twisted covariant representation of $\mathbb E$ on $\mathcal K$.
Also, since both $\mathcal K$ and $\mathcal K^{\perp}$ are left invariant under
each operator $T_i(\xi_i)$, for $i\in I_k$ and $\xi_i\in E_i$, it follows that
\[
\widetilde T_i^{*}\mathcal K \subseteq E_i\otimes\mathcal K,
\qquad
\widetilde T_i^{*}\mathcal K^{\perp}
\subseteq E_i\otimes\mathcal K^{\perp}.
\]
Moreover, the twisted relation yields
\(
t_{ij}\otimes U_{ij}
= (I_{E_j}\otimes \widetilde T_i^{*})\,\widetilde T_j^{*}\,
  \widetilde T_i\,(I_{E_i}\otimes \widetilde T_j).
\)
Consequently, if $\mathcal K$ reduces
$(\sigma,T_1,\dots,T_k)$, then
\begin{equation}\label{eq: reducing under Uij}
(t_{ij}\otimes U_{ij})(E_i\otimes E_j\otimes \mathcal K)
\subseteq E_j\otimes E_i\otimes \mathcal K,
\quad
(t_{ij}\otimes U_{ij})(E_i\otimes E_j\otimes \mathcal K^{\perp})
\subseteq E_j\otimes E_i\otimes \mathcal K^{\perp}.
\end{equation}

\begin{lemma}\label{lemma:WA-reduces-Tj}
Let \(A \subsetneq I_k\). Then \(\mathcal{W}_A\) reduces \(\widetilde T_j\) for every \(j \in A^{\mathrm{c}}\).
\end{lemma}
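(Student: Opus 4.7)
The plan is to show that the orthogonal projection $P_{\mathcal W_A}$ commutes with $\sigma(a)$ for every $a \in \mathcal A$ and with $T_j(\xi)$ for every $\xi \in E_j$, which is equivalent to the claim that $\mathcal W_A$ reduces the covariant pair $(\sigma, T_j)$. Since by Lemma~\ref{lemma: commutativity of Pwi} the commuting projections $\{P_{\mathcal W_i}\}_{i\in A}$ factor the projection $P_{\mathcal W_A}=\prod_{i\in A}P_{\mathcal W_i}$, it suffices to verify that each individual factor $P_{\mathcal W_i}$ with $i\in A$ commutes with both $\sigma(a)$ and $T_j(\xi)$, exploiting that $j\neq i$ whenever $j\in A^{\mathrm c}$.

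The key input is an identity already established inside the proof of Theorem~\ref{thm: decompo for doubly twisted rep} (the base case $l=1$ of the induction there): for distinct $i,j$,
\[
\widetilde T_j (I_{E_j}\otimes \widetilde T_i\widetilde T_i^{*})
=\widetilde T_i\widetilde T_i^{*}\,\widetilde T_j,
\]
which is a direct consequence of the twisted relation \eqref{eq:twisted} combined with the doubly twisted relation \eqref{eq:doubly}. Subtracting this from $\widetilde T_j=\widetilde T_j(I_{E_j}\otimes I_{\mathcal H})$ yields
\[
\widetilde T_j(I_{E_j}\otimes P_{\mathcal W_i})=P_{\mathcal W_i}\,\widetilde T_j,
\qquad i\neq j,
\]
and taking adjoints gives $(I_{E_j}\otimes P_{\mathcal W_i})\widetilde T_j^{*}=\widetilde T_j^{*}P_{\mathcal W_i}$. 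Translating back, for every $\xi\in E_j$ and $h\in\mathcal W_i$ we obtain $T_j(\xi)h=\widetilde T_j(\xi\otimes P_{\mathcal W_i}h)=P_{\mathcal W_i}\widetilde T_j(\xi\otimes h)\in\mathcal W_i$, and symmetrically $T_j(\xi)^{*}\mathcal W_i\subseteq\mathcal W_i$.

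To handle the $\sigma$-reducing condition, I would use the covariance relation $\widetilde T_i(\varphi_i(a)\otimes I_{\mathcal H})=\sigma(a)\widetilde T_i$; taking adjoints gives $(\varphi_i(a)\otimes I_{\mathcal H})\widetilde T_i^{*}=\widetilde T_i^{*}\sigma(a)$, and composing one obtains
\[
\sigma(a)\,\widetilde T_i\widetilde T_i^{*}
=\widetilde T_i(\varphi_i(a)\otimes I)\widetilde T_i^{*}
=\widetilde T_i\widetilde T_i^{*}\,\sigma(a),
\]
so $P_{\mathcal W_i}\in\sigma(\mathcal A)'$.

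Combining these two facts and invoking Lemma~\ref{lemma: commutativity of Pwi}, the projection $P_{\mathcal W_A}=\prod_{i\in A}P_{\mathcal W_i}$ lies in $\sigma(\mathcal A)'$ and commutes with every $T_j(\xi)$ for $j\in A^{\mathrm c}$, $\xi\in E_j$. Consequently $\mathcal W_A$ reduces $(\sigma,T_j)$, which is precisely the assertion. The argument is essentially computational rather than conceptually deep; the one genuine ingredient is the identity inherited from the proof of Theorem~\ref{thm: decompo for doubly twisted rep}, which is exactly where the doubly twisted hypothesis is used.
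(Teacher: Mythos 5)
Your proof is correct and follows essentially the same route as the paper: the identity $\widetilde T_i\widetilde T_i^{*}\widetilde T_j=\widetilde T_j(I_{E_j}\otimes\widetilde T_i\widetilde T_i^{*})$ for $i\in A$, $j\in A^{\mathrm c}$ (obtained from the twisted and doubly twisted relations, and appearing as the $l=1$ case in Theorem~\ref{thm: decompo for doubly twisted rep}) gives $P_{\mathcal W_i}\widetilde T_j=\widetilde T_j(I_{E_j}\otimes P_{\mathcal W_i})$, and multiplying over $i\in A$ via Lemma~\ref{lemma:  commutativity of Pwi} yields the claim, exactly as in the paper. Your additional check that $P_{\mathcal W_i}\in\sigma(\mathcal A)'$ is a harmless extra that the paper omits here.
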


\begin{proof}
For \(i \in A\) and \(j \in A^{\mathrm{c}}\), the doubly twisted relations yield
$$(\widetilde T_i \widetilde T^*_i)\widetilde T_j 
= \widetilde T_i (I_{E_i} \otimes \widetilde T_j) (t_{ji} \otimes U_{ji})  (I_{E_j} \otimes  \widetilde T^*_i)
=\widetilde T_j (I_{E_j} \otimes \widetilde T_i)(I_{E_j} \otimes  \widetilde T^*_i) 
.$$
Therefore, $P_{\mathcal W_i}\widetilde T_j
= \widetilde T_j (I_{E_j} \otimes P_{\mathcal W_i}),$ and Equation \eqref{eq:WA-and-PwA-as-product} implies that \(\mathcal W_A\) reduces \(\widetilde T_j\), as
\begin{equation} \label{eq:Tj-commutes-with-PwA}
    P_{\mathcal W_A}\widetilde T_j
= \widetilde T_j (I_{E_j} \otimes P_{\mathcal W_A}).
\end{equation}
\end{proof}
For a doubly twisted isometric representation 
\((\sigma, T_1, \dots, T_k)\) of \(\mathbb{E}\) on \(\mathcal{H}\),
let \(\mathcal{H}_i^1\) and \(\mathcal{H}_i^2\) be the reducing subspaces such that
\(\restr{(\sigma,  T_i)}{\mathcal{H}_i^1}\) is an induced covariant representation,
and \(\restr{(\sigma,  T_i)}{\mathcal{H}_i^2}\) is fully coisometric.
Then we have:
\begin{lemma}\label{lemma:commutativity of PHj with PwA}
Let \(A\) be a nonempty proper subset of \(I_k\).
Then, for every \(j \in A^c\),
\[
P_{\mathcal{H}_j^1},\; P_{\mathcal{H}_j^2} \in \{P_{\mathcal{W}_A}\}'.
\]
\end{lemma}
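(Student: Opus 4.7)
The plan is to leverage the intertwining relation \eqref{eq:Tj-commutes-with-PwA} established in Lemma~\ref{lemma:WA-reduces-Tj}, together with the explicit formulas for $P_{\mathcal{H}_j^1}$ and $P_{\mathcal{H}_j^2}$ recorded in \eqref{eq:Projection on H1 and H2}. Since $P_{\mathcal{H}_j^1}=I-P_{\mathcal{H}_j^2}$, it suffices to prove $P_{\mathcal{W}_A}P_{\mathcal{H}_j^2}=P_{\mathcal{H}_j^2}P_{\mathcal{W}_A}$.

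First, I would upgrade the commutation $P_{\mathcal{W}_A}\widetilde T_j=\widetilde T_j(I_{E_j}\otimes P_{\mathcal{W}_A})$ from Lemma~\ref{lemma:WA-reduces-Tj} to all iterated powers by induction on $n$. Using the recursive definition $\widetilde T_j^{(n)}=\widetilde T_j(I_{E_j}\otimes \widetilde T_j^{(n-1)})$ together with Remark~\ref{rem: tensor identities}, one obtains
\[
P_{\mathcal{W}_A}\,\widetilde T_j^{(n)}
=\widetilde T_j^{(n)}\bigl(I_{E_j^{\,n}}\otimes P_{\mathcal{W}_A}\bigr),
\qquad n\in\mathbb Z_+.
\]
Taking adjoints yields the dual relation $\widetilde T_j^{(n)*}P_{\mathcal{W}_A}=(I_{E_j^{\,n}}\otimes P_{\mathcal{W}_A})\widetilde T_j^{(n)*}$.

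Multiplying these two identities then gives, for each $n\in\mathbb Z_+$,
\[
P_{\mathcal{W}_A}\,\widetilde T_j^{(n)}\widetilde T_j^{(n)*}
=\widetilde T_j^{(n)}\bigl(I_{E_j^{\,n}}\otimes P_{\mathcal{W}_A}\bigr)\widetilde T_j^{(n)*}
=\widetilde T_j^{(n)}\widetilde T_j^{(n)*}\,P_{\mathcal{W}_A},
\]
so that $P_{\mathcal{W}_A}$ commutes with every range projection $\widetilde T_j^{(n)}\widetilde T_j^{(n)*}$. Passing to the SOT limit as $n\to\infty$ and invoking the formula $P_{\mathcal{H}_j^2}=\text{SOT-}\lim_n\widetilde T_j^{(n)}\widetilde T_j^{(n)*}$ in \eqref{eq:Projection on H1 and H2} gives $P_{\mathcal{W}_A}P_{\mathcal{H}_j^2}=P_{\mathcal{H}_j^2}P_{\mathcal{W}_A}$, and hence also $P_{\mathcal{W}_A}P_{\mathcal{H}_j^1}=P_{\mathcal{H}_j^1}P_{\mathcal{W}_A}$.

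I do not anticipate a real obstacle here: the argument reduces, by induction, to the single identity \eqref{eq:Tj-commutes-with-PwA}, which was proved using the doubly twisted relation and the intertwining of the $U_{ij}$'s with the $\widetilde T_\ell$'s. The one point requiring a touch of care is the passage to the SOT limit, which is automatic since $\{\widetilde T_j^{(n)}\widetilde T_j^{(n)*}\}_n$ is a uniformly bounded decreasing net of projections and $P_{\mathcal{W}_A}$ is a bounded operator.
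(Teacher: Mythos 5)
Your proposal is correct and follows essentially the same route as the paper: extend the identity $P_{\mathcal{W}_A}\widetilde T_j=\widetilde T_j(I_{E_j}\otimes P_{\mathcal{W}_A})$ to all powers $\widetilde T_j^{(n)}$, take adjoints, deduce commutation with the range projections $\widetilde T_j^{(n)}\widetilde T_j^{(n)*}$, and conclude via the SOT formulas in \eqref{eq:Projection on H1 and H2} together with $P_{\mathcal{H}_j^1}=I-P_{\mathcal{H}_j^2}$.
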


\begin{proof}
From Equation~\ref{eq:Tj-commutes-with-PwA}, for any \(n \in \mathbb{Z}_+\) and \(j \in A^{\mathrm{c}}\),
\begin{equation}\label{eq:commutativity PwA Tjn}
P_{\mathcal{W}_A}\widetilde{T}_j^{(n)}
= \widetilde{T}_j^{(n)}\left(I_{E_j^{n}} \otimes P_{\mathcal{W}_A}\right),
\text{ and~}
\widetilde{T}_j^{(n)*}P_{\mathcal{W}_A}
= \left(I_{E_j^{n}} \otimes P_{\mathcal{W}_A}\right)\widetilde{T}_j^{(n)*}.
\end{equation}
Consequently,
\[
\widetilde{T}_j^{(n)}\widetilde{T}_j^{(n)*}P_{\mathcal{W}_A}
= \widetilde{T}_j^{(n)}\left(I_{E_j^{n}}\otimes P_{\mathcal{W}_A}\right)\widetilde{T}_j^{(n)*}
= P_{\mathcal{W}_A}\widetilde{T}_j^{(n)}\widetilde{T}_j^{(n)*}.
\]
\eqref{eq:Projection on H1 and H2} implies that \(P_{\mathcal{H}_j^2} \in \{P_{\mathcal{W}_A}\}'\).
Since \(P_{\mathcal{H}_j^1} = I - P_{\mathcal{H}_j^2}\),
it follows that \(P_{\mathcal{H}_j^1} \in \{P_{\mathcal{W}_A}\}'\).
\end{proof}

\begin{lemma}\label{lemma:commutativity-PHi1-PHj2}
For any \(i, j \in I_k\), we have $P_{\mathcal{H}_i^{1}} P_{\mathcal{H}_j^{2}}
= P_{\mathcal{H}_j^{2}} P_{\mathcal{H}_i^{1}}.$
\end{lemma}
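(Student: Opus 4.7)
The plan is first to dispose of the diagonal case $i=j$, where $P_{\mathcal{H}_i^{1}}+P_{\mathcal{H}_i^{2}}=I$ makes the commutation trivial. For $i\neq j$, I will use $P_{\mathcal{H}_i^{1}}=I-P_{\mathcal{H}_i^{2}}$ to reduce the problem to showing that $P_{\mathcal{H}_i^{2}}$ and $P_{\mathcal{H}_j^{2}}$ commute. Since both are obtained as strong-operator limits of the range projections $P_i^{(l)}=\widetilde T_i^{(l)}\widetilde T_i^{(l)*}$ (cf.~\eqref{eq:Projection on H1 and H2}), it will suffice to prove that $P_i^{(l)}$ commutes with $P_j^{(m)}$ for all $l,m\in\mathbb Z_+$ and then pass to SOT limits twice.

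The key ingredient is already available from the proof of Theorem~\ref{thm: decompo for doubly twisted rep}, namely identity \eqref{eq:TiTj commutation}, which reads
\[
\widetilde T_j\bigl(I_{E_j}\otimes P_i^{(l)}\bigr)=P_i^{(l)}\widetilde T_j,\qquad l\in\mathbb Z_+.
\]
I will promote this to
\[
\widetilde T_j^{(m)}\bigl(I_{E_j^{\,m}}\otimes P_i^{(l)}\bigr)=P_i^{(l)}\widetilde T_j^{(m)},\qquad l,m\in\mathbb Z_+,
\]
by induction on $m$, using the recursion $\widetilde T_j^{(m+1)}=\widetilde T_j\bigl(I_{E_j}\otimes \widetilde T_j^{(m)}\bigr)$ together with the tensor identities recorded in Remark~\ref{rem: tensor identities}. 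Taking adjoints yields the mirror relation $\bigl(I_{E_j^{\,m}}\otimes P_i^{(l)}\bigr)\widetilde T_j^{(m)*}=\widetilde T_j^{(m)*}P_i^{(l)}$; sandwiching these between $\widetilde T_j^{(m)}$ and $\widetilde T_j^{(m)*}$ then gives
\[
P_j^{(m)}P_i^{(l)}=\widetilde T_j^{(m)}\bigl(I_{E_j^{\,m}}\otimes P_i^{(l)}\bigr)\widetilde T_j^{(m)*}=P_i^{(l)}P_j^{(m)}.
\]

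Finally, since $P_{\mathcal H_j^{2}}=\text{SOT-}\lim_{m\to\infty}P_j^{(m)}$, passing to the strong limit in $m$ shows that $P_i^{(l)}$ commutes with $P_{\mathcal H_j^{2}}$ for every $l$; a second strong limit in $l$, using $P_{\mathcal H_i^{2}}=\text{SOT-}\lim_{l\to\infty}P_i^{(l)}$, yields $P_{\mathcal H_i^{2}}P_{\mathcal H_j^{2}}=P_{\mathcal H_j^{2}}P_{\mathcal H_i^{2}}$, and hence the desired identity for the complementary projections. No genuine obstacle is anticipated: the substance is the doubly twisted covariance relation \eqref{eq:doubly}, which has already been processed into \eqref{eq:TiTj commutation}; the remaining work is a one-line induction and two routine SOT-limit passages.
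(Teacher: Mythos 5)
Your argument is correct, but it follows a genuinely different route from the paper's. The paper deduces the commutation structurally: it invokes Theorem~\ref{thm: decompo for doubly twisted rep} together with Theorem~\ref{thm: class of decom} to conclude that $\mathcal H_j^1$ (hence $\mathcal H_j^2$) reduces each $\widetilde T_i$, then inserts this into the explicit SOT-series formula \eqref{eq:Projection on H1 and H2} for $P_{\mathcal H_i^1}$, using Lemma~\ref{lemma:commutativity of PHj with PwA} to move $P_{\mathcal H_j^{1}}$ (or $P_{\mathcal H_j^{2}}$) past the factor $I_{E_i^m}\otimes P_{\mathcal W_i}$; this yields all four commutations $P_{\mathcal H_i^{\epsilon}}P_{\mathcal H_j^{\delta}}=P_{\mathcal H_j^{\delta}}P_{\mathcal H_i^{\epsilon}}$ at once. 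You instead work directly with the range projections $P_i^{(l)}=\widetilde T_i^{(l)}\widetilde T_i^{(l)*}$: starting from \eqref{eq:TiTj commutation} (which encodes the doubly twisted relation and whose derivation does not depend on the present lemma, so there is no circularity), your induction on $m$ giving $\widetilde T_j^{(m)}\bigl(I_{E_j^{\,m}}\otimes P_i^{(l)}\bigr)=P_i^{(l)}\widetilde T_j^{(m)}$ is valid, the adjoint-and-sandwich step gives $P_j^{(m)}P_i^{(l)}=P_i^{(l)}P_j^{(m)}$, and the two SOT-limit passages are legitimate since multiplication by a fixed bounded operator on either side is SOT-continuous on bounded nets; the mixed identity then follows by complementation $P_{\mathcal H_i^{1}}=I-P_{\mathcal H_i^{2}}$. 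What your route buys is self-containedness: it avoids the reducing-subspace formalism, Theorem~\ref{thm: class of decom}, and the wandering-subspace Lemma~\ref{lemma:commutativity of PHj with PwA}, needing only the already-processed identity \eqref{eq:TiTj commutation} and \eqref{eq:Projection on H1 and H2}; what the paper's route buys is economy, reusing structure already established and producing the $P_{\mathcal H_i^{1}}P_{\mathcal H_j^{1}}$ commutation (needed in Theorem~\ref{thm: representation of summands}) in the same computation — though your 2--2 commutation also yields all combinations by complementation, so nothing downstream is lost.
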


\begin{proof}
For each \(i \in I_k\), clearly $P_{\mathcal{H}_i^{1}} P_{\mathcal{H}_i^{2}} = 0 = P_{\mathcal{H}_i^{2}} P_{\mathcal{H}_i^{1}}.$
Assume \(i \neq j\).  
By Theorem~\ref{thm: decompo for doubly twisted rep}, the tuple
\((\sigma, T^{(1)}, \ldots, T^{(k)})\) admits a Wold decomposition.
Then Theorem~\ref{thm: class of decom} implies that
\(\mathcal{H}_j^{1}\) reduces \(\widetilde{T}_i\); that is, $P_{\mathcal{H}_j^{1}}\widetilde{T}_i
= \widetilde{T}_i (I_{E_i} \otimes P_{\mathcal{H}_j^{1}}).$
Hence for any $m \in \mathbb{Z}_+$,
\begin{equation} \label{eq: Tmi reduces H1j}
   P_{\clh^1_j} \,\widetilde{T}_i^{(m)} 
   = \widetilde{T}_i^{(m)} \left(I_{E_i^{ m}} \otimes P_{\clh^1_j}\right),
   \qquad
   \widetilde{T}_i^{(m)*} P_{\clh^1_j} 
   = \left(I_{E_i^{ m}} \otimes P_{\clh^1_j}\right)\widetilde{T}_i^{(m)*}.
\end{equation}
Using \eqref{eq:Projection on H1 and H2} and~\eqref{eq: Tmi reduces H1j}, together with Lemma~\ref{lemma:commutativity of PHj with PwA}, we obtain
\[
P_{\mathcal{H}_i^{1}} P_{\mathcal{H}_j^{1}}
= \left(\sum_{m \in \mathbb{Z}_+}
  \widetilde{T}_i^{(m)}(I_{E_i^{m}} \otimes P_{\mathcal{W}_i})\widetilde{T}_i^{(m)*} \right)P_{\mathcal{H}_j^{1}} =P_{\mathcal{H}_j^{1}} P_{\mathcal{H}_i^{1}}.
\]
A similar argument yields $P_{\mathcal{H}_i^{1}} P_{\mathcal{H}_j^{2}}
= P_{\mathcal{H}_j^{2}} P_{\mathcal{H}_i^{1}},$ and $P_{\mathcal{H}_i^{2}} P_{\mathcal{H}_j^{2}}
= P_{\mathcal{H}_j^{2}} P_{\mathcal{H}_i^{2}}.$
\end{proof}
Recall that $(1)$ of Lemma \ref{lemma:intertwining of U_{ij}} says that for all $i,j,m \in I_k$ with $i\ne j$ and for every $n\in\mathbb Z_+$, $U_{ij}\, \widetilde{T}_m^{(n)}
= \widetilde{T}_m^{(n)}\, (I_{E_m^{\,n}} \otimes U_{ij}).$ This further implies $ \widetilde{T}_m^{(n)*} U_{ij}
= (I_{E_m^{\,n}} \otimes U_{ij})\widetilde{T}_m^{(n)*}.$ Also for $m,n \in \Z_+,$ recall the map \(
t_{ij}^{(m,n)}\) \eqref{eq:tijmn} , that flips 
$E_i^{\,m} \otimes E_j^{\,n}$ to $
E_j^{\,n} \otimes E_i^{\,m}.$

\begin{lemma}\label{lemma: commutativity of Ti *m and Tj n}
Let $(\sigma,T_1,\dots,T_k)$ be a doubly twisted representation of the product
system $\{E_i\}_{i=1}^k$.  
For $i\neq j$ and $p,q\in\mathbb Z_+$,
\begin{equation}\label{eq:DTR-relation}
    \widetilde{T}_i^{(p)*}\,\widetilde{T}_j^{\,(q)}
= 
\left(I_{E_i^{\,p}} \otimes \widetilde{T}_j^{\,(q)}\right)
  \left(t_{ji}^{(q,p)} \otimes U_{ji}^{qp}\right)
  \left(I_{E_j^{\,q}} \otimes \widetilde{T}_i^{(p)*}\right).
\end{equation}
\end{lemma}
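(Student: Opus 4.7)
The plan is to prove the identity by a double induction on the pair $(p,q)$, starting from the base case $p=q=1$, which is precisely the defining doubly twisted relation \eqref{eq:doubly}. The technical tools will be the two equivalent recursions
\[
\widetilde{T}_\ell^{(n)} = \widetilde{T}_\ell\bigl(I_{E_\ell}\otimes \widetilde{T}_\ell^{(n-1)}\bigr) = \widetilde{T}_\ell^{(n-1)}\bigl(I_{E_\ell^{n-1}}\otimes \widetilde{T}_\ell\bigr),
\]
the intertwining relations $U_{ji}\widetilde{T}_m^{(n)} = \widetilde{T}_m^{(n)}(I_{E_m^n}\otimes U_{ji})$ and their adjoints from Lemma~\ref{lemma:intertwining of U_{ij}}(1), together with the block-flip recursion $t_{ji}^{(q,1)} = (t_{ji}\otimes I_{E_j^{q-1}})(I_{E_j}\otimes t_{ji}^{(q-1,1)})$ (the analogue of Lemma~\ref{lemma:recursive-def-tij-n} with $i,j$ interchanged) and the companion relation $t_{ji}^{(q,p)} = (I_{E_i^{p-1}}\otimes t_{ji}^{(q,1)})(t_{ji}^{(q,p-1)}\otimes I_{E_i})$, which is precisely \eqref{eq: sec def of tijmn} after swapping $i$ and $j$.

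First I will fix $p=1$ and induct on $q$. Starting from $\widetilde{T}_i^{*}\widetilde{T}_j^{(q)} = \widetilde{T}_i^{*}\widetilde{T}_j(I_{E_j}\otimes \widetilde{T}_j^{(q-1)})$, I apply the base case \eqref{eq:doubly} to rewrite $\widetilde{T}_i^{*}\widetilde{T}_j$, reassemble the resulting factor $(I_{E_j}\otimes \widetilde{T}_i^{*}\widetilde{T}_j^{(q-1)})$, and invoke the inductive hypothesis on $\widetilde{T}_i^{*}\widetilde{T}_j^{(q-1)}$. The block $(I_{E_j\otimes E_i}\otimes \widetilde{T}_j^{(q-1)})$ is then slid past $(t_{ji}\otimes U_{ji})$ by means of Remark~\ref{rem: tensor identities}(2) and Lemma~\ref{lemma:intertwining of U_{ij}}(1). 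The leftmost piece collapses to $I_{E_i}\otimes \widetilde{T}_j^{(q)}$ via the recursion for $\widetilde{T}_j^{(q)}$, the two block-flips combine into $t_{ji}^{(q,1)}$ by the first recursion above, and the unitaries combine by commutativity as $U_{ji}\cdot U_{ji}^{q-1}=U_{ji}^{q}$, closing the induction at level $(1,q)$.

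With the case $p=1$ established for every $q\ge 1$, I will induct on $p$ for an arbitrary but fixed $q$. Writing $\widetilde{T}_i^{(p)*} = (I_{E_i^{p-1}}\otimes \widetilde{T}_i^{*})\widetilde{T}_i^{(p-1)*}$, applying the $p=1$ case to the innermost product $\widetilde{T}_i^{*}\widetilde{T}_j^{(q)}$, and invoking the inductive hypothesis for $p-1$ produces an expression containing the composition $(I_{E_i^{p-1}\otimes E_j^q}\otimes \widetilde{T}_i^{*})(t_{ji}^{(q,p-1)}\otimes U_{ji}^{q(p-1)})$. Since $t_{ji}^{(q,p-1)}$ acts only on the $E_j^q\otimes E_i^{p-1}$ slot while $\widetilde{T}_i^{*}$ acts on $\mathcal{H}$, and since $\widetilde{T}_i^{*}\,U_{ji}^{q(p-1)} = (I_{E_i}\otimes U_{ji}^{q(p-1)})\widetilde{T}_i^{*}$ by Lemma~\ref{lemma:intertwining of U_{ij}}(1), this factor slides past to produce $(t_{ji}^{(q,p-1)}\otimes I_{E_i}\otimes U_{ji}^{q(p-1)})(I_{E_j^q\otimes E_i^{p-1}}\otimes \widetilde{T}_i^{*})$. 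The two block-flips $(I_{E_i^{p-1}}\otimes t_{ji}^{(q,1)})$ and $(t_{ji}^{(q,p-1)}\otimes I_{E_i})$ then merge into $t_{ji}^{(q,p)}$ via the second recursion; the unitaries combine as $U_{ji}^{q}\cdot U_{ji}^{q(p-1)} = U_{ji}^{qp}$; and the two $\widetilde{T}_i$-adjoints reassemble into $\widetilde{T}_i^{(p)*}$, yielding the target identity.

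The main obstacle is purely one of careful bookkeeping of tensor slots: at every step one must identify precisely which copy of $E_i$, $E_j$, or $\mathcal{H}$ each operator acts on, and apply the correct functorial commutation rule to rearrange the factors into the target form. No new algebraic input beyond the defining twisted and doubly twisted relations is required; once the recursive identities for the flips $t_{ji}^{(q,p)}$ are in hand, the double induction closes without further difficulty.
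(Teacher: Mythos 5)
Your proof is correct and follows essentially the same route as the paper's: the same base case \eqref{eq:doubly}, the same recursions for $\widetilde T_i^{(p)}$ and for the iterated flips (Lemma~\ref{lemma:recursive-def-tij-n} and \eqref{eq: sec def of tijmn}), and the same sliding of blocks via Lemma~\ref{lemma:intertwining of U_{ij}}, with the only difference being organizational — you run a two-stage induction (first $(1,q)$ for all $q$, then $(p,q)$ in $p$), whereas the paper inducts on $p+q$ and disposes of the $(1,q)$ case by its ``symmetric'' remark. If anything, your ordering makes explicit the $(1,q)$ input that the paper's inductive step uses, so no changes are needed beyond noting that your first flip recursion $t_{ji}^{(q,1)}=(t_{ji}\otimes I_{E_j^{\,q-1}})(I_{E_j}\otimes t_{ji}^{(q-1,1)})$ follows directly from the definition \eqref{eq:tijmn} rather than literally from Lemma~\ref{lemma:recursive-def-tij-n} with indices interchanged.
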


\begin{proof}
We prove the identity in (\ref{eq:DTR-relation}) by induction on $r(p,q):=p+q.$
If $p=0$ or $q=0$, the identity is immediate since
$t_{ji}^{(q,0)}=I_{E_j^q}$, $U_{ji}^{q0}=I$, and similarly for $t_{ji}^{(0,p)}$.
When $p=q=1$, the statement reduces to the defining twisted covariance
relation:
\[
\widetilde{T}_i^*\widetilde{T}_j
=
\left(I_{E_i}\otimes \widetilde{T}_j\right)
(t_{ji}\otimes U_{ji})
\left(I_{E_j}\otimes \widetilde{T}_i^*\right).
\]
Assume (\ref{eq:DTR-relation}) holds for every pair $(p',q')$ with $r(p',q')<r(p,q)$.
We treat the case $p\ge1$; the case $q\ge1$ is symmetric.
Since $E_i^p = E_i\otimes E_i^{p-1}$, $\widetilde{T}_i^{(p)*}
=
\left(I_{E_i^{p-1}}\otimes \widetilde{T}_i^*\right)\,
\widetilde{T}_i^{(p-1)*}.$
Apply the induction hypothesis to the pair $(p-1,q)$:
\[
\widetilde{T}_i^{(p-1)*}\widetilde{T}_j^{\,(q)}
=
\left(I_{E_i^{p-1}}\otimes \widetilde{T}_j^{\,(q)}\right)
\left(t_{ji}^{(q,p-1)}\otimes U_{ji}^{q(p-1)}\right)
\left(I_{E_j^{q}}\otimes \widetilde{T}_i^{(p-1)*}\right).
\]
Substituting this and using Equation \eqref{eq: sec def of tijmn} yields
\begin{align*}
&\widetilde{T}_i^{(p)*}\widetilde{T}_j^{\,(q)}\\
&=
\left(I_{E_i^{p-1}}\otimes \widetilde{T}_i^*\right)
\left(I_{E_i^{p-1}}\otimes \widetilde{T}_j^{\,(q)}\right)
\left(t_{ji}^{(q,p-1)}\otimes U_{ji}^{q(p-1)}\right)
\left(I_{E_j^{q}}\otimes \widetilde{T}_i^{(p-1)*}\right) \\
&=
\left(I_{E_i^{p}}\otimes \widetilde{T}_j^{\,(q)}\right)
\left(I_{E_i^{p-1}}\otimes  t_{ji}^{(q,1)}\otimes U_{ji}^{q} \right)
\left(I_{E_i^{p-1} \otimes E^q_j}\otimes \widetilde{T}_i^*\right)
\left(t_{ji}^{(q,p-1)}\otimes U_{ji}^{q(p-1)}\right)
\left(I_{E_j^{q}}\otimes \widetilde{T}_i^{(p-1)*}\right) \\
&=
\left(I_{E_i^{p}}\otimes \widetilde{T}_j^{\,(q)}\right)
\left(I_{E_i^{p-1}}\otimes  t_{ji}^{(q,1)}\otimes U_{ji}^{q} \right)
\left(t_{ji}^{(q,p-1)}\otimes I_{E_i} \otimes U_{ji}^{q(p-1)}\right)
\left(I_{ E_j^q \otimes E_i^{p-1}}\otimes \widetilde{T}_i^*\right)
\left(I_{E_j^{q}}\otimes \widetilde{T}_i^{(p-1)*}\right) \\
&=
\left(I_{E_i^{p}}\otimes \widetilde{T}_j^{\,(q)}\right)
\left(  t_{ji}^{(q,p)}\otimes U_{ji}^{qp} \right)
\left(I_{E_j^{q}}\otimes \widetilde{T}_i^{(p)*}\right).
\end{align*}
\end{proof}

\begin{lemma}\label{lemma: Simplify}
Let $A \subseteq I_k$ be nonempty, and 
$m = (m_1, m_2, \ldots, m_{|A|}) \in \mathbb{Z}_+^{|A|}$. 
Then
\[
\prod_{i \in A} 
\left(
\widetilde{T}_i^{(m_i)}
  (I_{E_i^{\,m_i}} \otimes P_{\mathcal{W}_i})
  \widetilde{T}_i^{(m_i)*}
\right)
\;=\;
\widetilde{T}_A^{(m)}
  \left(I_{E_A^{\,m}} \otimes P_{\mathcal{W}_A}\right)
  \widetilde{T}_A^{(m)*}.
\]
\end{lemma}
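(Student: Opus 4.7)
The plan is to reduce the statement to a commutation principle for the projections $P_i^{(n)}=\widetilde T_i^{(n)}\widetilde T_i^{(n)*}$ and then induct on $|A|$. For each $i\in A$, set
\[
Q_i:=\widetilde T_i^{(m_i)}\bigl(I_{E_i^{\,m_i}}\otimes P_{\mathcal W_i}\bigr)\widetilde T_i^{(m_i)*}.
\]
Since $P_{\mathcal W_i}=I-\widetilde T_i\widetilde T_i^{*}$, a direct expansion shows $Q_i=P_i^{(m_i)}-P_i^{(m_i+1)}$. So the left-hand side of the lemma is $\prod_{i\in A}Q_i$, while the right-hand side is (by the same calculation applied to $\widetilde T_A^{(m)}$)
\[
\widetilde T_A^{(m)}\bigl(I_{E_A^m}\otimes P_{\mathcal W_A}\bigr)\widetilde T_A^{(m)*}
= \widetilde T_A^{(m)}\widetilde T_A^{(m)*}\prod_{i\in A}\bigl(I-\text{something}\bigr),
\]
but more convenient than expanding is to prove the equality by induction.

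The key ingredient is the graded commutation $P_i^{(l)}\widetilde T_j=\widetilde T_j(I_{E_j}\otimes P_i^{(l)})$ for $i\neq j$ and all $l\in\mathbb Z_+$; this is exactly identity~\eqref{eq:TiTj commutation} established in the proof of Theorem~\ref{thm: decompo for doubly twisted rep}. Iterating in the first variable yields
\[
P_i^{(l)}\widetilde T_j^{(n)}=\widetilde T_j^{(n)}(I_{E_j^n}\otimes P_i^{(l)}),\qquad
\widetilde T_j^{(n)*}P_i^{(l)}=(I_{E_j^n}\otimes P_i^{(l)})\widetilde T_j^{(n)*},
\]
from which one deduces (taking $l=n'$ and sandwiching with $\widetilde T_j^{(n')}$, $\widetilde T_j^{(n')*}$) that $P_i^{(l)}$ and $P_j^{(n')}$ commute for $i\neq j$. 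In particular the family $\{Q_i\}_{i\in A}$ consists of pairwise commuting operators, so the product $\prod_{i\in A}Q_i$ is unambiguous.

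For the induction, the case $|A|=1$ is tautological. Writing $A=\{i_1<\cdots<i_p\}$, set $B=\{i_1,\dots,i_{p-1}\}$ and $m'=(m_{i_1},\dots,m_{i_{p-1}})$, so that $\widetilde T_A^{(m)}=\widetilde T_B^{(m')}\bigl(I_{E_B^{m'}}\otimes\widetilde T_{i_p}^{(m_{i_p})}\bigr)$ and $P_{\mathcal W_A}=P_{\mathcal W_B}P_{\mathcal W_{i_p}}$. Expanding
\[
\widetilde T_A^{(m)}(I_{E_A^m}\otimes P_{\mathcal W_A})\widetilde T_A^{(m)*}
\]
and using (i) the commutation of $(I_{E_A^m}\otimes P_{\mathcal W_B})$ with $(I_{E_B^{m'}}\otimes \widetilde T_{i_p}^{(m_{i_p})*})$ coming from $\widetilde T_{i_p}^{(m_{i_p})*}P_{\mathcal W_B}=(I_{E_{i_p}^{m_{i_p}}}\otimes P_{\mathcal W_B})\widetilde T_{i_p}^{(m_{i_p})*}$, collapses the $i_p$-factor to $I_{E_B^{m'}}\otimes Q_{i_p}$ and leaves
\[
\widetilde T_B^{(m')}\bigl(I_{E_B^{m'}}\otimes Q_{i_p}\bigr)\bigl(I_{E_B^{m'}}\otimes P_{\mathcal W_B}\bigr)\widetilde T_B^{(m')*}.
\]
Since $i_p\notin B$, the graded commutation above lets me pull $Q_{i_p}$ (being a linear combination of $P_{i_p}^{(m_{i_p})}$ and $P_{i_p}^{(m_{i_p}+1)}$) outside $\widetilde T_B^{(m')}$, obtaining $Q_{i_p}\,\widetilde T_B^{(m')}(I_{E_B^{m'}}\otimes P_{\mathcal W_B})\widetilde T_B^{(m')*}=Q_{i_p}\prod_{j\in B}Q_{i_j}=\prod_{i\in A}Q_i$ by the induction hypothesis and commutativity.

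The only non-routine step is verifying the graded commutation $P_i^{(l)}\widetilde T_j^{(n)}=\widetilde T_j^{(n)}(I_{E_j^n}\otimes P_i^{(l)})$ in the second variable $n$; but this was already carried out in the proof of Theorem~\ref{thm: decompo for doubly twisted rep}, so it is available here. Everything else reduces to bookkeeping with the tensor identities of Remark~\ref{rem: tensor identities}.
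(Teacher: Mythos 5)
Your proof is correct, but it follows a genuinely different route from the paper's. The paper proves the case $A=\{i,j\}$ by a direct computation that keeps the twist data explicit: it invokes the generalized doubly twisted relation of Lemma~\ref{lemma: commutativity of Ti *m and Tj n} (with the iterated flips $t_{ji}^{(m_j,m_i)}$ and twist powers $U_{ji}^{\,m_jm_i}$), moves the wandering projections using \eqref{eq:commutativity PwA Tjn}, and cancels the twist against its adjoint via Lemma~\ref{lemma:intertwining of U_{ij}}, leaving the general case to induction. You instead start from the right-hand side, peel off the last coordinate of $\widetilde T_A^{(m)}$, and reduce everything to the single commutation $P_i^{(l)}\widetilde T_j=\widetilde T_j\bigl(I_{E_j}\otimes P_i^{(l)}\bigr)$, i.e.\ identity \eqref{eq:TiTj commutation} from the proof of Theorem~\ref{thm: decompo for doubly twisted rep}, iterated through the tensored factors of $\widetilde T_B^{(m')}$; the observation $Q_i=P_i^{(m_i)}-P_i^{(m_i+1)}$ makes the pulling-out step painless and also yields pairwise commutativity of the $Q_i$, so the left-hand product is unambiguous. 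What your approach buys is that all twisting is absorbed once and for all into the already-established range-projection commutation, so the flip maps $t_{ij}^{(m,n)}$ never appear, and the induction on $|A|$ is spelled out explicitly; what the paper's approach buys is that it exercises Lemma~\ref{lemma: commutativity of Ti *m and Tj n}, machinery it needs anyway for Lemma~\ref{twisted intersection}. One small correction of attribution: the graded version $P_i^{(l)}\widetilde T_j^{(n)}=\widetilde T_j^{(n)}\bigl(I_{E_j^{\,n}}\otimes P_i^{(l)}\bigr)$ for $n>1$ is not literally carried out in the proof of Theorem~\ref{thm: decompo for doubly twisted rep} (which treats $n=1$ for all $l$); it is the routine iteration you yourself sketch, analogous to \eqref{eq:commutativity PwA Tjn}, so nothing essential is missing, but you should prove it rather than cite it, and similarly you should note that citing an identity established inside another proof (rather than a standalone lemma) is acceptable here only because its derivation uses nothing beyond the doubly twisted relations.
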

\begin{proof}
We prove the result for \(A = \{i,j\}\); the general case follows by induction.  
Fix \(m = (m_i, m_j) \in \mathbb{Z}_+^2\).  
We need to show that
\[
\left(\widetilde{T}_i^{(m_i)}(I_{E_i^{m_i}} \otimes P_{\mathcal{W}_i})\widetilde{T}_i^{(m_i)*}\right)
\left(\widetilde{T}_j^{(m_j)}(I_{E_j^{m_j}} \otimes P_{\mathcal{W}_j})\widetilde{T}_j^{(m_j)*}\right)
= \widetilde{T}_A^{(m)}(I_{E_A^{m}} \otimes P_{\mathcal{W}_A})\widetilde{T}_A^{(m)*}.
\]
Let X denote the left-hand side of the above equation. Then by Lemma~\ref{lemma: commutativity of Ti *m and Tj n},
\begin{align*}
X
&= \widetilde{T}_i^{(m_i)}\left(I_{E_i^{m_i}} \otimes P_{\mathcal{W}_i}\right)
\left(I_{E_i^{m_i}} \otimes \widetilde{T}_j^{(m_j)}\right)
\left(t_{ji}^{(m_j,m_i)} \otimes U_{ji}^{m_j m_i}\right)
\left(I_{E_j^{m_j}} \otimes \widetilde{T}_i^{(m_i)*}\right)
\left(I_{E_j^{m_j}} \otimes P_{\mathcal{W}_j}\right)\widetilde{T}_j^{(m_j)*}\\
&= \widetilde{T}_A^{(m)}\left(I_{E_A^{m}} \otimes P_{\mathcal{W}_i}\right)
\left(t_{ji}^{(m_j,m_i)} \otimes U_{ji}^{m_j m_i}\right)
\left(I_{E_j^{m_j}} \otimes \widetilde{T}_i^{(m_i)*}\right)
\left(I_{E_j^{m_j}} \otimes P_{\mathcal{W}_j}\right)\widetilde{T}_j^{(m_j)*} \quad (\text{by }\eqref{eq:commutativity PwA Tjn})\\
&=\widetilde{T}_A^{(m)}\left(I_{E_A^{m}} \otimes P_{\mathcal{W}_i}\right)
\left(t_{ji}^{(m_j,m_i)} \otimes U_{ji}^{m_j m_i}\right)\left(I_{E_j^{m_j} \otimes E_i^{m_i}} \otimes P_{\clw_j}\right)
    \left(I_{E_j^{m_j}} \otimes \widetilde{T}_i^{(m_i)*}\right)
    \widetilde{T}_j^{(m_j)*}
    \quad(\text{by }\eqref{eq:commutativity PwA Tjn})\\
&=\widetilde{T}_A^{(m)}\left(I_{E_A^{m}} \otimes P_{\mathcal{W}_i}\right)
\left(t_{ji}^{(m_j,m_i)} \otimes U_{ji}^{m_j m_i}\right)\left(I_{E_j^{m_j} \otimes E_i^{m_i}} \otimes P_{\clw_j}\right)
     \left(\widetilde{T}_j^{(m_j)}(I_{E_j^{m_j}} \otimes \widetilde{T}_i^{(m_i)})\right)^* \\
&=\widetilde{T}_A^{(m)}\left(I_{E_A^{m}} \otimes P_{\mathcal{W}_i}\right)
\left(t_{ji}^{(m_j,m_i)} \otimes U_{ji}^{m_j m_i}\right)\left(I_{E_j^{m_j} \otimes E_i^{m_i}} \otimes P_{\clw_j}\right)
     \left(t_{ji}^{\,{m_j}{m_i}} \otimes U_{ji}^{\,{m_j}{m_i}}\right)^*
     \left(\widetilde{T}_A^{(m)*}\right)\\
     &= \widetilde{T}_A^{(m)}\left(I_{E_A^{m}} \otimes P_{\mathcal{W}_i}\right)
\left(I_{E_A^{m}} \otimes P_{\mathcal{W}_j}\right)
(\widetilde{T}_A^{(m)*})
\qquad \text{(Lemma~\ref{lemma:intertwining of U_{ij}})}\\
&= \widetilde{T}_A^{(m)}(I_{E_A^{m}} \otimes P_{\mathcal{W}_A})(\widetilde{T}_A^{(m)*})
\quad (\text{by }\eqref{eq:WA-and-PwA-as-product}).
\end{align*}
Thus, the desired equality holds.
\end{proof}

\begin{lemma}\label{twisted intersection}
Let $\cls$ be a reducing subspace for $(\sigma, T_1, \ldots, T_k)$. Then
\[
\bigcap_{m \in \mathbb{Z}_+^{|A|}} \widetilde{T}_A^{(m)}(I_{E_A^{m}} \otimes \cls)
= \bigcap_{i \in A}\; \bigcap_{m_i \in \mathbb{Z}_+}
\widetilde{T}_i^{(m_i)}(I_{E_i^{m_i}} \otimes \cls).
\]
\end{lemma}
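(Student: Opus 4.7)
The plan is to prove the nontrivial inclusion by identifying both sides with ranges of commuting orthogonal projections. The inclusion $\subseteq$ is immediate: for each $i\in A$ and $m_i\in\mathbb Z_+$, the special multi-index $m=m_i e_i\in\mathbb Z_+^{|A|}$ reduces $\widetilde T_A^{(m)}$ to $\widetilde T_i^{(m_i)}$ and $E_A^m$ to $E_i^{m_i}$, so every element of the LHS already lies in $\widetilde T_i^{(m_i)}(I_{E_i^{m_i}}\otimes \cls)$; taking the intersection over all $i$ and $m_i$ gives containment in the RHS.

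For the reverse inclusion, I would first observe that the RHS can be rewritten as $\bigcap_{m\in\mathbb Z_+^{|A|}}\bigcap_{i\in A}\widetilde T_i^{(m_i)}(I_{E_i^{m_i}}\otimes \cls)$, since the $i$-th factor depends only on $m_i$. Thus it suffices to prove, for each fixed $m\in\mathbb Z_+^{|A|}$, the pointwise equality
\[
\widetilde T_A^{(m)}(I_{E_A^m}\otimes \cls)=\bigcap_{i\in A}\widetilde T_i^{(m_i)}(I_{E_i^{m_i}}\otimes \cls).
\]
Since $(\sigma,T_1,\ldots,T_k)$ is isometric, every $\widetilde T_i^{(m_i)}$ and the composite $\widetilde T_A^{(m)}$ are isometries, so the operators
\[
R_i:=\widetilde T_i^{(m_i)}(I_{E_i^{m_i}}\otimes P_{\cls})\widetilde T_i^{(m_i)*},\qquad R_A:=\widetilde T_A^{(m)}(I_{E_A^m}\otimes P_{\cls})\widetilde T_A^{(m)*}
\]
are orthogonal projections with ranges $\widetilde T_i^{(m_i)}(I_{E_i^{m_i}}\otimes \cls)$ and $\widetilde T_A^{(m)}(I_{E_A^m}\otimes \cls)$, respectively. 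The key claim is $R_A=\prod_{i\in A}R_i$; once established, self-adjointness of the product forces the $R_i$ to commute pairwise, so $\prod_{i\in A}R_i$ is the projection onto $\bigcap_{i\in A}\mathrm{Ran}(R_i)$, and comparing ranges yields the pointwise equality.

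The core technical step is therefore the projection identity $R_A=\prod_{i\in A}R_i$, which is the exact analogue of Lemma \ref{lemma: Simplify} with every $P_{\mathcal W_i}$ replaced by the single projection $P_{\cls}$. The proof of Lemma \ref{lemma: Simplify} adapts almost verbatim by induction on $|A|$: in the two-index case, one invokes Lemma \ref{lemma: commutativity of Ti *m and Tj n} to rearrange the central factor $\widetilde T_i^{(m_i)*}\widetilde T_j^{(m_j)}$ and then commutes the ambient $P_{\cls}$ factors past the $\widetilde T_j$, $\widetilde T_j^*$, and the twists $(t_{ji}^{(m_j,m_i)}\otimes U_{ji}^{m_j m_i})$. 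The assumption that $\cls$ reduces $(\sigma,T_1,\ldots,T_k)$ is used precisely here: it yields $P_{\cls}\widetilde T_j=\widetilde T_j(I\otimes P_{\cls})$, $\widetilde T_j^*P_{\cls}=(I\otimes P_{\cls})\widetilde T_j^*$, and $U_{ij}P_{\cls}=P_{\cls}U_{ij}$, together with the invariance of $\cls$ under the flips as recorded in \eqref{eq: reducing under Uij}. With these, all the sliding is automatic and the computation is in fact cleaner than in Lemma \ref{lemma: Simplify}, since a single projection does all the work. Intersecting the resulting pointwise equality over $m\in\mathbb Z_+^{|A|}$ completes the proof; the only real obstacle is the bookkeeping in the inductive step, which is already handled by the two-index argument copied from Lemma \ref{lemma: Simplify}.
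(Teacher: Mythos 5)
Your argument is correct in substance, but it takes a somewhat different route from the paper's. The paper proves the nontrivial inclusion by element-chasing: given $y$ in the right-hand side and a fixed $m$, it writes $y=\widetilde T_i^{(m_i)}(e_i\otimes x_i)$ with $x_i\in\cls$ for each $i$, uses the isometry of $\widetilde T_1^{(m_1)}$ together with Lemma~\ref{lemma: commutativity of Ti *m and Tj n} to rewrite $e_1\otimes x_1$ as the image of $e_2\otimes x_2$ under $\bigl(I\otimes\widetilde T_2^{(m_2)}\bigr)\bigl(t_{21}^{(m_2,m_1)}\otimes U_{21}^{m_2m_1}\bigr)\bigl(I\otimes\widetilde T_1^{(m_1)*}\bigr)$, and then invokes the reducing property and \eqref{eq: reducing under Uij} to see that the relevant vector lies in $E_1^{m_1}\otimes E_2^{m_2}\otimes\cls$, whence $y\in\widetilde T_A^{(m)}(I_{E_A^{m}}\otimes\cls)$. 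You instead prove the operator identity $R_A=\prod_{i\in A}R_i$, i.e.\ the $P_{\cls}$-analogue of Lemma~\ref{lemma: Simplify}, and read off the range equality from commuting projections. This buys you the slightly stronger pointwise statement $\widetilde T_A^{(m)}(I_{E_A^{m}}\otimes\cls)=\bigcap_{i\in A}\widetilde T_i^{(m_i)}(I_{E_i^{m_i}}\otimes\cls)$ for every fixed $m$ (in the doubly commuting scalar case this is the classical identity $V_1^{m_1}\mathcal H\cap V_2^{m_2}\mathcal H=V_1^{m_1}V_2^{m_2}\mathcal H$), from which the lemma follows by intersecting over $m$; the easy inclusion then comes for free. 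Both proofs ultimately rest on the same ingredients: Lemma~\ref{lemma: commutativity of Ti *m and Tj n}, the intertwining $P_{\cls}\widetilde T_j=\widetilde T_j(I\otimes P_{\cls})$ supplied by the reducing hypothesis, and \eqref{eq: reducing under Uij}.

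Two points should be tightened. First, for $|A|>2$ you cannot deduce pairwise commutation of the $R_i$ merely from self-adjointness of the full product; but this is easily repaired, since your two-index identity applies to every pair $\{i,j\}\subseteq A$ and already gives $R_iR_j=R_{\{i,j\}}=R_jR_i$, after which $\prod_{i\in A}R_i$ is the orthogonal projection onto $\bigcap_{i\in A}\operatorname{Ran}R_i$ and the induction on $|A|$ closes as in Lemma~\ref{lemma: Simplify}. Second, the relation $U_{ij}P_{\cls}=P_{\cls}U_{ij}$ is not part of Definition~\ref{def:reducing subspace} and needs a word of justification (it does hold, e.g.\ by combining \eqref{eq: reducing under Uij} with nondegeneracy of $\sigma$); alternatively you can avoid it altogether, since every sliding step in your computation only requires that the powered twists $t_{ji}^{(m_j,m_i)}\otimes U_{ji}^{m_jm_i}$ commute with $I\otimes P_{\cls}$, which is the iterated form of \eqref{eq: reducing under Uij} — the same extension the paper itself uses tacitly in its own proof.
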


\begin{proof}
It suffices to prove the case $A = \{1,2\}$; the general case follows by induction.  
Let $(\sigma, T_1, T_2)$ be a doubly twisted representation with twist $U$.  
By \eqref{eq: reducing under Uij},
\[
\bigcap_{m \in \mathbb{Z}_+^{|A|}} \widetilde{T}_A^{(m)}\left(I_{E_A^{m}} \otimes \cls\right)
\subseteq 
\bigcap_{i \in A}\; \bigcap_{m_i \in \mathbb{Z}_+}
\widetilde{T}_i^{(m_i)}\left(I_{E_i^{m_i}} \otimes \cls\right).
\]
For the reverse inclusion, take
\[
y \in
\left(\!\bigcap_{m_1 \in \mathbb{Z}_+} \widetilde{T}_1^{(m_1)}\left(I_{E_1^{m_1}} \otimes \cls\right)\!\right)
\cap
\left(\!\bigcap_{m_2 \in \mathbb{Z}_+} \widetilde{T}_2^{(m_2)}\left(I_{E_2^{m_2}} \otimes \cls\right)\!\right).
\]
Then for $m=(m_1,m_2)\in\mathbb{Z}_+^2$, there exist $e_i\in E_i^{m_i}$ and $x_i\in\cls$ such that  
$y = \widetilde{T}_1^{(m_1)}(e_1\!\otimes\!x_1)
   = \widetilde{T}_2^{(m_2)}(e_2\!\otimes\!x_2)$.
Hence Lemma~\ref{lemma: commutativity of Ti *m and Tj n} implies
\[
e_1 \otimes x_1
= \widetilde{T}_1^{(m_1)*}\widetilde{T}_2^{(m_2)}\left(e_2 \otimes x_2\right)
= \left(I_{E_1^{m_1}} \otimes \widetilde{T}_2^{(m_2)}\right)
  \left(t_{21}^{(m_2,m_1)} \otimes U_{21}^{m_2m_1}\right)
  \left(I_{E_2^{m_2}} \otimes \widetilde{T}_1^{(m_1)*}\right)(e_2 \otimes x_2).
\] 
Since $\cls$ reduces $\widetilde{T}_1$,  
\(\left(I_{E_2^{m_2}} \otimes \widetilde{T}_1^{(m_1)*}\right)(e_2 \otimes x_2)
   \in E_2^{m_2} \otimes E_1^{m_1} \otimes \cls.\)
By \eqref{eq: reducing under Uij},  
\[
\left(t_{21}^{(m_2,m_1)} \otimes U_{21}^{m_2m_1}\right)
 \left(I_{E_2^{m_2}} \otimes \widetilde{T}_1^{(m_1)*}\right)(e_2 \otimes x_2)
 \in E_1^{m_1} \otimes E_2^{m_2} \otimes \cls.
\]
Let this element be $\eta \otimes x$ with $\eta \in E_1^{m_1} \otimes E_2^{m_2}$ and $x \in \cls$.  
Then
\[
y = \widetilde{T}_1^{(m_1)}\left(I_{E_1^{m_1}} \otimes \widetilde{T}_2^{(m_2)}\right)(\eta \otimes x) = \widetilde{T}_A^{(m)}(\eta \otimes x).
\]
As $m$ was arbitrary, therefore, $y \in \bigcap_{m \in \mathbb{Z}_+^2} \widetilde{T}_A^{(m)}\left(I_{E_A^{m}} \otimes \cls\right).$
Hence,
\[
\bigcap_{i \in A}\; \bigcap_{m_i \in \mathbb{Z}_+}
\widetilde{T}_i^{(m_i)}\left(I_{E_i^{m_i}} \otimes \cls\right)
\subseteq
\bigcap_{m \in \mathbb{Z}_+^{|A|}}
\widetilde{T}_A^{(m)}\left(I_{E_A^{m}} \otimes \cls\right).
\]
\end{proof}

Theorem~\ref{thm: decompo for doubly twisted rep} guarantees a Wold decomposition for any 
doubly twisted isometric covariant representation 
\((\sigma, T_1, \ldots, T_k)\) of \(\mathbb{E}\) on a Hilbert space \(\clh\).  
We now describe the representations of the summands \(\clh_A\) in the Wold decomposition $\clh =\displaystyle \bigoplus_{A \subseteq I_k} \clh_A.$

\begin{theorem}\label{thm: representation of summands}
For each subset \(A \subseteq I_k\), we have
\[
\clh_A
= \bigoplus_{n \in \mathbb{Z}_+^{|A|}}
\widetilde{T}_A^{(n)}
\left(I_{E_A^{n}} \otimes
   \bigcap_{m \in \mathbb{Z}_+^{k - |A|}}
   \widetilde{T}_{I_k \setminus A}^{(m)}
   \left(I_{E_{I_k \setminus A}^{m}} \otimes \clw_A\right)
\right).
\]
\end{theorem}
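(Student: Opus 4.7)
The approach is to compute the orthogonal projection $P_{\clh_A}$ explicitly and identify its range with the claimed orthogonal direct sum. By \eqref{eqn: rep of Wold dec H A},
$\clh_A=\bigcap_{i\in A}\clh_i^1\cap\bigcap_{j\in A^{\mathrm c}}\clh_j^2$, and Lemma~\ref{lemma:commutativity-PHi1-PHj2} guarantees that the projections $P_{\clh_i^1}$ and $P_{\clh_j^2}$ pairwise commute. Setting $Q:=\prod_{j\in A^{\mathrm c}}P_{\clh_j^2}=P_{\bigcap_{j\in A^{\mathrm c}}\clh_j^2}$, this gives $P_{\clh_A}=Q\prod_{i\in A}P_{\clh_i^1}$.

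First I would use the single-variable formula \eqref{eq:Projection on H1 and H2}, the SOT-continuity of left and right multiplication by bounded operators, and Lemma~\ref{lemma: Simplify} applied termwise to obtain
\[
\prod_{i\in A}P_{\clh_i^1}=\mathrm{SOT}\text{-}\!\sum_{n\in\Z_+^{|A|}}\widetilde T_A^{(n)}(I_{E_A^n}\otimes P_{\clw_A})\widetilde T_A^{(n)*}.
\]
Then I would push $Q$ inside each summand: Theorem~\ref{thm: class of decom} ensures that each $\clh_j^2$ with $j\in A^{\mathrm c}$ reduces every $\widetilde T_i$ with $i\in A$, so $Q\widetilde T_A^{(n)}=\widetilde T_A^{(n)}(I_{E_A^n}\otimes Q)$; by Lemma~\ref{lemma:commutativity of PHj with PwA}, $Q$ also commutes with $P_{\clw_A}$. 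Hence
\[
P_{\clh_A}=\mathrm{SOT}\text{-}\!\sum_{n\in\Z_+^{|A|}}\widetilde T_A^{(n)}(I_{E_A^n}\otimes QP_{\clw_A})\widetilde T_A^{(n)*}.
\]

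Next I would identify $QP_{\clw_A}$ as the projection onto the inner intersection. Since $\clw_A$ reduces $\widetilde T_j$ for every $j\in A^{\mathrm c}$ (Lemma~\ref{lemma:WA-reduces-Tj}), the adjoint relation $\widetilde T_j^{(m_j)*}\clw_A\subseteq E_j^{m_j}\otimes\clw_A$ yields $\clw_A\cap\clh_j^2=\bigcap_{m_j\in\Z_+}\widetilde T_j^{(m_j)}(E_j^{m_j}\otimes\clw_A)$. Intersecting over $j\in A^{\mathrm c}$ and applying Lemma~\ref{twisted intersection} to the subfamily $\{T_j\}_{j\in A^{\mathrm c}}$ with $\cls=\clw_A$ identifies $\clm_A:=\clw_A\cap\bigcap_{j\in A^{\mathrm c}}\clh_j^2$ with $\bigcap_{m\in\Z_+^{k-|A|}}\widetilde T_{I_k\setminus A}^{(m)}(E_{I_k\setminus A}^m\otimes\clw_A)$, matching the inner intersection in the theorem; in particular $QP_{\clw_A}=P_{\clm_A}$.

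Finally, each summand $P_n:=\widetilde T_A^{(n)}(I_{E_A^n}\otimes P_{\clm_A})\widetilde T_A^{(n)*}$ is itself a projection with range $\widetilde T_A^{(n)}(E_A^n\otimes\clm_A)$, because $\widetilde T_A^{(n)}$ is isometric as a composition of isometric $\widetilde T_i^{(n_i)}$. An SOT-convergent sum of projections equal to the projection $P_{\clh_A}$ forces $P_n\le P_{\clh_A}$ for every $n$, hence $\sum_{m\ne n}P_mP_n=0$; multiplying by $P_n$ on the left and using positivity of each term $P_nP_mP_n\ge 0$ gives $P_nP_m=0$ whenever $m\ne n$. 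Therefore the subspaces $\widetilde T_A^{(n)}(E_A^n\otimes\clm_A)$ are mutually orthogonal, and their Hilbert-space direct sum equals $\clh_A$. The main obstacle will be the bookkeeping in the second paragraph: merging the single-variable Fock sums into one $\Z_+^{|A|}$-indexed sum via Lemma~\ref{lemma: Simplify}, and rigorously justifying $Q\widetilde T_A^{(n)}=\widetilde T_A^{(n)}(I_{E_A^n}\otimes Q)$ from the individual reducing properties of the $\clh_j^2$; orthogonality of the resulting pieces then follows formally from the projection identity.
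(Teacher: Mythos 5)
Your proposal is correct and follows essentially the same route as the paper: write $P_{\clh_A}=\bigl(\prod_{i\in A}P_{\clh_i^1}\bigr)\bigl(\prod_{j\in A^c}P_{\clh_j^2}\bigr)$, expand the first factor via \eqref{eq:Projection on H1 and H2} and Lemma~\ref{lemma: Simplify}, push the coisometric factor inside each summand using \eqref{eq: Tmi reduces H1j} and Lemma~\ref{lemma:commutativity of PHj with PwA}, and identify the inner subspace with $\bigcap_{m}\widetilde T_{I_k\setminus A}^{(m)}\bigl(I_{E_{I_k\setminus A}^{m}}\otimes\clw_A\bigr)$ by applying Lemma~\ref{twisted intersection} to the subfamily $\{T_j\}_{j\in A^c}$ with $\cls=\clw_A$, exactly as the paper does. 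Your only addition is the explicit verification that the pieces $\widetilde T_A^{(n)}(E_A^n\otimes\clm_A)$ are mutually orthogonal (an SOT-sum of projections equal to a projection forces pairwise orthogonality), a point the paper leaves implicit when passing from the sum to the orthogonal direct sum.
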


\begin{proof}
From (\ref{eqn: rep of Wold dec H A}), we have $\clh_A = 
\Big[\bigcap_{i \in A} \clh_i^1\Big]
\cap
\Big[\bigcap_{j \in A^c} \clh_j^2\Big].$ Fix a non-empty subset $A $ of $ I_k$, the Lemma~\ref{lemma:commutativity-PHi1-PHj2} gives
\[
\clh_A = 
\left(\prod_{i \in A} P_{\clh_i^1}\right)
\left(\prod_{j \in A^c} P_{\clh_j^2}\right)\clh.
\]
For each \(i \in A\),
\begin{align*}
\prod_{i \in A} P_{\clh_i^1}
&= \sum_{n \in \mathbb{Z}_+^{|A|}}
   \prod_{i \in A}
   \widetilde{T}_i^{(n_i)}\left(I_{E_i^{n_i}} \otimes P_{\clw_i}\right)
   \widetilde{T}_i^{(n_i)*}  \qquad (\text{by }\eqref{eq:Projection on H1 and H2})\\
&= \sum_{n \in \mathbb{Z}_+^{|A|}}
   \widetilde{T}_A^{(n)}\left(I_{E_A^{n}} \otimes P_{\clw_A}\right)
   \widetilde{T}_A^{(n)*}
   \qquad (\text{by }\text{Lemma~\ref{lemma: Simplify}}).
\end{align*}

If \(A = I_k\), then
\(
\clh_{I_k}
=\displaystyle \prod_{i \in I_k} P_{\clh_i^1}(\clh)
= \bigoplus_{n \in \mathbb{Z}_+^k}
  \widetilde{T}^{(n)}(I_{E^{n}} \otimes \clw_{I_k}).
\)

Now suppose \(A \subsetneq I_k\).  
Then
\begin{align*}
\left(\prod_{i \in A} P_{\clh_i^1}\right)
\left(\prod_{j \in A^c} P_{\clh_j^2}\right)
&= \left(\sum_{n \in \mathbb{Z}_+^{|A|}}
   \widetilde{T}_A^{(n)}
   \left(I_{E_A^{n}} \otimes P_{\clw_A}\right)
   \widetilde{T}_A^{(n)*} \right)  \prod_{j \in A^c} P_{\clh_j^2}\\
&= \sum_{n \in \mathbb{Z}_+^{|A|}}
   \widetilde{T}_A^{(n)}
   \left(I_{E_A^{n}} \otimes P_{\clw_A}\right)
   \left(I_{E_A^{n}} \otimes \prod_{j \in A^c} P_{\clh_j^2}\right)
   \widetilde{T}_A^{(n)*} \quad (\text{by }\eqref{eq: Tmi reduces H1j}) \\
&= \sum_{n \in \mathbb{Z}_+^{|A|}}
   \widetilde{T}_A^{(n)}
   \left(I_{E_A^{n}} \otimes
   \prod_{j \in A^c} P_{\clh_j^2}\right)
   \left(I_{E_A^{n}} \otimes P_{\clw_A}\right)
   \widetilde{T}_A^{(n)*} \quad(\text{by  Lemma~\ref{lemma:commutativity of PHj with PwA}}).\\
\end{align*}
Therefore, Lemma~\ref{twisted intersection} implies 
\begin{align*}
 \clh_A
&=
\left(
  \sum_{n \in \mathbb{Z}_+^{|A|}}
  \widetilde{T}_A^{(n)}
  \left(I_{E_A^{n}} \otimes
   \left(\prod_{j \in A^c} P_{\clh_j^2}\right)( P_{\clw_A})
\right)\right) \clh\\  
&= \sum_{n \in \mathbb{Z}_+^{|A|}}
\widetilde{T}_A^{(n)}
\left(I_{E_A^{n}} \otimes
   \bigcap_{m \in \mathbb{Z}_+^{k - |A|}}
   \widetilde{T}_{I_k \setminus A}^{(m)}
   \left(I_{E_{I_k \setminus A}^{m}} \otimes \clw_A\right)
\right).\\
\end{align*}

Finally, when \(A = \emptyset\), 
\(\clw_A = \clh\), and hence $\clh_{\emptyset}
= \bigcap_{m \in \mathbb{Z}_+^{k}}
  \widetilde{T}^{(m)}(I_{E^{m}} \otimes \clh).$
\end{proof}

Let $\mathcal D_A
:=\displaystyle \bigcap_{m\in\mathbb Z_+^{k-|A|}}
\widetilde T_{I_k\setminus A}^{(m)}
\left(I_{E_{I_k\setminus A}^{m}}\otimes \mathcal W_A\right).$
Then the above theorem allows us to express
\[
\clh_A 
= \bigoplus_{n \in \mathbb{Z}_+^{|A|}}
\widetilde{T}_A^{(n)}
\left(I_{E_A^{n}} \otimes \cld_A\right).
\]
The Wold decomposition (Theorem~\ref{Wold-Decomposition}) ensures that each 
$\mathcal H_A$ reduces $\sigma(\mathcal A)$.

\section{Model Operators on the Fock Space} \label{Models}
Recall the one-variable situation, suppose \(V \in \clb(\clh)\) is an isometry. Then, by the classical Wold decomposition, we have the orthogonal splitting $\clh = \clh_{\{1\}} \oplus \clh_{\emptyset},$
where $\clh_{\{1\}} = \displaystyle\bigoplus_{j=0}^{\infty} V^j (\ker V^*),$ and $
\displaystyle\clh_{\emptyset} = \bigcap_{j=0}^{\infty} V^j \clh.$ Let $\clw= \ker V^*.$
With the \emph{canonical unitary} $\Pi : \clh_{\{1\}} \longrightarrow H^2_{\clw}(\D),\, 
\Pi(V^m \eta) = z^m \eta,\, m \in \Z_+, \ \eta \in \clw,$
we get the intertwining relation $\Pi \restr{V}{\clh_{\{1\}}} =M_z \Pi.$
This illustrates the classical model for an isometry on a Hilbert space, where the Wold decomposition separates the shift and unitary parts.

Given a doubly twisted isometric covariant representation 
\((\sigma, T_1, \ldots, T_k)\) of the product system \(\mathbb{E}\) on a Hilbert space \(\clh\), 
we obtain the Wold-type orthogonal decomposition.
Analogously, for the multivariable doubly twisted representation, if we restrict to 
\(\restr{(\sigma, T_1, \ldots, T_k)}{\clh_A}\),
we again obtain a doubly twisted isometric covariant representation such that \(\restr{(\sigma, T_i)}{\clh_A}\) is induced for \(i \in A\), and 
\(\restr{(\sigma, T_j)}{\clh_A}\) is fully coisometric for \(j \in A^c\).
In this section, our aim is to construct a unitarily equivalent model for each such summand, consisting of creation operators (corresponding to the induced part) and unitary operators (corresponding to the coisometric part) acting on the appropriate Fock space. 

In order to build the model operators on the Fock space, we begin by defining several key maps and proving a few lemmas.
\begin{lemma}\label{lemma:commutativity-Uijn-Tlm}
Let $i,j,l\in I_k$ with $i \neq j.$ Then for every $m,n\in\mathbb Z_+$, $U_{ij}^{\,n}\,\widetilde{T}_l^{(m)}
\;=\;
\widetilde{T}_l^{(m)}\,\left(I_{E_l^{\,m}}\otimes U_{ij}^{\,n}\right).$
\end{lemma}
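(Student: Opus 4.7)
The statement is essentially an iterated version of Lemma~\ref{lemma:intertwining of U_{ij}}(1), so I propose a straightforward induction on $n$ with $m$ fixed.

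The base case $n=0$ is trivial, since $U_{ij}^{0}=I_{\mathcal H}$ and $I_{E_l^{m}}\otimes I_{\mathcal H}=I_{E_l^{m}\otimes\mathcal H}$. The case $n=1$ is exactly the statement of Lemma~\ref{lemma:intertwining of U_{ij}}(1), which yields
\[
U_{ij}\,\widetilde T_l^{(m)}
\;=\;
\widetilde T_l^{(m)}\bigl(I_{E_l^{\,m}}\otimes U_{ij}\bigr)
\]
for all $m\in\mathbb Z_+$.

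For the inductive step, assume the identity for some $n\ge1$. Using the case $n=1$ and the functoriality identity from Remark~\ref{rem: tensor identities}(1), namely $(I_{E_l^{m}}\otimes U_{ij})(I_{E_l^{m}}\otimes U_{ij}^{n})=I_{E_l^{m}}\otimes U_{ij}^{\,n+1}$, I would compute
\[
U_{ij}^{\,n+1}\,\widetilde T_l^{(m)}
\;=\;U_{ij}\bigl(U_{ij}^{\,n}\,\widetilde T_l^{(m)}\bigr)
\;=\;U_{ij}\,\widetilde T_l^{(m)}\bigl(I_{E_l^{\,m}}\otimes U_{ij}^{\,n}\bigr)
\;=\;\widetilde T_l^{(m)}\bigl(I_{E_l^{\,m}}\otimes U_{ij}\bigr)\bigl(I_{E_l^{\,m}}\otimes U_{ij}^{\,n}\bigr),
\]
and collapsing the last two factors gives $\widetilde T_l^{(m)}(I_{E_l^{\,m}}\otimes U_{ij}^{\,n+1})$, completing the induction.

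There is no real obstacle here: every ingredient (the $n=1$ intertwining, the tensor-product functoriality, and the algebraic identity $U_{ij}^{n+1}=U_{ij}U_{ij}^{n}$) is already available from the preliminaries and from Lemma~\ref{lemma:intertwining of U_{ij}}. The only thing to watch for is being clean about the tensor placement, i.e.\ that $U_{ij}^{n}$ acts on the $\mathcal H$ factor while $I_{E_l^{m}}$ acts on the correspondence factor, so that the two tensor-product operators commute trivially and may be combined.
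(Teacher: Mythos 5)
Your proof is correct, but it is organized differently from the paper's. The paper does not invoke Lemma~\ref{lemma:intertwining of U_{ij}}(1) here; instead it runs a self-contained two-stage induction: first it proves $U_{ij}^{\,n}\,\widetilde T_l=\widetilde T_l\,(I_{E_l}\otimes U_{ij}^{\,n})$ by induction on $n$ (the case $m=1$), and then, for each fixed $n$, it inducts on $m$ using the recursion $\widetilde T_l^{(m+1)}=\widetilde T_l^{(m)}\,(I_{E_l^{\,m}}\otimes \widetilde T_l)$. You instead take the general-$m$, single-$U_{ij}$ identity from Lemma~\ref{lemma:intertwining of U_{ij}}(1) as your base case and induct only on the power $n$, which collapses the second induction entirely; since that lemma is stated for doubly twisted representations (the setting in force here) and its proof is exactly the induction on $m$ that the paper repeats, your route is legitimate and shorter, at the cost of an external citation rather than a self-contained argument. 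Both arguments rely on the same elementary ingredients (the defining intertwining relation, functoriality of the tensor product, and $U_{ij}^{\,n+1}=U_{ij}U_{ij}^{\,n}$), and your care about which tensor factor each operator acts on is exactly the point the paper also exploits.
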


\begin{proof}
We proceed in two stages.
For each fixed $n\in\mathbb Z_+$, first we prove that
\begin{equation}\label{eq:UnTl}
U_{ij}^{\,n}\,\widetilde{T}_l \;=\; \widetilde{T}_l\,\left(I_{E_l}\otimes U_{ij}^{\,n}\right).
\end{equation}
We argue by induction on $n$. The case $n=1$ follows from the definition.
Assume \eqref{eq:UnTl} holds for some $n\ge1$. Then
\[
U_{ij}^{\,n+1}\,\widetilde{T}_l
= U_{ij}\,\left(U_{ij}^{\,n}\,\widetilde{T}_l\right)
= U_{ij}\,\widetilde{T}_l\,\left(I_{E_l}\otimes U_{ij}^{\,n}\right)
= \widetilde{T}_l\,\left(I_{E_l}\otimes U_{ij}\right)\,\left(I_{E_l}\otimes U_{ij}^{\,n}\right)
= \widetilde{T}_l\,\left(I_{E_l}\otimes U_{ij}^{\,n+1}\right),
\]
which proves \eqref{eq:UnTl} for all $n$.

Now for each fixed $n\in\mathbb Z_+$, we prove the claim for all $m\in\mathbb Z_+$ by induction on $m$.
For $m=1$, this is exactly \eqref{eq:UnTl}.
Assume for some $m\ge1$ that
\[
U_{ij}^{\,n}\,\widetilde{T}_l^{(m)}
= \widetilde{T}_l^{(m)}\,\left(I_{E_l^{\,m}}\otimes U_{ij}^{\,n}\right).
\]
Using the recursion $\widetilde{T}_l^{(m+1)}=\widetilde{T}_l^{(m)}\,\left(I_{E_l^{\,m}}\otimes \widetilde{T}_l\right)$, we compute
\[
\begin{aligned}
U_{ij}^{\,n}\,\widetilde{T}_l^{(m+1)}
&= U_{ij}^{\,n}\,\widetilde{T}_l^{(m)}\left(I_{E_l^{\,m}}\otimes \widetilde{T}_l\right) \\
&= \widetilde{T}_l^{(m)}\left(I_{E_l^{\,m}}\otimes U_{ij}^{\,n}\right)\,\left(I_{E_l^{\,m}}\otimes \widetilde{T}_l\right)
\qquad\text{(by induction hypothesis)} \\
&= \widetilde{T}_l^{(m)}\,\left(I_{E_l^{\,m}}\otimes (U_{ij}^{\,n}\,\widetilde{T}_l)\right) \\
&= \widetilde{T}_l^{(m)}\,\left(I_{E_l^{\,m}}\otimes \widetilde{T}_l\,(I_{E_l}\otimes U_{ij}^{\,n})\right)
\qquad\text{(by \eqref{eq:UnTl})} \\
&= \widetilde{T}_l^{(m)}\,(I_{E_l^{\,m}}\otimes \widetilde{T}_l)\,(I_{E_l^{\,m+1}}\otimes U_{ij}^{\,n}) \\
&= \widetilde{T}_l^{(m+1)}\,\left(I_{E_l^{\,m+1}}\otimes U_{ij}^{\,n}\right).
\end{aligned}
\]
This completes the induction on $m$. Since $n$ was arbitrary, the statement holds for all $m,n\in\mathbb Z_+$.
\end{proof}

\begin{lemma}\label{lemma:commutativity-tij-Uij-Tl}
Let $i,j,l\in I_k$ with $i \neq j$, and $n,m\in\mathbb Z_+$. 
Then the following commutation relation holds:
\[
\left(t_{ij}^{(n)} \otimes U_{ij}^{\,n}\right)
\left(I_{E_i\otimes E_j^{\,n}} \otimes \widetilde T_l^{\,(m)}\right)
\;=\;
\left(I_{E_j^{\,n}\otimes E_i} \otimes \widetilde T_l^{\,(m)}\right)
\left(t_{ij}^{(n)} \otimes I_{E_l^{\,m}} \otimes U_{ij}^{\,n}\right).
\]
\end{lemma}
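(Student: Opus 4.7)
The plan is to reduce this identity to a pure tensor-factor rearrangement, using Lemma \ref{lemma:commutativity-Uijn-Tlm} as the only nontrivial input. Both sides are maps
\[
E_i\otimes E_j^{\,n}\otimes E_l^{\,m}\otimes \mathcal H
\;\longrightarrow\;
E_j^{\,n}\otimes E_i\otimes \mathcal H,
\]
and the key observation is that $t_{ij}^{(n)}$ acts only on the correspondence factors $E_i\otimes E_j^{\,n}$, while $U_{ij}^{\,n}$ and $\widetilde T_l^{\,(m)}$ act only on $E_l^{\,m}\otimes \mathcal H$. Hence $t_{ij}^{(n)}$ and $\widetilde T_l^{\,(m)}$ live on disjoint tensor factors and factor through one another trivially.

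First I would regroup the left-hand side by separating the action on the $E_i\otimes E_j^{\,n}$ factors from the action on the $E_l^{\,m}\otimes \mathcal H$ factors. Explicitly, since $I_{E_i\otimes E_j^{\,n}}\otimes \widetilde T_l^{\,(m)}$ acts as the identity on $E_i\otimes E_j^{\,n}$, we obtain
\[
\bigl(t_{ij}^{(n)}\otimes U_{ij}^{\,n}\bigr)\bigl(I_{E_i\otimes E_j^{\,n}}\otimes \widetilde T_l^{\,(m)}\bigr)
\;=\;
t_{ij}^{(n)}\otimes \bigl(U_{ij}^{\,n}\,\widetilde T_l^{\,(m)}\bigr).
\]
Next, applying Lemma \ref{lemma:commutativity-Uijn-Tlm} inside the second tensor slot replaces $U_{ij}^{\,n}\widetilde T_l^{\,(m)}$ by $\widetilde T_l^{\,(m)}\bigl(I_{E_l^{\,m}}\otimes U_{ij}^{\,n}\bigr)$.

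Finally, I would rewrite the resulting expression as a composition by pulling the factor $\widetilde T_l^{\,(m)}$ outside:
\[
t_{ij}^{(n)}\otimes \Bigl[\widetilde T_l^{\,(m)}\bigl(I_{E_l^{\,m}}\otimes U_{ij}^{\,n}\bigr)\Bigr]
\;=\;
\bigl(I_{E_j^{\,n}\otimes E_i}\otimes \widetilde T_l^{\,(m)}\bigr)\bigl(t_{ij}^{(n)}\otimes I_{E_l^{\,m}}\otimes U_{ij}^{\,n}\bigr),
\]
which is exactly the right-hand side. No induction on $n$ or $m$ is needed because Lemma \ref{lemma:commutativity-Uijn-Tlm} already encodes the full $(m,n)$ commutation; the role of $t_{ij}^{(n)}$ here is entirely passive, being supported on tensor factors disjoint from those on which $\widetilde T_l^{\,(m)}$ and $U_{ij}^{\,n}$ operate.

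There is no substantive obstacle; the only care required is the bookkeeping of identity operators on the various tensor factors, which I would handle by writing each side as a single elementary tensor of operators before composing. If desired, one could verify the identity on an elementary tensor $\xi\otimes \eta\otimes \zeta\otimes h$ to make the argument completely transparent: the left-hand side produces $t_{ij}^{(n)}(\xi\otimes\eta)\otimes U_{ij}^{\,n}\widetilde T_l^{\,(m)}(\zeta\otimes h)$, while the right-hand side produces $t_{ij}^{(n)}(\xi\otimes\eta)\otimes \widetilde T_l^{\,(m)}(\zeta\otimes U_{ij}^{\,n}h)$, and these coincide by Lemma \ref{lemma:commutativity-Uijn-Tlm}.
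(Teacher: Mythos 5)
Your proposal is correct and follows essentially the same route as the paper: regroup each side as an elementary tensor of operators using $(A\otimes B)(C\otimes D)=(AC)\otimes(BD)$, apply Lemma~\ref{lemma:commutativity-Uijn-Tlm} in the second tensor slot, and regroup back. The bookkeeping with identity operators is exactly how the paper carries it out, so no changes are needed.
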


\begin{proof}
Using the elementary identity 
\((A\otimes B)(C\otimes D)=(AC)\otimes(BD)\) whenever the tensor products are well-defined, we compute:
\[
\begin{aligned}
\left(t_{ij}^{(n)} \otimes U_{ij}^{\,n}\right)
\left(I_{E_i\otimes E_j^{\,n}} \otimes \widetilde T_l^{\,(m)}\right)
&=\;
t_{ij}^{(n)} (I_{E_i\otimes E_j^{\,n}}) \otimes 
\left(U_{ij}^{\,n} \widetilde T_l^{\,(m)}\right)\\[2mm]
&=\;
 t_{ij}^{(n)} \otimes 
\left(\widetilde T_l^{\,(m)}\left(I_{E_l^{\,m}}\otimes U_{ij}^{\,n}\right)\right)
\qquad\text{(by Lemma~\ref{lemma:commutativity-Uijn-Tlm})}\\[2mm]
&=\;
\left(I_{E_j^{\,n}\otimes E_i} \otimes \widetilde T_l^{\,(m)}\right)
\left(t_{ij}^{(n)} \otimes I_{E_l^{\,m}} \otimes U_{ij}^{\,n}\right),
\end{aligned}
\]
which establishes the desired identity.
\end{proof}

The techniques used in the proof of the preceding lemma, together with repeated applications of Lemma~\ref{lemma:commutativity-Uijn-Tlm}, yield the following result.

\begin{lemma}\label{lemma:commutativity-tij-Uij-TA}
Let $A\subseteq I_k$ and $q\in\mathbb Z_+^{|A|}$. 
Then, for all $i,j\in I_k$ with $i\neq j$ and for each $n\in\mathbb Z_+$, we have
\[
\left(t_{ij}^{(n)} \otimes U_{ij}^{\,n}\right)
\left(I_{E_i \otimes E_j^{\,n}} \otimes \widetilde{T}_A^{(q)}\right)
\;=\;
\left(I_{E_j^{\,n} \otimes E_i} \otimes \widetilde{T}_A^{(q)}\right)
\left(t_{ij}^{(n)} \otimes I_{E_A^{q          }} \otimes U_{ij}^{\,n}\right).
\]
\end{lemma}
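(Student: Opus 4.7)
The plan is to reduce the statement to the single factor case already proved in Lemma~\ref{lemma:commutativity-tij-Uij-Tl} by an induction on the length of the composition defining $\widetilde T_A^{(q)}$. Concretely, writing $A=\{i_1<\cdots<i_p\}$, the notational convention~\eqref{eq:notational-convenience} expresses
\[
\widetilde T_A^{(q)}
=\widetilde T_{i_1}^{(q_1)}\bigl(I_{E_{i_1}^{q_1}}\otimes \widetilde T_{A'}^{(q')}\bigr),
\]
where $A':=A\setminus\{i_1\}$ and $q':=(q_2,\ldots,q_p)$. Induction will be on the integer $r(A,q):=|A|$ (or equivalently on $p$). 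The base case $p=1$, i.e.\ $\widetilde T_A^{(q)}=\widetilde T_{i_1}^{(q_1)}$, is exactly Lemma~\ref{lemma:commutativity-tij-Uij-Tl} with $l=i_1$ and $m=q_1$.

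For the inductive step, I would plug in the factorization of $\widetilde T_A^{(q)}$ and push the twist $(t_{ij}^{(n)}\otimes U_{ij}^{\,n})$ through one factor at a time. First, applying Lemma~\ref{lemma:commutativity-tij-Uij-Tl} to the outermost factor $\widetilde T_{i_1}^{(q_1)}$ moves $(t_{ij}^{(n)}\otimes U_{ij}^{\,n})$ past $I_{E_i\otimes E_j^{n}}\otimes \widetilde T_{i_1}^{(q_1)}$, at the cost of introducing a factor $I_{E_{i_1}^{q_1}}$ on the tensor component where $t_{ij}^{(n)}$ acts. Then, after appropriately reassociating the tensor factors so that the remaining composition has the shape $I_{E_i\otimes E_j^n}\otimes \widetilde T_{A'}^{(q')}$ (conjugated by identities on $E_{i_1}^{q_1}$), the induction hypothesis applied to $\widetilde T_{A'}^{(q')}$ pushes the twist past the remaining factor. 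Reassembling the two steps and using the bilinearity identities from Remark~\ref{rem: tensor identities}(1) yields the required equality
\[
\bigl(I_{E_j^{n}\otimes E_i}\otimes \widetilde T_A^{(q)}\bigr)
\bigl(t_{ij}^{(n)}\otimes I_{E_A^{q}}\otimes U_{ij}^{\,n}\bigr).
\]

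The main obstacle is purely bookkeeping: one must keep careful track of the positions of the tensor factors so that the identity operators $I_{E_{i_1}^{q_1}}$ and $I_{E_{A'}^{q'}}$ collate correctly into $I_{E_A^{q}}$, and so that $t_{ij}^{(n)}$ and $U_{ij}^{\,n}$ remain on the distinguished $E_i,E_j^{n}$, and $\mathcal H$ components throughout. Once the factorization is set up carefully, no new ideas are required beyond the two ingredients already isolated, namely Lemma~\ref{lemma:commutativity-tij-Uij-Tl} (which handles the commutation with a single $\widetilde T_l^{(m)}$) and Lemma~\ref{lemma:commutativity-Uijn-Tlm} (implicit in that lemma, used to commute the scalar factor $U_{ij}^{\,n}$ past the $\mathcal H$-valued operator $\widetilde T_A^{(q)}$). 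Alternatively, one may avoid induction on $|A|$ entirely by inducting on the total degree $|q|=\sum_{l}q_l$, using the recursive identity $\widetilde T_l^{(m)}=\widetilde T_l\bigl(I_{E_l}\otimes \widetilde T_l^{(m-1)}\bigr)$ together with the $n=1$ case of the braid relations; both routes reduce the assertion to the identities already established in this section.
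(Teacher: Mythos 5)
Your proposal is correct and follows essentially the same route as the paper, which proves this lemma by noting that the argument of Lemma~\ref{lemma:commutativity-tij-Uij-Tl} (the single-factor case) combined with repeated applications of Lemma~\ref{lemma:commutativity-Uijn-Tlm} carries over; your induction on $|A|$ using the factorization $\widetilde T_A^{(q)}=\widetilde T_{i_1}^{(q_1)}\bigl(I_{E_{i_1}^{q_1}}\otimes \widetilde T_{A'}^{(q')}\bigr)$ is just a careful formalization of that iteration. The tensor-bookkeeping you flag is handled exactly by the $(A\otimes B)(C\otimes D)=(AC)\otimes(BD)$ identities of Remark~\ref{rem: tensor identities}, as in the paper.
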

\begin{lemma}\label{lemma:commutativity-Ti-Tj-n}
For every $n\in\mathbb Z_+$ and $i,j\in I_k$ with $i\neq j$, we have
\[
\widetilde{T}_i\left(I_{E_i}\otimes \widetilde{T}_j^{(n)}\right)
\;=\;
\widetilde{T}_j^{(n)}\left(I_{E_j^{\,n}}\otimes \widetilde{T}_i\right)
\left(t_{ij}^{(n)}\otimes U_{ij}^{\,n}\right).
\]
\end{lemma}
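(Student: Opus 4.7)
The identity is an iterated form of the defining twisted relation $\widetilde T_i(I_{E_i}\otimes \widetilde T_j)=\widetilde T_j(I_{E_j}\otimes \widetilde T_i)(t_{ij}\otimes U_{ij})$, so my plan is induction on $n$. The base case $n=1$ is exactly equation (\ref{eq:twisted}), using $t_{ij}^{(1)}=t_{ij}$ and $U_{ij}^{\,1}=U_{ij}$.

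For the inductive step I would first rewrite $\widetilde T_j^{(n+1)}=\widetilde T_j^{(n)}(I_{E_j^{\,n}}\otimes \widetilde T_j)$, the ``other'' form of the defining recursion (immediate by unraveling the definition and associativity of the tensor product), so that the innermost factor is a single $\widetilde T_j$. Substituting into the left-hand side gives
\[
\widetilde T_i\bigl(I_{E_i}\otimes \widetilde T_j^{(n+1)}\bigr)
=\widetilde T_i\bigl(I_{E_i}\otimes \widetilde T_j^{(n)}\bigr)\bigl(I_{E_i\otimes E_j^{\,n}}\otimes \widetilde T_j\bigr),
\]
and the induction hypothesis rewrites the first two factors as $\widetilde T_j^{(n)}(I_{E_j^{\,n}}\otimes \widetilde T_i)(t_{ij}^{(n)}\otimes U_{ij}^{\,n})$.

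Next I would commute $(t_{ij}^{(n)}\otimes U_{ij}^{\,n})$ past $(I_{E_i\otimes E_j^{\,n}}\otimes \widetilde T_j)$ using Lemma~\ref{lemma:commutativity-tij-Uij-Tl} (with $l=j$, $m=1$), producing $(I_{E_j^{\,n}\otimes E_i}\otimes \widetilde T_j)(t_{ij}^{(n)}\otimes I_{E_j}\otimes U_{ij}^{\,n})$. This exposes a factor $\widetilde T_i(I_{E_i}\otimes \widetilde T_j)$ sitting inside $I_{E_j^{\,n}}\otimes(\cdot)$, to which the twisted relation (\ref{eq:twisted}) applies once more, yielding $I_{E_j^{\,n}}\otimes\bigl[\widetilde T_j(I_{E_j}\otimes \widetilde T_i)(t_{ij}\otimes U_{ij})\bigr]$. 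Collecting all $\widetilde T_j$ factors via $\widetilde T_j^{(n+1)}=\widetilde T_j^{(n)}(I_{E_j^{\,n}}\otimes \widetilde T_j)$ and gathering the remaining intertwiners, what survives is
\[
(I_{E_j^{\,n}}\otimes t_{ij}\otimes U_{ij})(t_{ij}^{(n)}\otimes I_{E_j}\otimes U_{ij}^{\,n})
=\bigl[(I_{E_j^{\,n}}\otimes t_{ij})(t_{ij}^{(n)}\otimes I_{E_j})\bigr]\otimes U_{ij}^{\,n+1},
\]
and by Lemma~\ref{lemma:recursive-def-tij-n} the bracketed expression equals $t_{ij}^{(n+1)}$. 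This assembles into $\widetilde T_j^{(n+1)}(I_{E_j^{\,n+1}}\otimes \widetilde T_i)(t_{ij}^{(n+1)}\otimes U_{ij}^{\,n+1})$, completing the induction.

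The main obstacle is purely bookkeeping: one has to keep careful track of how the iterated flips $t_{ij}^{(n)}$ and the scalar powers $U_{ij}^{\,n}$ interleave with the various identity tensor factors as they are pushed past $\widetilde T_j$. Once Lemma~\ref{lemma:commutativity-tij-Uij-Tl} and the recursion of Lemma~\ref{lemma:recursive-def-tij-n} are invoked at the right moments, no further nontrivial input (such as the hexagon relation or the doubly twisted relation) is required, and the identity falls out cleanly.
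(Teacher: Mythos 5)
Your proposal is correct and coincides with the paper's own argument: induction on $n$, rewriting $\widetilde T_j^{(n+1)}=\widetilde T_j^{(n)}\bigl(I_{E_j^{\,n}}\otimes\widetilde T_j\bigr)$, applying the induction hypothesis, pushing $\bigl(t_{ij}^{(n)}\otimes U_{ij}^{\,n}\bigr)$ past $\widetilde T_j$ via Lemma~\ref{lemma:commutativity-tij-Uij-Tl}, invoking the twisted relation \eqref{eq:twisted} once more, and assembling the flips with Lemma~\ref{lemma:recursive-def-tij-n}. No gaps; the bookkeeping you describe is exactly what the paper carries out.
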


\begin{proof}
We proceed by induction on $n$. For $n=1$, we have 
\(
\widetilde{T}_i(I_{E_i}\otimes\widetilde{T}_j)
= \widetilde{T}_j(I_{E_j}\otimes\widetilde{T}_i)
(t_{ij}\otimes U_{ij})
\) by definition.
Assume the statement holds for some $n\ge1$. Using 
$\widetilde{T}_j^{(n+1)}=\widetilde{T}_j^{(n)}\left(I_{E_j^{\,n}}\otimes\widetilde{T}_j\right)$, we have
\[
\begin{aligned}
\widetilde{T}_i\left(I_{E_i}\otimes \widetilde{T}_j^{(n+1)}\right)
&=\widetilde{T}_i\left(I_{E_i}\otimes \widetilde{T}_j^{(n)}\right)
   \left(I_{E_i\otimes E_j^{\,n}}\otimes \widetilde{T}_j\right)\\[2pt]
&=\widetilde{T}_j^{(n)}\left(I_{E_j^{\,n}}\otimes \widetilde{T}_i\right)
   \left(t_{ij}^{(n)}\otimes U_{ij}^{\,n}\right)
   \left(I_{E_i\otimes E_j^{\,n}}\otimes \widetilde{T}_j\right)
   \quad\text{(by induction hypothesis)}\\[2pt]
&=\widetilde{T}_j^{(n)}\left(I_{E_j^{\,n}}\otimes \widetilde{T}_i\right)
   \left(I_{E_j^{\,n}}\otimes I_{E_i}\otimes \widetilde{T}_j\right)
   \left(t_{ij}^{(n)}\otimes I_{E_j}\otimes U_{ij}^{\,n}\right)
   \quad\text{(by Lemma~\ref{lemma:commutativity-tij-Uij-Tl})}\\[2pt]
&= \widetilde T_j^{(n)} \, \left( I_{E_j^{\, n}} \otimes \widetilde T_j \right) 
   \left( I_{E_j^{\, n}} \otimes I_{E_j} \otimes \widetilde T_i \right) 
   \left( I_{E_j^{ \,n}} \otimes t_{ij} \otimes U_{ij} \right) 
   \left( t_{ij}^{(n)} \otimes I_{E_j} \otimes U_{ij}^n \right) \\[2pt]
&=\widetilde{T}_j^{(n+1)}\left(I_{E_j^{\,n+1}}\otimes \widetilde{T}_i\right)
   \left(t_{ij}^{(n+1)}\otimes U_{ij}^{\,n+1}\right)
   \quad\text{(by Lemma~\ref{lemma:recursive-def-tij-n})}.
\end{aligned}
\]
This completes the induction.
\end{proof}

Using the definition \ref{eq:Dm-theta-U-im}, we get the following relation:
\begin{lemma}\label{lemma:commutativity-Tij-TA}
Let \(A=\{i_1<i_2<\cdots<i_p\}\subseteq I_k\) and \(n=(n_1,\dots,n_p)\in\mathbb Z_+^{|A|}\).
For \(2\le j\le p\), 
\[
\widetilde{T}_{i_j}\left(I_{E_{i_j}}\otimes \widetilde{T}_A^{(n)}\right)
\;=\;
\widetilde{T}_A^{(n+e_j)}
\prod_{r=1}^{j-1}
D_{\,j-r}[W_{i_j,i_{\,j-r}}].
\]
Similarly, if $A^c=\{l_1 <l_2<\cdots <l_{k-p}\},$ and $m=(m_1, \cdots, m_{k-p})\in \Z_+^{|A^c|}.$ Then for $2 \leq j \leq k-p,$
\[
\widetilde{T}_{l_j}\left(I_{E_{l_j}}\otimes \widetilde{T}_{A^c}^{(m)}\right)
\;=\;
\widetilde{T}_{A^c}^{(m+e_j)}
\prod_{r=1}^{j-1}
D_{\,j-r}[W_{l_j,l_{\,j-r}}].
\]
\end{lemma}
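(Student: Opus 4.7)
My plan is to prove the first identity by induction on $j$, moving $\widetilde{T}_{i_j}$ leftward through the factorization of $\widetilde{T}_A^{(n)}$ one block at a time; the second identity follows by the identical argument applied to the subset $A^c$ with multi-index $m$.

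For the base case $j=2$, I factor $\widetilde{T}_A^{(n)} = \widetilde{T}_{i_1}^{(n_1)}(I_{E_{1,1}}\otimes \widetilde{T}_{2,p})$ and apply Lemma \ref{lemma:commutativity-Ti-Tj-n} (with $i=i_2$, $j=i_1$, $n=n_1$) to push $\widetilde{T}_{i_2}$ past $\widetilde{T}_{i_1}^{(n_1)}$, producing the commutation factor $t_{i_2 i_1}^{(n_1)}\otimes U_{i_2 i_1}^{n_1}$. Then I use Lemma \ref{lemma:commutativity-tij-Uij-TA} to carry this factor past the remaining $I\otimes \widetilde{T}_{2,p}$ so that it appears on the far right as the operator $D_1[W_{i_2,i_1}]$ (by the very definition \eqref{eq:Dm-theta-U-im}). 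Using the recursion $\widetilde{T}_{i_2}^{(n_2+1)} = \widetilde{T}_{i_2}(I_{E_{i_2}}\otimes \widetilde{T}_{i_2}^{(n_2)})$, the left factor reassembles precisely as $\widetilde{T}_A^{(n+e_2)}$.

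For the inductive step, I split $\widetilde{T}_A^{(n)} = \widetilde{T}_{1,j-1}(I\otimes \widetilde{T}_{j,p})$ and move $\widetilde{T}_{i_j}$ inward through $\widetilde{T}_{1,j-1}$ in $j-1$ successive steps. At step $r\in\{1,\dots,j-1\}$, Lemma \ref{lemma:commutativity-Ti-Tj-n} (with exponent $n_r$) spawns the commutation factor $t_{i_j i_r}^{(n_r)}\otimes U_{i_j i_r}^{n_r}$, which I then transport past every remaining tensor-right block (including the outer $\widetilde{T}_{j,p}$) by repeated application of Lemma \ref{lemma:commutativity-tij-Uij-TA}. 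After $j-1$ such moves, the accumulated shuffles have been extruded to the right in the order $D_{j-1}[W_{i_j,i_{j-1}}], D_{j-2}[W_{i_j,i_{j-2}}],\dots, D_1[W_{i_j,i_1}]$ — which is exactly $\prod_{r=1}^{j-1} D_{j-r}[W_{i_j,i_{j-r}}]$ — while $\widetilde{T}_{i_j}$ has fused with the original $\widetilde{T}_{i_j}^{(n_j)}$ inside $\widetilde{T}_{j,p}$ to form $\widetilde{T}_{i_j}^{(n_j+1)}$, so the leftmost factor recombines as $\widetilde{T}_A^{(n+e_j)}$.

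The main obstacle is purely bookkeeping: one must verify that each commutation factor $t_{i_j i_r}^{(n_r)}\otimes U_{i_j i_r}^{n_r}$, after being pushed through all the intervening $\widetilde{T}_{i_s}^{(n_s)}$ with $s>r$, lands in the correct tensor slot to equal $\theta_{i_j,i_r}^{(n_r)}\otimes U_{i_j i_r}^{n_r}$ in the sense of \eqref{eq:theta-ij-im}–\eqref{eq:Dm-theta-U-im}. This is precisely the content of Lemma \ref{lemma:commutativity-tij-Uij-TA}, which guarantees that the $t_{ij}^{(n)}\otimes U_{ij}^{n}$ factors commute past the block maps $I\otimes \widetilde{T}_A^{(q)}$ while acquiring the correct identity fillers on the tensor components of $E_A^q$. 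The algebraic content of the proof is thereby reduced to iterated applications of the two already-established commutation lemmas, and no further input is needed.
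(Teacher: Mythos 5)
Your proposal is correct and follows essentially the same route as the paper: commute $\widetilde T_{i_j}$ past one power block at a time via Lemma~\ref{lemma:commutativity-Ti-Tj-n}, transport each resulting factor $t_{i_j i_r}^{(n_r)}\otimes U_{i_j i_r}^{n_r}$ to the far right via Lemma~\ref{lemma:commutativity-tij-Uij-TA} so it becomes $D_r[W_{i_j,i_r}]$, and reassemble the left factor as $\widetilde T_A^{(n+e_j)}$ using the recursion for $\widetilde T_{i_j}^{(n_j+1)}$. The only cosmetic difference is that you label the argument ``induction on $j$'' while the paper formalizes the same block-by-block iteration as induction on the number $r$ of factors crossed; the content is identical.
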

\begin{proof}
We prove the statement by induction on 
$r\in\{1,2,\dots,j-1\}$, 
the number of factors $\widetilde T_{i_1}^{(n_1)}, \dots, \widetilde T_{i_r}^{(n_r)}$ 
that $\widetilde T_{i_j}$ must cross inside $\widetilde T_A^{(n)}$. We claim that if $\widetilde T_{i_j}$ has crossed the first $r$ factors, then
\[
\widetilde T_{i_j}\left(I_{E_{i_j}}\otimes \widetilde T_A^{(n)}\right)
=
\widetilde T_{1,r}\left(I_{E_{1,r}}\otimes \widetilde T_{i_j}\left( I_{E_{i_j}}\otimes \widetilde T_{r+1,p}\right)\right)\prod_{m=1}^{r} D_{\,r+1-m}[W_{i_j,i_{\,r+1-m}}]
\]
For $r=1$, Lemma~\ref{lemma:commutativity-Ti-Tj-n} implies that 
\begin{align*}
 \widetilde T_{i_j} \left( I_{E_{i_j}} \otimes \widetilde T_{A}^{(n)} \right) 
&= \widetilde T_{i_1}^{(n_1)}  \, \left( I_{E_{i_1}^{\,n_1}} \otimes \widetilde T_{i_j} \right) 
\left( t_{i_ji_1}^{(n_1)} \otimes U_{i_ji_1}^{\,n_1} \right)
\left( I_{E_{i_j}} \otimes I_{E_{i_1}^{\,n_1}} \otimes  \widetilde T_{2,p} \right) \\[4pt]
&= \widetilde T_{i_1}^{(n_1)}  \, \left( I_{E_{i_1}^{\,n_1}} \otimes \widetilde T_{i_j} \right) 
\left(  I_{E_{i_1}^{\,n_1}} \otimes I_{E_{i_j}} \otimes\widetilde T_{2,p} \right)
\left( t_{i_ji_1}^{(n_1)}  \otimes I_{E_{2,p}}  \otimes U_{i_ji_1}^{\,n_1} \right)
 \quad (\text{by Lemma } \ref{lemma:commutativity-tij-Uij-TA})\\[4pt]
&= \widetilde T_{i_1}^{(n_1)}  \, \left( I_{E_{i_1}^{\,n_1}} \otimes \widetilde T_{i_j} (   I_{E_{i_j}} \otimes\widetilde T_{2,p} )\right)
D_1[W_{i_j,i_1}] \quad ({\text{by }\eqref{eq:theta-ij-im},~\eqref{eq:Dm-theta-U-im}} )\\
\end{align*}
Assume the relation holds if $\widetilde T_{i_j}$ crosses the first $r$ factors.
We now commute $\widetilde T_{i_j}$ past the next block $\widetilde T_{i_{r+1}}^{(n_{r+1})}$.
By Lemma~\ref{lemma:commutativity-Ti-Tj-n}, and using \eqref{eq:notational-convenience}, we get
\[
\widetilde T_{1,r}\left(I_{E_{1,r}}\otimes \widetilde T_{i_j}\left(I_{E_{i_j}}\otimes \widetilde T_{i_{r+1}}^{(n_{r+1})}\right)\right)
=
\widetilde T_{1,{r+1}}\left(I_{E_{1,r+1}}\otimes \widetilde T_{i_j}\right)\left( I_{E_{1,r}}\otimes t^{(n_{r+1})}_{i_j i_{r+1}} \otimes  U^{n_{r+1}}_{i_j i_{r+1}}\right).
\]
Again Lemma \ref{lemma:commutativity-tij-Uij-TA} implies that
\[
\begin{aligned}
&\widetilde T_{1,r}\left(I_{E_{1,r}}\otimes \widetilde T_{i_j}\left( I_{E_{i_j}}\otimes \widetilde T_{r+1,p}\right)\right)\\
&=
\widetilde T_{1,{r+1}}\left(I_{E_{1,r+1}}\otimes \widetilde T_{i_j}\right)\left( I_{E_{1,r}}\otimes \left(t^{(n_{r+1})}_{i_j i_{r+1}} \otimes  U^{n_{r+1}}_{i_j i_{r+1}}\right)\left(I_{E_{i_j}}\otimes I_{E^{n_{r+1}}_{i_{r+1}}}\otimes \widetilde T_{r+2,p}\right)\right)\\
&=
\widetilde T_{1,{r+1}}\left(I_{E_{1,r+1}}\otimes  \widetilde T_{i_j} \left( I_{E_{i_j}}\otimes \widetilde T_{r+2,p}\right)\right)\left(I_{E_{1,r}}\otimes t^{(n_{r+1})}_{i_j i_{r+1}} \otimes I_{E_{r+2,p}} \otimes  U^{n_{r+1}}_{i_j i_{r+1}}\right)\\
&=
\widetilde T_{1,r+1}\left(I_{E_{1,r+1}}\otimes  \widetilde T_{i_j} \left( I_{E_{i_j}}\otimes \widetilde T_{r+2,p}\right)\right)\left(\theta^{(n_{r+1})}_{i_j, i_{r+1}} \otimes  U^{n_{r+1}}_{i_j i_{r+1}}\right)\\
&=
\widetilde T_{1,r+1}\left(I_{E_{1,r+1}}\otimes  \widetilde T_{i_j} \left( I_{E_{i_j}}\otimes \widetilde T_{r+2,p}\right)\right)D_{r+1}[W_{i_j, i_{r+1}}]\\
\end{aligned}
\]
Substituting this into the induction hypothesis establishes the formula for $r+1$.
\begin{align*}
   \widetilde T_{i_j}\left(I_{E_{i_j}}\otimes \widetilde T_A^{(n)}\right)
&=
\widetilde T_{1,r}\left(I_{E_{1,r}}\otimes \widetilde T_{i_j}\left( I_{E_{i_j}}\otimes \widetilde T_{r+1,p}\right)\right) D_{\,r+1}[W_{i_j,i_{\,r+1}}]\prod_{m=1}^{r} D_{\,r+1-m}[W_{i_j,i_{\,r+1-m}}] \\
&= \widetilde T_{1,r}\left(I_{E_{1,r}}\otimes \widetilde T_{i_j}\left( I_{E_{i_j}}\otimes \widetilde T_{r+1,p}\right)\right) \prod_{m=1}^{r+1} D_{\,r+2-m}\!\big[W_{i_j,i_{\,r+2-m}}\big], \\
\end{align*}
Therefore, the result holds for $r+1$ proving our claim.
In particular, when $r=j-1$, that is when $\widetilde T_{i_j}$ crosses first $j-1$ blocks, we get
\[
\widetilde{T}_{i_j}\left(I_{E_{i_j}}\otimes \widetilde{T}_A^{(n)}\right)
=
\widetilde{T}_A^{(n+e_j)}
\prod_{m=1}^{j-1}
D_{\,j-m}[W_{i_j,i_{\,j-m}}],
\]
which completes the proof. The second claim follows similarly.
\end{proof}

\begin{lemma} \label{lemma: commutativity of Tlj and TnA}
For any $l \in A^c,$ $ \widetilde{T}_{l}\left(I_{E_l} \otimes \widetilde{T}^{(n)}_A\right) 
 = \widetilde{T}^{(n)}_A \left(I_{E^n_A} \otimes \widetilde{T}_l\right)\displaystyle \prod_{r=1}^{p} D_{p+1-r}[W_{l,i_{p+1-r}}].$
\end{lemma}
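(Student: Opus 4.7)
The statement is the companion of Lemma~\ref{lemma:commutativity-Tij-TA} in which the pushed index $l$ lies outside $A$. My plan is to induct on $r\in\{1,\dots,p\}$ counting how many blocks of $\widetilde{T}_A^{(n)}=\widetilde{T}_{1,p}$ the operator $\widetilde{T}_l$ has crossed; because $l\in A^c$ we have $l\neq i_r$ for every $r$, so Lemma~\ref{lemma:commutativity-Ti-Tj-n} applies freely at each step, and no splitting of $\widetilde{T}_A^{(n)}$ is ever required (contrast Lemma~\ref{lemma:commutativity-Tij-TA}, where $\widetilde{T}_{i_j}$ has to be deposited at its own position inside the block).

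Explicitly, the inductive claim for $1\le r\le p-1$ is
\[
\widetilde{T}_l\bigl(I_{E_l}\otimes \widetilde{T}_A^{(n)}\bigr)
=\widetilde{T}_{1,r}\Bigl(I_{E_{1,r}}\otimes \widetilde{T}_l\bigl(I_{E_l}\otimes \widetilde{T}_{r+1,p}\bigr)\Bigr)\prod_{m=1}^{r} D_{r+1-m}\bigl[W_{l,i_{r+1-m}}\bigr].
\]
For the base case $r=1$, expand $\widetilde{T}_A^{(n)}=\widetilde{T}_{i_1}^{(n_1)}\bigl(I_{E_{i_1}^{n_1}}\otimes \widetilde{T}_{2,p}\bigr)$, apply Lemma~\ref{lemma:commutativity-Ti-Tj-n} to the pair $(l,i_1)$ with exponent $n_1$ to produce the flip–unitary $t_{l i_1}^{(n_1)}\otimes U_{l i_1}^{n_1}$, and then use Lemma~\ref{lemma:commutativity-tij-Uij-TA} (applied to the sublist $\{i_2,\dots,i_p\}$ with multi-index $(n_2,\dots,n_p)$) to slide this twist past $\widetilde{T}_{2,p}$; after padding by the appropriate identities, the resulting factor on the far right is exactly $D_1[W_{l,i_1}]$ by \eqref{eq:theta-ij-im}–\eqref{eq:Dm-theta-U-im}.

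For the step from $r$ to $r+1$, decompose $\widetilde{T}_{r+1,p}=\widetilde{T}_{i_{r+1}}^{(n_{r+1})}\bigl(I_{E_{i_{r+1}}^{n_{r+1}}}\otimes \widetilde{T}_{r+2,p}\bigr)$, apply Lemma~\ref{lemma:commutativity-Ti-Tj-n} to $(l,i_{r+1})$ with exponent $n_{r+1}$, and again invoke Lemma~\ref{lemma:commutativity-tij-Uij-TA} to push the new twist past $\widetilde{T}_{r+2,p}$. This appends a factor $D_{r+1}[W_{l,i_{r+1}}]$ on the \emph{left} of the existing product, the slot index $r+1$ being forced by the location of $E_{i_{r+1}}^{n_{r+1}}$ inside $E_A^{n}$. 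Specializing to $r=p$ (interpreting the empty tail $\widetilde{T}_{p+1,p}$ as the identity), the inner parenthetical collapses to $\widetilde{T}_l$ and we obtain the desired identity.

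The main obstacle is purely bookkeeping: tracking exactly which tensor slot each $t_{l i_r}^{(n_r)}\otimes U_{l i_r}^{n_r}$ occupies after transport, so that it assembles precisely as $D_{r}[W_{l,i_r}]$ under the conventions of \eqref{eq:theta-ij-im}–\eqref{eq:Dm-theta-U-im}; once the indexing is right, all analytic content is furnished by Lemmas~\ref{lemma:commutativity-Ti-Tj-n} and~\ref{lemma:commutativity-tij-Uij-TA}.
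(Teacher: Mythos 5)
Your proposal is correct and follows essentially the same route as the paper: the paper's proof is a one-line appeal to Lemmas~\ref{lemma:commutativity-tij-Uij-TA} and~\ref{lemma:commutativity-Ti-Tj-n}, and your block-by-block induction (mirroring the proof of Lemma~\ref{lemma:commutativity-Tij-TA}, but carried through all $p$ blocks since $l\in A^{\mathrm c}$ never coincides with any $i_r$) is exactly the intended argument, with the twists assembling into the factors $D_r[W_{l,i_r}]$ as prescribed by \eqref{eq:theta-ij-im}--\eqref{eq:Dm-theta-U-im}.
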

\begin{proof}
    The proof follows from Lemma \ref{lemma:commutativity-tij-Uij-TA} along with Lemma \ref{lemma:commutativity-Ti-Tj-n}.
\end{proof}

\begin{lemma}\label{lem:sigma-reduces-coreA-kernels}
For each $A \subseteq I_k$, the following statements hold true.
\begin{enumerate}
    \item $\mathcal W_A$ and $\mathcal D_A$ reduce $\sigma(\mathcal A)$.
    \item For all $i \neq j,$ $U_{ij} \clw_A=\clw_A,$ and $U_{ij} \cld_A=\cld_A.$
    \item For all $j \in A^c$, $\widetilde T_j (E_j \otimes \mathcal W_A) \subseteq \mathcal W_A$, and $\widetilde T^*_j  \mathcal W_A \subseteq E_j \otimes \mathcal W_A$. Also, $\widetilde T_j (E_j \otimes \mathcal D_A) \subseteq \mathcal D_A$, and $\widetilde T^*_j  \mathcal D_A \subseteq E_j \otimes \mathcal D_A$.
    \end{enumerate}
\end{lemma}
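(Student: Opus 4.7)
The plan is to handle the three assertions in the natural order, exploiting the fact that $\mathcal W_A=\bigcap_{i\in A}\mathcal W_i$ is already available as a range of a commuting product of projections, while $\mathcal D_A$ is constructed from $\mathcal W_A$ by iterating the $\widetilde T_l^{(m_l)}$'s for $l\in A^c$.

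For $(1)$, I would start with the covariance identity $\widetilde T_i(\varphi_i(a)\otimes I)=\sigma(a)\widetilde T_i$. Taking adjoints gives $\widetilde T_i^{\,*}\sigma(a)=(\varphi_i(a)\otimes I)\widetilde T_i^{\,*}$, and multiplying yields $\sigma(a)\,\widetilde T_i\widetilde T_i^{\,*}=\widetilde T_i\widetilde T_i^{\,*}\,\sigma(a)$. Thus each $P_{\mathcal W_i}\in\sigma(\mathcal A)'$; since these projections commute pairwise (Lemma~\ref{lemma:  commutativity of Pwi}), their product $P_{\mathcal W_A}$ lies in $\sigma(\mathcal A)'$, so $\mathcal W_A$ reduces $\sigma(\mathcal A)$. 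For $\mathcal D_A$, I would use the same covariance to conclude $\sigma(a)\,\widetilde T_{I_k\setminus A}^{(m)}=\widetilde T_{I_k\setminus A}^{(m)}\bigl(\varphi^{(m)}(a)\otimes I_{\mathcal H}\bigr)$, and then observe that $\varphi^{(m)}(a)\otimes I$ preserves $E_{I_k\setminus A}^{m}\otimes\mathcal W_A$ because $\mathcal W_A$ reduces $\sigma(\mathcal A)$. Hence $\sigma(a)$ preserves each set in the intersection defining $\mathcal D_A$.

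For $(2)$, Lemma~\ref{lemma:intertwining of U_{ij}}$(2)$ directly gives $U_{ij}P_{\mathcal W_m}=P_{\mathcal W_m}U_{ij}$ for all $m$, hence $U_{ij}P_{\mathcal W_A}=P_{\mathcal W_A}U_{ij}$ and $U_{ij}\mathcal W_A=\mathcal W_A$ by unitarity. To transfer this to $\mathcal D_A$, I would use Lemma~\ref{lemma:intertwining of U_{ij}}$(1)$ (and iterate to the whole tuple $\widetilde T_{I_k\setminus A}^{(m)}$ via the recursion in~\eqref{eq:notational-convenience}) to obtain $U_{ij}\widetilde T_{I_k\setminus A}^{(m)}=\widetilde T_{I_k\setminus A}^{(m)}\bigl(I_{E_{I_k\setminus A}^{m}}\otimes U_{ij}\bigr)$. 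Combining this with the already established invariance $U_{ij}\mathcal W_A=\mathcal W_A$ shows that $U_{ij}$ preserves every set of which $\mathcal D_A$ is the intersection, so $U_{ij}\mathcal D_A=\mathcal D_A$.

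For $(3)$, the $\mathcal W_A$ statements are precisely the content of Lemma~\ref{lemma:WA-reduces-Tj}, combined with the observation that $\widetilde T_j$ is an isometry (so the identity $P_{\mathcal W_A}\widetilde T_j=\widetilde T_j(I\otimes P_{\mathcal W_A})$ gives both the range-and the kernel-side assertions). The real work is the $\mathcal D_A$ case, which I expect to be the main obstacle. The idea is: given $j=l_s\in A^{c}$ and $y=\widetilde T_{A^c}^{(m+e_s)}(z)$ with $z\in E_{A^c}^{m+e_s}\otimes\mathcal W_A$, apply Lemma~\ref{lemma:commutativity-Tij-TA} to rewrite $\widetilde T_{A^c}^{(m+e_s)}=\widetilde T_j\bigl(I_{E_j}\otimes\widetilde T_{A^c}^{(m)}\bigr)\cdot V$ for a unitary $V$ built from the flips $\theta_{j,l_r}^{(m_r)}$ and the unitaries $U_{j l_r}^{m_r}$. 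Since part $(2)$ shows these unitaries preserve $\mathcal W_A$ (and the flips act only on the Fock factors), $V^{-1}z\in E_j\otimes E_{A^c}^{m}\otimes\mathcal W_A$, and then $\widetilde T_j^{\,*}y=(I\otimes\widetilde T_{A^c}^{(m)})V^{-1}z$ lies in $E_j\otimes\widetilde T_{A^c}^{(m)}(E_{A^c}^{m}\otimes\mathcal W_A)$; intersecting over $m$ yields $\widetilde T_j^{\,*}y\in E_j\otimes\mathcal D_A$.

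The forward containment $\widetilde T_j(E_j\otimes\mathcal D_A)\subseteq\mathcal D_A$ will be treated by a similar commutation, splitting into the two cases $m_s\ge 1$ and $m_s=0$. When $m_s\ge 1$ I would write $\xi=(I\otimes\widetilde T_{A^c}^{(m-e_s)})\eta$ and push $\widetilde T_j$ inside to rebuild $\widetilde T_{A^c}^{(m)}$ via Lemma~\ref{lemma:commutativity-Tij-TA}; when $m_s=0$ the factor $\widetilde T_j$ is absent from $\widetilde T_{A^c}^{(m)}$, so I would move $\widetilde T_j$ past the remaining factors using Lemma~\ref{lemma: commutativity of Tlj and TnA} and then absorb it using $\widetilde T_j(E_j\otimes\mathcal W_A)\subseteq\mathcal W_A$ from the $\mathcal W_A$ case of $(3)$. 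The delicate point throughout is to ensure the auxiliary unitaries arising from the braid and twist all preserve the $\mathcal W_A$ component — precisely what part $(2)$ is designed to guarantee — which is why the three statements are naturally proven in this order.
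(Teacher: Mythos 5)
Your proposal is correct and follows essentially the same route as the paper: covariance relations for part (1), the $U_{ij}$–$\widetilde T$ intertwining for part (2), the projection identity $P_{\mathcal W_A}\widetilde T_j=\widetilde T_j(I\otimes P_{\mathcal W_A})$ for the $\mathcal W_A$ half of (3), and Lemma~\ref{lemma:commutativity-Tij-TA} together with part (2) for the $\mathcal D_A$ half. Your only deviations are cosmetic — proving (1) via commutation of $\sigma(a)$ with the projections rather than the kernel description, and handling the forward containment $\widetilde T_j(E_j\otimes\mathcal D_A)\subseteq\mathcal D_A$ by the case split $m_s\ge1$ versus $m_s=0$ instead of the paper's nesting inclusion $\widetilde T_{A^c}^{(m+e_j)}(I\otimes\mathcal W_A)\subseteq\widetilde T_{A^c}^{(m)}(I\otimes\mathcal W_A)$ (the minor $V$ versus $V^{-1}$ labeling slip aside).
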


\begin{proof}
\emph{Proof of (1):} For each $i \in I_k$, the covariance condition gives $\sigma(a)\,\widetilde T_i=\widetilde T_i\,(\varphi_i(a)\otimes I_{\mathcal H}), \, \forall a\in\mathcal A$.
Taking adjoints on both sides, 
\begin{equation}\label{{eq:adjoint-intertwine}}
    \widetilde T_i^{*}\sigma(a) = (\varphi_i(a)\otimes I_{\mathcal H})\widetilde T_i^{*}.
\end{equation}
For $h\in\mathcal W_A$, $\widetilde T_j^{*}h=0, \, \forall j \in A$. Therefore, for each $a \in \mathcal{A},\, \widetilde T_j^{*}\sigma(a)h=(\varphi_j(a)\otimes I)\widetilde T_j^{*}h=0.$
Hence $\sigma(a)\mathcal W_A\subseteq\mathcal W_A$, for all $a \in \mathcal{A}$, proving our claim.

Now we claim that $\cld_A$ reduces $\sigma(\mathcal{A})$. Let $B:=I_k\setminus A$ and $\varphi_B^{(m)}$ denote the left action of $\mathcal A$ on $E_B^{m}$.  
Then $\sigma(a)\widetilde T_B^{(m)} 
= \widetilde T_B^{(m)}\left(\varphi_B^{(m)}(a)\otimes I_{\clh}\right)$ implies that
\[
\left(\widetilde T_B^{(m)}\right)^{*}\sigma(a)
= \left(\varphi_B^{(m)}(a)\otimes I_{\clh}\right)\left(\widetilde T_B^{(m)}\right)^{*}.
\]
Let $x \in \cld_A$. Then $x=\widetilde T_B^{(m)}\left(\xi \otimes w\right)$ with $\xi\in E_B^{m}$ and $w\in\mathcal W_A$.
\[
\sigma(a)x= \sigma(a) \widetilde T_B^{(m)}\left(\xi \otimes w\right)=\widetilde T_B^{(m)}\left(\varphi_B^{(m)}(a) \xi\otimes w\right)
\in \widetilde T_B^{(m)}\left(I_{E_B^m}\otimes\mathcal W_A\right).
\]
Thus each $\widetilde T_B^{(m)}\left(I_{E_B^m}\otimes\mathcal W_A\right)$ is invariant under
$\sigma(a)$ and $\sigma(a)^*$. Therefore, $\mathcal D_A$ reduces $\sigma(\mathcal A)$.

\emph{Proof of (2):} Note that $\widetilde T_k (I_{E_k} \otimes U_{ij}) = U_{ij}\widetilde T_k$, and hence  
$ (I_{E_k} \otimes U_{ij}) \widetilde T^*_k =  \widetilde T^*_k U_{ij}$, for all $i\neq j$ and for all $k\in I_k$. 
And hence $\clw_A, \, \cld_A$ reduce $U_{ij}$.

\emph{Proof of (3):}  Suppose $h \in \clw_A$. Then, for every $x \in E_j, \, j \in A^c$, we have
\[
\widetilde T_i^{\,*} \widetilde T_j (x \otimes h)
= (I_{E_i} \otimes \widetilde T_j)(t_{ji} \otimes U_{ij})(I_{E_j} \otimes  \widetilde T_i^{\,*}) x \otimes h
= 0, \qquad \forall i \in A.
\]
Hence $\widetilde T_j( E_j \otimes \mathcal W_A)\subseteq \clw_A$. 
Also, the doubly twisted relations imply,  
$(I_{E_i} \otimes \widetilde T_j^{\,*} )\widetilde T_i^{\,*}
= (t_{ij} \otimes U_{ij})^* (I_{E_j} \otimes \widetilde T_i^{\,*})\widetilde T_j^{\,*}$, 
and therefore, 
$\widetilde T_j^{\,*}\clw_A \subseteq  E_j \otimes \mathcal W_A.$

Now we claim that $\cld_A$ reduces $T_j, \, j \in A^c.$
Suppose first that $A = \emptyset$. Then, by convention, $\clw_A = \clh$, and hence 
$\cld_A = \clh_A$ by Theorem~\ref{thm: representation of summands}. In this case, $\cld_A$ trivially reduces 
$\widetilde T_j$ for all $j \in I_k$.  
If $A = I_k$, then $\cld_A = \clw_{I_k}$, and the claim follows from the first part.  
Now assume $A = \{i_1, \ldots, i_p\}$ with $1 \le p < k$.
Let $A^{\mathrm c}=\{l_1,\ldots,l_{k-p}\}$ and fix an index $s\in\{1,\ldots,k-p\}$.
Set $j:=l_s$. 
Given $x\in E_j$ and $h\in\mathcal D_A$, we claim that
$\widetilde T_j(x\otimes h)\in\mathcal D_A$.
Recall that $\mathcal D_A
:=
\bigcap_{m\in\mathbb Z_+^{k-p}}
\widetilde T_{A^{\mathrm c}}^{(m)}(I_{E_{A^{\mathrm c}}^{\,m}}\otimes \mathcal W_A).$
Fix $m=(m_1,\ldots,m_{k-p})\in\mathbb Z_+^{k-p}$.
Since $h\in\mathcal D_A$, we have
$h=\widetilde T_{A^{\mathrm c}}^{(m)}(\xi\otimes w)$ for some
$\xi\in E_{A^{\mathrm c}}^{\,m}$ and $w\in\mathcal W_A$.
Using Lemma~\ref{lemma:commutativity-Tij-TA} (applied to the ordered set $A^{\mathrm c}$),
\[
\widetilde T_{l_s}\bigl(I_{E_{l_s}}\otimes \widetilde T_{A^{\mathrm c}}^{(m)}\bigr)
=
\widetilde T_{A^{\mathrm c}}^{(m+e_{l_s})}
\prod_{r=1}^{s-1} D_{\,s-r}\!\big[W_{l_s,l_{\,s-r}}\big].
\]
and therefore,
\[
\widetilde T_j(x\otimes h)
=
\widetilde T_{A^{\mathrm c}}^{(m+e_j)}
\left(\prod_{r=1}^{s-1} D_{\,s-r}\!\big[W_{l_s,l_{\,s-r}}\big]\right)
(x\otimes \xi\otimes w).
\]
Since $\mathcal W_A$ reduces each $U_{ij}$,
the operator
$\prod_{r=1}^{s-1} D_{\,s-r}[W_{l_s,l_{\,s-r}}]$
maps $E_{j}\otimes E_{A^{\mathrm c}}^{\,m}\otimes \mathcal W_A$
into $E_{A^{\mathrm c}}^{\,m+e_j}\otimes \mathcal W_A$.
Hence
\[
\widetilde T_j(x\otimes h)\in
\widetilde T_{A^{\mathrm c}}^{(m+e_j)}
\left(I_{E_{A^{\mathrm c}}^{m+e_j}}\otimes \mathcal W_A\right)
\subseteq
\widetilde T_{A^{\mathrm c}}^{(m)}
\left(I_{E_{A^{\mathrm c}}^{\,m}}\otimes \mathcal W_A\right),
\]
and since $m$ was arbitrary, we conclude that
$\widetilde T_j(E_j\otimes \mathcal D_A)\subseteq \mathcal D_A$.
A similar argument (using the adjoint relation and Lemma~\ref{lem:sigma-reduces-coreA-kernels}(2))
shows that $\widetilde T_j^{\,*}\mathcal D_A\subseteq E_j\otimes \mathcal D_A$.

\end{proof}

Now assume that for this $A\subseteq I_k$, the corresponding $\clh_A \neq \{0\}.$ By Theorem~\ref{thm: representation of summands}, $\clh_A
= \displaystyle\bigoplus_{n \in \mathbb{Z}_+^{|A|}}
\widetilde{T}_A^{(n)}
\left(I_{E_A^{\,n}} \otimes \cld_A\right).$
Define the map $\Pi_A: \clh_A \rightarrow \mathcal{F}(E_A) \otimes \cld_A $
by
\begin{equation}\label{eq:PiA-def}
\Pi_A\left(\widetilde {T}_A^{(n)}(\xi \otimes h) \right) 
\;=\; \xi\otimes h,
\end{equation}
The map $\Pi_A$ is clearly well-defined.  
For $\xi,\eta\in E_A^{n}$ and $h,g\in\cld_A$, the isometric property of each
$(\sigma,T_i)$ yields
\[
\big\langle 
\widetilde T_A^{(n)}(\xi\otimes h),
\, \widetilde T_A^{(n)}(\eta\otimes g)
\big\rangle
=\big\langle 
h,\,
\sigma\!\left(\langle \xi,\eta\rangle_{\mathcal A}\right)
g
\big\rangle
=\big\langle 
\xi\otimes h,\,\eta\otimes g
\big\rangle.
\]
Hence each $\widetilde T_A^{(n)}$ is an isometry.
Surjectivity is immediate from the definition. 
Thus $\Pi_A$ is unitary.

\begin{prop}\label{prop:construction of MAi}
Let $(\sigma, T_1, \ldots, T_k)$ be a doubly twisted isometric covariant representation of $\mathbb E$ on $\clh$, and let $A=\{i_1<\cdots<i_p\}\subseteq I_k$ with $\clh_A\neq0$. Then with the canonical unitary $\Pi_A: \clh_A \rightarrow \mathcal{F}(E_A) \otimes \cld_A $, we have
\begin{enumerate}
\item For $x_i\in E_i$, we have
\[
\Pi_A T_i(x_i)\Pi_A^* =
\begin{cases}
T_{x_{i_1}}\otimes I_{\cld_A},
& i=i_1,\\[8pt]
\displaystyle
\Bigg(\prod_{r=1}^{j-1}
D_{j-r}\!\big[W_{i_j,i_{j-r}}\big]\Bigg)
(T_{x_{i_j}}\otimes I_{\cld_A}),
& i=i_j,\; 2\le j\le p,\\[10pt]
\displaystyle
\left(I_{\mathcal F(E_A)}\otimes \widetilde T_i\right)
\left(\prod_{r=1}^{p}
D_{p+1-r}\!\left[W_{i,i_{p+1-r}}\right]\right)
(T_{x_i}\otimes I_{\cld_A}),
& i\in A^{\mathrm c}.
\end{cases}
\]

\item For $a\in\mathcal A$,
\[
\Pi_A\sigma(a)\Pi_A^*
=\sigma_A(a)\otimes I_{\cld_A},
\qquad
\sigma_A(a):=\varphi_{\infty}(a)=\bigoplus_{n\in\mathbb Z_+^{|A|}}\varphi_A^{(n)}(a).
\]

\item For $i\ne j$ in $I_k$,
\[
U_{A,ij}:=\Pi_AU_{ij}\Pi_A^*
=I_{\mathcal F(E_A)}\otimes U_{ij}.
\]
\end{enumerate}
In particular,
if we set $M_{A,i}:=\Pi_AT_i(\cdot)\Pi_A^*$, then $\left(\sigma_A,\{M_{A,i}\}_{i\in I_k}\right) $ is again a doubly twisted isometric covariant representation with twist
$\{\,t_{ij}\otimes U_{A,ij}\,\}_{i\ne j}$ which is unitarily equivalent to \(\restr{(\sigma, \{ T_i\}_{i \in I_k})}{\clh_A}\).
Further $(\sigma_A,M_{A,i})$ is induced for $i\in A$ and fully coisometric for $i\in A^{\mathrm c}$.
\end{prop}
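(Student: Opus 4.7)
The plan is to establish assertions (1)--(3) by a direct computation, evaluating both sides on typical vectors of the form $\widetilde{T}_A^{(n)}(\xi\otimes h)$ with $\xi\in E_A^{n}$ and $h\in\cld_A$, and then to unwind the definition $\Pi_A(\widetilde{T}_A^{(n)}(\xi\otimes h))=\xi\otimes h$. Throughout, the key invariance properties that keep every expression inside $\mathcal F(E_A)\otimes\cld_A$ are supplied by Lemma~\ref{lem:sigma-reduces-coreA-kernels}: namely that $\cld_A$ reduces $\sigma(\mathcal A)$, each $U_{ij}$, and each $\widetilde T_l$, $\widetilde T_l^{*}$ for $l\in A^{\mathrm c}$. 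Once the three intertwining identities are in hand, the ``in particular'' clause is automatic, since unitary conjugation by $\Pi_A$ transports the doubly twisted isometric structure of $\restr{(\sigma,\{T_i\})}{\clh_A}$ verbatim to the target tuple.

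Parts (2) and (3) are short. By iterating the single-step intertwining $\sigma(a)\widetilde T_i=\widetilde T_i(\varphi_i(a)\otimes I)$ along the ordered block $E_A^{n}$, one obtains $\sigma(a)\widetilde T_A^{(n)}=\widetilde T_A^{(n)}(\varphi_A^{(n)}(a)\otimes I_{\clh})$; applying $\Pi_A$ to $\sigma(a)\widetilde T_A^{(n)}(\xi\otimes h)=\widetilde T_A^{(n)}(\varphi_A^{(n)}(a)\xi\otimes h)$ and using Lemma~\ref{lem:sigma-reduces-coreA-kernels}(1) gives (2). For (3), induct on the length of $A$ using Lemma~\ref{lemma:intertwining of U_{ij}}(1) to get $U_{ij}\widetilde T_A^{(n)}=\widetilde T_A^{(n)}(I_{E_A^{n}}\otimes U_{ij})$; combined with $U_{ij}\cld_A=\cld_A$ from Lemma~\ref{lem:sigma-reduces-coreA-kernels}(2), this yields $\Pi_AU_{ij}\Pi_A^{*}=I_{\mathcal F(E_A)}\otimes U_{ij}$.

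The substance lies in (1), and here the three subcases correspond to the three commutation identities already proved. For $i=i_1$ the recursion built into~\eqref{eq:notational-convenience} gives $\widetilde T_{i_1}(I_{E_{i_1}}\otimes \widetilde T_A^{(n)})=\widetilde T_A^{(n+e_1)}$, so $\Pi_AT_{i_1}(x_{i_1})\widetilde T_A^{(n)}(\xi\otimes h)=x_{i_1}\otimes\xi\otimes h=(T_{x_{i_1}}\otimes I_{\cld_A})(\xi\otimes h)$. For $i=i_j$ with $2\le j\le p$, Lemma~\ref{lemma:commutativity-Tij-TA} applied to $\widetilde T_{i_j}(I_{E_{i_j}}\otimes \widetilde T_A^{(n)})$ produces the shuffle
\[
T_{i_j}(x_{i_j})\widetilde T_A^{(n)}(\xi\otimes h)
=\widetilde T_A^{(n+e_j)}\Bigl(\prod_{r=1}^{j-1}D_{j-r}\bigl[W_{i_j,i_{j-r}}\bigr]\Bigr)(x_{i_j}\otimes\xi\otimes h).
\]
Because each $D_{j-r}[W_{i_j,i_{j-r}}]=\theta_{i_j,i_{j-r}}^{(n_{j-r})}\otimes U_{i_ji_{j-r}}^{n_{j-r}}$ acts as a flip on the $E_A$--slot and as a unitary on the $\cld_A$--slot (which is invariant under $U_{i_ji_{j-r}}$ by Lemma~\ref{lem:sigma-reduces-coreA-kernels}(2)), the expression in parentheses lands in $E_A^{n+e_j}\otimes\cld_A$. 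Applying $\Pi_A$ strips off $\widetilde T_A^{(n+e_j)}$, which precisely yields $\bigl(\prod_{r=1}^{j-1}D_{j-r}[W_{i_j,i_{j-r}}]\bigr)(T_{x_{i_j}}\otimes I_{\cld_A})(\xi\otimes h)$. The case $i\in A^{\mathrm c}$ is analogous, but uses Lemma~\ref{lemma: commutativity of Tlj and TnA}: after shuffling $x_i$ past the entire $A$--block via $\prod_{r=1}^{p}D_{p+1-r}[W_{i,i_{p+1-r}}]$, the trailing $E_i\otimes\cld_A$--piece is absorbed by $\widetilde T_i$, which by Lemma~\ref{lem:sigma-reduces-coreA-kernels}(3) maps back into $\cld_A$; this accounts for the outer factor $I_{\mathcal F(E_A)}\otimes\widetilde T_i$.

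For the final ``in particular'' assertion, note that $\Pi_A$ conjugates the restriction $\restr{(\sigma,\{T_i\})}{\clh_A}$, which is doubly twisted isometric with twists $\{t_{ij}\otimes U_{ij}\}$, to the tuple $(\sigma_A,\{M_{A,i}\})$ with twists $\{t_{ij}\otimes(I_{\mathcal F(E_A)}\otimes U_{ij})\}=\{t_{ij}\otimes U_{A,ij}\}$; the defining relations \eqref{eq:twisted} and \eqref{eq:doubly} are preserved by unitary equivalence. Moreover, from~\eqref{eqn: rep of Wold dec H A} one has $\clh_A\subseteq\clh_i^1$ for $i\in A$ and $\clh_A\subseteq\clh_j^2$ for $j\in A^{\mathrm c}$, so $\restr{(\sigma,T_i)}{\clh_A}$ is induced in the first case and fully coisometric in the second; both properties transport to $(\sigma_A,M_{A,i})$ through $\Pi_A$. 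The main technical obstacle is the bookkeeping in case $i=i_j$ of (1): one must verify slot by slot that the shuffle emerging from Lemma~\ref{lemma:commutativity-Tij-TA} on the left matches the composed product $\prod_{r=1}^{j-1}D_{j-r}[W_{i_j,i_{j-r}}]$ acting on the Fock-graded target on the right, an identity that is ultimately driven by the block hexagon relation of Proposition~\ref{prop: braid at nth level}.
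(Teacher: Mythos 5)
Your proposal is correct and follows essentially the same route as the paper's proof: evaluating on vectors $\widetilde T_A^{(n)}(\xi\otimes h)$, invoking Lemma~\ref{lemma:commutativity-Tij-TA} and Lemma~\ref{lemma: commutativity of Tlj and TnA} for the three cases of (1), the covariance and $U_{ij}$-intertwining identities for (2) and (3), Lemma~\ref{lem:sigma-reduces-coreA-kernels} for well-definedness, and unitary conjugation by $\Pi_A$ for the final structural claim. No substantive differences.
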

\begin{proof}
Let $\xi \otimes h \in \mathcal{F}(E_A) \otimes_\sigma \cld_A,$ where $\xi = \xi_{1}\otimes \xi_{2}\otimes\cdots\otimes \xi_{p} \in  E^n_A,$ and $h \in \cld_A$. 
Also, let $A^{\mathrm c} = \{l_1, \ldots, l_{k-p}\}$.

\begin{enumerate}
\item 
For $x_{i_1} \in E_{i_1}$, we compute
\begin{align*}
\Pi_A T_{i_1}(x_{i_1}) \Pi_A^*(\xi \otimes h)
&= \Pi_A T_{i_1}(x_{i_1})
   \left(T_{i_1}^{(n_1)}(\xi_{1})
   T_{i_2}^{(n_2)}(\xi_{2}) 
   \cdots 
   T_{i_p}^{(n_p)}(\xi_{p}) h\right)\\
&= \Pi_A \widetilde T_{A}^{(n+e_{i_1})}(x_{i_1}\otimes \xi \otimes  h) \\
&= x_{i_1}\otimes \xi \otimes h\\
&= (T_{x_{i_1}}\otimes I_{\cld_A})(\xi\otimes h).
\end{align*}
Hence
\[
T_{i_1}(x_{i_1}) 
\;\cong\;
T_{x_{i_1}}\otimes I_{\cld_A}.
\]
For $2\le j\le p$ and $x_{i_j}\in E_{i_j}$, we use Lemma~\ref{lemma:commutativity-Tij-TA} to get
\begin{align*}
\Pi_A T_{i_j}(x_{i_j}) \Pi_A^*(\xi \otimes h) 
&= \Pi_A T_{i_j}(x_{i_j}) 
   T_{i_1}^{(n_1)}(\xi_{1})
   \cdots 
   T_{i_p}^{(n_p)}(\xi_{p}) h\\
&= \Pi_A \widetilde{T}_A^{(n+e_{i_j})}
   \prod_{r=1}^{j-1} D_{j-r}[W_{i_j,i_{j-r}}]
   (x_{i_j}\otimes\xi\otimes h)\\
&= \left(\prod_{r=1}^{j-1}
   D_{j-r}[W_{i_j,i_{j-r}}]\right)
   \left(T_{x_{i_j}}\otimes I_{\cld_A}\right)(\xi\otimes h).
\end{align*}
Thus
\[
T_{i_j}(x_{i_j})
\;\cong\;
\left(\prod_{r=1}^{j-1}
D_{j-r}[W_{i_j,i_{j-r}}]\right)
\left(T_{x_{i_j}}\otimes I_{\cld_A}\right).
\]
Now, for $i\in A^{\mathrm c}$ and $x_i\in E_i$, we have
\begin{align*}
\Pi_A T_i(x_i)\Pi_A^*(\xi\otimes h)
&= \Pi_A T_i(x_i) \widetilde T_A^{(n)}(\xi\otimes h)\\
&= \Pi_A \widetilde T_A^{(n)} 
   \left(I_{E_A^{n}}\otimes \widetilde T_i\right)
   \prod_{r=1}^{p} D_{p+1-r}[W_{i,i_{p+1-r}}]
   (x_i\otimes\xi\otimes h)\\
&= \left(I_{E_A^{n}}\otimes \widetilde T_i\right)
   \prod_{r=1}^{p}
   D_{p+1-r}[W_{i,i_{p+1-r}}]
   (T_{x_i}\otimes I_{\cld_A})(\xi\otimes h).
\end{align*}
In general,
\[
T_i(x_i)
\;\cong\;
\left(I_{\mathcal{F}(E_A)}\otimes \widetilde T_i\right)
\prod_{r=1}^{p}
D_{p+1-r}[W_{i,i_{p+1-r}}]
\left(T_{x_i}\otimes I_{\cld_A}\right),
\]
Lemma \ref{lem:sigma-reduces-coreA-kernels}(3) implies that the above map is well-defined.
\item 
For any $a \in \mathcal{A},$
\[
(\Pi_A\sigma(a)\Pi_A^*)(\xi\otimes h)
= \Pi_A\sigma(a)\widetilde T_A^{(n)}(\xi\otimes h)
= \Pi_A\widetilde T_A^{(n)}\left(\varphi_A^{(n)}(a)\xi\otimes h\right)
= \varphi_A^{(n)}(a)\xi\otimes h.
\]
Hence 
\[
\Pi_A\sigma(a)\Pi_A^*
= \sigma_A(a)\otimes I_{\cld_A},
\text{ where }
\sigma_A(a):=\varphi_{\infty}(a)
.
\]
\item 
For any $i \neq j,$ we know that $U_{ij}\widetilde T_A^{(n)}
= \widetilde T_A^{(n)}(I_{E_A^{n}}\otimes U_{ij})$. Therefore,
\[
(\Pi_AU_{ij}\Pi_A^*)(\xi\otimes h)
= \Pi_AU_{ij}\widetilde T_A^{(n)}(\xi\otimes h)
= \Pi_A\widetilde T_A^{(n)}(\xi\otimes U_{ij}h)
= \xi\otimes U_{ij}h
= \left(I_{E^n_A}\otimes U_{ij}\right)(\xi\otimes h).
\]
Therefore,
\[
U_{A,ij}
= \Pi_AU_{ij}\Pi_A^*
= I_{\mathcal F(E_A)}\otimes U_{ij}.
\]
Lemma \ref{lem:sigma-reduces-coreA-kernels}(2) implies that the above map is well-defined.
\end{enumerate}

Since $\Pi_A$ is unitary and $\restr{(\sigma,\{T_i\}_{i \in I_k})}{\clh_A}$ satisfies the doubly twisted relations,
therefore, $\left(\sigma_A,\{M_{A,i}\}\right)$ is also a doubly twisted isometric covariant representation. This also implies that, $(\sigma_A,M_{A,i})$ is induced if $i\in A$, and  is fully coisometric, if $i\in A^{\mathrm c}$.
\end{proof}

\section{Unitary Extensions} \label{Unitary Extension}
The goal of this section is to construct a unitary extensions for specific classes via a direct limit construction. First we recall a categorical construction of the direct limit of Hilbert spaces \cite{Talker}.  
Let \( \clh = \{\clh_i\}_{i \in I} \) be a family of Hilbert spaces indexed by a non-empty directed set \( I \).  
Assume that for all \( i, j \in I \) with \( i \le j \) there is an isometry 
\(\varphi_{ji} : \clh_i \to \clh_j\) such that whenever \(i \le j \le k\),
\[
\varphi_{kj} \circ \varphi_{ji} = \varphi_{ki}, \quad \text{and} \quad \varphi_{ii} = \mathrm{id}_{\clh_i}.
\]  
The direct limit \((\clh_\infty, \psi_i)\) is a Hilbert space with isometries \(\psi_i : \clh_i \to \clh_\infty\) which satisfy
\[
\psi_j \circ \varphi_{ji} = \psi_i,
\]
and has the following universal property: for every family of contractions \(\{\pi_i\}_{i \in I}\) from \(\{\clh_i\}_{i \in I}\) into a Hilbert space \(\clk\) such that $\pi_j \circ \varphi_{ji} = \pi_i,$
there exists a unique contraction \(\pi_\infty : \clh_\infty \to \clk\) such that
\[
\pi_i = \pi_\infty \circ \psi_i \quad \forall i \in I.
\]

\[
\begin{tikzcd}[column sep=3em, row sep=3em]
  \clh_1 \arrow[r, "\varphi_{21}"] \arrow[drr, "\pi_1"'] &
  \clh_2 \arrow[r, "\varphi_{32}"] \arrow[dr, "\pi_2"'] &
  \clh_3 \arrow[r] \arrow[d, "\pi_3"'] &
  \cdots \arrow[r] &
  \clh_{\infty} \arrow[dll, "\pi_{\infty}"] \\
  & & \clk & & 
  \arrow[from=1-1, to=1-5, bend left=20, dotted, "\psi_1" above]
\end{tikzcd}
\]

Now, given a direct system \((\clh_m, \varphi_{nm})\) with a direct limit \((\clh_\infty, \psi_m)\), 
let, for each $m \in I,$ \(\, 
(\sigma_m,\, T_{m,1}, \dots, T_{m,k})
\)
denote a doubly twisted isometric covariant representation of \(\mathbb{E}\) on \(\clh_m\), 
with corresponding twists \(\{\,t_{ij}\otimes U_{m,ij}\,\}_{i\ne j}\).
Suppose this tuple satisfies the following conditions:
For all \(m \leq n\) and \(i,\, j \in I_k,\, i\neq j\), and for each \(a \in \mathcal{A},\)
\begin{equation} \label{eq:intertwine-sigma-nm}
  \varphi_{nm}\, \widetilde{T}_{m,i} 
  = \widetilde{T}_{n,i}\, (I_{E_i}\otimes \varphi_{nm}), \quad
  \varphi_{nm}\, U_{m,ij} 
  = U_{n,ij}\, \varphi_{nm}, \quad
  \text{and} \quad
  \varphi_{nm}\, \sigma_m(a) = \sigma_n(a)\, \varphi_{nm}.
\end{equation}
Then Lemma~\ref{lemma: existence of Vinfnty} implies the existence of a contraction 
\(\sigma_{\infty}\), and for all \(i, j \in I_k,\) the existence of isometries 
\(\{\widetilde{T}_{\infty,i}\}_{i \in I_k}\), along with unitaries 
\(\{U_{\infty,ij}\}_{i\neq j}\), such that for each \(m\in I\) and \(a\in \mathcal{A},\)
\begin{equation} \label{eq:intertwine-sigma-infinity}
\psi_m \, \widetilde{T}_{m,i}
= \widetilde{T}_{\infty,i} (I_{E_i} \otimes \psi_m), \quad
\psi_m \, U_{m,ij}
= U_{\infty,ij}\,  \psi_m, \quad
\text{and} \quad
\psi_m\,\sigma_m(a)=\sigma_{\infty}(a)\,\psi_m.
\end{equation}
Therefore, for every \(x_i \in E_i\) and \(a \in \mathcal{A}\), the restriction to 
\(\psi_m(\mathcal{H}_m) \subseteq \mathcal{H}_\infty\) yields
\begin{equation} \label{eq:infinity-restricted-m}
  \restr{T_{\infty,i}(x_i)}{\clh_m} = T_{m,i}(x_i), \quad
  \restr{U_{\infty,ij}}{\clh_m} = U_{m,ij}, \quad
  \text{and} \quad
  \sigma_{\infty}(a)\big|_{\clh_m}= \sigma_m(a).
\end{equation}
Equation \ref{eq:intertwine-sigma-infinity} also implies that
\[
U^*_{\infty,ji}\, \psi_m= \psi_m U^*_{m,ji}= \psi_m U_{m,ij}= U_{\infty,ij} \psi_m.
\]
Similarly, for any $l \in I_k,$
\[
U_{\infty,ij}\, \widetilde{T}_{\infty,l}(I_{E_l} \otimes \psi_m)
=U_{\infty,ij}\,\psi_m\, \widetilde{T}_{m,l}
=\psi_m U_{m,ij}\, \widetilde{T}_{m,l}
=\psi_m \, \widetilde{T}_{m,l}(I_{E_l} \otimes U_{m,ij})
=\widetilde{T}_{\infty,l}(I_{E_l} \otimes U_{\infty,ij}).
\]
Since \(\psi_m(\mathcal{H}_m)\) is dense in \(\mathcal{H}_\infty\), these equalities extend to all of \(\mathcal{H}_\infty\). Therefore, we get
\begin{equation} \label{eq:unitary relations in defn}
     U_{\infty,ji}=U_{\infty,ij}^{*}, \quad \text{and} \quad
    U_{\infty,ij}\, \widetilde{T}_{\infty,l} 
    = \widetilde{T}_{\infty,l}\left(I_{E_l} \otimes U_{\infty,ij}\right).
\end{equation}

\begin{lemma}\label{lemma: sigma infinity is representation}
The map $\sigma_{\infty} : \mathcal{A} \to \mathcal{B}(\clh_{\infty})$ is a $C^*$-representation.
\end{lemma}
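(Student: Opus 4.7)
The plan is to verify the three defining properties of a $*$-representation---linearity, multiplicativity, and involution---for $\sigma_{\infty}$. All three will be reduced, via a density argument based on Proposition~\ref{prop: density in H infty}, to the corresponding identities for each $\sigma_m$, which are known to hold since each $(\sigma_m,\clh_m)$ is a $*$-representation of $\mathcal{A}$.

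First, for every $a\in\mathcal{A}$ and every $m\in I$, the relation $\sigma_{\infty}(a)\,\psi_m=\psi_m\,\sigma_m(a)$ from \eqref{eq:intertwine-sigma-infinity} shows that $\sigma_{\infty}(a)$ is determined on the dense subspace $\bigcup_{m\in I}\psi_m(\clh_m)$ (Proposition~\ref{prop: density in H infty}). Linearity in $a$ then follows immediately: for $a,b\in\mathcal{A}$, $\lambda,\mu\in\mathbb{C}$, and $h\in\clh_m$,
\[
\sigma_{\infty}(\lambda a+\mu b)\,\psi_m h
=\psi_m\,\sigma_m(\lambda a+\mu b)h
=\lambda\,\sigma_{\infty}(a)\psi_m h+\mu\,\sigma_{\infty}(b)\psi_m h,
\]
and the bounded operators $\sigma_{\infty}(\lambda a+\mu b)$ and $\lambda\sigma_{\infty}(a)+\mu\sigma_{\infty}(b)$ agree on a dense subspace, hence everywhere.

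Next, multiplicativity is verified analogously: for $h\in\clh_m$,
\[
\sigma_{\infty}(ab)\psi_m h
=\psi_m\,\sigma_m(ab)h
=\psi_m\,\sigma_m(a)\sigma_m(b)h
=\sigma_{\infty}(a)\psi_m\,\sigma_m(b)h
=\sigma_{\infty}(a)\sigma_{\infty}(b)\psi_m h,
\]
so $\sigma_{\infty}(ab)=\sigma_{\infty}(a)\sigma_{\infty}(b)$ by density. For the $*$-property, I would use the inner-product characterization: given $h\in\clh_m$ and $h'\in\clh_n$, pick $k\ge m,n$ in the directed set $I$ and use $\psi_m=\psi_k\circ\varphi_{km}$, $\psi_n=\psi_k\circ\varphi_{kn}$ together with the isometry property of $\psi_k$ to reduce the computation to a single index. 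For $h,h'\in\clh_k$,
\[
\langle\sigma_{\infty}(a^*)\psi_k h,\psi_k h'\rangle
=\langle\psi_k\sigma_k(a^*)h,\psi_k h'\rangle
=\langle\sigma_k(a)^*h,h'\rangle
=\langle\psi_k h,\psi_k\sigma_k(a)h'\rangle
=\langle\psi_k h,\sigma_{\infty}(a)\psi_k h'\rangle,
\]
so $\sigma_{\infty}(a^*)=\sigma_{\infty}(a)^*$ by density.

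The only mildly delicate point---and what I would flag as the main obstacle---is the consistency required when comparing actions on vectors coming from different stages $\clh_m$ and $\clh_n$; this is precisely what the directedness of $I$ and the compatibility $\psi_n\circ\varphi_{nm}=\psi_m$ are designed to handle, so after passing to a common upper bound the issue disappears. Boundedness $\|\sigma_{\infty}(a)\|\le\|a\|$ comes for free from Lemma~\ref{lemma: existence of Vinfnty}, since $\|\sigma_m(a)\|\le\|a\|$ uniformly in $m$. Combined with the three algebraic properties above, this shows that $\sigma_{\infty}$ is a $*$-homomorphism from $\mathcal{A}$ into $\mathcal{B}(\clh_{\infty})$, i.e.\ a $C^*$-representation.
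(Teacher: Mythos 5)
Your proof is correct and follows essentially the same route as the paper: multiplicativity (and linearity) on the dense union $\bigcup_m\psi_m(\clh_m)$ via $\sigma_\infty(a)\psi_m=\psi_m\sigma_m(a)$, and the $*$-property by passing to a common upper bound in the directed set and using that the $\psi$'s (respectively $\varphi$'s) are isometries; your reduction of both vectors to a single stage $k$ is just a cosmetic variant of the paper's computation with $g\in\clh_m$, $h\in\clh_n$ lifted to $\clh_\ell$. The only point the paper records that you omit is nondegeneracy of $\sigma_\infty$ (inherited from the nondegeneracy of each $\sigma_m$ by density), which is part of the standing conventions on representations in this paper.
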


\begin{proof}
By the universal property of the direct limit,  
\(\sigma_{\infty} : \mathcal{A} \to \mathcal{B}(\clh_{\infty})\) is a contraction. 
Multiplicativity holds on the dense set $\displaystyle\bigcup_m\psi_m(\clh_m)$:
\[
\sigma_\infty(ab)\psi_m(h)
=\psi_m(\sigma_m(ab)h)
=\psi_m(\sigma_m(a)\sigma_m(b)h)
=\sigma_\infty(a)\sigma_\infty(b)\psi_m(h),
\]
and therefore, on all of $\clh_\infty$ by continuity.

For the $*$-property, fix $m,n\in I$ and choose $\ell\ge m,n$. Recall that $\varphi_{\ell n}\sigma_n(a)=\sigma_\ell(a)\varphi_{\ell n}$ and $\psi_\ell\varphi_{\ell n}=\psi_n$, $\psi_\ell\varphi_{\ell m}=\psi_m$. Then for $g\in\clh_m$, $h\in\clh_n$, along with the fact $\psi_m,\, \forall m \in I,$ is an isometry implies
\begin{align*}
\langle \sigma_\infty(a^*)\psi_n(h),\psi_m(g)\rangle
&=\langle \psi_n(\sigma_n(a^*)h),\psi_m(g)\rangle
=\langle \varphi_{\ell n}\sigma_n(a^*)h,\ \varphi_{\ell m}g\rangle_{\clh_\ell}\\
&=\langle \sigma_\ell(a^*)\varphi_{\ell n}h,\ \varphi_{\ell m}g\rangle
=\langle \varphi_{\ell n}h,\ \sigma_\ell(a)\varphi_{\ell m}g\rangle\\
&=\langle \psi_n(h),\ \psi_m(\sigma_m(a)g)\rangle
=\langle \psi_n(h),\ \sigma_\infty(a)\psi_m(g)\rangle,
\end{align*}
Hence $\sigma_\infty(a^*)=\sigma_\infty(a)^*$.

If each $\sigma_m$ is nondegenerate, then for every $m$,
\(\overline{\sigma_\infty(\mathcal A)\psi_m(\clh_m)}=\psi_m(\clh_m)\); density of $\displaystyle\bigcup_m\psi_m(\clh_m)$ in $\clh_\infty$ gives nondegeneracy of $\sigma_\infty$.
Therefore, $\sigma_\infty$ is a $*$-representation.
\end{proof}

Now we verify that the limiting representation 
\(\left(\sigma_\infty, \{T_{\infty,i}\}_{i\in I_k}\right)\)
preserves the covariance relations satisfied by each 
\((\sigma_m, \{T_{m,i}\}_{i\in I_k})\).

\begin{lemma}\label{lemma:Covariance-relations}
For all \(a,b \in \mathcal{A}\), \(i \in I_k\), and \(x_i \in E_i\), we have
\[
T_{\infty,i}(a x_i b)
\;=\;
\sigma_\infty(a)\, T_{\infty,i}(x_i)\, \sigma_\infty(b).
\]
\end{lemma}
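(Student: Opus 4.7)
The plan is to verify the identity on the dense subspace $\bigcup_{m \in I} \psi_m(\clh_m) \subseteq \clh_\infty$ provided by Proposition~\ref{prop: density in H infty}, and then extend by continuity. The three inputs I will use are: the covariance of each finite-level representation $(\sigma_m, T_{m,i})$; the intertwining relations \eqref{eq:intertwine-sigma-infinity}, which read, on elementary tensors $x_i \otimes k \in E_i \otimes \clh_m$, as the pointwise identity
\[
T_{\infty,i}(x_i)\,\psi_m(k) \;=\; \psi_m\bigl(T_{m,i}(x_i) k\bigr),
\qquad
\sigma_\infty(a)\,\psi_m(k) \;=\; \psi_m\bigl(\sigma_m(a) k\bigr);
\]
and the fact that $\sigma_\infty$ is a genuine $*$-representation, established in Lemma~\ref{lemma: sigma infinity is representation}.

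Concretely, I would fix $a,b \in \mathcal{A}$, $x_i \in E_i$ and an arbitrary vector of the form $\psi_m(k)$ with $k \in \clh_m$. Applying the first of the two intertwining identities above to the element $ax_i b \in E_i$ gives
\[
T_{\infty,i}(ax_i b)\,\psi_m(k) \;=\; \psi_m\bigl(T_{m,i}(ax_i b)\,k\bigr)
\;=\; \psi_m\bigl(\sigma_m(a)\, T_{m,i}(x_i)\,\sigma_m(b)\,k\bigr),
\]
where the second equality uses the covariance of $(\sigma_m, T_{m,i})$. Then three successive applications of the intertwining identities, pushing $\psi_m$ outward past $\sigma_m(b)$, past $T_{m,i}(x_i)$, and finally past $\sigma_m(a)$, reorganize the right-hand side into
\[
\sigma_\infty(a)\, T_{\infty,i}(x_i)\, \sigma_\infty(b)\,\psi_m(k).
\]
This proves the desired identity on $\psi_m(\clh_m)$ for every $m$, and hence on $\bigcup_m \psi_m(\clh_m)$, since these subspaces are nested via $\psi_n \varphi_{nm} = \psi_m$.

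Both sides of the identity define bounded operators on $\clh_\infty$: $T_{\infty,i}(x_i)$ is a contraction because $\widetilde T_{\infty,i}$ is a (Hilbert space) isometry produced by Lemma~\ref{lemma: existence of Vinfnty}, and $\sigma_\infty(a), \sigma_\infty(b)$ are bounded by Lemma~\ref{lemma: sigma infinity is representation}. Consequently, the identity extends by continuity to all of $\clh_\infty$. I do not anticipate a serious obstacle here: the argument is essentially a transport of covariance through the universal property of the direct limit, and the only point requiring care is reading the operator intertwining \eqref{eq:intertwine-sigma-infinity} in its correct pointwise form on vectors $\psi_m(k)$.
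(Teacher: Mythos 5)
Your proof is correct and follows essentially the same route as the paper: evaluate on vectors $\psi_m(k)$, use the covariance of $(\sigma_m,T_{m,i})$ together with the intertwining relations \eqref{eq:intertwine-sigma-infinity} to push $\psi_m$ outward, and conclude by density of $\bigcup_m\psi_m(\clh_m)$ and boundedness of both sides.
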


\begin{proof}
For any \(m \in I\) and \(h \in \clh_m\), using covariance of \((\sigma_m, T_{m,i})\) along with Equation
\eqref{eq:intertwine-sigma-infinity},
\[
T_{\infty,i}(a x_i b)\,\psi_m(h)
= \psi_m\, T_{m,i}(a x_i b)\,h
= \psi_m\,\sigma_m(a)\, T_{m,i}(x_i)\, \sigma_m(b)
=\sigma_\infty(a)\, T_{\infty,i}(x_i)\, \sigma_\infty(b)\, \psi_m(h).
\]
Since the subspaces \(\psi_m(\clh_m)\) are dense in \(\clh_\infty\),
the equality holds on all of \(\clh_\infty\).
\end{proof}
Now we prove that the twisted commutation relations are preserved by the limiting representation 
\((\sigma_\infty, \{T_{\infty,i}\}_{i\in I_k})\).

\begin{lemma}\label{lemma:twisted-relations-infty}
For every \(i\neq j\), the following twisted relation holds: 
$$\widetilde T_{\infty,i}\left(I_{E_i}\otimes \widetilde T_{\infty,j}\right)
=
\widetilde T_{\infty,j}\left(I_{E_j}\otimes \widetilde T_{\infty,i}\right)
\,(t_{ij}\otimes U_{\infty,ij}).$$
\end{lemma}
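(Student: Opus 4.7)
The plan is to verify the identity on a dense set of elementary tensors and then invoke continuity. By Proposition~\ref{prop: density in H infty}, the union $\bigcup_{m \in I} \psi_m(\mathcal H_m)$ is dense in $\mathcal H_\infty$; hence it suffices to check that both sides agree on vectors of the form $x_i \otimes x_j \otimes \psi_m(h)$ with $x_i \in E_i$, $x_j \in E_j$, $h \in \mathcal H_m$, and $m \in I$.

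The strategy is to use the intertwining relation \eqref{eq:intertwine-sigma-infinity} repeatedly to push the computation down to the $m$-th level, apply the twisted covariance relation for $(\sigma_m, T_{m,1}, \dots, T_{m,k})$, and then lift back to $\mathcal H_\infty$ by pushing $\psi_m$ to the outside. Concretely, on the left-hand side I first compute
\[
\bigl(I_{E_i} \otimes \widetilde T_{\infty, j}\bigr)(x_i \otimes x_j \otimes \psi_m(h))
= x_i \otimes \psi_m\bigl(\widetilde T_{m,j}(x_j \otimes h)\bigr),
\]
using \eqref{eq:intertwine-sigma-infinity} on the $\widetilde T_{\infty,j}$--factor; applying $\widetilde T_{\infty,i}$ and using \eqref{eq:intertwine-sigma-infinity} again converts this into $\psi_m \, \widetilde T_{m,i}(I_{E_i} \otimes \widetilde T_{m,j})(x_i \otimes x_j \otimes h)$. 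For the right-hand side, I first apply $t_{ij} \otimes U_{\infty,ij}$. The key point is that $U_{\infty,ij}$ intertwines with $\psi_m$ by \eqref{eq:intertwine-sigma-infinity}, so $(t_{ij} \otimes U_{\infty,ij})(x_i \otimes x_j \otimes \psi_m(h))$ has the form (flip)$\otimes \psi_m(U_{m,ij} h)$, and two more applications of \eqref{eq:intertwine-sigma-infinity} rewrite this as $\psi_m$ of the analogous expression involving $\widetilde T_{m,j}(I_{E_j} \otimes \widetilde T_{m,i})(t_{ij} \otimes U_{m,ij})$ applied to $x_i \otimes x_j \otimes h$.

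At that point both sides are $\psi_m$ applied to expressions that coincide by the twisted relation at level $m$ (part of the hypothesis that each $(\sigma_m, T_{m,1}, \dots, T_{m,k})$ is a doubly twisted isometric covariant representation). Hence the two sides agree on elementary tensors over $\psi_m(\mathcal H_m)$; by linearity and density they agree on all of $E_i \otimes E_j \otimes \mathcal H_\infty$.

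The only mild subtlety is bookkeeping: one must be careful that $t_{ij}$ acts only on the $E_i \otimes E_j$ factors and commutes through the tensor with $\psi_m$, while $U_{\infty,ij}$ is intertwined with $\psi_m$ by \eqref{eq:intertwine-sigma-infinity}. No deeper obstacle arises, since all relevant intertwiners are already in place by construction of the direct-limit data.
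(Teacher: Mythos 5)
Your proposal is correct and follows essentially the same route as the paper: verify the identity on elementary tensors over the dense union $\bigcup_m \psi_m(\mathcal H_m)$, use the intertwining relations \eqref{eq:intertwine-sigma-infinity} (including $\psi_m U_{m,ij}=U_{\infty,ij}\psi_m$) to push both sides down to level $m$, invoke the twisted relation for $(\sigma_m,T_{m,1},\dots,T_{m,k})$, and extend by boundedness and density. No discrepancies to note.
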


\begin{proof}
Fix \(i\neq j\), and let \(m\in I\), \(\xi\in E_i\), \(\eta\in E_j\), \(g\in\clh_m\).
Then, by the intertwining relations \eqref{eq:intertwine-sigma-infinity},
\begin{align*}
\widetilde T_{\infty,i}\left(I_{E_i}\otimes \widetilde T_{\infty,j}\right)
(\xi\otimes \eta\otimes \psi_m g)
&= \psi_m\,\widetilde T_{m,i}\left(I_{E_i}\otimes \widetilde T_{m,j}\right)
(\xi\otimes \eta\otimes g)  \\
&= \psi_m\,\widetilde T_{m,j}\left(I_{E_j}\otimes \widetilde T_{m,i}\right)
\,(t_{ij}\otimes U_{m,ij})(\xi\otimes \eta\otimes g)\\
&= \widetilde T_{\infty,j}\left(I_{E_j}\otimes \widetilde T_{\infty,i}\right)
\,(t_{ij}\otimes U_{\infty,ij})(\xi\otimes \eta\otimes \psi_m g).
\end{align*}
By boundedness and density of \(\bigcup_m\psi_m(\clh_m)\), 
the identity extends to all of \(E_i\otimes E_j\otimes \clh_\infty\).
\end{proof}

\begin{lemma}\label{lemma: condition for fully coisometric}
Fix $j\in I_k$.  
Suppose that for every $m\in I$, there exists $n>m$ such that
\[
\widetilde T_{n,j}\,\widetilde T_{n,j}^{*}\,\varphi_{n,m}
\;=\;
\varphi_{n,m}.
\]
Then the direct limit representation $(\sigma_{\infty},\,T_{\infty,j})$ is fully coisometric.  
Moreover, if 
\[
\widetilde T_{n,j}\,\widetilde T_{n,j}^{*}
= I_{\clh_n}
\qquad \forall\,n\in I,
\]
then $(\sigma_{\infty}, T_{\infty,j})$ is fully coisometric.
\end{lemma}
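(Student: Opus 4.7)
The plan is to show that $\widetilde T_{\infty,j}$ is surjective: since $\widetilde T_{\infty,j}$ is already an isometry (by Lemma~\ref{lemma: existence of Vinfnty} applied to the isometries $\widetilde T_{n,j}$), surjectivity is equivalent to $\widetilde T_{\infty,j}\widetilde T_{\infty,j}^{*}=I_{\clh_\infty}$, which is exactly what we need. Since $\widetilde T_{\infty,j}$ is an isometry, its range $\operatorname{Ran}(\widetilde T_{\infty,j})$ is a closed subspace of $\clh_\infty$. By Proposition~\ref{prop: density in H infty}, $\bigcup_{m\in I}\psi_m(\clh_m)$ is dense in $\clh_\infty$, so it suffices to prove that
\[
\psi_m(\clh_m)\;\subseteq\;\operatorname{Ran}(\widetilde T_{\infty,j}),
\qquad \forall\,m\in I.
\]

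To verify this, fix $m\in I$ and $h\in\clh_m$. By hypothesis, choose $n>m$ with
$\widetilde T_{n,j}\widetilde T_{n,j}^{*}\,\varphi_{n,m}=\varphi_{n,m}$. Using the compatibility $\psi_n\circ\varphi_{n,m}=\psi_m$, we obtain
\[
\psi_m(h)
=\psi_n\bigl(\varphi_{n,m}(h)\bigr)
=\psi_n\,\widetilde T_{n,j}\widetilde T_{n,j}^{*}\,\varphi_{n,m}(h).
\]
Now invoke the intertwining relation \eqref{eq:intertwine-sigma-infinity}, namely
$\psi_n\,\widetilde T_{n,j}=\widetilde T_{\infty,j}\bigl(I_{E_j}\otimes\psi_n\bigr)$, to rewrite the right-hand side as
\[
\psi_m(h)
=\widetilde T_{\infty,j}\,\bigl(I_{E_j}\otimes\psi_n\bigr)\,\widetilde T_{n,j}^{*}\,\varphi_{n,m}(h),
\]
which exhibits $\psi_m(h)$ as an element of $\operatorname{Ran}(\widetilde T_{\infty,j})$. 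Since $m$ and $h$ were arbitrary, $\operatorname{Ran}(\widetilde T_{\infty,j})$ contains the dense set $\bigcup_m\psi_m(\clh_m)$; being closed, it equals $\clh_\infty$.

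Thus $\widetilde T_{\infty,j}$ is an isometric surjection, hence unitary, and in particular
$\widetilde T_{\infty,j}\widetilde T_{\infty,j}^{*}=I_{\clh_\infty}$, so $(\sigma_\infty,T_{\infty,j})$ is fully coisometric. For the moreover part, if $\widetilde T_{n,j}\widetilde T_{n,j}^{*}=I_{\clh_n}$ for every $n$, then the identity $\widetilde T_{n,j}\widetilde T_{n,j}^{*}\varphi_{n,m}=\varphi_{n,m}$ holds trivially for any $n>m$, so the hypothesis is satisfied and the preceding argument applies.

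No serious obstacle is anticipated: the only subtlety is producing the required preimage explicitly from $\bigl(I_{E_j}\otimes\psi_n\bigr)\widetilde T_{n,j}^{*}\varphi_{n,m}(h)$ rather than trying to pass $\widetilde T_{\infty,j}^{*}$ through $\psi_m$ (which would require the adjoint intertwining relation that was omitted from the excerpt). Routing the argument through the forward intertwining relation \eqref{eq:intertwine-sigma-infinity} avoids this entirely.
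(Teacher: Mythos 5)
Your argument for the main statement is correct and is essentially the paper's own proof: both reduce full coisometry to surjectivity of the isometry $\widetilde T_{\infty,j}$, use density of $\bigcup_m\psi_m(\clh_m)$ (Proposition~\ref{prop: density in H infty}), and show $\psi_m(\clh_m)\subseteq\operatorname{Ran}\widetilde T_{\infty,j}$ via $\psi_m=\psi_n\varphi_{n,m}$ together with the forward intertwining relation \eqref{eq:intertwine-sigma-infinity}.

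The only divergence is the ``moreover'' part. The paper argues it separately: if $\widetilde T_{n,j}\widetilde T_{n,j}^{*}=I_{\clh_n}$ for all $n$, then each $\widetilde T_{n,j}^{*}$ is an isometry, and Lemma~\ref{lemma: existence of Vinfnty} is invoked to conclude $\widetilde T_{\infty,j}^{*}$ is an isometry; this tacitly requires the adjoint intertwining $(I_{E_j}\otimes\varphi_{n,m})\widetilde T_{m,j}^{*}=\widetilde T_{n,j}^{*}\varphi_{n,m}$, which does hold here (multiply the forward relation suitably and use coisometry), though the paper does not spell it out. You instead reduce the ``moreover'' claim to the first part, which is cleaner and avoids the adjoint relation altogether; the only caveat is that your reduction needs, for each $m$, some $n>m$ in the directed set $I$. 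If $I$ had a maximal element this would fail as stated, but the fix is immediate: the same range computation works with $n=m$ and $\varphi_{m,m}=\mathrm{id}_{\clh_m}$, since then $\psi_m(\clh_m)=\psi_m(\operatorname{Ran}\widetilde T_{m,j})\subseteq\operatorname{Ran}\widetilde T_{\infty,j}$. With that one-line remark, your proof is complete and, if anything, tighter than the paper's treatment of the second assertion.
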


\begin{proof}
We already know that $T_{\infty,j}: E_j \to \mathcal B(\clh_\infty)$ is an isometric representation, and 
$\operatorname{Ran}\,\widetilde T_{\infty,j}$ is a closed subspace of $\clh_\infty$.
Hence $\widetilde T_{\infty,j}$ is a coisometry if and only if 
$\operatorname{Ran}\,\widetilde T_{\infty,j} = \clh_\infty$. Because of  
the density of \(\bigcup_m\psi_m(\clh_m)\), it suffices to show that for each $m \in I,$
\(
\psi_m(\clh_m)\subseteq \operatorname{Ran}\,\widetilde T_{\infty,j}.
\)
Fix $m\in I$.  
By assumption, there exists $n>m$ such that
\(
\widetilde T_{n,j}\,\widetilde T_{n,j}^{*}\,\varphi_{n,m}
= \varphi_{n,m}.
\)
Therefore,
\(
\varphi_{n,m}(\clh_m)
\ \subseteq\
\operatorname{Ran}\,\widetilde T_{n,j}.
\)
Using $\psi_m=\psi_n\varphi_{n,m}$ together with
$\psi_n\,\widetilde T_{n,j}
=\widetilde T_{\infty,j}(I_{E_j}\otimes \psi_n)$, we get
\[
\psi_m(\clh_m)
=\psi_n\varphi_{n,m}(\clh_m)
\ \subseteq\
\psi_n\left(\operatorname{Ran}\,\widetilde T_{n,j}\right)
\ \subseteq\
\operatorname{Ran}\,\widetilde T_{\infty,j}.
\]
Hence $\psi_m(\clh_m)\subseteq \operatorname{Ran}\,\widetilde T_{\infty,j}$ for all $m\in I$, 
and therefore, 
$\operatorname{Ran}\,\widetilde T_{\infty,j}=\clh_\infty$.  
Equivalently,
\(
\widetilde T_{\infty,j}\,\widetilde T_{\infty,j}^{*}
=I_{\clh_\infty},
\)
that is, $(\sigma_{\infty}, T_{\infty,j})$ is fully coisometric.

Finally, if 
\(
\widetilde T_{n,j}\,\widetilde T_{n,j}^{*}=I_{\clh_n},
\, \forall\,n\in I,
\)
then each $\widetilde T_{n,j}^{*}$ is an isometry.  
By Lemma~\ref{lemma: existence of Vinfnty}, it follows that $\widetilde T_{\infty,j}^{*}$ is an isometry as well, 
and hence $(\sigma_{\infty}, T_{\infty,j})$ is fully coisometric.
\end{proof}
\begin{remark}
The hypothesis for all $m\in I,$ there exists $n>m$ such that $ 
\widetilde T_{n,j}\,\widetilde T_{n,j}^{*}\,\varphi_{n,m}
=\varphi_{n,m}$
is the natural (and minimal) condition ensuring that
$\psi_m(\clh_m)\subseteq \operatorname{Ran}\,\widetilde T_{\infty,j}$ for all $m\in I$, 
and hence that $\widetilde T_{\infty,j}$ is coisometric.  

The stronger assumption $\widetilde T_{n,j}\,\widetilde T_{n,j}^{*}=I_{\clh_n},$
for some $n$, implies the above relation for every $m\le n$, 
but does not, in general, guarantee coverage of $\psi_p(\clh_p)$ for all $p\ge n$.  
If one assumes instead that for every $m$ there exists $n\ge m$ such that $\widetilde T_{n,j}\,\widetilde T_{n,j}^{*}=I_{\clh_n},$
(that is, \emph{eventual coisometry}), then the required hypothesis holds automatically.
\end{remark}
\begin{lemma} \label{lemma: coiso plus twisted implies doubly twisted}
Let \((\sigma, T_1,\dots, T_k)\) be a c.c. covariant twisted isometric 
representation of \(\mathbb E\) on \(\mathcal H\) with corresponding twists $\{t_{ij} \otimes U_{ij}\}_{i \neq j},$ which is fully coisometric.
Then the representation is doubly twisted.
\end{lemma}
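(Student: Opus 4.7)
The plan is to derive the doubly twisted relation directly from the twisted relation by exploiting both the isometric and fully coisometric hypotheses. The twisted relation
\[
\widetilde T_i(I_{E_i}\otimes \widetilde T_j)
=
\widetilde T_j(I_{E_j}\otimes \widetilde T_i)\,(t_{ij}\otimes U_{ij})
\]
is, a priori, an identity of operators $E_i\otimes E_j\otimes \mathcal H\to\mathcal H$. To reach the doubly twisted relation $\widetilde T_j^{*}\widetilde T_i=(I_{E_j}\otimes \widetilde T_i)(t_{ij}\otimes U_{ij})(I_{E_i}\otimes \widetilde T_j^{*})$, which lives on $E_i\otimes\mathcal H\to E_j\otimes\mathcal H$, I need to remove the rightmost $\widetilde T_j$ factor and introduce a $\widetilde T_j^{*}$ on the right, while inserting a $\widetilde T_j^{*}$ on the left.

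First I would compose the twisted relation on the left with $\widetilde T_j^{*}$. Since $(\sigma,T_j)$ is isometric we have $\widetilde T_j^{*}\widetilde T_j=I_{E_j\otimes\mathcal H}$, so the right-hand side collapses to $(I_{E_j}\otimes \widetilde T_i)(t_{ij}\otimes U_{ij})$, yielding
\[
\widetilde T_j^{*}\widetilde T_i\,(I_{E_i}\otimes \widetilde T_j)
=
(I_{E_j}\otimes \widetilde T_i)(t_{ij}\otimes U_{ij}).
\]
Next I would compose on the right with $(I_{E_i}\otimes \widetilde T_j^{*})$. On the left this gives $\widetilde T_j^{*}\widetilde T_i\,(I_{E_i}\otimes \widetilde T_j\widetilde T_j^{*})$, and the fully coisometric hypothesis $\widetilde T_j\widetilde T_j^{*}=I_{\mathcal H}$ reduces this to $\widetilde T_j^{*}\widetilde T_i$. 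The resulting equality is precisely the doubly twisted relation \eqref{eq:doubly}.

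There is really no serious obstacle here; the whole argument is a two-line manipulation once one has observed that fully coisometric plus isometric turns $\widetilde T_j$ into a unitary on the relevant spaces, so composing by $\widetilde T_j^{*}$ on either side is lossless. The only mild subtlety is keeping track of the tensor identities (that $\widetilde T_j^{*}\widetilde T_j=I_{E_j\otimes \mathcal H}$ and $\widetilde T_j\widetilde T_j^{*}=I_{\mathcal H}$ act on different spaces), and verifying that $U_{ij}$, which is already built into the twist, does not need any further manipulation. Note that the commutation relations between $U_{ij}$ and $\sigma$, $\widetilde T_\ell$ recorded in Definition~\ref{def:twisted} are not even used in this derivation.
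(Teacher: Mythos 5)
Your proof is correct and is essentially the same argument as the paper's: a two-line manipulation of the twisted relation, cancelling with $\widetilde T^{*}\widetilde T=I$ (isometry) on one side and $\widetilde T\,\widetilde T^{*}=I_{\mathcal H}$ (full coisometry) on the other. The paper multiplies by $\widetilde T_i^{*}$ on the left and by the unitary $(t_{ij}\otimes U_{ij})^{*}$ on the right, obtaining the relation with the indices interchanged, whereas you multiply by $\widetilde T_j^{*}$ and land directly on \eqref{eq:doubly}; this is only a cosmetic difference.
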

\begin{proof}
For each $i,$ we have
$\widetilde{T}_i\,\widetilde{T}_i^{\,*} = I_{\mathcal H}$. 
Also, for all $i\neq j$ in $I_k$, we have $ \widetilde T_i (I_{E_i}\otimes \widetilde T_j)
    = \widetilde T_j (I_{E_j}\otimes \widetilde T_i)\,(t_{ij}\otimes U_{ij}).$  This implies $(I_{E_i}\otimes \widetilde T_j) (t_{ij}\otimes U_{ij})^*
=  \widetilde T^*_i \widetilde T_j (I_{E_j}\otimes \widetilde T_i).$ Therefore, $$\widetilde T^*_i \widetilde T_j =(I_{E_i}\otimes \widetilde T_j) (t_{ji}\otimes U_{ji}) (I_{E_j}\otimes \widetilde T^*_i),$$
proving our claim.
\end{proof}
Combining \eqref{eq:unitary relations in defn} with 
Lemmas~\ref{lemma: sigma infinity is representation}-
\ref{lemma: condition for fully coisometric}, 
we obtain the following result:

\begin{theorem}\label{thm:inductive-limit}
Let $\{(\clh_m,\varphi_{n,m})\}_{m\in I}$ be a direct system of Hilbert spaces with direct limit
$(\clh_\infty,\psi_m)$.  
For each $m\in I$, let $(\sigma_m,\,T_{m,1},\dots,T_{m,k})$
be a doubly twisted isometric covariant representation of the product system 
$\mathbb E=\{E_i\}_{i=1}^k$ on $\clh_m$, with twists 
$\{t_{ij}\otimes U_{m,ij}\}_{i\neq j}$.  
Assume that for all $m\le n$, $a\in\mathcal A$, and all $i\neq j$,
\begin{equation*}
\varphi_{n,m}\widetilde T_{m,i}
=\widetilde T_{n,i}(I_{E_i}\otimes \varphi_{n,m}),\quad
\varphi_{n,m}U_{m,ij}=U_{n,ij}\varphi_{n,m},\quad
\varphi_{n,m}\sigma_m(a)=\sigma_n(a)\varphi_{n,m}.
\end{equation*}
Then there exists a twisted isometric covariant representation $(\sigma_\infty,\, T_{\infty,1}, \dots, T_{\infty,k})$
of the product system $\mathbb E$ on $\clh_\infty$, together with unitaries 
$\{U_{\infty,ij}\}_{i\neq j}$, such that 
$\{t_{ij} \otimes U_{\infty,ij}\}_{i\neq j}$ are the corresponding twists satisfying:
\begin{enumerate}
    \item For every $m\in I$, $a\in\mathcal A$, and $i\neq j$,
    \[
    \psi_m\,\widetilde T_{m,i}=\widetilde T_{\infty,i}(I_{E_i}\otimes \psi_m),\qquad
    \psi_m\,U_{m,ij}=U_{\infty,ij}\,\psi_m, \qquad
    \psi_m\,\sigma_m(a)=\sigma_\infty(a)\,\psi_m.
    \]
    Equivalently, for each $m\in I$,
    \[
    \restr{T_{\infty,i}(x_i)}{\clh_m}=T_{m,i}(x_i),\quad
    \restr{U_{\infty,ij}}{\clh_m}=U_{m,ij},\quad
    \restr{\sigma_\infty(a)}{\clh_m}=\sigma_m(a).
    \]
    
    \item
    For some $j\in I_k$, and for every $m\in I$, if there exists $n>m$ such that
    \[
    \widetilde T_{n,j}\,\widetilde T_{n,j}^{*}\,\varphi_{n,m}=\varphi_{n,m},
    \]
    then $(\sigma_\infty,T_{\infty,j})$ is fully coisometric. 
    Moreover, if 
    \[
    \widetilde T_{n,j}\,\widetilde T_{n,j}^{*}=I_{\clh_n}\qquad \text{for all }\,n\in I,
    \]
    then $\widetilde T_{\infty,j}$ is a coisometry.  
    In particular, if either of these conditions hold for every $j\in I_k$, 
    the representation $(\sigma_\infty,T_{\infty,1},\dots,T_{\infty,k})$ is fully coisometric, and hence doubly twisted.
\end{enumerate}
\end{theorem}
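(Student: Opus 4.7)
The plan is to assemble the theorem from the lemmas already established immediately before the statement. First, I would apply Lemma~\ref{lemma: existence of Vinfnty} three times to the compatible families $\{\widetilde T_{m,i}\}_m$, $\{U_{m,ij}\}_m$, and $\{\sigma_m(a)\}_m$ (the hypothesis \eqref{eq:intertwine-sigma-nm} is precisely the intertwining required by that lemma). This produces the maps $\widetilde T_{\infty,i}$ (isometries, since each $\widetilde T_{m,i}$ is), $U_{\infty,ij}$ (unitaries, since each $U_{m,ij}$ is), and $\sigma_\infty(a)$ (a contraction) on $\mathcal H_\infty$, each satisfying the intertwining relations with the canonical isometries $\psi_m$ recorded in \eqref{eq:intertwine-sigma-infinity}. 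The restriction identities \eqref{eq:infinity-restricted-m} then follow from $\psi_m$ being isometric.

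The next block of work is to verify that these limits actually form a twisted representation. The identities $U_{\infty,ji} = U_{\infty,ij}^*$ and $U_{\infty,ij}\widetilde T_{\infty,l} = \widetilde T_{\infty,l}(I_{E_l}\otimes U_{\infty,ij})$ are already derived in \eqref{eq:unitary relations in defn}, and the commutation $U_{\infty,ij}\sigma_\infty(a) = \sigma_\infty(a)U_{\infty,ij}$ follows by the same density argument applied to $\bigcup_m\psi_m(\mathcal H_m)$. That $\sigma_\infty$ is a nondegenerate $\ast$-representation is exactly Lemma~\ref{lemma: sigma infinity is representation}, covariance of each $T_{\infty,i}$ is Lemma~\ref{lemma:Covariance-relations}, and the twisted commutation relation \eqref{eq:twisted} with twist $t_{ij}\otimes U_{\infty,ij}$ is Lemma~\ref{lemma:twisted-relations-infty}. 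Together these give assertion~(1) of the theorem.

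For assertion~(2), I would invoke Lemma~\ref{lemma: condition for fully coisometric} coordinatewise: the hypothesis $\widetilde T_{n,j}\widetilde T_{n,j}^{*}\varphi_{n,m} = \varphi_{n,m}$ for every $m$ and some $n>m$ is exactly what that lemma requires to conclude that $(\sigma_\infty, T_{\infty,j})$ is fully coisometric, and the stronger hypothesis $\widetilde T_{n,j}\widetilde T_{n,j}^{*}=I_{\mathcal H_n}$ for all $n$ is handled by the second half of the same lemma. If this holds for every $j\in I_k$, then the limiting representation is fully coisometric in every coordinate; combined with the already established twisted relations, Lemma~\ref{lemma: coiso plus twisted implies doubly twisted} upgrades the twisted representation to a doubly twisted one, completing the proof.

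The only subtle point I anticipate is purely bookkeeping: every equality among operators on $\mathcal H_\infty$ must first be verified on vectors of the form $\psi_m(h)$ (or on elementary tensors $\xi\otimes\psi_m(h)$), using the base-level relation and the intertwining identities, and then extended to all of $\mathcal H_\infty$ by Proposition~\ref{prop: density in H infty} together with boundedness. Since the lemmas preceding the theorem have already carried out each of these density extensions individually, the main body of the proof is essentially a matter of citing them in the right order.
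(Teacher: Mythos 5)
Your proposal is correct and follows essentially the same route as the paper: the paper itself obtains the theorem by combining the direct-limit construction of $\widetilde T_{\infty,i}$, $U_{\infty,ij}$, $\sigma_\infty$ via Lemma~\ref{lemma: existence of Vinfnty} and the intertwining relations with the identities in \eqref{eq:unitary relations in defn} and Lemmas~\ref{lemma: sigma infinity is representation}--\ref{lemma: coiso plus twisted implies doubly twisted}, exactly as you cite them, with all density arguments on $\bigcup_m\psi_m(\clh_m)$ already carried out there.
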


As an immediate consequence of the above theorem, we obtain the following elegant result for an $ n$-tuple of doubly twisted isometries \cite{RSS}. Recall that an $n$-tuple of isometries $(V_1, \ldots ,V_n)$ on $\clh$ is called doubly twisted with respect to a commuting family of unitaries $\{U_{ij}\}_{i<j}$ with $U_{ji} := U_{ij}^*$, if
$$V_iV_j = U_{ij} V_jV_i,\qquad V_i^*V_j=U_{ij}^*V_jV_i^*, \quad V_kU_{ij}=U_{ij}V_k \quad (i,j,k=I_n, \, i \neq j).$$ 

\begin{theorem}\label{thm:UE-doubly-twisted-central}
Let $V=(V_1,\ldots,V_k)$ be a doubly twisted tuple of isometries on a Hilbert space $\clh$ with corresponding twists $\{U_{ij}\}_{i\ne j}\subset \clb(\clh)$. Then $V$ admits a doubly twisted unitary extension.
\end{theorem}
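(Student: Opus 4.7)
The plan is to apply Theorem~\ref{thm:inductive-limit} to an inductive system obtained by iteratively enlarging the Fock model of $V$ by one extra ``negative'' level in each shift coordinate.

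First I would specialize the product-system framework to the scalar case: take $\mathcal{A}=\mathbb{C}$, each $E_i=\mathbb{C}$, and $\sigma_m$ the scalar representation, so that doubly twisted isometric covariant representations of $\mathbb{E}$ reduce precisely to doubly twisted tuples of isometries in the sense of the statement. Set $\clh_0:=\clh$, $V_{0,i}:=V_i$, and $U_{0,ij}:=U_{ij}$ as the base stage.

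For the inductive step, I would construct $\clh_{m+1}$ as follows. By Theorem~\ref{thm: decompo for doubly twisted rep}, the tuple $(V_{m,1},\ldots,V_{m,k})$ admits a Wold decomposition $\clh_m=\bigoplus_{A\subseteq I_k}\clh_{m,A}$. On each summand, the canonical unitary $\Pi_A$ of Proposition~\ref{prop:construction of MAi} identifies $\clh_{m,A}$ with $\mathcal{F}(E_A)\otimes\cld_{m,A}\cong \ell^2(\mathbb{Z}_+^{|A|})\otimes\cld_{m,A}$. Enlarge the index set by one negative step in each shift coordinate $i\in A$, obtaining $\clh_{m+1,A}:=\ell^2(\mathbb{Z}_{\ge -1}^{|A|})\otimes\cld_{m,A}$, with $\varphi_{m+1,m}|_{\clh_{m,A}}$ the natural isometric inclusion. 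Extend $V_{m+1,i}$ as the twisted shift (coordinate shift in $i$ for $i\in A$, and the unchanged unitary $V_{m,i}|_{\cld_{m,A}}$ for $i\in A^{\mathrm c}$), and set $U_{m+1,ij}:=I_{\mathcal F}\otimes U_{m,ij}|_{\cld_{m,A}}$ on each block. Proposition~\ref{prop:construction of MAi}(3) together with Lemma~\ref{lem:sigma-reduces-coreA-kernels}(2) guarantees that the twists act only on the coefficient space $\cld_{m,A}$, so enlarging the lattice does not disturb the doubly twisted relations. Finally, put $\clh_{m+1}:=\bigoplus_A \clh_{m+1,A}$.

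With this data, the hypotheses of Theorem~\ref{thm:inductive-limit} hold: by construction $\varphi_{n,m}\widetilde V_{m,i}=\widetilde V_{n,i}(I_{E_i}\otimes\varphi_{n,m})$ and $\varphi_{n,m}U_{m,ij}=U_{n,ij}\varphi_{n,m}$ for all $m\le n$, and each stage is a doubly twisted isometric tuple. Moreover, for every $i\in I_k$ and $m\in\mathbb{Z}_+$, every vector of $\clh_m$ lies in the range of $\widetilde V_{m+1,i}$: on summands where $i\in A$ the extra negative lattice point is mapped precisely onto the previous zero level, while on summands where $i\in A^{\mathrm c}$ the operator $V_{m,i}$ was already unitary. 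Hence $\widetilde V_{m+1,i}\widetilde V_{m+1,i}^{*}\varphi_{m+1,m}=\varphi_{m+1,m}$, and Theorem~\ref{thm:inductive-limit}(2) forces the limit $(V_{\infty,1},\ldots,V_{\infty,k})$ on $\clh_\infty$ to be fully coisometric. Being both isometric and coisometric, each $V_{\infty,i}$ is unitary, and Lemma~\ref{lemma: coiso plus twisted implies doubly twisted} upgrades the twisted relations at the limit to doubly twisted ones. The embedding $\psi_0:\clh\hookrightarrow\clh_\infty$ identifies $V$ with the restriction of $(V_{\infty,i})$ to the invariant subspace $\psi_0(\clh)$, giving the desired doubly twisted unitary extension.

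The main obstacle is to keep the twists and the shifts coherent across all the Wold summands during the inductive enlargement. The decisive input is Proposition~\ref{prop:construction of MAi}(3), which shows that under the canonical unitary the twists $U_{A,ij}$ act as $I_{\mathcal F}\otimes U_{ij}$ on each summand, depending only on the coefficient space $\cld_A$; adding negative lattice points therefore commutes automatically with every $U_{ij}$, and the doubly twisted relations carry through the inductive step without modification.
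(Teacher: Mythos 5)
Your proposal reaches the right conclusion by the same limiting machinery (Theorem~\ref{thm:inductive-limit} plus Lemma~\ref{lemma: coiso plus twisted implies doubly twisted}), but the direct system you build is genuinely different from the paper's. You first invoke the Wold decomposition (Theorem~\ref{thm: decompo for doubly twisted rep}) and the Fock model (Proposition~\ref{prop:construction of MAi}), and then enlarge each summand $\ell^2(\mathbb Z_+^{|A|})\otimes\mathcal D_A$ by one negative lattice level per shift coordinate at every stage, so the limit is an explicit block sum of bilateral twisted shifts; this is close in spirit to the paper's later treatment of the commutative case (Corollary~\ref{cor:direct-limit-A} and Example~\ref{ex: unitary-ext-commutative}), and it buys a concrete model of the extension. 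The paper's own proof of this theorem is leaner: it keeps $\mathcal H_m=\mathcal H$ fixed, sets $T_{m,i}(1)=Z_i^{\,m}V_i$ with $Z_i=\prod_{j\neq i}U_{ji}$, and uses the stationary connecting isometries $\varphi_{n,m}=\Phi^{\,n-m}$ with $\Phi=V_1\cdots V_k$, so no structure theory is needed at all — the coisometry hypothesis reduces to the algebraic fact that $\operatorname{Ran}\Phi\subseteq\operatorname{Ran}V_j$ via the doubly twisted relations. Two points in your sketch need actual verification rather than the appeal to ``the twists act only on the coefficient space'': the model operators themselves carry position-dependent powers of the $U_{ij}$ (cf.\ Proposition~\ref{prop:construction of MAi}(1), where $D_r$ contributes $U_{i_j i_r}^{\,n_{i_r}}$, and the $A^{\mathrm c}$ operators are not simply $I\otimes V_{m,i}|_{\mathcal D_{m,A}}$), so you must redo the Theorem~\ref{thm:verification of ex 1}-type computation on the enlarged lattice to confirm the twisted and doubly twisted relations there, and you must check the intertwining $\varphi_{m+1,m}\widetilde V_{m,i}=\widetilde V_{m+1,i}(I\otimes\varphi_{m+1,m})$ against these position-dependent factors; both checks do go through (the enlarged model is unitarily equivalent, via an index shift, to the original model with coefficient operators conjugated by fixed powers of the twists, which is exactly the stationary trick the paper uses), but they are part of the proof, not consequences of Proposition~\ref{prop:construction of MAi}(3) alone.
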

\begin{proof}
Take $\mathcal A=\C$ and $E_i=\C$ with $t_{ij}=I$. Our aim is to construct a direct system ${(\clh_m,\varphi_{n,m})}_{m\in\Z+}$ of Hilbert spaces with direct limit $(\clh_\infty,\psi_m)$.
For each $m\in\mathbb Z_+$, let $\clh_m= \clh,$ and let $\sigma_m : \mathbb{C} \rightarrow \mathcal{B}(\clh)$ be given by
\[
\sigma_m(\lambda) = \lambda I_{\clh}, \quad \forall \lambda \in \mathbb{C}. 
\]
Also let $U_{m,ij} := U_{ij}.$ For $i \in I_k,$ let $Z_i := \displaystyle\prod_{j \neq i} U_{ji},$ and define
$T_{m,i} : E_i \to \mathcal B(\clh)$ by
\[
T_{m,i}(1) := M_{m,i} := Z_i^{\,m} V_i,
\qquad \text{so that} \qquad 
\widetilde T_{m,i}(\lambda \otimes h)
= \lambda\, Z_i^{\,m} V_i h,
\quad (\lambda\in\mathbb C,\; h\in\clh).
\]
Since each $V_i$ is an isometry and $Z_i$ is unitary, every 
$\widetilde T_{m,i}$ is an isometry.  
We now check that for each $m\in\mathbb Z_+$, the tuple $(\sigma_m,\,T_{m,1},\dots,T_{m,k})$
forms a doubly twisted isometric covariant representation of $\mathbb E$.
Let $a,b,\xi_i \in \mathbb C$, then
\[
T_{m,i}(a \xi_i b)
= a \xi_i b\, Z_i^{\,m} V_i
= \sigma_m(a)\, \xi_i\, Z_i^{\,m} V_i\, \sigma_m(b)
= \sigma_m(a)\, T_{m,i}(\xi_i)\, \sigma_m(b).
\]
We check the twisted relations for each $m\in\mathbb Z_+$: First notice that, since $\{U_{ij}\}_{i \neq j}$ is a commuting family, $Z_iZ_j=Z_jZ_i, \, \forall i, j \in I_k.$ Also $U_{ij}\in\{V_\ell\}',\, \forall i \neq j, \,\ell\in I_k.$ Therefore, for $\lambda_1,\,  \lambda_2\in \mathbb{C}$ and $ h \in \clh$,  
\begin{align*}
\widetilde T_{m,i}(I\otimes \widetilde T_{m,j})(\lambda_1\otimes \lambda_2\otimes h)
&= \lambda_1\, Z_i^{\,m}V_i\left(\lambda_2\, Z_j^{\,m}V_j h\right) 
= \lambda_1\lambda_2\, Z_i^{\,m} Z_j^{\,m} V_i V_j h 
= \lambda_1\lambda_2\, U_{ij}\, Z_j^{\,m} Z_i^{\,m} V_j V_i h \\
&= \widetilde T_{m,j}(I\otimes \widetilde T_{m,i})
      (I\otimes U_{ij})(\lambda_1\otimes \lambda_2\otimes h).\\
\end{align*}
For the adjoint relation, notice that for each $j \in I_k$,
\begin{align*}
  \big\langle \widetilde T_{m,j}^{*} h,\; \lambda \otimes h_1 \big\rangle
  &= \big\langle h,\; \widetilde T_{m,j}(\lambda \otimes h_1) \big\rangle 
  = \big\langle h,\; \lambda Z_j^{m} V_j h_1 \big\rangle 
  = \lambda \big\langle h,\; Z_j^{m} V_j h_1 \big\rangle 
  = \lambda \big\langle V_j^{*} Z_j^{*m} h,\; h_1 \big\rangle \\
  &= \big\langle 1 \otimes  V_j^{*} Z_j^{*m} h,\; \lambda \otimes h_1 \big\rangle.
\end{align*}
Hence
\(
\widetilde T_{m,j}^{*} h = 1 \otimes V_j^{*} Z_j^{*m} h.
\)
Therefore,
\begin{align*}
    \widetilde T_{m,j}^{*}\,\widetilde T_{m,i}(\lambda \otimes h) 
&= \lambda  \widetilde T_{m,j}^{*}  Z_i^m V_i h\\
&= \lambda(1  \otimes Z_j^{*m} V_j^{*} Z_i^m V_i h)\\
&=\lambda (1  \otimes  Z_i^m V_j^{*} V_i Z_j^{*m}h)\\
&=\lambda \otimes 1  \otimes  Z_i^m V_i U^*_{ji} V_j^{*}  Z_j^{*m}h\\
&= ( I_{E_j} \otimes  Z_i^m V_i U_{ij} )(  I_{E_i \otimes E_j} \otimes V_j^{*}  Z_j^{*m})(\lambda \otimes 1 \otimes h)\\
&=(I_{E_j} \otimes  Z_i^m V_i  )(t_{ij} \otimes U_{ij})(I_{E_i}\otimes \widetilde T_{m,j}^{*} ) (\lambda \otimes h)\\
&=(I_{E_j} \otimes  \widetilde T_{m,i}  )(t_{ij} \otimes U_{ij})(I_{E_i}\otimes \widetilde T_{m,j}^{*} ) (\lambda \otimes h).\\
\end{align*}
Therefore, $\widetilde T_{m,j}^{*}\,\widetilde T_{m,i}
=
(I_{E_j}\otimes \widetilde T_{m,i})
(I\otimes U_{ij})
(I_{E_i}\otimes \widetilde T_{m,j}^{*}).$
Also, one can easily show that $U_{m,ij} \widetilde T_{m,i} = T_{m,i}(I_{E_i} \otimes U_{m,ij}).$

We now construct the connecting maps $\varphi_{nm}, \, n\geq m$. Set $\Phi:=V_1V_2\cdots V_k$ and define the stationary connecting maps:
\[
\varphi_{m+1,m}\ :=\ \Phi \implies \varphi_{n,m}\ :=\ \Phi^{\,n-m}\quad(n\ge m).
\]
We claim that, for all $m\le n$, $a\in\mathcal A$, and all $i\neq j$, the following conditions hold:
\begin{equation*}
\varphi_{n,m}\widetilde T_{m,i}
=\widetilde T_{n,i}(I_{E_i}\otimes \varphi_{n,m}),\quad 
\varphi_{n,m}U_{m,ij}=U_{n,ij}\varphi_{n,m},\quad
\varphi_{n,m}\sigma_m(a)=\sigma_n(a)\varphi_{n,m}.
\end{equation*}
Since $U_{m,ij}=U_{ij}$ for all $m$, the compatibility condition $\varphi_{n,m} U_{m,ij} = U_{n,ij}\, \varphi_{n,m}$
holds trivially.  Likewise,
\(
\varphi_{n,m}\, \sigma_m(a) = \sigma_n(a)\, \varphi_{n,m}
\)
is automatic from the definition of $\sigma_m$.
Next, notice that for each $i, \, Z_i \Phi = \Phi Z_i.$ Therefore, for $\lambda\otimes h \in E_i\otimes \clh$,
\begin{align*}
\varphi_{n,m}\, \widetilde T_{m,i}(\lambda\otimes h)
&= \Phi^{\,n-m} \left(\lambda\, Z_i^{\,m} V_i h\right)  
= \lambda\, Z_i^{\,m} (V_1 \cdots V_k)^{\,n-m} V_i h 
= \lambda\, Z_i^{\,m} Z_i^{\,n-m} V_i \Phi^{\,n-m} h\\
&= \widetilde T_{n,i}\left(\lambda\otimes \varphi_{n,m}h\right) 
= \widetilde T_{n,i}(I_{E_i}\otimes \varphi_{n,m})(\lambda\otimes h).
\end{align*}
Therefore, all hypotheses of Theorem~\ref{thm:inductive-limit} are satisfied.  
Hence, we obtain a Hilbert space $\clh_\infty$ and a twisted isometric 
covariant representation $(\sigma_\infty,\, T_{\infty,1},\dots,T_{\infty,k})
\text{ on }\clh_\infty,$
with corresponding twists $\{t_{ij} \otimes U_{\infty,ij}\}_{i\neq j}$, such that
\[
\restr{T_{\infty,i}(\lambda)}{\clh}
  = T_{m,i}(\lambda),\qquad
\restr{U_{\infty,ij}}{\clh}=U_{ij},\qquad
\restr{\sigma_\infty(a)}{\clh}=\sigma(a).
\]
In particular, letting $\lambda =1$ and $m=0$, we get
\[
\restr{T_{\infty, i}(1)}{\clh} = V_i,\qquad
\restr{U_{\infty, ij}}{\clh} = U_{ij},\qquad
\restr{\sigma_{(\infty)}(a)}{\clh}= \sigma(a).
\]
Also the adjoint formula gives,
\[
\widetilde T_{n,j} \widetilde T_{n,j}^*
  = Z_j^{\,n} V_j V_j^* Z_j^{*n}
  = V_j V_j^*,
  \qquad \text{for all } n.
\]
So for each $m \in I$ and $j \in I_k$, the doubly twisted relations imply $\widetilde T_{m+1,j} \widetilde T_{m+1,j}^* \circ \varphi_{m+1,m}
  = \varphi_{m+1,m}.$
This implies that for each $j \in I_k$, the pair $(\sigma, \widetilde T_{\infty, j})$ is fully coisometric.  
Therefore, the representation $(\sigma, T_{\infty, 1},\dots,T_{\infty, k})$
is fully coisometric. Consequently, $\left( T_{\infty, 1}(1),\dots,T_{\infty, k}(1) \right)$
is a twisted unitary extension of $(V_1,\dots,V_k).$ Therefore, Lemma \ref{lemma: coiso plus twisted implies doubly twisted} implies it is a doubly twisted unitary extension.
\end{proof}

\begin{remark}
 If there exists a subset $A \subseteq I_n$ such that $V_i$ is a shift 
for each $i \in A$ (and unitary otherwise), then it suffices to take  
\[
\Phi := \prod_{i \in A} V_i,
\]
in the above proof.
\end{remark}
When $U_{ij}=z_{ij}$ for all $i\ne j$, the above theorem recovers the 
unitary extension for the class of doubly non-commuting isometries 
studied by Jeu and Pinto \cite{JP}.

\begin{cor}
Let $V=(V_1,\dots,V_n)$ be a $n$–tuple of isometries on $\clh$ satisfying
\[
V_j V_i \;=\; z_{ji}\, V_i V_j, 
\qquad 
V_i^* V_j \;=\; \overline{z_{ij}}\, V_j V_i^* 
\qquad (i\neq j),
\]
for scalars $z_{ij}\in\mathbb T$ with $z_{ij}\,z_{ji}=1$ and $z_{ii}=1$. Then $V$ admits a unitary extension.
\end{cor}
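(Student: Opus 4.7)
The plan is to realize the given tuple as a special case of the doubly twisted framework of Theorem~\ref{thm:UE-doubly-twisted-central} by choosing the twisting unitaries to be the scalar multiples $U_{ij}:=z_{ij}\,I_{\clh}$, and then invoke that theorem directly. Since $|z_{ij}|=1$, each $U_{ij}$ is unitary; since $z_{ij}z_{ji}=1$ one has $\overline{z_{ij}}=z_{ji}$, so $U_{ji}=U_{ij}^{*}$; and being scalar operators, the family $\{U_{ij}\}_{i\neq j}$ is automatically commuting and lies in the commutant of every $V_k$.

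Next, I would verify that the two algebraic identities in the hypothesis are precisely the doubly twisted relations with this choice of $U_{ij}$. Rewriting $V_jV_i=z_{ji}V_iV_j$ by multiplying through with $z_{ij}$ and using $z_{ij}z_{ji}=1$ gives
\[
V_iV_j \;=\; z_{ij}\,V_jV_i \;=\; U_{ij}\,V_jV_i,
\]
which is the first doubly twisted relation. The second hypothesis $V_i^{*}V_j=\overline{z_{ij}}\,V_jV_i^{*}$ becomes $V_i^{*}V_j=U_{ij}^{*}\,V_jV_i^{*}$, which is the second doubly twisted relation. Thus $V=(V_1,\dots,V_n)$ is doubly twisted with respect to the commuting family of scalar unitaries $\{U_{ij}\}_{i\neq j}$.

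Finally, Theorem~\ref{thm:UE-doubly-twisted-central} applies and produces a doubly twisted unitary extension $(\widetilde V_1,\dots,\widetilde V_n)$ of $(V_1,\dots,V_n)$ on a larger Hilbert space $\clh_\infty\supseteq\clh$, together with unitaries $\{U_{\infty,ij}\}_{i\neq j}$ extending $U_{ij}$. Since $U_{ij}=z_{ij}I_{\clh}$ is a scalar on $\clh$ and the theorem guarantees $\restr{U_{\infty,ij}}{\clh}=U_{ij}$, one expects (and in fact reads off from the direct-limit construction) that $U_{\infty,ij}=z_{ij}I_{\clh_\infty}$; the resulting commutation relations satisfied by $(\widetilde V_1,\dots,\widetilde V_n)$ are then exactly of the form displayed in the statement, so $\widetilde V$ is a genuine unitary extension in the doubly non-commuting sense of Jeu--Pinto~\cite{JP}. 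There is no real obstacle here, as the proof is a pure translation; the only minor point to check is the identification of $U_{\infty,ij}$ as a scalar, which follows from the stationarity of the connecting maps and the scalar nature of $U_{m,ij}$ at every finite stage of the inductive construction used in Theorem~\ref{thm:UE-doubly-twisted-central}.
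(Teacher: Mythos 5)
Your proposal is correct and follows essentially the same route as the paper: the corollary is obtained there as an immediate specialization of Theorem~\ref{thm:UE-doubly-twisted-central} by taking $U_{ij}=z_{ij}I_{\clh}$, exactly as you do, with the scalar nature of the twists making the commutation and adjoint conditions automatic. Your additional observation that $U_{\infty,ij}=z_{ij}I_{\clh_\infty}$ (via density of $\bigcup_m\psi_m(\clh_m)$ and the stationary connecting maps) is a correct, if optional, refinement.
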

Further, when $U_{ij}=I$ for all $i\ne j$, the theorem yields a unitary 
extension for doubly commuting $k$-tuples of isometries, as studied by 
Sarkar \cite{S} and Słociński \cite{Slocinski80}.
\begin{cor}
If $V=(V_1,\dots,V_k)$ is a doubly commuting $k$–tuple of isometries on $\clh$, that is,
\[
V_iV_j=V_jV_i \quad\text{and}\quad V_i^*V_j=V_jV_i^* \qquad (i\neq j),
\]
then $V$ admits a unitary extension.
\end{cor}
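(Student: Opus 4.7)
The plan is to obtain this corollary as an immediate specialization of Theorem~\ref{thm:UE-doubly-twisted-central}. Concretely, I would set $U_{ij}:=I_{\clh}$ for every pair $i\ne j$. Under this choice, the defining relations for a doubly twisted tuple,
\[
V_iV_j = U_{ij}V_jV_i,\qquad V_i^*V_j = U_{ij}^*V_jV_i^*,
\]
collapse precisely to the doubly commuting relations $V_iV_j = V_jV_i$ and $V_i^*V_j = V_jV_i^*$. The auxiliary conditions $U_{ji}=U_{ij}^*$, the intertwining $V_kU_{ij}=U_{ij}V_k$, and mutual commutativity of $\{U_{ij}\}_{i\ne j}$ are all trivially satisfied. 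Hence every doubly commuting $k$-tuple of isometries is a doubly twisted tuple with trivial twists, and Theorem~\ref{thm:UE-doubly-twisted-central} produces a (doubly twisted) unitary extension $\widetilde V=(\widetilde V_1,\dots,\widetilde V_k)$ of $V$ on a larger Hilbert space $\clh_\infty$.

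The only small point worth recording is that this unitary extension is itself doubly commuting, not merely doubly twisted. Inspecting the proof of Theorem~\ref{thm:UE-doubly-twisted-central}, the twists in the direct system are chosen stationary, $U_{m,ij}=U_{ij}=I_{\clh}$ for all $m$, so that the limiting unitaries satisfy $\restr{U_{\infty,ij}}{\psi_m(\clh_m)}=I_{\psi_m(\clh_m)}$ for every $m$. By density of $\bigcup_m\psi_m(\clh_m)$ in $\clh_\infty$ (Proposition~\ref{prop: density in H infty}) and continuity, $U_{\infty,ij}=I_{\clh_\infty}$ for all $i\ne j$. The twisted relations satisfied by $\widetilde V$ therefore reduce to doubly commuting relations, recovering the classical Słociński--Sarkar unitary extension.

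There is no genuine obstacle in this argument; the work is entirely absorbed into Theorem~\ref{thm:UE-doubly-twisted-central}. The only thing requiring a moment of care is the identification of the limiting twists with the identity, which is a direct strong-operator continuity argument on the dense subspace coming from the direct limit construction.
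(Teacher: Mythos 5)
Your proposal is correct and coincides with the paper's approach: the corollary is obtained precisely by specializing Theorem~\ref{thm:UE-doubly-twisted-central} to trivial twists $U_{ij}=I_{\clh}$. Your additional remark that the limiting twists $U_{\infty,ij}$ are the identity (by density of $\bigcup_m\psi_m(\clh_m)$), so the extension is genuinely doubly commuting, is a valid and harmless refinement of what the paper states.
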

The following example gives an immediate application of the above corollary.
\begin{example}
Multiplication operators $(M_{z_1},\cdots, M_{z_n})$ on $H^2(\mathbb D^n)$ admit a unitary extension.
\end{example}

Let $(\sigma, T_1,\dots, T_k)$ be a doubly twisted isometric covariant representation of a product system $\mathbb E$ of $C^*$\nobreakdash-correspondences over $\mathbb Z_+^k$ on a Hilbert space $\mathcal H$. 
By Theorem~\ref{thm: decompo for doubly twisted rep}, we can write $\mathcal H=\displaystyle\bigoplus_{A\subseteq I_k}\mathcal H_A.$ 
Fix $A=\{i_1,\dots,i_p\}\subseteq I_k$ with $\mathcal H_A\neq\{0\}$. 
Proposition~\ref{prop:construction of MAi} guarantees the existence of a unitary $\Pi_A:\ \mathcal H_A \xrightarrow{\ \cong\ } \mathcal F(E_A)\otimes_\sigma \mathcal D_A$
and a tuple $\left(\sigma_A, M_{A,1},\dots, M_{A,k}\right)$ on $\mathcal F(E_A)\otimes_\sigma \mathcal D_A$ that is unitarily equivalent to the restricted representation 
$\left(\restr{\sigma}{\mathcal H_A},\,\restr{T_1}{\mathcal H_A},\dots, \restr{T_k}{\mathcal H_A}\right)$ via $\Pi_A$. 

For each $i \in I_k$, let $M_i= \displaystyle\bigoplus_{A \subseteq I_k} M_{A,i}$, and $\sigma' = \displaystyle\bigoplus_{A \subseteq I_k}  \sigma_A$.  
For each $A\subseteq I_k$, if $\left(\sigma_A, M_{A,1},\dots, M_{A,k}\right)$ on $\mathcal F(E_A)\otimes_\sigma \mathcal D_A$ admits a doubly twisted unitary extension, then so does  $\left(\sigma', M_{1},\dots, M_{k}\right)$ on $\displaystyle\bigoplus_{A \subseteq I_k}\mathcal F(E_A)\otimes_\sigma \mathcal D_A$. Therefore, we get a doubly twisted unitary extension for $\left(\sigma,\,T_1,\dots, T_k\right).$ Consequently, it suffices to establish an analogue of Theorem~\ref{thm:inductive-limit}
for $A\subseteq I_k$. Since we are fixing a subset \(A\subseteq I_k\) throughout this construction, 
we shall henceforth suppress the subscript ‘‘\(A\)’’.  
Thus we write 
\((\sigma_m, M_{m,1},\dots,M_{m,k})\)
for the representation on \(\mathcal H_m\), with twists 
\(t_{ij}\otimes U_{m,ij}\).

\begin{cor}\label{cor:direct-limit-A}
Let $\{(\mathcal H_m,\varphi_{n,m})\}_{m\le n\in I}$
be a direct system of Hilbert spaces, with direct limit $(\mathcal K_A,\psi_m)$.  
For each $m\in I$, let $(\sigma_{m},\, M_{m,1},\dots, M_{m,k})$
be a doubly twisted isometric covariant representation of a product system $\mathbb E=\{E_i\}_{i\in I_k}$ on 
$\mathcal H_m$, with twists $\{\,t_{ij}\otimes U_{m,ij}\,\}_{i\ne j}.$
Let $\mathcal K_A=\mathcal F(E_A)\otimes_\sigma \mathcal D_A$, and the initial 
representation be given by
\(
(\sigma_{1},\, M_{1,1},\dots, M_{1,k})
  = (\sigma_A,\, M_{A,1},\dots, M_{A,k}).
\)
Assume that for all $m\le n$, all $a\in\mathcal A$, and all $i\ne j$, 
\[
\varphi_{n,m}\,\widetilde M_{m,i}
   = \widetilde M_{n,i}\,(I_{E_i}\otimes \varphi_{n,m}),\qquad
\varphi_{n,m}\, U_{m,ij}
   = U_{n,ij}\,\varphi_{n,m},\qquad
\varphi_{n,m}\,\sigma_{m}(a)
   = \sigma_{n}(a)\,\varphi_{n,m}.
\]
Then there exists a twisted isometric covariant representation $(\tau_A,\; V_{A,1},\dots,V_{A,k})$
of $\mathbb E$ on $\mathcal K_A$, with twists $\{\,t_{ij}\otimes W_{A,ij}\,\}_{i\ne j}$,
such that
\[
\restr{\widetilde V_{A,i}}{E_i\otimes \mathcal H_1}
   = \widetilde M_{A,i}, \qquad
\restr{W_{A,ij}}{\mathcal H_1}
   = U_{A,ij}, \qquad
\restr{\tau_A(a)}{\mathcal H_1}
   = \sigma_{A}(a).
\]
Moreover, for $j\in I_k$, if either for all $ m\in I$ there exists $n>m$ such that
\[
\widetilde M_{n,j}\, \widetilde M_{n,j}^{*}\,\varphi_{n,m} = \varphi_{n,m},
\]
or 
\[
\widetilde M_{n,j}\, \widetilde M_{n,j}^{*}=I_{\mathcal H_n}\qquad \forall\,n\in I,
\]
then $(\tau_A,V_{A,j})$ is coisometric.  
In particular, if either condition holds for every $j\in I_k$, then 
$(\tau_A,V_{A,1},\dots,V_{A,k})$ is fully coisometric (and hence doubly twisted).
\end{cor}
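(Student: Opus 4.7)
The plan is to view this corollary as a direct specialization of Theorem~\ref{thm:inductive-limit} in which the initial Hilbert space $\mathcal{H}_1$ has been pre-identified, via Proposition~\ref{prop:construction of MAi}, with the Fock-type space $\mathcal{F}(E_A)\otimes_\sigma \mathcal{D}_A$, and the initial data $(\sigma_1,M_{1,1},\ldots,M_{1,k})$ with the model tuple $(\sigma_A,M_{A,1},\ldots,M_{A,k})$ together with its twists $\{t_{ij}\otimes U_{A,ij}\}_{i\ne j}$. The hypotheses on the connecting maps $\varphi_{n,m}$ are precisely the compatibility conditions \eqref{eq:intertwine-sigma-nm} required by Theorem~\ref{thm:inductive-limit}, so no further verification is needed at that stage.

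First I would invoke Theorem~\ref{thm:inductive-limit} on the system $\{(\mathcal{H}_m,\varphi_{n,m})\}_{m\le n}$ equipped with the doubly twisted isometric representations $(\sigma_m,M_{m,1},\ldots,M_{m,k})$ and twists $\{t_{ij}\otimes U_{m,ij}\}_{i\ne j}$. This produces a direct limit Hilbert space, which I rename $\mathcal{K}_A$, together with isometries $\psi_m:\mathcal{H}_m\to\mathcal{K}_A$, a $*$-representation $\tau_A:=\sigma_\infty$ of $\mathcal{A}$, isometric maps $V_{A,i}:=M_{\infty,i}$ corresponding to the limiting operators $\widetilde M_{\infty,i}$, and unitaries $W_{A,ij}:=U_{\infty,ij}$ on $\mathcal{K}_A$ satisfying the twisted covariance and intertwining relations \eqref{eq:unitary relations in defn} and the conclusion of Lemma~\ref{lemma:twisted-relations-infty}. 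Hence $(\tau_A,V_{A,1},\ldots,V_{A,k})$ is a twisted isometric covariant representation of $\mathbb{E}$ on $\mathcal{K}_A$ with twists $\{t_{ij}\otimes W_{A,ij}\}_{i\ne j}$.

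Next I would specialize the restriction identities of Theorem~\ref{thm:inductive-limit}(1) to $m=1$. Since $\psi_1$ embeds $\mathcal{H}_1=\mathcal{F}(E_A)\otimes_\sigma \mathcal{D}_A$ into $\mathcal{K}_A$ and the identifications
\[
\restr{\widetilde V_{A,i}}{E_i\otimes \mathcal{H}_1}=\widetilde M_{1,i}=\widetilde M_{A,i},\qquad
\restr{W_{A,ij}}{\mathcal{H}_1}=U_{1,ij}=U_{A,ij},\qquad
\restr{\tau_A(a)}{\mathcal{H}_1}=\sigma_1(a)=\sigma_A(a),
\]
are exactly the $m=1$ instances of the intertwining properties, the desired restriction formulas follow. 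The coisometry statements are immediate from Theorem~\ref{thm:inductive-limit}(2): the hypothesis $\widetilde M_{n,j}\widetilde M_{n,j}^*\varphi_{n,m}=\varphi_{n,m}$ (or its strong form) guarantees that $\widetilde V_{A,j}\widetilde V_{A,j}^*=I_{\mathcal{K}_A}$. Finally, if this holds for every $j\in I_k$, the tuple is fully coisometric, and Lemma~\ref{lemma: coiso plus twisted implies doubly twisted} upgrades the twisted relations to doubly twisted ones.

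There is essentially no separate obstacle in this argument, since all technical work was done in Theorem~\ref{thm:inductive-limit}. The only point requiring mild care is the notational identification of the initial data of the direct system with the model tuple on $\mathcal{F}(E_A)\otimes_\sigma\mathcal{D}_A$, and a remark that the twisted relations transfer unchanged to the direct limit so that, under the coisometric hypotheses, Lemma~\ref{lemma: coiso plus twisted implies doubly twisted} applies to conclude the doubly twisted property.
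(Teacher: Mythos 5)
Your proposal is correct and follows exactly the route the paper intends: the corollary is a direct application of Theorem~\ref{thm:inductive-limit} to the given direct system with initial datum the model tuple $(\sigma_A,M_{A,1},\dots,M_{A,k})$ on $\mathcal F(E_A)\otimes_\sigma\mathcal D_A$, the restriction identities being the $m=1$ case of the intertwining relations, and Lemma~\ref{lemma: coiso plus twisted implies doubly twisted} upgrading the fully coisometric limit to a doubly twisted one. The paper offers no separate argument beyond this specialization, so your write-up matches its approach.
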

We illustrate this corollary through the following Example:
\begin{example}\label{ex: unitary-ext-commutative}
Let $\mathcal A$ be a unital commutative $C^*$\nobreakdash-algebra, and let 
$\mathbb E=\{E_i\}_{i\in I_k}$ be a product system of $C^*$\nobreakdash-correspondences
over $\mathbb Z_+^k$.  
For each $i\in I_k$, take $E_i=\mathcal A$ as a correspondence over $\mathcal A$ with the 
usual left and right actions
\(
a\cdot x\cdot b \;:=\; axb,
\) and
the structure maps \(
t_{ij}:E_i\otimes_{\mathcal A} E_j \longrightarrow E_j\otimes_{\mathcal A} E_i,
\)
\(
t_{ij}(a\otimes b)=b\otimes a, \, \forall a,x,b\in\mathcal A.
\)
Let $(\sigma,\, T_1,\dots,T_k)$ be a doubly twisted isometric covariant 
representation of the product system $\mathbb E$ on a Hilbert space $\mathcal H$.  
Then $(\sigma, T_1,\dots,T_k)$ admits a unitary extension.
\end{example}
The discussion preceding Corollary~\ref{cor:direct-limit-A} shows that it suffices to prove the statement for the tuple   $\left(\sigma_A, M_{A,1},\dots, M_{A,k}\right)$ on $\mathcal F(E_A)\otimes_\sigma \mathcal D_A$, where \(A =\{i_1, \cdots, i_p\} \subseteq I_n\) is arbitrary. Thus, our goal is to construct a direct system of Hilbert spaces
\(
\{(\mathcal H_m,\varphi_{n,m})\}_{m \le n \in I},
\)
and, for each \(m \in I\), a doubly twisted isometric covariant representation
\(
(\sigma_{m},\, M_{m,1},\dots, M_{m,k})
\)
of the product system \(\mathbb E=\{E_i\}_{i\in I_k}\) on \(\mathcal H_m\), with twists \(\{\, t_{ij}\otimes U_{m,ij}\,\}_{i\ne j}\), such that all the hypotheses of the corollary are satisfied.

As \(\mathcal A\) is a unital commutative \(C^{*}\)\nobreakdash-algebra and \(E_i = \mathcal A\) for each \(i\), we have
\(
E_i\otimes_{\mathcal A}E_j \cong \mathcal A
\)
via the map \(a\otimes b \mapsto ab\). Since \(ab = ba\), this identification is symmetric, and hence we may identify
\(t_{ij} = \mathrm{id}_{\mathcal A}\).
Moreover,
\[
\mathcal F(E_A)\otimes_\sigma \mathcal D_A
\;=\;
\ell^2(\mathbb Z_+^{|A|})\otimes \mathcal D_A.
\]
Let \(\{e_n\}_{n\in\mathbb Z_+^{|A|}}\) denote the canonical orthonormal basis of
\(\ell^2(\mathbb Z_+^{|A|})\).
Recall that for \(n\in\mathbb Z_+^{|A|}\), \(h\in\mathcal D_A\), and \(\xi \in \mathcal A\),
using the identification \(\xi \otimes h = 1\otimes \sigma(\xi)h\) in
\(\mathcal A \otimes_\sigma\mathcal D_A\), we have
\begin{equation}\label{eq:PiA-star}
\Pi_A\!\left(\widetilde T_A^{\,n}(\xi\otimes h)\right)
= \xi\otimes h
= e_n\otimes \sigma(\xi)\,h,
\qquad\text{and}\qquad
\Pi_A^*(e_n\otimes d)\;=\;\widetilde T_A^{\,n}(1\otimes d).
\end{equation}
By Proposition~\ref{prop:construction of MAi}\,(2),
for any \(a\in\mathcal A\), \(n\in\mathbb Z_+^{|A|}\) and \(h\in\mathcal D_A\), we obtain
\begin{align*}
(\Pi_A\sigma(a)\Pi_A^*)(e_n\otimes h)
&= \phi_{\infty}(a)e_n\otimes h 
= a e_n\otimes h
= e_n\otimes \sigma(a) h 
= (I_{\ell^2(\mathbb Z_+^{|A|})}\otimes \sigma(a))(e_n\otimes h).
\end{align*}
Thus,
\(
\sigma_A(a)
= \Pi_A\sigma(a)\Pi_A^*
= I_{\ell^2(\mathbb Z_+^{|A|})}\otimes \sigma(a).
\) Lemma~\ref{lem:sigma-reduces-coreA-kernels}\,(1) implies that \(\sigma(\mathcal A)\) reduces \(\mathcal D_A\).
Therefore, \(\sigma_A\) is well-defined. 
Also, for all $i \neq j,$ and again using Lemma \ref{lem:sigma-reduces-coreA-kernels} (2), we get a well-defined $U_{A, ij}$ defined by
\[
U_{A, ij}:=\Pi_A\,U_{ij}\,\Pi_A^*= I_{\ell^2(\mathbb Z_+^{|A|})} \otimes U_{ij}.
\]

Let \(S_i\in\mathcal B\left(\ell^2(\mathbb Z_+^{|A|})\right)\) be given by
\(S_i e_n =e_{\,n+e_i}\); this is the unilateral shift in the \(i\)th coordinate.
For any \(a\in\mathcal A\) and \(i\in I_k\), the operators
\(
M_{A,i}(a)\in\mathcal B\left(\ell^2(\mathbb Z_+^{|A|})\otimes\mathcal D_A\right)
\)
are explicitly given by
\[
M_{A,i}(a) =
\begin{cases}
S_{i_1}\otimes \sigma(a),
& i=i_1,\\[8pt]
\displaystyle
S_{i_j}\otimes \sigma(a)\,
\prod_{r=1}^{j-1}
D_{r}\!\big[U_{i_j,i_{r}}\big],
& i=i_j,\; 2\le j\le p,\\[10pt]
\displaystyle
I_{\ell^2(\mathbb Z_+^{|A|})}\otimes  T_l(a)\,
\prod_{r=1}^{p} D_r[U_{l,i_r}],
& i=l \in A^{\mathrm c}.
\end{cases}
\]
We obtain these formulas from Proposition~\ref{prop:construction of MAi}\,(1). Indeed,
\begin{align*}
M_{A,i_1}(a)(e_n\otimes h)
&= \Pi_A T_{i_1}(a)\, \Pi_A^{*} (e_n\otimes h)
= (T_{a}\otimes I_{\mathcal D_A}) (e_n\otimes h)
= a\otimes e_n \otimes h
= e_{n+e_{i_1}}\otimes \sigma(a)h.
\end{align*}
For \(2\le j\le p\),
\begin{align*}
(\Pi_A T_{i_j}(a)\Pi_A^*)(e_n\otimes h)
&=
\Bigg(\prod_{r=1}^{j-1}
D_{j-r}\big[W_{i_j,i_{j-r}}\big]\Bigg)
(T_{a}\otimes I_{\mathcal D_A})(e_n\otimes h)\\
&=
\Bigg(\prod_{r=1}^{j-1}
D_{j-r}\big[W_{i_j,i_{j-r}}\big]\Bigg)
(a\otimes e_n\otimes h) \\
&= e_{n+e_{i_j}} \otimes \sigma(a)
\prod_{r=1}^{j-1}
D_{r}\big[U_{i_j,i_{r}}\big]h.
\end{align*}
For \(l \in A^{\mathrm c}\),
\begin{align*}
(\Pi_A T_l(a)\Pi_A^*)(e_n \otimes h)
&=\left(I_{\mathcal F(E_A)}\otimes \widetilde T_l\right)
\Bigg(\prod_{r=1}^{p}
D_{p+1-r}\!\big[W_{l,i_{p+1-r}}\big]\Bigg)
(T_{a}\otimes I_{\mathcal D_A})(e_n \otimes h)\\
&=e_n \otimes  T_l(a)
\prod_{r=1}^{p}
D_{r}\big[U_{l,i_{r}}\big] h.
\end{align*}
Therefore, we get a \emph{doubly twisted isometric covariant representation}
\(\left(\sigma_A,\{M_{A,i}\}_{i\in I_k}\right)\) 
with twists \(\{\,t_{ij}\otimes U_{A,ij}\,\}_{i\neq j}\).

Now for each \(m \in I\), let $\clh_m=\ell^2(\mathbb Z_+^{|A|}) \otimes \cld_A$, and let us construct a doubly twisted isometric covariant representation
\(
(\sigma_{m},\, M_{m,1},\dots, M_{m,k})
\) 
of the product system \(\mathbb E=\{E_i\}_{i\in I_k}\) on \(\mathcal H_m\) with twists \(\{\, t_{ij}\otimes U_{m,ij}\,\}_{i\ne j}\). First let 
\[
X_{i_1}= I_{\ell^2(\mathbb Z_+^{|A|}) \otimes \mathcal D_A}.
\]
For each \(i_j \in A \setminus \{i_1\}\), and for each \(l \in A^{\mathrm c}\), let
\[
X_{i_j}
=
I_{\ell^2(\mathbb Z_+^{|A|})}
\otimes
\left(\prod_{r<j} U_{i_r i_j}\right),
\qquad
Y_l
=
I_{\ell^2(\mathbb Z_+^{|A|})}
\otimes
\left(\prod_{i \in A} U_{i l}\right).
\]
For each \(m \in I\) and \(i \in I_k\), define
\(
M_{m,i} : \mathcal A \longrightarrow 
\mathcal B\!\left(\ell^2(\mathbb Z_+^{|A|}) \otimes \mathcal D_A\right)
\)
by
\[
M_{m,i}(a)
\;=\;
M_{A,i}(a)\, Z_i^{\,m},
\qquad\text{where}\qquad
Z_i =
\begin{cases}
X_i, & i\in A,\\[4pt]
Y_i, & i\in A^{\mathrm c}.
\end{cases}
\]
Therefore, we obtain a map
\(
\widetilde M_{m,i} : 
\mathcal A \otimes (\ell^2(\mathbb Z_+^{|A|}) \otimes \mathcal D_A)
\longrightarrow
\ell^2(\mathbb Z_+^{|A|}) \otimes \mathcal D_A,
\)
such that for \(a\in \mathcal A\) and 
\(h \in \ell^2(\mathbb Z_+^{|A|}) \otimes \mathcal D_A\),
\[
\widetilde M_{m,i}(a\otimes h)
\;=\;
M_{A,i}(a)\, Z_i^{\,m} h.
\]
As \(\{U_{ij}\}_{i\neq j}\) is a commuting family, $U_{ij}\,\sigma(a) = \sigma(a)\,U_{ij}$ for all $a\in\mathcal A$, and
\(
U_{ij}\,\widetilde T_k
=
\widetilde T_k\, (I_{\mathcal A}\otimes U_{ij}), 
\ \forall k\in I_k,
\)
using definition of $Z_l$, we get
\begin{equation}\label{eq: Z_i commutes MAi}
M_{A,i}(a)\, Z_j^{\,m} = Z_j^{\,m}\, M_{A,i}(a),
\qquad
Z_\ell\, U_{A,ij} = U_{A,ij}\, Z_\ell,
\qquad
\forall\, i,j,\ell\in I_k,\; i\neq j.
\end{equation}
Finally, for all $m \in I$, define
\[
\sigma_{m,A} := \sigma_A = I \otimes \sigma,
\qquad
U_{m, A, ij} := U_{A,ij}.
\]
 Notice that, for $m=0,$ we recover our doubly twisted isometric covariant representation
\(\left(\sigma_A,\{M_{A,i}\}_{i\in I_k}\right)\) 
with twists \(\{\,t_{ij}\otimes U_{A,ij}\,\}_{i\neq j}\).

\begin{prop} \label{prop: mth level doubly twisted}
For each \(m \in I\), the tuple 
\(\left(\sigma_{m},\, M_{m,1},\dots, M_{m,k}\right)\)
is a doubly twisted isometric covariant representation of the product system
\(\mathbb E\)
with twists \(\{\,t_{ij}\otimes U_{m,ij}\,\}_{i\ne j}\).
\end{prop}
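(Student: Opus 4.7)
The plan is to reduce everything to the already-verified doubly twisted representation $(\sigma_A,\{M_{A,i}\}_{i\in I_k})$ with twists $\{t_{ij}\otimes U_{A,ij}\}_{i\neq j}$, exploiting the defining relation $M_{m,i}(a)=M_{A,i}(a)\,Z_i^{\,m}$ together with the commutation identities \eqref{eq: Z_i commutes MAi}. The key observation is that for every $a\in\mathcal A$ and $h\in\mathcal H_m$,
\[
\widetilde M_{m,i}(a\otimes h)=M_{A,i}(a)Z_i^{\,m}h,
\qquad\text{so that}\qquad
\widetilde M_{m,i}=\widetilde M_{A,i}\,(I_{E_i}\otimes Z_i^{\,m}).
\]
Since $Z_i^{\,m}$ is unitary and, by \eqref{eq: Z_i commutes MAi}, commutes with $M_{A,j}$ for every $j$ and with $U_{A,ij}$ for all $i\neq j$, this factorization makes every verification essentially a one-line manipulation.

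First, I would dispatch the easy items: (i) covariance of $M_{m,i}$ follows immediately from covariance of $M_{A,i}$ since $Z_i^{\,m}$ commutes with $\sigma_A(b)$ (the unitaries $U_{ij}$ commute with $\sigma$, hence $Z_i$ commutes with $\sigma_A$); (ii) the pair $(\sigma_m,M_{m,i})$ is isometric because $I_{E_i}\otimes Z_i^{\,m}$ is a unitary on $E_i\otimes_{\sigma_m}\mathcal H_m$ and $\widetilde M_{A,i}$ is an isometry; (iii) $U_{m,ji}=U_{m,ij}^{\,*}$, compatibility with $\sigma_m$, and the relation $U_{m,ij}\widetilde M_{m,\ell}=\widetilde M_{m,\ell}(I_{E_\ell}\otimes U_{m,ij})$ all follow by pushing $U_{A,ij}$ past $Z_\ell^{\,m}$ using \eqref{eq: Z_i commutes MAi} and then invoking the corresponding identity for the $A$-level representation.

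The main computational step is the twisted (and doubly twisted) relation. Using the factorization above and the commutations from \eqref{eq: Z_i commutes MAi}, I would first absorb the $Z$-factors into a single block:
\[
\widetilde M_{m,i}\left(I_{E_i}\otimes \widetilde M_{m,j}\right)
=\widetilde M_{A,i}\left(I_{E_i}\otimes \widetilde M_{A,j}\right)\left(I_{E_i\otimes E_j}\otimes Z_i^{\,m}Z_j^{\,m}\right).
\]
Applying the twisted relation for $(\sigma_A,M_{A,\bullet})$ to the leading two factors, then commuting $t_{ij}\otimes U_{A,ij}$ past $I\otimes Z_i^{\,m}Z_j^{\,m}$ (which works because each $Z_\ell$ commutes with $U_{A,ij}$), gives
\[
\widetilde M_{A,j}\left(I_{E_j}\otimes \widetilde M_{A,i}\right)\left(I_{E_j\otimes E_i}\otimes Z_j^{\,m}Z_i^{\,m}\right)(t_{ij}\otimes U_{A,ij}),
\]
which, using $Z_i^{\,m}Z_j^{\,m}=Z_j^{\,m}Z_i^{\,m}$ and reversing the factorization, equals $\widetilde M_{m,j}(I_{E_j}\otimes \widetilde M_{m,i})(t_{ij}\otimes U_{m,ij})$. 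The doubly twisted relation is obtained by the same manoeuvre applied to $\widetilde M_{m,j}^{\,*}=(I_{E_j}\otimes Z_j^{\,*m})\,\widetilde M_{A,j}^{\,*}$ and the doubly twisted relation for the $A$-level.

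I do not expect a genuine obstacle: the whole point of the construction \eqref{eq: Z_i commutes MAi} is precisely that the twisting unitaries $Z_i$ are designed to be central with respect to the whole $A$-level data, so every relation passes through cleanly. The only care needed is bookkeeping of the tensor factors on which $t_{ij}$ and the various $Z_\ell^{\,m}$ act, and the repeated use of the elementary commutation $(t_{ij}\otimes I)(I\otimes X)=(I\otimes X)(t_{ij}\otimes I)$ from Remark~\ref{rem: tensor identities}(2).
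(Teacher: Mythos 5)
Your proposal is correct and follows essentially the same route as the paper: both factor $\widetilde M_{m,i}=\widetilde M_{A,i}(I_{E_i}\otimes Z_i^{\,m})$ and reduce every verification (covariance, isometricity, the unitary compatibility relations, and the twisted and doubly twisted identities) to the already-established $A$-level representation via the commutation relations \eqref{eq: Z_i commutes MAi}, including the adjoint factorization $\widetilde M_{m,j}^{\,*}=(I_{E_j}\otimes Z_j^{\,m})^{*}\widetilde M_{A,j}^{\,*}$ for the doubly twisted relation. The only cosmetic difference is that you argue at the operator level while the paper computes on elementary tensors.
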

\begin{proof}
Fix \(m\in I\). 
By construction, each \(M_{A,i}\) is an isometry (unitary if \(i\in A^c\)), hence each
\(M_{m,i} = M_{A,i} Z_i^{\,m}\) is also an isometry (unitary if \(i\in A^c\)).
Since \([\sigma(\mathcal A), U_{ij}]=0\) for all \(i\neq j\), therefore, \([\sigma_m(a), Z_i]=0\) for all \(a\in\mathcal A\) and \(i\in I_k\). 
Using the covariance of \(M_{A,i}\) and this commutation, for all \(a,x,b\in\mathcal A\),
\[
M_{m,i}(axb)
= M_{A,i}(axb) Z_i^{\,m}
= \sigma_A(a)\,M_{A,i}(x)\,\sigma_A(b)\,Z_i^{\,m}
= \sigma_m(a)\,M_{m,i}(x)\,\sigma_m(b).
\]
Thus \((\sigma_m,M_{m,1},\dots,M_{m,k})\) is covariant.
Now, for \(l \in I_k\) and \(i \ne j\), Equation \ref{eq: Z_i commutes MAi} implies
\[
U_{m,ij} M_{m,l}(a)
= U_{m,ij} M_{A,l}(a) Z_l^{\,m}
= M_{A,l}(a) Z_l^{\,m} U_{m,ij}
= M_{m,l}(a)\, U_{m,ij}, \qquad a \in \mathcal{A}.
\]
Hence,
\(
U_{m,ij}\, \widetilde M_{m,l}
= \widetilde M_{m,l}\, (I_{\mathcal A} \otimes U_{m,ij}).
\) For all $i \ne j, \, \forall a \in \mathcal{A}$, the relations $U_{m,ij}^*= U_{m,ji},$ and $\sigma_m(a) U_{m,ij}= U_{m,ij}\sigma_m(a)$ follow trivially from their definitions.
To prove the twisted relations, let \(a,b\in\mathcal A\) and \(h\in\mathcal H_m\),
\begin{align*}
\widetilde M_{m,i}(I_{\mathcal A}\otimes \widetilde M_{m,j})(a\otimes b\otimes h)
&= M_{m,i}(a)\, M_{m,j}(b)\, h \\
&= M_{A,i}(a)\, Z_i^{\,m} M_{A,j}(b)\, Z_j^{\,m} h \\
&= M_{A,i}(a) M_{A,j}(b)\, Z_i^{\,m} Z_j^{\,m} h
   \qquad\text{(by \eqref{eq: Z_i commutes MAi})} \\
&= \widetilde M_{A,i}(I_{\mathcal A}\otimes \widetilde M_{A,j})
   (a\otimes b\otimes Z_i^{\,m} Z_j^{\,m} h) \\
&= \widetilde M_{A,j}(I_{\mathcal A}\otimes \widetilde M_{A,i})
   (t_{ij}\otimes U_{A,ij})(a\otimes b\otimes Z_i^{\,m} Z_j^{\,m} h)\\
&=  M_{A,j}(b) M_{A,i}(a)  U_{A,ij} Z_i^m Z^{m}_jh\\
&=  M_{A,j}(b)Z^{m}_j M_{A,i}(a) Z_i^m U_{A,ij}  h \qquad\text{(by \eqref{eq: Z_i commutes MAi})}\\
&=  \widetilde M_{m,j}(b \otimes  M_{m,i}(a) U_{A,ij}  h) \\
&= \widetilde M_{m,j} (I_{\mathcal{A}} \otimes \widetilde M_{m,i})(t_{ij}  \otimes U_{m,ij})(a \otimes b \otimes h )\\
\end{align*}
Therefore, $\widetilde M_{m,i} (I_{\mathcal{A}} \otimes \widetilde M_{m,j})=\widetilde M_{m,j} (I_{\mathcal{A}} \otimes \widetilde M_{m,i})(t_{ij}  \otimes U_{m,ij}).$

To prove doubly twisted relations, first notice that
\begin{equation} \label{eq: M*_m,i}
    \widetilde M^{*}_{m,i}
= (I_{\mathcal A}\otimes Z_i^{\,m})^{*}\, \widetilde M_{A,i}^{*}.
\end{equation}
Equation \eqref{eq: Z_i commutes MAi} implies that, for any $i, j \in I_k,$
\[
Z^m_j \widetilde M_{A,i} =  \widetilde M_{A,i} (I_{\mathcal{A}} \otimes Z^m_j) \implies  Z^{*m}_j \widetilde M_{A,i}=\widetilde M_{A,i}(I_{\mathcal{A}} \otimes Z^{*m}_j).
\]
Using this equality along with doubly twisted relations, we get
\begin{align*}
\widetilde M^*_{m,i}\,\widetilde M_{m,j}(b \otimes h)
&= \widetilde M^*_{m,i}\left(M_{A,j}(b)\, Z_j^{\,m} h\right)\\
&= (I_{\mathcal A}\otimes Z_i^{\,m})^{*}\,
   \widetilde M_{A,i}^{*}\left(M_{A,j}(b)\, Z_j^{\,m} h\right)\\
&= (I_{\mathcal A}\otimes Z_i^{\,*m})\, 
   \widetilde M_{A,i}^{*}\, \widetilde M_{A,j}(b \otimes Z_j^{\,m} h)\\
&= (I_{\mathcal A}\otimes Z_i^{\,*m})\,
   (I_{\mathcal A}\otimes \widetilde M_{A,j})\,
   (t_{ji}\otimes U_{m,ji})\,
   (I_{\mathcal A}\otimes \widetilde M_{A,i}^{*})\,
   (b \otimes Z_j^{\,m} h)\\
&= (I_{\mathcal A}\otimes \widetilde M_{A,j})\,
   (t_{ji}\otimes U_{m,ji})\,
   (I_{\mathcal A}\otimes I_{\mathcal A}\otimes Z_i^{\,*m})\,
   (I_{\mathcal A}\otimes \widetilde M_{A,i}^{*})\,
   (b\otimes Z_j^{\,m}h)
   \quad(\text{ by } \eqref{eq: Z_i commutes MAi}\\
&= (I_{\mathcal A}\otimes \widetilde M_{A,j})\,
   (t_{ji}\otimes U_{m,ji})\,
   \left(I_{\mathcal A}\otimes (I_{\mathcal A}\otimes Z_i^{\,m})^{*}\,
   \widetilde M_{A,i}^{*}\right)\,(b\otimes Z_j^{\,m}h)\\
&= (I_{\mathcal A}\otimes \widetilde M_{A,j})\,
   (t_{ji}\otimes U_{m,ji})\,
   (I_{\mathcal A}\otimes \widetilde M_{m,i}^{*})\,
   (I_{\mathcal A}\otimes Z_j^{\,m})(b\otimes h) \quad(\text{ by } \eqref{eq: M*_m,i})\\
&= (I_{\mathcal A}\otimes \widetilde M_{A,j})\,
   (t_{ji}\otimes U_{m,ji})\,
   (I_{\mathcal A}\otimes I_{\mathcal A}\otimes Z_j^{\,m})\,
   (I_{\mathcal A}\otimes \widetilde M_{m,i}^{*})\,
   (b\otimes h)\\
&= (I_{\mathcal A}\otimes \widetilde M_{m,j})\,
   (t_{ji}\otimes U_{m,ji})\,
   (I_{\mathcal A}\otimes \widetilde M^*_{m,i})\,
   (b\otimes h).
\end{align*}
Thus for each \(m\in I\), the tuple
\(\left(\sigma_m, M_{m,1},\dots,M_{m,k}\right)\)
is a doubly twisted isometric covariant representation of
\(\mathbb E=\{E_i=\mathcal A\}_{i\in I_k}\) with twists
\(\{t_{ij}\otimes U_{m,ij}\}_{i\neq j}\).
\end{proof}

Now, for any \(n,m \in I\) with \(n \geq m\), define
\(
\varphi_{n,m} : \ell^2(\mathbb Z_+^{|A|}) \otimes \mathcal D_A 
\longrightarrow \ell^2(\mathbb Z_+^{|A|}) \otimes \mathcal D_A
\)
by
\[
\varphi_{m+1,m} := S_A \otimes I_{\mathcal D_A},
\quad\text{where } S_A := \prod_{i_r \in A} S_{i_r}.
\]
Therefore,
\(
\varphi_{n,m} := (S_A \otimes I_{\mathcal D_A})^{\,n-m}, \, n \ge m.
\)
Since \(S_A\) is an isometry, each \(\varphi_{n,m}\) is an isometry, and for all
\(m \le n \le k\) we have
\[
\varphi_{k,n} \circ \varphi_{n,m} = \varphi_{k,m},
\qquad
\varphi_{m,m} = I_{\ell^2(\mathbb Z_+^{|A|}) \otimes \mathcal D_A}.
\]
Thus \(\{(\mathcal H_m,\varphi_{n,m})\}_{m \le n \in I}\), with
\(\mathcal H_m := \ell^2(\mathbb Z_+^{|A|}) \otimes \mathcal D_A\), 
forms a direct system of Hilbert spaces.  
We denote its direct limit by \((\mathcal K_A,\psi_m)_{m\in I}\), that is, there exists 
a Hilbert space \(\mathcal K_A\) and isometries 
\(\psi_m : \mathcal H_m \to \mathcal K_A\) such that for all $m \le n$,
\(
\psi_n \circ \varphi_{n,m} = \psi_m,
\)
and
\(
\mathcal K_A 
= \overline{\mathrm{span}}\big\{\psi_m(\mathcal H_m) : m \in I\big\}.
\)
By Proposition~\ref{prop: mth level doubly twisted}, for each \(m \in I\), $(\sigma_m,\, M_{m,1},\dots, M_{m,k})$ is a
doubly twisted isometric covariant representation with twists
\(\{\,t_{ij}\otimes U_{m,ij}\,\}_{i\ne j}\). With the maps defined above, we now claim the following.

\begin{prop} \label{prop: twisted extension}
A doubly twisted isometric covariant representation
\(\left(\sigma_A,\{M_{A,i}\}_{i\in I_k}\right)\) 
with twists \(\{\,t_{ij}\otimes U_{A,ij}\,\}_{i\neq j}\) admits a twisted isometric covariant extension.
\end{prop}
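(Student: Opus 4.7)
The plan is to invoke Corollary~\ref{cor:direct-limit-A} applied to the direct system $\{(\mathcal H_m,\varphi_{n,m})\}_{m\in I}$ together with the doubly twisted isometric covariant representations $(\sigma_m,M_{m,1},\ldots,M_{m,k})$ constructed in Proposition~\ref{prop: mth level doubly twisted}. Since by construction the initial representation at $m=0$ coincides with $(\sigma_A,M_{A,1},\ldots,M_{A,k})$, verifying the three intertwining hypotheses of the corollary will produce the desired twisted extension on $\mathcal K_A$, with $\restr{\widetilde V_{A,i}}{E_i\otimes \mathcal H_0}=\widetilde M_{A,i}$, $\restr{W_{A,ij}}{\mathcal H_0}=U_{A,ij}$, and $\restr{\tau_A(a)}{\mathcal H_0}=\sigma_A(a)$.

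Two of the three conditions are essentially free. Because $\sigma_m=\sigma_A=I_{\ell^2(\mathbb Z_+^{|A|})}\otimes\sigma$ and $U_{m,ij}=U_{A,ij}=I_{\ell^2(\mathbb Z_+^{|A|})}\otimes U_{ij}$ are independent of $m$ and act as the identity on the $\ell^2$-factor, while $\varphi_{n,m}=(S_A\otimes I_{\mathcal D_A})^{\,n-m}$ acts as the identity on the $\mathcal D_A$-factor, both
\[
\varphi_{n,m}\,\sigma_m(a)=\sigma_n(a)\,\varphi_{n,m}
\qquad\text{and}\qquad
\varphi_{n,m}\,U_{m,ij}=U_{n,ij}\,\varphi_{n,m}
\]
follow from the elementary tensor identity $(A\otimes I)(I\otimes B)=(I\otimes B)(A\otimes I)$.

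The substantive verification is of the first intertwining. By iterating in $n-m$ and using that each $Z_i$ acts only on $\mathcal D_A$ and therefore commutes with $\varphi_{n,m}$, the identity $\varphi_{n,m}\,\widetilde M_{m,i}=\widetilde M_{n,i}(I_{E_i}\otimes\varphi_{n,m})$ reduces to the single-step commutation
\[
(S_A\otimes I_{\mathcal D_A})\,M_{A,i}(a) \;=\; M_{A,i}(a)\,(S_A\otimes I_{\mathcal D_A})\,Z_i.
\]
I will verify this by cases on $i$. For $i=i_1\in A$, one has $Z_{i_1}=I$ and $M_{A,i_1}(a)=S_{i_1}\otimes\sigma(a)$, so the identity reduces to the commutativity of the shifts $S_A$ and $S_{i_1}$ on $\ell^2(\mathbb Z_+^{|A|})$. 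For $i=i_j\in A$ with $j\ge 2$, and for $l\in A^{\mathrm c}$, the key observation is that in the present commutative setting each operator $D_r[W_{i_j,i_r}]$ is diagonal in the canonical $\ell^2$-basis, acting on $e_n\otimes h$ as multiplication in the $\mathcal D_A$-factor by $U_{i_j,i_r}^{\,n_r}$. Consequently, pushing $(S_A\otimes I_{\mathcal D_A})$, which raises each coordinate by one, past such an operator produces a residual factor $I\otimes U_{i_r,i_j}$ on the right. Collecting these residual factors over $r=1,\ldots,j-1$ (using $U_{ji}=U_{ij}^{*}$ and the pairwise commutativity of the twists) yields precisely $X_{i_j}=Z_{i_j}$; the analogous calculation for $l\in A^{\mathrm c}$ accumulates $\prod_{i\in A}U_{i,l}=Y_l=Z_l$.

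The main bookkeeping obstacle is tracking the accumulated twists in the correct order and verifying that they assemble into exactly the factor $Z_i$ introduced in the definition of $M_{m,i}$. Once the single-step identity above is established, iteration gives $\varphi_{n,m}\,M_{m,i}(a)=M_{A,i}(a)\,(S_A\otimes I)^{n-m}\,Z_i^{\,n}=M_{A,i}(a)\,Z_i^{\,n}\,\varphi_{n,m}=M_{n,i}(a)\,\varphi_{n,m}$, and Corollary~\ref{cor:direct-limit-A} produces a twisted isometric covariant representation $(\tau_A,V_{A,1},\ldots,V_{A,k})$ of $\mathbb E$ on $\mathcal K_A$ extending $(\sigma_A,M_{A,1},\ldots,M_{A,k})$, which is the claim of the proposition.
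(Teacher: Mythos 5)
Your proposal is correct and follows essentially the same route as the paper: both verify the three intertwining hypotheses of Corollary~\ref{cor:direct-limit-A} for the system $(\sigma_m,M_{m,1},\dots,M_{m,k})$ of Proposition~\ref{prop: mth level doubly twisted}, with the $\sigma$- and $U$-conditions immediate and the substantive work being the twist bookkeeping in $\varphi_{n,m}\,\widetilde M_{m,i}=\widetilde M_{n,i}(I_{E_i}\otimes\varphi_{n,m})$. The only difference is organizational: you factor that verification into the single-step identity $(S_A\otimes I)\,M_{A,i}(a)=M_{A,i}(a)\,(S_A\otimes I)\,Z_i$ and iterate, whereas the paper checks the general $m\le n$ case directly on basis vectors, carrying out the same exponent cancellation $s_{i_r}+(n-m)-n=s_{i_r}-m$.
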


\begin{proof}
For any $i \in I_k,$ first we claim that $\varphi_{n,m}\,\widetilde M_{m,i}
=
\widetilde M_{n,i}\,(I_{\mathcal A}\otimes \varphi_{n,m}).$
Fix \(e_s \otimes d \in \ell^2(\mathbb Z_+^{|A|}) \otimes \mathcal D_A\),
where \(s=(s_{i_1},\dots,s_{i_p}) \in \mathbb Z_+^{|A|}\).
For \(j=1\), \(a\in\mathcal A\), and \(m\in I\), we have
\[
\widetilde M_{m,i_1}(a\otimes e_s \otimes d)
= (S_{i_1}\otimes \sigma(a))(e_s\otimes d)
= e_{\,s+e_{i_1}} \otimes \sigma(a)d.
\]
Therefore, with \(e(A):=\sum_{i\in A }e_i\),
\begin{align*}
\varphi_{n,m}\,\widetilde M_{m,i_1}(a\otimes e_s \otimes d)
= (S_A\otimes I)^{\,n-m}(e_{\,s+e_{i_1}} \otimes \sigma(a)d) 
= e_{\,s+e_{i_1} + (n-m)e({A})} \otimes \sigma(a)d.
\end{align*}
On the other hand,
\[
\widetilde M_{n,i_1}(I_{\mathcal A}\otimes \varphi_{n,m})(a\otimes e_s\otimes d)
= \widetilde M_{n,i_1}(a\otimes e_{\,s+(n-m)e(A)}\otimes d)
= e_{\,s+e_{i_1}+(n-m)e(A)}\otimes \sigma(a)d.
\]
Hence,
\[
\varphi_{n,m}\,\widetilde M_{m,i_1}
=
\widetilde M_{n,i_1}\,(I_{\mathcal A}\otimes \varphi_{n,m}).
\]
Now for all \(i_j \in A\setminus \{i_1\}\),
\[
\widetilde M_{A,i_j}(a \otimes e_s\otimes d)
= e_{s+e_{i_j}}\otimes \sigma(a)
  \left(\prod_{r<j}U_{i_j,i_r}^{\,s_{i_r}}\right)d,
\]
and hence
\begin{align*}
  \widetilde M_{m,i_j}(a \otimes e_s\otimes d)
&= M_{m,i_j}(a)(e_s\otimes d)\\
&= M_{A,i_j}(a)\,Z_{i_j}^{\,m}(e_s\otimes d)\\
&= e_{s+e_{i_j}}\otimes 
\sigma(a)\, \left(\prod_{r<j}U_{i_j,i_r}^{\,s_{i_r}}\right)
          \left(\prod_{r<j}U_{i_r,i_j}^{\,m}\right)d\\
&= e_{s+e_{i_j}}\otimes 
\sigma(a)\, \left(\prod_{r<j}U_{i_j,i_r}^{\,s_{i_r}-m}\right)d.
\end{align*}
Applying \(\varphi_{n,m}=(S_A\otimes I)^{n-m}\) gives
\[
\varphi_{n,m}\,\widetilde M_{m,i_j}(a \otimes e_s\otimes d)
= e_{s+e_{i_j}+(n-m) e(A)} \otimes 
\sigma(a)\, \left(\prod_{r<j}U_{i_j,i_r}^{\,s_{i_r}-m}\right)d.
\]
On the other hand,
\[
\begin{aligned}
\widetilde M_{n,i_j}(I_{\mathcal{A}} \otimes \varphi_{n,m})(a \otimes e_s\otimes d)
&=\widetilde M_{n,i_j}(a \otimes e_{s+(n-m)e(A)}\otimes d)\\
&= M_{A,i_j}(a)\,Z_{i_j}^{\,n}(e_{s+(n-m)e(A)}\otimes d)\\
&= e_{s+e_{i_j}+(n-m)e(A)}\otimes \sigma(a)
  \left(\prod_{r<j}U_{i_j,i_r}^{\,s_{i_r}+(n-m)}\right)
  \left(\prod_{r<j}U_{i_r,i_j}^{\,n}\right)d\\
&= e_{s+e_{i_j}+(n-m)e(A)}\otimes \sigma(a)
  \left(\prod_{r<j}U_{i_j,i_r}^{\,s_{i_r}-m}\right)d.
\end{aligned}
\]
Therefore, for all \(m \leq n\) and all \(i_j \in A\),
\[
\varphi_{n,m}\,\widetilde M_{m,i_j}
= \widetilde M_{n,i_j}\, (I_{\mathcal{A}} \otimes\varphi_{n,m}).
\]
For \(l \in A^{\mathrm c}\),  
\(
M_{A,l}(a)(e_s\otimes d)
= e_s\otimes 
  T_l(a)\,
  \left(\displaystyle\prod_{r=1}^p U_{\,l,i_r}^{\,s_{i_r}}\right)d.
\)
Therefore,
\begin{align*}
\varphi_{n,m}\,\widetilde M_{m,l}(a \otimes e_s \otimes d)
&= (S_A \otimes I)^{\,n-m}
   \left(I \otimes 
      T_l(a)
      \left(\prod_{i_r \in A} U_{\,l,i_r}^{\,s_{i_r}}\right)\right)
      Y_l^{\,m}(e_s \otimes d)\\[6pt]
&= (S_A \otimes I)^{\,n-m}
   \left(e_s \otimes
      T_l(a)\,
      \left(\prod_{i_r \in A} U_{\,l,i_r}^{\,s_{i_r}}\right)
      \left(\prod_{i_r \in A} U_{\,i_r,l}^{\,m}\right)
      d\right)\\[6pt]
&=
   e_{\,s+(n-m)e(A)} \otimes
   T_l(a)\,
   \left(\prod_{i_r \in A} U_{\,l,i_r}^{\,s_{i_r}-m}\right)d .
\end{align*}

On the other hand,
\[
\begin{aligned}
\widetilde M_{n,l}( I \otimes \varphi_{n,m})(a \otimes e_s\otimes d)
&= \widetilde M_{n,l}(a \otimes e_{\,s+(n-m)e(A)}\otimes d)\\
&= M_{A,l}(a)\, Y_l^{\,n}(e_{\,s+(n-m)e(A)}\otimes d)\\
&= e_{\,s+(n-m)e(A)}\otimes 
   T_l(a)\,
   \left(\prod_{i_r \in A} U_{\,l,i_r}^{\,s_{i_r}+ (n-m)}\right)
   \left(\prod_{i_r \in A} U_{\,i_r,l}^{\,n}\right)d\\[3pt]
&= e_{\,s+(n-m)e(A)}\otimes 
   T_l(a)\,
   \left(\prod_{i_r \in A} U_{\,l,i_r}^{\,s_{i_r}-m}\right)d .
\end{aligned}
\]
Therefore, for all \(m\le n\) and $l \in A^c,$
\[
\varphi_{n,m}\,\widetilde M_{m,l}
\;=\;
\widetilde M_{n,l}\,( I_{\mathcal A} \otimes \varphi_{n,m}).
\]
Hence, the claim follows.
Finally, for all \(a \in \mathcal A\) with \(m \le n\), 
\[
\varphi_{n,m}\,\sigma_{m}(a)
=(S_A\otimes I)^{\,n-m}(I\otimes\sigma(a))
=(I\otimes\sigma(a))(S_A\otimes I)^{\,n-m}
=\sigma_{n}(a)\,\varphi_{n,m},
\]
and  for all \(i \neq j\) in \(I_k\),
\[
\varphi_{n,m}\, U_{m,ij}
= (S_A\otimes I)^{\,n-m}(I\otimes U_{ij})
= (I\otimes U_{ij})(S_A\otimes I)^{\,n-m}
= U_{n,ij}\,\varphi_{n,m}.
\]
Therefore, all the hypotheses of Corollary~\ref{cor:direct-limit-A} are satisfied, 
and we obtain a twisted isometric covariant extension
\((\tau_A,\, V_{A,1},\dots,V_{A,k})\)
of \(\mathbb E\) on \(\mathcal K_A\), with twists 
\(\{\,t_{ij}\otimes W_{A,ij}\,\}_{i\ne j}\) such that
\[
\restr{\widetilde V_{A,i}}{E_i\otimes \mathcal H_0}
   = \widetilde M_{A,i}, \qquad
\restr{W_{A,ij}}{\mathcal H_0}
   = U_{A,ij}, \qquad
\restr{\tau_A(a)}{\mathcal H_0}
   = \sigma_{A}(a).
\]
This completes the proof of the claim.
\end{proof}
We now claim that this extension is doubly twisted and, in fact, unitary.  
To verify this, we need to compute the adjoint of \(\widetilde M_{A,i}\) for each \(i \in A\). First notice that, any 
\( \zeta \in \mathcal A\otimes_\sigma
   (\ell^2(\mathbb Z_+^{|A|})\otimes\mathcal D_A)\)
can be written in the form
\[
\zeta=\sum_{k,l} b_{kl}\,(a_{kl}\otimes e_k\otimes h_l)=\sum_{k,l} b_{kl}\,\left(1\otimes e_k\otimes \sigma(a_{kl})h_l\right),
\qquad b_{kl}\in\mathbb C,\; a_{kl}\in\mathcal A.
\]
Let \(\sigma(a_{kl})h_l=\sum_m d^{(k,l)}_m h_m\), and \(c_{km}=\sum_l b_{kl}d^{(k,l)}_m\). Then
\[
\zeta
=\sum_{k,m} c_{km}\,(1\otimes e_k\otimes h_m), \qquad
\sum_{k,m} |c_{km}|^2 < \infty.
\]
Under the identification 
\(\mathcal A\otimes_\sigma(\ell^2(\mathbb Z_+^{|A|})\otimes\mathcal D_A)
 \cong \ell^2(\mathbb Z_+^{|A|})\otimes\mathcal D_A\),
given by \(a\otimes(\xi\otimes h)\longmapsto \xi\otimes\sigma(a)h\),
the vectors \(\{\,1\otimes e_k\otimes h_m\,\}_{k,m}\) correspond to the
standard orthonormal basis \(\{e_k\otimes h_m\}_{k,m}\), and hence form 
an orthonormal basis of 
\(\mathcal A\otimes_\sigma(\ell^2(\mathbb Z_+^{|A|})\otimes\mathcal D_A)\).
Thus \(\{\,1\otimes e_k\otimes h_m\,\}_{k,m}\) is an orthonormal basis of
\(\mathcal A\otimes_\sigma(\ell^2(\mathbb Z_+^{|A|})\otimes\mathcal D_A).\)

\begin{prop}\label{Adjoint of the twisted creation operator}
Let \(\{\,e_n \otimes h_m : n \in \mathbb Z_+^{|A|},\, m \in \mathbb Z_+\,\}\) be an 
orthonormal basis of 
\(\ell^2(\mathbb Z_+^{|A|}) \otimes \mathcal D_A\).
For each \(i_j \in A\), the adjoint
\[
\widetilde M_{A,i_j}^* :
\ell^2(\mathbb Z_+^{|A|}) \otimes \mathcal D_A
\;\longrightarrow\;
\mathcal A \otimes 
\ell^2(\mathbb Z_+^{|A|}) \otimes \mathcal D_A
\]
is given by
\[
\widetilde M_{A,i_j}^*(e_n \otimes h_m)
=
\begin{cases}
\displaystyle 
1 \otimes 
\left(e_{\,n-e_{i_j}} \otimes 
      \prod\limits_{r=1}^{j-1} 
      U_{\,i_j, i_r}^{\,*\, n_{i_r}}\; h_m\right),
& \text{if } n_{i_j} \ge 1,\\[10pt]
0, & \text{if } n_{i_j} = 0.
\end{cases}
\]
\end{prop}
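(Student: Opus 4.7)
The approach is to compute the adjoint by testing it against the orthonormal basis described just before the statement. Recall that the identification $\mathcal{A}\otimes_\sigma(\ell^2(\mathbb{Z}_+^{|A|})\otimes\mathcal{D}_A)\cong\ell^2(\mathbb{Z}_+^{|A|})\otimes\mathcal{D}_A$ sends $a\otimes(\xi\otimes h)\mapsto \xi\otimes\sigma(a)h$, so that $\{1\otimes e_k\otimes h_\ell\}_{k,\ell}$ is orthonormal. It therefore suffices to compute $\langle \widetilde{M}_{A,i_j}^*(e_n\otimes h_m),\,1\otimes e_k\otimes h_\ell\rangle$ for all $k,\ell$, and recognize the result as the coordinates of the claimed vector.

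First I would recall the explicit formula established in Example~\ref{ex: unitary-ext-commutative}, namely
\[
\widetilde{M}_{A,i_j}(1\otimes e_k\otimes h_\ell)
= e_{k+e_{i_j}}\otimes\Bigl(\prod_{r=1}^{j-1}U_{i_j,i_r}^{\,k_{i_r}}\Bigr)h_\ell
\]
for $j\ge 2$, and $\widetilde{M}_{A,i_1}(1\otimes e_k\otimes h_\ell)=e_{k+e_{i_1}}\otimes h_\ell$ for $j=1$ (interpreting the empty product as $I$, so the two cases unify). Taking the inner product with $e_n\otimes h_m$ then forces $k=n-e_{i_j}$, which is admissible precisely when $n_{i_j}\ge 1$. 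In that case one reads off
\[
\langle \widetilde{M}_{A,i_j}^*(e_n\otimes h_m),\,1\otimes e_k\otimes h_\ell\rangle
=\delta_{k,\,n-e_{i_j}}\Bigl\langle\Bigl(\prod_{r=1}^{j-1}U_{i_j,i_r}^{\,n_{i_r}}\Bigr)^{\!*}h_m,\,h_\ell\Bigr\rangle,
\]
using that for $r<j$ the coordinate $k_{i_r}$ coincides with $n_{i_r}$. If $n_{i_j}=0$, the inner product vanishes for every $(k,\ell)$, giving $\widetilde{M}_{A,i_j}^*(e_n\otimes h_m)=0$.

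Next I would assemble these matrix coefficients: since $\{1\otimes e_k\otimes h_\ell\}_{k,\ell}$ is an orthonormal basis of the balanced tensor product, the vector whose coefficients are as above is exactly
\[
1\otimes\Bigl(e_{n-e_{i_j}}\otimes\prod_{r=1}^{j-1}U_{i_j,i_r}^{*\,n_{i_r}}h_m\Bigr),
\]
which is the claimed formula. Finally, I would check $\ell^2$-summability of these coefficients (immediate, since each $U_{i_j,i_r}$ is unitary and only one $k$ contributes), which legitimates the identification and ensures that the series defining $\widetilde{M}_{A,i_j}^*$ converges in norm.

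The main technical point, rather than a genuine obstacle, is careful bookkeeping: one must use that for $r<j$ the index $k_{i_r}=n_{i_r}$ is unchanged by the shift $k\mapsto k+e_{i_j}$, and that the unitaries $U_{i_j,i_r}$ commute with $\sigma(\mathcal{A})$ (so the representation $\sigma$ plays no role in the computation of the power product). These observations, together with $U_{i_r,i_j}=U_{i_j,i_r}^{*}$, give the conjugation $U_{i_j,i_r}^{\,n_{i_r}}\rightsquigarrow U_{i_j,i_r}^{*\,n_{i_r}}$ appearing in the formula.
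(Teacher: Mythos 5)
Your proposal is correct and follows essentially the same route as the paper: both compute the matrix coefficients $\langle \widetilde M_{A,i_j}^*(e_n\otimes h_m),\,1\otimes e_k\otimes h_\ell\rangle=\langle e_n\otimes h_m,\,\widetilde M_{A,i_j}(1\otimes e_k\otimes h_\ell)\rangle$ against the orthonormal basis, obtain the constraint $k=n-e_{i_j}$ (hence the vanishing when $n_{i_j}=0$), and reassemble the coefficients using $k_{i_r}=n_{i_r}$ for $r<j$. The only cosmetic quibble is that the adjoints $U_{i_j,i_r}^{*\,n_{i_r}}$ arise from moving the unitaries across the inner product, not from the relation $U_{i_r,i_j}=U_{i_j,i_r}^*$, but this does not affect the argument.
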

\begin{proof}
    Let
\(
\widetilde M_{A,i_j}^*(e_n\otimes h_m)
= \sum_{k,p} a_{k, p}\,\left(1\otimes e_k \otimes h_p\right),
\,  a_{k,p}\in\mathbb C.
\)
Then 
\begin{align*}
 a_{k,p} 
 &= \left\langle \widetilde M_{A,i_j}^*(e_n\otimes h_m),\, 1 \otimes e_k \otimes h_{p} \right\rangle   \\
 &= \left\langle e_n\otimes h_m,\, \widetilde M_{A,i_j}(1\otimes e_k\otimes h_p)
            \right\rangle\\
&= \left\langle e_n\otimes h_m,\, e_{k+ e_{i_j}} \otimes 
\prod_{r=1}^{j-1}
D_{r}\!\left[U_{i_j,i_{r}}\right]h_{p}
            \right\rangle\\
&= \delta_{\,n,\;k+e_{i_j}} \left\langle  h_m, 
\prod_{r=1}^{j-1}
D_{r}\!\left[U_{i_j,i_{r}}\right]h_{p}
            \right\rangle.
\end{align*}

Therefore, $a_{k,p} \neq 0$ if and only if $k=n-e_{i_j}$.  
Let $k=n-e_{i_j}$. Then
\begin{align*}
 a_{n-e_{i_j},\,p} 
 &=  \left\langle  h_m, 
\prod_{r=1}^{j-1}
D_{r}\!\big[U_{i_j,i_{r}}\big]h_{p}
            \right\rangle \\
&= \left\langle  h_m, 
\prod_{r=1}^{j-1}
U^{k_{i_r}}_{i_j,i_{r}}\,h_{p}\right\rangle \\
&= \left\langle \prod_{r=1}^{j-1}
U^{*k_{i_r}}_{i_j,i_{r}}\, h_m, 
h_{p} \right\rangle.
\end{align*}

This further implies, \(
\widetilde M_{A,i_j}^*(e_n\otimes h_m)
= \displaystyle\sum_{k,p} a_{k, p}\,\left(1\otimes e_k \otimes h_p\right)
=  \sum_{p} a_{n-e_{i_j}, p}\,\left(1\otimes e_{n-e_{i_j} } \otimes h_p\right).
\)
Hence, 
\begin{align*}
 \widetilde M_{A,i_j}^*(e_n\otimes h_m)
&=\sum_{p}\left\langle \prod_{r=1}^{j-1}
U^{*{(n-{e_{i_j}})}_{r}}_{i_j,i_{r}} h_m, 
h_{p} \right\rangle \left(1\otimes e_{n-e_{i_j} } \otimes h_p\right)  \\
&=1\otimes e_{n-e_{i_j} } \otimes \sum_{p} \left\langle \prod_{r=1}^{j-1}
U^{*{(n-{e_{i_j}})}_{r}}_{i_j,i_{r}} h_m, 
h_{p} \right\rangle h_p  \\
&=1\otimes e_{n-e_{i_j} } \otimes  \prod_{r=1}^{j-1}
U^{*{(n-{e_{i_j}})}_{r}}_{i_j,i_{r}} h_m. \\
\end{align*}
Since \(s = n-e_{i_j}\), we have \(s_{i_r} = n_{i_r}\) for all \(r<j\), and we may write
\[
\widetilde M_{A,i_j}^*(e_n\otimes h_m)
=
\begin{cases}
1\otimes e_{\,n-e_{i_j}}\otimes  
   \displaystyle\prod_{r=1}^{j-1}
      U_{i_j i_r}^{*\,n_{i_r}}\, h_m,
&\text{if } n_{i_j}\ge 1,\\[6pt]
0, &\text{if } n_{i_j}=0.
\end{cases}
\]
\end{proof}

In order to prove that the extension constructed in
Proposition~\ref{prop: twisted extension} is a doubly twisted unitary
representation, we begin by showing that any twisted representation which is
fully coisometric must in fact satisfy the doubly twisted relations.

\begin{theorem} 
    A doubly twisted isometric covariant representation 
\(\left(\sigma_A,\{M_{A,i}\}_{i\in I_k}\right)\) 
with twists \(\{\,t_{ij}\otimes U_{A,ij}\,\}_{i\neq j}\) admits a doubly twisted unitary covariant extension.

\end{theorem}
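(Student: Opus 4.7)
The plan is to apply the direct--limit construction of Proposition~\ref{prop: twisted extension} and then verify the coisometry hypothesis of Corollary~\ref{cor:direct-limit-A}(2). Once we establish
\[
\widetilde M_{n,i}\,\widetilde M_{n,i}^{*}\,\varphi_{n,m}\;=\;\varphi_{n,m}
\]
for every $m\in I$, every $i\in I_k$, and some $n>m$, Corollary~\ref{cor:direct-limit-A} produces a twisted isometric covariant extension $(\tau_A,V_{A,1},\dots,V_{A,k})$ on $\mathcal K_A$ that is fully coisometric. Lemma~\ref{lemma: coiso plus twisted implies doubly twisted} then promotes the twisted relations to doubly twisted ones, and each $\widetilde V_{A,i}$, being simultaneously isometric and coisometric, is unitary. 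This yields the desired doubly twisted unitary covariant extension.

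The first observation is that the $n$--dependence in $\widetilde M_{n,i}\widetilde M_{n,i}^{*}$ disappears. From $M_{m,i}(a)=M_{A,i}(a)Z_{i}^{m}$ one has $\widetilde M_{m,i}=\widetilde M_{A,i}(I_{\mathcal A}\otimes Z_{i}^{m})$, and unitarity of $Z_{i}$ gives
\[
\widetilde M_{n,i}\,\widetilde M_{n,i}^{*}\;=\;\widetilde M_{A,i}\,\widetilde M_{A,i}^{*}\qquad(n\in I).
\]
For $i\in A^{c}$ the pair $(\sigma_{A},M_{A,i})$ is fully coisometric by Proposition~\ref{prop:construction of MAi}, so $\widetilde M_{A,i}\widetilde M_{A,i}^{*}=I$ and the required identity is automatic.

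The central computation is the case $i=i_{j}\in A$. Using the explicit adjoint formula of Proposition~\ref{Adjoint of the twisted creation operator}, together with the formula for $M_{A,i_j}$ from Proposition~\ref{prop:construction of MAi}, a direct computation on the basis $\{e_{n}\otimes h_{m}\}$ gives
\[
\widetilde M_{A,i_{j}}\,\widetilde M_{A,i_{j}}^{*}\;=\;I-P_{i_{j}},
\]
where $P_{i_{j}}$ is the orthogonal projection in $\ell^{2}(\mathbb Z_{+}^{|A|})\otimes\mathcal D_{A}$ onto $\overline{\mathrm{span}}\{e_{n}\otimes h:n_{i_{j}}=0\}$; indeed $\widetilde M_{A,i_{j}}^{*}$ annihilates vectors with $n_{i_{j}}=0$, while on vectors with $n_{i_{j}}\ge 1$ the products of twisting unitaries $\prod_{r<j}U_{i_{j}i_{r}}^{\,n_{i_{r}}}$ appearing in $M_{A,i_{j}}$ cancel exactly with the adjoint factors $\prod_{r<j}U_{i_{j}i_{r}}^{\,*n_{i_{r}}}$ coming from Proposition~\ref{Adjoint of the twisted creation operator} (using that $\{U_{ij}\}$ is a commuting family). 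Since $\varphi_{n,m}=(S_{A}\otimes I)^{n-m}$ shifts every $A$--coordinate by $n-m$, the range of $\varphi_{n,m}$ lies in $\overline{\mathrm{span}}\{e_{s}\otimes h:s_{i_{r}}\ge n-m\ \forall\,i_{r}\in A\}$, so $P_{i_{j}}\varphi_{n,m}=0$ for every $n>m$. Choosing $n=m+1$ thus verifies the hypothesis of Corollary~\ref{cor:direct-limit-A}(2) simultaneously for all $i\in I_{k}$.

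Combining these paragraphs with the direct--limit machinery finishes the proof. The only nontrivial step, which I expect to be the main obstacle, is the identification $\widetilde M_{A,i_{j}}\widetilde M_{A,i_{j}}^{*}=I-P_{i_{j}}$: one must track the products of twisting unitaries carefully and ensure that the multi-index bookkeeping in $M_{A,i_{j}}$ and in the adjoint formula match. All remaining steps are routine applications of results already established in the paper, notably Proposition~\ref{prop: twisted extension}, Corollary~\ref{cor:direct-limit-A}, and Lemma~\ref{lemma: coiso plus twisted implies doubly twisted}.
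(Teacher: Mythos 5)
Your proposal is correct and follows essentially the same route as the paper: verify the hypothesis of Corollary~\ref{cor:direct-limit-A}(2) for the indices in $A$ via the adjoint formula of Proposition~\ref{Adjoint of the twisted creation operator} (with the indices in $A^{\mathrm c}$ already fully coisometric), and then invoke Lemma~\ref{lemma: coiso plus twisted implies doubly twisted}. The only difference is presentational: you package the cancellation of the twisting unitaries as the identity $\widetilde M_{A,i_j}\widetilde M_{A,i_j}^{*}=I-P_{i_j}$ together with the observation that $\operatorname{Ran}\varphi_{n,m}$ is orthogonal to $\{e_n\otimes h: n_{i_j}=0\}$, whereas the paper carries out the same computation directly on vectors $\varphi_{n,m}(e_s\otimes h)$.
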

\begin{proof}
    Now, for each \(i_j \in A\), we claim that
\[
\widetilde M_{A,i_j}^{(n)}\, \widetilde M_{A,i_j}^{*(n)} \circ \varphi_{n,m}
\;=\;
\varphi_{n,m}.
\]
Let \(e_s \otimes h \in \ell^2(\mathbb Z_+^{|A|}) \otimes \mathcal D_A\). Then
\begin{align*}
 \widetilde M_{A,i_j}^{(n)}\, \widetilde M_{A,i_j}^{*(n)} \circ \varphi_{n,m}(e_s \otimes h)
 &= \widetilde M_{A,i_j}\, X_{i_j}^{\,n}\, X_{i_j}^{*n}\,
    \widetilde M_{A,i_j}^{*}\left( e_{\,s+(n-m)e(A)} \otimes h \right)\\[4pt]
 &= \widetilde M_{A,i_j}\left(
      1 \otimes e_{\,s+(n-m)e(A) - e_{i_j}}
      \otimes 
      \prod_{r=1}^{j-1}
      U_{i_j i_r}^{*\, (s+(n-m)e(A) - e_{i_j})_r}
      h
    \right)\\[4pt]
 &= e_{\,s+(n-m)e(A)} \otimes
    \prod_{r=1}^{j-1}
    U_{i_j i_r}^{\, (s+(n-m)e(A) - e_{i_j})_r}\,
    \prod_{r=1}^{j-1}
    U_{i_j i_r}^{*\, (s+(n-m)e(A) - e_{i_j})_r}\,
    h\\[4pt]
 &= e_{\,s+(n-m)e(A)} \otimes h\\[4pt]
 &= \varphi_{n,m}(e_s \otimes h).
\end{align*}

Thus, for each \(i_j \in A\), the pair \((\sigma_A, M_{A,i_j})\) is coisometric.  
For every \(j \in A^{\mathrm c}\), we already know that \((\sigma_A, M_{A,j})\) is coisometric.  
Therefore, by Corollary~\ref{cor:direct-limit-A}, we obtain the required unitary extension. Lemma \ref{lemma: coiso plus twisted implies doubly twisted} further implies that the extension is doubly twisted.

\end{proof}

\section*{Acknowledgements}
The authors acknowledge the Department of Mathematics,
Technion—Israel Institute of Technology, Haifa, Israel,
for providing institutional support and a conducive research
environment during the preparation of this work.

\end{document}